\documentclass[10pt]{amsart}
\usepackage{amssymb}
\usepackage{bm}
\usepackage{graphicx}
\usepackage[centertags]{amsmath}
\usepackage{amsfonts}
\usepackage{amsthm}
\usepackage{graphicx}
\linespread{1.18}


\newtheorem{thm}{Theorem}[section]
\newtheorem{cor}[thm]{Corollary}
\newtheorem{lem}[thm]{Lemma}
\newtheorem{sbl}[thm]{Sublemma}
\newtheorem{prop}[thm]{Proposition}
\newtheorem{claim}[thm]{Claim}
\newtheorem{subclaim}[thm]{Subclaim}
\newtheorem{fact}[thm]{Fact}
\newtheorem{defn}[thm]{Definition}

\theoremstyle{definition}

\newtheorem{examp}{Example}[section]

\newenvironment{Hales-Jewett}{\noindent\textbf{Hales--Jewett Theorem.}\begin{itshape}\noindent}{\end{itshape}}
\newenvironment{Halpern-Lauchli}{\noindent\textbf{Halpern--L\"{a}uchli Theorem.}\begin{itshape}\noindent}{\end{itshape}}
\newenvironment{Carlson-Simpson}{\noindent\textbf{Carlson--Simpson Theorem.}\begin{itshape}\noindent}{\end{itshape}}
\newenvironment{Density Hales-Jewett}{\noindent\textbf{Density Hales--Jewett Theorem.}\begin{itshape}\noindent}{\end{itshape}}
\newenvironment{Density Halpern-Lauchli}{\noindent\textbf{Density Halpern--L\"{a}uchli Theorem.}\begin{itshape}\noindent}{\end{itshape}}
\newenvironment{Theorem A}{\noindent\textbf{Theorem A.}\begin{itshape}\noindent}{\end{itshape}}
\newenvironment{Theorem B}{\noindent\textbf{Theorem B.}\begin{itshape}\noindent}{\end{itshape}}
\newenvironment{Theorem C}{\noindent\textbf{Theorem C.}\begin{itshape}\noindent}{\end{itshape}}

\newcommand{\nn}{\mathbb{N}}
\newcommand{\ee}{\varepsilon}
\newcommand{\con}{\smallfrown}
\newcommand{\meg}{\geqslant}
\newcommand{\mik}{\leqslant}
\newcommand{\lex}{<_{\mathrm{lex}}}
\newcommand{\dens}{\mathrm{dens}}
\newcommand{\cv}{\mathrm{c}}
\newcommand{\qv}{\mathrm{q}}
\newcommand{\ave}{\mathbb{E}}
\newcommand{\hj}{\mathrm{HJ}}
\newcommand{\dhj}{\mathrm{DHJ}}
\newcommand{\dcs}{\mathrm{DCS}}
\newcommand{\reg}{\mathrm{Reg}}

\newcommand{\cs}{\mathrm{CS}}

\newcommand{\subtr}{\mathrm{Subtr}}
\newcommand{\bl}{\mathbf{L}}

\newcommand{\bv}{\mathbf{V}}

\newcommand{\bx}{\mathbf{x}}
\newcommand{\by}{\mathbf{y}}


\begin{document}

\title{A density version of the Carlson--Simpson theorem}

\author{Pandelis Dodos, Vassilis Kanellopoulos and Konstantinos Tyros}

\address{Department of Mathematics, University of Athens, Panepistimiopolis 157 84, Athens, Greece}
\email{pdodos@math.uoa.gr}

\address{National Technical University of Athens, Faculty of Applied Sciences,
Department of Mathematics, Zografou Campus, 157 80, Athens, Greece}
\email{bkanel@math.ntua.gr}

\address{Department of Mathematics, University of Toronto, Toronto, Canada M5S 2E4}
\email{ktyros@math.toronto.edu}

\thanks{2000 \textit{Mathematics Subject Classification}: 05D10.}
\thanks{\textit{Key words}: words, left variable words, density.}

\maketitle


\begin{abstract}
We prove a density version of the Carlson--Simpson Theorem. Specifically we show the following.
\medskip

\noindent \textit{For every integer $k\meg 2$ and every set $A$ of words over $k$ satisfying
\[ \limsup_{n\to\infty} \frac{|A\cap [k]^n|}{k^n} >0 \]
there exist a word $c$ over $k$ and a sequence $(w_n)$ of left variable words over $k$ such that the set
\[ \{c\}\cup \big\{c^{\con}w_0(a_0)^{\con}...^{\con}w_n(a_n):n\in\nn \text{ and } a_0,...,a_n\in [k]\big\} \]
is contained in $A$.}
\medskip

\noindent While the result is infinite-dimensional its proof is based on an appropriate finite and quantitative
version, also obtained in the paper.
\end{abstract}

\tableofcontents


\section{Introduction}

\numberwithin{equation}{section}

\noindent 1.1. \textbf{Overview.} Our topic is Ramsey Theory, the general area of Combinatorics that studies the basic pigeonhole
principles of discrete structures and organizes, in a systematic way, the results obtained by iterating them.
\medskip

\noindent 1.1.1. \textit{The coloring versions.} The first pigeonhole principle relevant to our discussion in this paper is the
Hales--Jewett Theorem \cite{HJ}. To state it we need to introduce some pieces of notation and some terminology. For every integer $k\meg 2$
let $[k]^{<\nn}$ be the set of all finite sequences having values in $[k]:=\{1,...,k\}$. The elements of $[k]^{<\nn}$ are referred to as
\textit{words over $k$}, or simply \textit{words} if $k$ is understood. If $n\in\nn$, then $[k]^n$ stands for the set of words of length $n$.
We fix a letter $v$ that we regard as a variable. A \textit{variable word over $k$} is a finite sequence having values in $[k]\cup\{v\}$ where
the letter $v$ appears at least once. If $w$ is a variable word and $a\in[k]$, then $w(a)$ is the word obtained by substituting all appearances
of the letter $v$ in $w$ by $a$. A \textit{combinatorial line} of $[k]^n$ is a set of the form $\{w(a):a\in[k]\}$ where $w$ is a variable word
over $k$ of length $n$.
\medskip

\begin{Hales-Jewett}
For every $k,r\in\nn$ with $k\meg 2$ and $r\meg 1$ there exists an integer $N$ with the following property. If $n\meg N$, then for every
$r$-coloring of $[k]^n$ there exists a combinatorial line of $[k]^n$ which is monochromatic. The least integer $N$ with this property will
be denoted by $\hj(k,r)$.
\end{Hales-Jewett}
\medskip

The Hales--Jewett Theorem is the bread and butter of Ramsey Theory and is often regarded as an abstract version of the van der Waerden Theorem
\cite{vW}. The exact asymptotics of the numbers $\hj(k,r)$ are still unknown. The best known upper bounds are primitive recursive and are due 
to S. Shelah \cite{Sh1}.

The second pigeonhole principle relevant to our discussion is the Halpern--L\"{a}uchli Theorem \cite{HL}, a rather deep result that concerns
partitions of finite products of infinite trees.
\medskip

\begin{Halpern-Lauchli}
For every finite tuple $(T_1,...,T_d)$ of uniquely rooted and finitely branching trees without maximal nodes and every finite coloring of the
level product
\begin{equation} \label{1e1}
\bigcup_{n\in\nn} T_1(n)\times ...\times T_d(n)
\end{equation}
of $(T_1,...,T_d)$ there exist strong subtrees $(S_1, ..., S_d)$ of $(T_1,...,T_d)$ having common level set such that the level
product of $(S_1,...,S_d)$ is monochromatic.
\end{Halpern-Lauchli}
\medskip

We recall that a subtree $S$ of a tree $(T,<)$ is said to be \textit{strong} if: (a) $S$ is uniquely rooted, (b) there exists an infinite subset
$L_T(S)=\{l_0< l_1< ...\}$ of $\nn$, called the \textit{level set} of $S$, such that for every $n\in\nn$ the $n$-level $S(n)$ of $S$ is a subset
of $T(l_n)$, and (c) for every $s\in S$ and every immediate successor $t$ of $s$ in $T$ there exists a unique immediate successor $s'$ of $s$ in
$S$ with $t\mik s'$. The notion of a strong subtree was highlighted with the work of K. Milliken \cite{Mi1,Mi2} who used the Halpern--L\"{a}uchli
Theorem to show that the family of strong subtrees of a uniquely rooted and finitely-branching tree is partition regular.

The Hales--Jewett Theorem and the Halpern--L\"{a}uchli Theorem are pigeonhole principles of quite different nature. Nevertheless, they do admit
a common extension which is due to T. J. Carlson and S. G. Simpson \cite{CS}. To state it we recall that a \textit{left variable word over $k$}
is a variable word over $k$ whose leftmost letter is the variable $v$. The concatenation of two words $x$ and $y$ over $k$ is denoted
by $x^{\con}y$.
\medskip

\begin{Carlson-Simpson}
For every integer $k\meg 2$ and every finite coloring of the set of all words over $k$ there exist a word $c$ over $k$ and a sequence
$(w_n)$ of left variable words over $k$ such that the set
\begin{equation} \label{1e2}
\{c\}\cup \big\{c^{\con}w_0(a_0)^{\con}...^{\con}w_n(a_n): n\in\nn \text{ and } a_0,...,a_n\in [k]\big\}
\end{equation}
is monochromatic.
\end{Carlson-Simpson}
\medskip

The Carlson--Simpson Theorem belongs to the circle of results that provide information on the structure of the wildcard\footnote[1]{We recall
that if $w=(w_i)_{i=0}^{n-1}$ is a variable word over $k$ of length $n$, then its \textit{wildcard set} is defined to be the set
$\big\{i\in\{0,...,n-1\}: w_i=v\big\}$.} set of the variable word obtained by the Hales--Jewett Theorem; see, e.g., \cite{BL,HM,McC,Sh2,Wa}.
This extra information (namely, that the sequence $(w_n)$ consists of left variable words) can be used to derive the Halpern--L\"{a}uchli Theorem
when the trees $T_1,...,T_d$ are homogeneous\footnote[2]{A tree $T$ is \textit{homogeneous} if it is uniquely rooted and there exists an integer
$b\meg 2$ such that every $t\in T$ has exactly $b$ immediate successors; e.g., every dyadic, or triadic tree is homogeneous.}, a special case which
is sufficient for all known combinatorial applications of the Halpern--L\"{a}uchli Theorem (see \cite{PV}).
\medskip

\noindent 1.1.2. \textit{The density versions.} It is a remarkably fruitful phenomenon that many pigeonhole principles have a density version.
These density versions are strengthenings of their coloristic counterparts and assert that every large subset of a ``structure" must contain a
``substructure". In fact, the first pigeonhole principle we discussed so far, namely the Hales--Jewett Theorem, admits a density version which
is due to H. Furstenberg and Y. Katznelson \cite{FK2}.
\medskip

\begin{Density Hales-Jewett}
For every integer $k\meg 2$ and every $0<\delta\mik 1$ there exists an integer $N$ with the following property. If $n\meg N$, then every
subset $A$ of $[k]^n$ with $|A|\meg\delta k^n$ contains a combinatorial line of $[k]^n$. The least integer $N$ with this property will be
denoted by $\dhj(k,\delta)$.
\end{Density Hales-Jewett}
\medskip

The density Hales--Jewett Theorem is a fundamental result of Ramsey Theory. It has several strong results as consequences, most notably
the famous Szemer\'{e}di Theorem on arithmetic progressions \cite{Sz1} and its multidimensional version \cite{FK1}. The best known upper
bounds for the numbers $\dhj(k,\delta)$ are obtained in \cite{Pol} and have an Ackermann-type dependence with respect to $k$.

It turns out that the Halpern--L\"{a}uchli Theorem also has a density version that was obtained relatively recently in \cite{DKK}.
\medskip

\begin{Density Halpern-Lauchli}
For every finite tuple $(T_1,...,T_d)$ of homogeneous trees and every subset $A$ of the level product of $(T_1,...,T_d)$ satisfying
\begin{equation} \label{1e3}
\limsup_{n\to\infty} \frac{|A\cap \big( T_1(n)\times ... \times T_d(n)\big)|}{|T_1(n)\times ... \times T_d(n)|}>0
\end{equation}
there exist strong subtrees $(S_1, ..., S_d)$ of $(T_1,...,T_d)$ having common level set such that
the level product of $(S_1,...,S_d)$ is a subset of $A$.
\end{Density Halpern-Lauchli}
\medskip

We should point out that the assumption in the above result that the trees $T_1,...,T_d$ are homogeneous is not redundant. On the contrary,
various examples given in \cite{BV} show that it is essentially optimal.
\medskip

\noindent 1.2. \textbf{The main results.} In view of the above it is natural to ask whether the Carlson--Simpson Theorem has a density analogue
which would extend, among others, both the density Hales--Jewett Theorem and the density Halpern--L\"{a}uchli Theorem. Our goal in this
paper is to answer this question affirmatively. Specifically we show the following theorem.
\medskip

\begin{Theorem A}
For every integer $k\meg 2$ and every set $A$ of words over $k$ satisfying
\begin{equation} \label{1e4}
\limsup_{n\to\infty} \frac{|A\cap [k]^n|}{k^n} >0
\end{equation}
there exist a word $c$ over $k$ and a sequence $(w_n)$ of left variable words over $k$ such that the set
\begin{equation} \label{1e5}
\{c\}\cup \big\{c^{\con}w_0(a_0)^{\con}...^{\con}w_n(a_n):n\in\nn \text{ and } a_0,...,a_n\in [k]\big\}
\end{equation}
is contained in $A$.
\end{Theorem A}
\medskip

The proof of Theorem A follows a strategy that was already applied in a closely related context and was described in some detail in 
\cite[\S 1.3]{DKT2}. It consists of reducing Theorem A to an appropriate finite version. This finite version, which represents the combinatorial
core of Theorem A, is the content of the following theorem which is the second main result of the paper.
\medskip

\begin{Theorem B}
For every integer $k\meg 2$, every integer $m\meg 1$ and every $0<\delta\mik 1$ there exists an integer $N$ with the following property. If $L$
is a finite subset of $\nn$ of cardinality at least $N$ and $A$ is a set of words over $k$ satisfying $|A\cap [k]^n|\meg \delta k^n$ for every
$n\in L$, then there exist a word $c$ over $k$ and a finite sequence $(w_n)_{n=0}^{m-1}$ of left variable words over $k$ such that the set
\begin{equation} \label{1e6}
\{c\}\cup \big\{c^{\con}w_0(a_0)^{\con}...^{\con}w_n(a_n): n\in\{0,...,m-1\} \text{ and } a_0,...,a_n\in [k]\big\}
\end{equation}
is contained in $A$. The least integer $N$ with this property will be denoted by $\dcs(k,m,\delta)$.
\end{Theorem B}
\medskip

The main point in Theorem B is that the result is independent of the position of the finite set $L$. Its proof is based on a density increment
strategy -- a powerful method pioneered by K. F. Roth \cite{Roth} -- and yields explicit upper bounds for the numbers $\dcs(k,m,\delta)$.
These upper bounds are admittedly rather weak. They are in line, however, with several other bounds obtained recently in the area; see, e.g.,
\cite{DKT1,Gowers-MSz,Pol,RNSSK}.

Although Theorem B refers to left variable words, it can be used to obtain variable words with quite divergent structure. Specifically, given
two sequences $(p_n)$ and $(w_n)$ of variable words over $k$, we say that the sequence $(w_n)$ is of \textit{pattern} $(p_n)$ if $p_n$ is an
initial segment of $w_n$ for every $n\in\nn$. So, for instance, if $q_n=(v)$ for every $n\in\nn$, then a sequence $(w_n)$ of variable words
over $k$ is of pattern $(q_n)$ if and only if it consists of left variable words. We show the following theorem.
\medskip

\begin{Theorem C}
Let $k\in\nn$ with $k\meg 2$ and $(p_n)$ be an arbitrary sequence of variable words over $k$. Then for every set $A$ of words over $k$ satisfying
\begin{equation} \label{1e7}
\limsup_{n\to\infty} \frac{|A\cap [k]^n|}{k^n} >0
\end{equation}
there exist a word $c$ over $k$ and a sequence $(w_n)$ of variable words over $k$ of pattern $(p_n)$ such that the set
\begin{equation} \label{1e8}
\{c\}\cup \big\{c^{\con}w_0(a_0)^{\con}...^{\con}w_n(a_n):n\in\nn \text{ and } a_0,...,a_n\in [k]\big\}
\end{equation}
is contained in $A$.
\end{Theorem C}
\medskip

Of course, there is also a finite version of Theorem C in the spirit of Theorem B. This is the content of Theorem \ref{11t1} in the main text.
\medskip

\noindent 1.3. \textbf{Structure of the paper.} The paper is organized as follows. In \S 2 we set up our notation and terminology, and 
we recall some tools needed for the proof of the main results. Of particular importance is the notion of a \textit{Carlson--Simpson tree}
introduced in \S 2.5. It is the analogue, within the context of left variable words, of the notion of a combinatorial subspace.

The next four sections contain several preparatory results needed for the proof of Theorem B. This material is not only independent of the
rest of the paper but also of independent interest. In \S 3 we state and prove a ``regularity lemma'' for subsets of $[k]^{<\nn}$. The lemma
asserts that every dense subset of $[k]^{<\nn}$ is inherently pseudorandom and is proved via an energy increment strategy, an influential
method introduced by E. Szemer\'{e}di \cite{Sz2}. In the next section, \S 4, we present a partition result for Carlson--Simspon trees which
is, essentially, a variant of the classical Graham--Rothschild Theorem \cite{GR}. Finally, in \S 5 and \S 6 we develop a method of ``gluing''
a pair $x$ and $y$ of words over $k$. The method can be thought of as a natural extension of the familiar practice of concatenating $x$ and $y$.
It is encoded by what we call a \textit{convolution operation} which is introduced and studied in \S 5. Iterations of convolution operations
are studied in \S 6. We emphasize that the results in \S 6 are invoked only in \S 9. However, the material in \S 3, \S 4 and \S 5 is heavily
used and the reader is advised to gain some familiarity with the contents of these sections before reading the rest of the paper.

The next four sections are devoted to the proof of Theorem B. The results in \S 7 are independent of the rest of the argument. In particular,
this section can be read separately. The main part of the proof is contained in \S 8 and \S 9. The reader will find a detailed outline and an
exposition of the key ideas in \S 8.1 and \S 9.1. The proof of Theorem B is completed in \S 10. 

Finally, the last section of the paper contains a discussion on some consequences of Theorem B, including the proofs of Theorem A and Theorem C.


\section{Background material}

\numberwithin{equation}{section}

By $\nn=\{0,1,2,...\}$ we shall denote the natural numbers. For every integer $n\meg 1$ we set $[n]=\{1,...,n\}$. If $X$ is a nonempty finite
set, then by $\ave_{x\in X}$ we shall denote the average $\frac{1}{|X|} \sum_{x\in X}$ where, as usual, $|X|$ stands for the cardinality of $X$.
For every function $f:\nn\to\nn$ and every $\ell\in\nn$ by $f^{(\ell)}:\nn\to\nn$ we shall denote the $\ell$-th iteration of $f$ defined recursively
by the rule $f^{(0)}(n)=n$ and $f^{(\ell+1)}(n)=f\big(f^{(\ell)}(n)\big)$ for every $n\in\nn$.

Let $X$ be a nonempty (possibly infinite) set and $A$ be a subset of $X$. For every nonempty finite subset $Y$ of $X$ the \textit{density} of
$A$ in $Y$ is defined by
\begin{equation} \label{2e1}
\dens_Y(A)= \frac{|A\cap Y|}{|Y|}.
\end{equation}
If it is clear from the context to which set $Y$ we are referring (for instance, if $Y$ coincides with $X$), then we shall drop the subscript
$Y$ and we shall denote the above quantity simply by $\dens(A)$.
\medskip

\noindent 2.1. \textbf{Words.} For every $k\in\nn$ with $k\meg 2$ and every $n\in\nn$ let $[k]^n$ be the set of all sequences
of length $n$ having values in $[k]$. Precisely, $[k]^0$ contains just the empty sequence while if $n\meg 1$, then
\begin{equation} \label{2e2}
[k]^n=\big\{ (s_0,...,s_{n-1}): s_i\in [k] \text{ for every } i\in\{0,...,n-1\}\big\}.
\end{equation}
Also let
\begin{equation} \label{2e3}
[k]^{<n}=\bigcup_{\{i\in\nn:i<n\}} [k]^i.
\end{equation}
Notice, in particular, that $[k]^{<0}$ is empty. We set
\begin{equation} \label{2e4}
[k]^{<\nn}=\bigcup_{n\in\nn} [k]^n.
\end{equation}
The elements of $[k]^{<\nn}$ are called \textit{words over $k$}, or simply \textit{words} if $k$ is understood. The \textit{length}
of a word $x$ over $k$, denoted by $|x|$, is defined to be the unique natural number $n$ such that $x\in [k]^n$. For every $i\in\nn$
with $i\mik |x|$ by $x|_i$ we shall denote the word of length $i$ which is an initial segment of $x$. The concatenation of two words
$x, y$ will be denoted by $x^{\con}y$.
\medskip

\noindent 2.2. \textbf{Located words.} For every $k\in\nn$ with $k\meg 2$ and every (possibly empty) finite subset $J$ of $\nn$ by $[k]^J$
we shall denote the set of all functions from $J$ into $[k]$. An element of the set
\begin{equation} \label{2e5}
\bigcup_{J\subseteq \nn \text{ finite}} [k]^J
\end{equation}
will be called a \textit{located word over $k$}. If $x\in[k]^J$ is a located word over $k$ and $S$ is a subset of $J$, then $x|_S$ stands for the
restriction of $x$ on $S$; notice that $x|_S\in [k]^S$. Moreover, for every $x\in [k]^I$ and every $y\in [k]^J$, where $I$ and $J$ are two finite
subsets of $\nn$ with $I\cap J=\varnothing$, by $(x,y)$ we shall denote the unique element $z$ of $[k]^{I\cup J}$ satisfying $z|_I=x$ and $z|_J=y$.

Of course, every word over $k$ is a located word over $k$. Indeed, notice that
\begin{equation} \label{2e6}
[k]^{\{m\in\nn:m<n\}}=[k]^n
\end{equation}
for every $n\in\nn$.  Conversely, we may identify located words over $k$ with words over $k$ as follows. Let $J$ be a nonempty finite subset of
$\nn$. We set $j=|J|$ and we write the set $J$ in increasing order as $\{n_0<...< n_{j-1}\}$. The \textit{canonical isomorphism} associated to $J$
is the bijection $\mathrm{I}_J:[k]^j\to [k]^J$ defined by the rule
\begin{equation} \label{2e7}
\mathrm{I}_J(x)(n_i) = x(i)
\end{equation}
for every $i\in\{0,...,j-1\}$. Observing that $[k]^\varnothing= [k]^0=\{\varnothing\}$, we define the canonical isomorphism
$\mathrm{I}_{\varnothing}$ associated to the empty set to be the identity.
\medskip

\noindent 2.3. \textbf{Variable words.} Let $k,m\in\nn$ with $k\meg 2$ and $m\meg 1$, and fix a tuple $v_0,...,v_{m-1}$ of distinct letters.
An \textit{$m$-variable word over $k$} is a finite sequence having values in  $[k]\cup\{v_0,...,v_{m-1}\}$ such that: (a) for every
$i\in \{0,...,m-1\}$ the letter $v_i$ appears at least once, and (b) if $m\meg 2$, then for every $i,j\in\{0,...,m-1\}$ with $i<j$ all
occurrences of $v_i$ precede all occurrences of $v_j$. For every $m$-variable word $w$ over $k$ and every $a_0,...,a_{m-1}\in [k]$ by
$w(a_0,...,a_{m-1})$ we shall denote the unique word over $k$ obtained by substituting in $w$ all appearances of the letter $v_i$ with $a_i$
for every $i\in \{0,...,m-1\}$. A \textit{left variable word over $k$} is an $1$-variable word over $k$ whose leftmost letter is the variable $v$.
\medskip

\noindent 2.4. \textbf{Combinatorial subspaces.} Let $k,m\in\nn$ with $k\meg 2$ and $m\meg 1$. An \textit{$m$-dimensional combinatorial
subspace} of $[k]^{<\nn}$ is a set of the form
\begin{equation} \label{2e8}
V=\big\{ w(a_0,...,a_{m-1}): a_0,...,a_{m-1}\in [k]\big\}
\end{equation}
where $w$ is an $m$-variable word over $k$. The $1$-dimensional combinatorial subspaces are called \textit{combinatorial lines}.

For every $m$-dimensional combinatorial subspace $V$ of $[k]^{<\nn}$ and every $\ell\in [m]$ let $\mathrm{Subs}_{\ell}(V)$ be the set of all
$\ell$-dimensional combinatorial subspaces of $[k]^{<\nn}$ which are contained in $V$. We will need the following special case of the
Graham--Rothschild Theorem \cite{GR}.
\begin{thm} \label{2t1}
For every integer $k\meg 2$, every pair of integers $d\meg m\meg 1$ and every integer $r\meg 1$ there exists an integer $N$ with the
following property. If $n\meg N$ and $V$ is an $n$-dimensional combinatorial subspace of $[k]^{<\nn}$, then for every $r$-coloring of the set
$\mathrm{Subs}_m(V)$ there exits $W\in \mathrm{Subs}_d(V)$ such that the set $\mathrm{Subs}_m(W)$ is monochromatic. The least integer $N$ with
this property will be denoted by $\mathrm{GR}(k,d,m,r)$.
\end{thm}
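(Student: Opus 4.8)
The plan is to first dispose of the ambient subspace $V$ and reduce to the standard cube $[k]^n$. Indeed, if $V$ is defined by an $n$-variable word $w$ over $k$, then the substitution map $[k]^n\ni(a_0,\dots,a_{n-1})\mapsto w(a_0,\dots,a_{n-1})\in V$ is a bijection; for each $\ell$ it sends the $\ell$-dimensional combinatorial subspaces contained in $[k]^n$ onto those contained in $V$, and it preserves inclusion. So it suffices to find $N$ such that for every $n\meg N$ every $r$-coloring of $\mathrm{Subs}_m([k]^n)$ admits $W\in\mathrm{Subs}_d([k]^n)$ with $\mathrm{Subs}_m(W)$ monochromatic. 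This is the combinatorial-subspace case of the Graham--Rothschild theorem, which I would prove by induction on $m$, run simultaneously over all values of $k$, $d$ and $r$ (the inductive step enlarges the alphabet, so all alphabets must be treated at once).

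For the base case it is convenient to permit $m=0$, reading $\mathrm{Subs}_0$ as the collection of points. The statement then becomes: every $r$-coloring of $[k]^n$ has a monochromatic $d$-dimensional combinatorial subspace. This is the multidimensional Hales--Jewett theorem, deduced from the Hales--Jewett theorem recalled in the introduction by the standard product argument: one applies the Hales--Jewett theorem $d$ times, at the $i$-th step to the finitely many colorings of an initial block of coordinates that are induced by the possible choices on the remaining coordinates. The desired statement for $m\meg1$ then follows from the inductive step.

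For the inductive step, suppose the result holds for $m$ and let $c\colon\mathrm{Subs}_{m+1}([k]^n)\to[r]$. The idea is to encode the leading variable by means of an enlarged alphabet. Over $[k+1]$, an $m$-dimensional combinatorial subspace $z$ of $[k+1]^n$ with $z\cap[k]^n=\varnothing$ --- equivalently, whose defining variable word uses the letter $k+1$ --- corresponds canonically to an $(m+1)$-dimensional combinatorial subspace $\widehat{z}$ of $[k]^n$, obtained by promoting the occurrences of $k+1$ to a fresh leading variable; this is a bijection onto $\mathrm{Subs}_{m+1}([k]^n)$. Define $\widetilde{c}\colon\mathrm{Subs}_m([k+1]^n)\to[r+1]$ by $\widetilde{c}(z)=c(\widehat{z})$ if $z\cap[k]^n=\varnothing$ and $\widetilde{c}(z)=r+1$ otherwise. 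Applying the inductive hypothesis over the alphabet $[k+1]$, with a suitably large dimension $D$, produces $\mathcal{W}\in\mathrm{Subs}_D([k+1]^n)$ on which $\widetilde{c}$ is constant, with value $\gamma$ say. Freezing $D-m$ of the variable directions of $\mathcal{W}$, one of them to the letter $k+1$, exhibits an $m$-dimensional subspace of $\mathcal{W}$ disjoint from $[k]^n$, so $\gamma\in[r]$; freezing instead $D-m$ variable directions all to the letter $1$ forces the defining word of $\mathcal{W}$ itself to use $k+1$. One finally converts those occurrences of $k+1$ into a variable direction and freezes enough of the remaining directions to be left with a $d$-dimensional combinatorial subspace $W$ of $[k]^n$ such that every element of $\mathrm{Subs}_{m+1}(W)$ equals $\widehat{z}$ for some $z\in\mathrm{Subs}_m(\mathcal{W})$; hence $c$ is identically $\gamma$ on $\mathrm{Subs}_{m+1}(W)$.

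I expect the last part of the inductive step to be the main obstacle: one must guarantee that the $(m+1)$-dimensional subspaces of the extracted $W$ are exactly the images under $z\mapsto\widehat{z}$ of the $m$-dimensional subspaces of $\mathcal{W}$, which requires careful control of how the various variable directions interleave. In practice it is cleanest to arrange the occurrences of the letter $k+1$ within a single block of coordinates --- that is, to combine the alphabet enlargement with a block decomposition of $[k]^n$ --- before invoking the inductive hypothesis. Modulo this, everything is routine, and the recursion yields an upper bound for $\mathrm{GR}(k,d,m,r)$ of tower type, as is standard for the Graham--Rothschild numbers.
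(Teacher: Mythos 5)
The paper offers no proof of Theorem \ref{2t1}: it is quoted as a known special case of the Graham--Rothschild theorem, with references to the literature for expositions and a remark that primitive recursive bounds are due to Shelah. So there is nothing in the paper to compare your argument with; what follows is an assessment of your sketch on its own terms. Your reduction to $V=[k]^n$ and your base case (multidimensional Hales--Jewett by iterating the one-dimensional theorem over blocks of coordinates) are fine, and induction on $m$ via an alphabet extension is indeed the skeleton of standard proofs.

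The gap is in the inductive step, and it is not the cosmetic ``interleaving'' issue you flag but a structural one. First, under the paper's convention an $(m+1)$-variable word has \emph{all} occurrences of $v_i$ preceding all occurrences of $v_j$ for $i<j$, so promoting the occurrences of $k+1$ in an arbitrary $z\in\mathrm{Subs}_m([k+1]^n)$ with $z\cap[k]^n=\varnothing$ to a leading variable does not in general yield an element of $\mathrm{Subs}_{m+1}([k]^n)$; the map $z\mapsto\widehat z$ is only defined on those $z$ whose occurrences of $k+1$ all precede their wildcard sets, so it is not the bijection you claim. Second, and more seriously, the final extraction fails. Let $P$ be the set of positions where the defining word of $\mathcal W$ carries the letter $k+1$. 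Every $z\in\mathrm{Subs}_m(\mathcal W)$ has the constant $k+1$ on all of $P$, hence every $\widehat z$ contains $P$ in the wildcard set of its \emph{leading} variable. Consequently, any $(m+1)$-dimensional subspace of your extracted $W$ which assigns the direction coming from $P$ a constant of $[k]$, or assigns it to a non-leading variable, is not of the form $\widehat z$ for any $z$, and its color is not controlled by the monochromaticity of $\widetilde c$ on $\mathrm{Subs}_m(\mathcal W)$. So ``every element of $\mathrm{Subs}_{m+1}(W)$ equals $\widehat z$ for some $z$'' is false for the $W$ you construct, no matter how the occurrences of $k+1$ are blocked. Repairing this is the actual content of the Graham--Rothschild induction step: one needs many distinguished blocks, one application of the inductive hypothesis (or of Hales--Jewett) per block to control the $(m+1)$-subspaces whose leading wildcard set starts in that block, and a pigeonhole/focusing argument to amalgamate; a single application of the inductive hypothesis over $[k+1]$ does not suffice. (A minor remark: the naive recursion gives Ackermann-type rather than tower-type bounds; the primitive recursive bounds the paper cites require Shelah's argument.)
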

Detailed expositions as well as infinite extensions of Theorem \ref{2t1} can be found in various places in the literature; see, e.g.,
\cite{BBH,C,FK3,McCbook,PV}. Also we remark that there exist primitive recursive upper bounds for the numbers $\mathrm{GR}(k,d,m,r)$
which are due to S. Shelah \cite{Sh1}.
\medskip

\noindent 2.5. \textbf{Carlson--Simpson trees.} We are about to introduce a family of combinatorial objects which will be of particular
importance throughout the paper.
\begin{defn} \label{2d2}
Let $k\in\nn$ with $k\meg 2$. A \emph{Carlson--Simpson tree} of $[k]^{<\nn}$ is a set of the form
\begin{equation} \label{2e9}
W=\{c\} \cup \big\{ c^{\con}w_0(a_0)^{\con}...^{\con}w_n(a_n): n\in\{0,...,m-1\} \text{ and } a_0,...,a_n\in [k]\big\}
\end{equation}
where $c$ is a word over $k$ and $(w_n)_{n=0}^{m-1}$ is a nonempty finite sequence of left variable words over $k$.
\end{defn}
It is easy to see that the sequence $(c,w_0,...,w_{m-1})$ that generates a Carlson--Simpson tree $W$ via formula \eqref{2e9} is unique. This
unique sequence will be called the \textit{generating} sequence of $W$. The corresponding natural number $m$ will be called the \emph{dimension}
of $W$ and will be denoted by $\dim(W)$. The $1$-dimensional Carlson--Simpson trees will be called \textit{Carlson--Simpson lines}.

Let $W$ be an $m$-dimensional Carlson--Simpson tree of $[k]^{<\nn}$ and $(c,w_0,...,w_{m-1})$ be its generating sequence.
The \textit{$0$-level} $W(0)$ of $W$ is defined by
\begin{equation} \label{2e10}
W(0)=\{c\}.
\end{equation}
Observe that $W(0)$ is contained in $[k]^{\ell_0}$ where $\ell_0$ is the length of $c$. Moreover, for every $n\in [m]$ the
\textit{$n$-level} $W(n)$ of $W$ is defined by
\begin{equation} \label{2e11}
W(n)=\big\{ c^{\con}w_0(a_0)^{\con}...^{\con}w_{n-1}(a_{n-1}): a_0,...,a_{n-1}\in [k]\big\}.
\end{equation}
Notice that $W(n)$ is an $n$-dimensional combinatorial subspace of $[k]^{<\nn}$ and is contained in $[k]^{\ell_n}$ where $\ell_n$
is the sum of the lengths of $c, w_0,..., w_{n-1}$. The set $\{\ell_0<...<\ell_m\}$ will be called the \textit{level set}
of $W$ and will be denoted by $L(W)$.

For every $m$-dimensional Carlson--Simpson tree $W$ of $[k]^{<\nn}$ and every $\ell\in [m]$ by $\mathrm{Subtr}_{\ell}(W)$ we shall
denote the set of all $\ell$-dimensional Carlson--Simpson trees of $[k]^{<\nn}$ which are contained in $W$. An element of
$\mathrm{Subtr}_{\ell}(W)$ will be called an \textit{$\ell$-dimensional Carlson--Simpson subtree} of $W$, or simply
\textit{Carlson--Simpson subtree} of $W$ if the dimension $\ell$ is understood.

The archetypical example of a Carlson--Simpson tree of $[k]^{<\nn}$ of dimension $m$ is the set $[k]^{<m+1}$. In fact, every
Carlson--Simpson tree of dimension $m$ can be thought of as a ``copy'' of $[k]^{<m+1}$ inside $[k]^{<\nn}$. Specifically, let
$W$ be an $m$-dimensional Carlson--Simpson tree of $[k]^{<\nn}$ and $(c,w_0,...,w_{m-1})$ be its generating sequence.
The \textit{canonical isomorphism} associated to $W$ is the bijection $\mathrm{I}_W:[k]^{<m+1}\to W$ defined by
$\mathrm{I}_W(\varnothing)=c$ and
\begin{equation} \label{2e12}
\mathrm{I}_W\big((a_0,...,a_{n-1})\big)= c^{\con}w_0(a_0)^{\con}...^{\con}w_{n-1}(a_{n-1})
\end{equation}
for every $n\in [m]$ and every $(a_0,...,a_{n-1})\in [k]^n$. The canonical isomorphism $\mathrm{I}_W$ preserves all structural properties
one is interested in while working in the category of Carlson--Simpson trees. For instance, if $\ell\in [m]$ and $V$ is a Carlson--Simpson
subtree of $[k]^{<m+1}$ of dimension $\ell$, then its image $\mathrm{I}_W(V)$ under the canonical isomorphism is an $\ell$-dimensional 
Carlson--Simpson subtree of $W$. Thus, for most practical purposes, we may identify $W$ with $[k]^{<m+1}$ via the canonical isomorphism
$\mathrm{I}_W$.

More generally, let $W$ and $U$ be two Carlson--Simpson trees of $[k]^{<\nn}$ of the same dimension. The \textit{canonical isomorphism}
associated to the pair $W, U$ is the bijection $\mathrm{I}_{W,U}:W\to U$ defined by the rule
\begin{equation} \label{2e13}
\mathrm{I}_{W, U}(t) = (\mathrm{I}_U\circ \mathrm{I}_W^{-1})(t)
\end{equation}
where $\mathrm{I}_W$ and $\mathrm{I}_U$ are the canonical isomorphisms associated to $W$ and $U$. Of course, the map $\mathrm{I}_{W,U}$
will be used to transfer information from $W$ to $U$ and vice versa.

Finally, for every $m$-dimensional Carlson--Simpson tree $W$ of $[k]^{<\nn}$ and every $k'\in\{2,...,k\}$ we define the
\textit{$k'$-restriction} $W\upharpoonright k'$ of $W$ to be the set
\begin{equation} \label{2e14}
\{c\} \cup \big\{ c^{\con}w_0(a'_0)^{\con}...^{\con}w_n(a'_n): n\in\{0,...,m-1\} \text{ and } a'_0,...,a'_n\in [k']\big\}
\end{equation}
where $(c,w_0,...,w_{m-1})$ stands for the generating sequence of $W$. Notice that the canonical isomorphism of $W$ maps $[k']^{<m+1}$
onto $W\upharpoonright k'$. Therefore, $W\upharpoonright k'$ can be naturally identified as a Carlson--Simpson tree of $[k']^{<\nn}$.
\medskip

\noindent 2.6. \textbf{Insensitive sets.} Let $k\in\nn$ with $k\meg 2$ and $x,y$ be two words over $k$. Also let $i,j\in [k]$ with $i\neq j$.
We say that $x$ and $y$ are \textit{$(i,j)$-equivalent} if: (a) $x$ and $y$ have common length, and (b) if $n$ is the common length
of $x$ and $y$, then for every $s\in [k]\setminus \{i,j\}$ and every $r\in \nn$ with $r<n$ we have $x(r)=s$ if and only if $y(r)=s$.

If $n\in\nn$ and $A$ is a subset of $[k]^n$, then $A$ is said to be \emph{$(i,j)$-insensitive} if for every $x\in A$ and every $y\in [k]^n$ if $x$
and $y$ are $(i,j)$-equivalent, then $y\in A$. The notion of an $(i,j)$-insensitive set was introduced by S. Shelah \cite{Sh1} and highlighted in
the polymath proof \cite{Pol} of the density Hales--Jewett Theorem. It can be naturally extended to subsets of $[k]^{<\nn}$ as follows.
\begin{defn} \label{2d3}
Let $k\in\nn$ with $k\meg 2$ and $i,j\in [k]$ with $i\neq j$. Also let $A$ be a subset of $[k]^{<\nn}$. We say that $A$ is
\emph{$(i,j)$-insensitive} if for every $n\in\nn$ the set $A\cap [k]^n$ is $(i,j)$-insensitive.  If $W$ is Carlson--Simpson tree of $[k]^{<\nn}$,
then we say that $A$ is \emph{$(i,j)$-insensitive in} $W$ if $\mathrm{I}^{-1}_W(A\cap W)$ is an $(i,j)$-insensitive subset of $[k]^{<\nn}$
where $\mathrm{I}_W$ is the canonical isomorphism associated to $W$.
\end{defn}
It is easy to see that the family of all $(i,j)$-insensitive subsets of $[k]^{<\nn}$ is closed under intersections, unions and complements.
The same remark, of course, applies to the family of all $(i,j)$-insensitive sets in a Carlson--Simpson tree $W$ of $[k]^{<\nn}$.
\medskip

\noindent 2.7. \textbf{Furstenberg--Weiss measures.} Let $k,m\in\nn$ with $k\meg 2$ and $m\meg 1$. The \textit{Furstenberg--Weiss measure}
$\mathrm{d}_{\mathrm{FW}}^m$ associated to $[k]^{<m+1}$ is the probability measure on $[k]^{<\nn}$ defined by
\begin{equation} \label{2e15}
\mathrm{d}_{\mathrm{FW}}^m(A)= \ave_{n\in\{0,...,m\}} \dens_{[k]^n}(A).
\end{equation}
This class of measures was introduced by H. Furstenberg and B. Weiss \cite{FW} and has proven to be useful in various problems
in Ramsey Theory (see, e.g., \cite{DKT1,PST}). We will need the following two variants.
\begin{defn} \label{2d4}
Let $k\in\nn$ with $k\meg 2$.
\begin{enumerate}
\item[(i)] For every Carlson--Simpson tree $W$ of $[k]^{<\nn}$ the \emph{Furstenberg--Weiss measure} $\mathrm{d}_{\mathrm{FW}}^W$
associated to $W$ is the probability measure on $[k]^{<\nn}$ defined by
\begin{equation} \label{2e16}
\mathrm{d}_{\mathrm{FW}}^W(A)= \ave_{n\in\{0,...,\dim(W)\}} \dens_{W(n)}(A).
\end{equation}
\item[(ii)] For every nonempty finite subset $L$ of $\nn$ the \emph{generalized Furstenberg--Weiss measure} $\mathrm{d}_L$ associated to $L$
is the probability measure on $[k]^{<\nn}$ defined by
\begin{equation} \label{2e17}
\mathrm{d}_L(A)= \ave_{n\in L} \dens_{[k]^n}(A).
\end{equation}
\end{enumerate}
\end{defn}
It is, of course, clear that if $L$ is an initial interval of $\nn$ of cardinality $\ell\meg 2$, then the generalized Furstenberg--Weiss measure
$\mathrm{d}_L$ associated to $L$ coincides with the Furstenberg--Weiss measure $\mathrm{d}_{\mathrm{FW}}^{\ell-1}$.
\medskip

\noindent 2.8. \textbf{Probabilistic preliminaries.} We record, for future use, three probabilistic facts.
The first one is an immediate consequence of Markov's inequality.
\begin{lem} \label{2l5}
Let $(\Omega,\Sigma,\mu)$ be a probability space and $0<\delta\mik 1$. Also let $(A_i)_{i=1}^n$ be a finite family of measurable events in
$(\Omega,\Sigma,\mu)$ such that $\mu(A_i)\meg\delta$ for every $i\in [n]$. Then, setting $L_\omega=\{i\in [n]:\omega\in A_i\}$ for every
$\omega\in\Omega$, we have
\begin{equation} \label{2e18}
\mu\big( \{ \omega: |L_\omega|\meg (\delta/2)n \}\big) \meg \delta/2.
\end{equation}
\end{lem}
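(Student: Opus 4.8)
The plan is to run the standard first-moment argument on the ``occupation count'' $f\colon\Omega\to\nn$ defined by $f(\omega)=|L_\omega|=\sum_{i=1}^{n}\mathbf{1}_{A_i}(\omega)$; the asserted inequality is then just a reverse form of Markov's inequality. First I would compute the expectation of $f$ by linearity: since $\ave[\mathbf{1}_{A_i}]=\mu(A_i)\meg\delta$ for every $i\in[n]$, we obtain $\ave[f]=\sum_{i=1}^{n}\mu(A_i)\meg\delta n$. The only other input needed is the trivial pointwise bound $0\mik f(\omega)\mik n$, valid for all $\omega\in\Omega$.

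Next I would set $B=\{\omega:|L_\omega|\meg(\delta/2)n\}$ and split the integral of $f$ according to whether $\omega\in B$ or not, bounding $f$ by $n$ on $B$ and by $(\delta/2)n$ on $\Omega\setminus B$:
\[ \delta n\mik\ave[f]=\int_{B}f\,\mathrm{d}\mu+\int_{\Omega\setminus B}f\,\mathrm{d}\mu\mik n\,\mu(B)+\frac{\delta}{2}\,n\big(1-\mu(B)\big)\mik n\,\mu(B)+\frac{\delta}{2}\,n. \]
Cancelling $n$ and rearranging gives $\mu(B)\meg\delta-\delta/2=\delta/2$, which is exactly \eqref{2e18}. (Equivalently one may apply Markov's inequality directly to the nonnegative random variable $n-f$, whose mean is at most $(1-\delta)n$; this actually yields the marginally stronger bound $\mu(B)\meg\delta/(2-\delta)$, but $\delta/2$ is all that is needed in the sequel.)

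There is essentially no obstacle here: the argument is a couple of lines once the counting function $f$ is introduced, and the only point requiring any care is the elementary bookkeeping of constants — in particular, checking that bounding $f$ by $(\delta/2)n$ off $B$ is legitimate (it is, since $\omega\notin B$ means precisely $|L_\omega|<(\delta/2)n$) and that the final arithmetic leaves the advertised $\delta/2$ on the right-hand side.
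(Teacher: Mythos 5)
Your proof is correct and is exactly the argument the paper has in mind: the paper sets $Z=\frac{1}{n}\sum_{i=1}^n\mathbf{1}_{A_i}$, notes $\ave[Z]\meg\delta$, and leaves the reverse-Markov step implicit, which is precisely the splitting of $\ave[f]$ over $B$ and its complement that you carry out in detail. Nothing further is needed.
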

\begin{proof}
For every $i\in [n]$ let $\mathbf{1}_{A_i}$ be the indicator function of the event $A_i$ and set $Z=\frac{1}{n}\sum_{i=1}^n \mathbf{1}_{A_i}$.
Then $\ave[Z]\meg\delta$ and the result follows.
\end{proof}
To state the second result we recall that if $(\Omega,\Sigma,\mu)$ is a probability space and $Y\in\Sigma$ with $\mu(Y)>0$, then $\mu_Y$ stands
for the conditional probability measure of $\mu$ relative to $Y$ defined by
\begin{equation} \label{2e19}
\mu_Y(A)=\frac{\mu(A\cap Y)}{\mu(Y)}
\end{equation}
for every $A\in\Sigma$.
\begin{lem} \label{2l6}
Let $(\Omega,\Sigma,\mu)$ be a probability space and $0<\lambda,\beta,\ee\mik 1$. Let $A$ and $B$ be two measurable events in $(\Omega,\Sigma,\mu)$
with $A\subseteq B$ and such that $\mu(A)\meg\lambda\mu(B)$ and $\mu(B)\meg\beta$. Suppose that $\mathcal{Q}=(Q_i)_{i=1}^n$ is a finite family
of pairwise disjoint measurable events in $(\Omega,\Sigma,\mu)$ such that $\mu(B\setminus \cup \mathcal{Q})\mik\ee\beta/2$ and $\mu(Q_i)>0$
for every $i\in [n]$. Then, setting
\begin{equation} \label{2e20}
I=\big\{i\in [n]:\mu_{Q_i}(A)\meg(\lambda-\ee)\mu_{Q_i}(B) \text{ and } \mu_{Q_i}(B)\meg\beta\ee/4\big\},
\end{equation}
we have
\begin{equation} \label{2e21}
\sum_{i\in I}\mu(Q_i)\meg\beta\ee/4.
\end{equation}
In particular, if $\mu(Q_i)=\mu(Q_j)$ for every $i,j\in[n]$, then $|I|\meg (\beta\ee/4)n$.
\end{lem}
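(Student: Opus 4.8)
\noindent The plan is a standard ``bad set removal'' averaging argument over the family $\mathcal{Q}$, in the spirit of Lemma \ref{2l5}. The facts I would use repeatedly are: $\mu(A\cap Q_i)=\mu_{Q_i}(A)\,\mu(Q_i)$ and $\mu(B\cap Q_i)=\mu_{Q_i}(B)\,\mu(Q_i)$; the pairwise disjointness of the $Q_i$, which gives $\sum_{i=1}^{n}\mu(Q_i)=\mu(\cup\mathcal{Q})\mik 1$ and $\sum_{i=1}^{n}\mu(A\cap Q_i)=\mu(A\cap\cup\mathcal{Q})$; and the inclusion $A\subseteq B$, which forces $\mu(A\cap Q_i)\mik\mu(B\cap Q_i)$ for each $i$ and $\mu(A\setminus\cup\mathcal{Q})\mik\mu(B\setminus\cup\mathcal{Q})\mik\ee\beta/2$.

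First I would split off the two families of ``bad'' indices, writing $I=I'\cap I''$ where $I'=\{i\in[n]:\mu_{Q_i}(B)\meg\beta\ee/4\}$ and $I''=\{i\in[n]:\mu_{Q_i}(A)\meg(\lambda-\ee)\mu_{Q_i}(B)\}$. Indices outside $I'$ carry little mass of $B$, hence of $A$: since $\mu_{Q_i}(B)<\beta\ee/4$ for $i\notin I'$, we get $\sum_{i\notin I'}\mu(B\cap Q_i)<(\beta\ee/4)\sum_{i\notin I'}\mu(Q_i)\mik\beta\ee/4$. Indices in $I'\setminus I''$ carry little mass of $A$ relative to $B$: for such $i$ one has $\mu(A\cap Q_i)<(\lambda-\ee)\mu(B\cap Q_i)$, so when $\lambda>\ee$ summing gives $\sum_{i\in I'\setminus I''}\mu(A\cap Q_i)<(\lambda-\ee)\mu(B)$.

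In the main case $\lambda>\ee$ I would then expand, using the disjoint decomposition $[n]=([n]\setminus I')\cup(I'\setminus I'')\cup I$,
\[
\mu(A)\;\mik\;\mu(B\setminus\cup\mathcal{Q})+\sum_{i\notin I'}\mu(B\cap Q_i)+\sum_{i\in I'\setminus I''}\mu(A\cap Q_i)+\sum_{i\in I}\mu(Q_i),
\]
and bound the first three summands by $\ee\beta/2$, $\beta\ee/4$ and $(\lambda-\ee)\mu(B)$. Combined with $\mu(A)\meg\lambda\mu(B)$, the term $(\lambda-\ee)\mu(B)$ cancels, leaving $\ee\,\mu(B)\mik(3/4)\beta\ee+\sum_{i\in I}\mu(Q_i)$; since $\mu(B)\meg\beta$ this yields $\sum_{i\in I}\mu(Q_i)\meg\beta\ee/4$. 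For the ``in particular'' clause, if $\mu(Q_i)=q$ for all $i$ then disjointness forces $q\mik 1/n$, so $\beta\ee/4\mik\sum_{i\in I}\mu(Q_i)=|I|\,q\mik|I|/n$ and hence $|I|\meg(\beta\ee/4)n$.

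The one point needing care --- the ``main obstacle'', modest as it is --- is the degenerate regime $\ee\meg\lambda$. There $(\lambda-\ee)\mu_{Q_i}(B)\mik 0\mik\mu_{Q_i}(A)$ for every $i$, so $I''=[n]$ and $I=I'$, and the cancellation above is unavailable since $(\lambda-\ee)\mu(B)$ then has the wrong sign. In this subcase I would instead run the identical expansion on $\mu(B)$ rather than $\mu(A)$, getting $\mu(B)\mik\mu(B\setminus\cup\mathcal{Q})+\sum_{i\notin I'}\mu(B\cap Q_i)+\sum_{i\in I}\mu(Q_i)\mik(3/4)\beta\ee+\sum_{i\in I}\mu(Q_i)$; then $\ee\mik 1$ and $\mu(B)\meg\beta$ give $\sum_{i\in I}\mu(Q_i)\meg\beta-(3/4)\beta\ee\meg\beta/4\meg\beta\ee/4$, which finishes the proof.
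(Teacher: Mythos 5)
Your proof is correct and is essentially the paper's argument in unnormalized form: the paper divides everything by $\mu(B)$, sets $a_i=\mu_{Q_i}(A)/\mu_{Q_i}(B)$, $b_i=\mu_{Q_i}(B)$, $c_i=\mu(Q_i)/\mu(B)$, and runs the same three-way split of $[n]$ (indices with $b_i$ small, indices with $a_i$ small, and $I$) to get $\sum_{i\in I}c_i\meg\ee/4$. Your explicit treatment of the degenerate regime $\ee\meg\lambda$ is a welcome extra, since the paper's terse final step tacitly uses $\lambda>\ee$ when bounding the contribution of the indices with $a_i<\lambda-\ee$.
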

\begin{proof}
Notice, first, that $\mu(A\setminus \cup\mathcal{Q})\mik\ee\beta/2$. This is easily seen to imply that
\begin{equation} \label{2e22}
\sum_{i=1}^n\frac{\mu(A\cap Q_i)}{\mu(B)} \meg \lambda - \ee/2.
\end{equation}
For every $i\in [n]$ let $a_i=\mu_{Q_i}(A)/\mu_{Q_i}(B)$, $b_i=\mu_{Q_i}(B)$ and $c_i=\mu(Q_i)/\mu(B)$ with the convention that $a_i=0$
if $\mu(B\cap Q_i)=0$. Then inequality \eqref{2e22} can be reformulated as
\begin{equation} \label{2e23}
\sum_{i=1}^n a_i b_i c_i \meg \lambda - \ee/2.
\end{equation}
Notice that
\begin{equation} \label{2e24}
\sum_{i=1}^n b_i c_i \mik 1 \text{ and } \sum_{i=1}^n c_i\mik \frac{1}{\beta}.
\end{equation}
Also observe that $I=\{i\in [n]: a_i\meg\lambda-\ee \text{ and } b_i\meg \beta\ee/4\}$. Since $0\mik a_i,b_i\mik 1$ for every $i\in [n]$,
combining \eqref{2e23}, \eqref{2e24} and the previous remarks, we see that $\sum_{i\in I} c_i\meg \ee/4$ and the proof is completed.
\end{proof}
The final result of this subsection is the following.
\begin{lem} \label{2l7}
Let $0< \theta< \ee\mik 1$ and $n\in\nn$ with $n\meg (\ee^2-\theta^2)^{-1}$. If $(A_i)_{i=1}^{n}$ is a family of measurable events in a probability
space $(\Omega,\Sigma,\mu)$ satisfying $\mu(A_i)\meg \ee$ for every $i\in [n]$, then there exist $i,j\in [n]$ with $i\neq j$ such that
$\mu(A_i\cap A_j) \meg \theta^2$.
\end{lem}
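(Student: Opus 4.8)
The plan is to run the standard second-moment (pigeonhole) argument. First I would introduce the counting random variable $Z=\sum_{i=1}^{n}\mathbf{1}_{A_i}$ on $(\Omega,\Sigma,\mu)$, so that by linearity $\ave[Z]=\sum_{i=1}^{n}\mu(A_i)\meg n\ee$. The one analytic input is the inequality $\ave[Z]^2\mik\ave[Z^2]$ (equivalently, the nonnegativity of the variance, or Cauchy--Schwarz against the constant $1$), which yields $\ave[Z^2]\meg n^2\ee^2$.

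Next I would expand the second moment combinatorially: $\ave[Z^2]=\sum_{i=1}^{n}\sum_{j=1}^{n}\mu(A_i\cap A_j)$, and split off the diagonal to get
\[ \ave[Z^2]=\sum_{i=1}^{n}\mu(A_i)+\sum_{i\neq j}\mu(A_i\cap A_j)\mik n+\sum_{i\neq j}\mu(A_i\cap A_j), \]
using $\mu(A_i)\mik 1$. Then I argue by contradiction: assume $\mu(A_i\cap A_j)<\theta^2$ for every pair $i\neq j$. One should first observe that the hypothesis $0<\theta<\ee\mik 1$ forces $\ee^2-\theta^2<1$, hence $(\ee^2-\theta^2)^{-1}>1$ and $n\meg 2$; in particular there is at least one off-diagonal pair, so the bound $\sum_{i\neq j}\mu(A_i\cap A_j)<n(n-1)\theta^2<n^2\theta^2$ holds strictly. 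Combining the two displays gives $n^2\ee^2<n+n^2\theta^2$, i.e. $n^2(\ee^2-\theta^2)<n$, i.e. $n<(\ee^2-\theta^2)^{-1}$, contradicting the assumption $n\meg(\ee^2-\theta^2)^{-1}$. Therefore some pair satisfies $\mu(A_i\cap A_j)\meg\theta^2$, as required.

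There is no genuine obstacle here: once the inequality $\ave[Z]^2\mik\ave[Z^2]$ is written down, everything is elementary arithmetic. The only point that needs a moment's attention is verifying $n\meg 2$ so that the contradiction is not vacuous and the strict inequality in the off-diagonal estimate is legitimate — this is precisely where the strict hypothesis $\theta<\ee$ (rather than $\theta\mik\ee$) is used.
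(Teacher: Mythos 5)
Your proof is correct and is essentially the same second‑moment argument as the paper's: the paper writes $\mathbb{E}[X(X-1)]\meg \ee n(\ee n-1)$ by convexity and then averages over the $n(n-1)$ off‑diagonal pairs, which is just your computation ($\ave[Z^2]\meg\ave[Z]^2$ with the diagonal split off) packaged directly rather than by contradiction. Your explicit check that $n\meg 2$ and the clean reduction to $n<(\ee^2-\theta^2)^{-1}$ are fine.
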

\begin{proof}
We set $X=\sum_{i=1}^n \mathbf{1}_{A_i}$ where $\mathbf{1}_{A_i}$ is the indicator function of the event $A_i$ for every $i\in [n]$. 
Then $\mathbb{E}[X]\meg \ee n$ so, by convexity,
\begin{equation} \label{2e25}
\sum_{i\in [n]} \sum_{j\in [n]\setminus \{i\}} \mu(A_i\cap A_j) = \mathbb{E}[X(X-1)] \meg \ee n (\ee n-1).
\end{equation}
Therefore, there exist $i,j\in [n]$ with $i\neq j$ such that $\mu(A_i\cap A_j)\meg \theta^2$.
\end{proof}


\section{A regularity lemma for subsets of $[k]^{<\nn}$}

\numberwithin{equation}{section}

\noindent 3.1. \textbf{Statement of the main result.} Our goal in this section is to prove a ``regularity lemma'' for subsets
of $[k]^{<\nn}$. Roughly speaking, the lemma asserts that if $n$ is large enough and $A$ is a subset of $[k]^n$, then we may find a set
of coordinates $I\subseteq \{m\in\nn:m<n\}$ of preassigned cardinality such that the set $A$, viewed as a subset of the product
$[k]^I\times [k]^{\{m\in\nn:m<n\}\setminus I}$, behaves like a randomly chosen set.

To put things in a proper perspective we need, first, to determine the kind of randomness we are referring to. This is the content of the
following definition.
\begin{defn} \label{3d1}
Let $k\in\nn$ with $k\meg 2$ and $\mathcal{F}$ be a family of subsets of $[k]^{<\nn}$. Also let $0<\ee\mik 1$ and $L$ be a nonempty finite
subset of $\nn$. The family $\mathcal{F}$ will be called \emph{$(\ee, L)$-regular} provided that for every $A\in\mathcal{F}$, every $n\in L$,
every (possibly empty) subset $I$ of $\{l\in L: l<n\}$ and every $y\in [k]^{I}$ we have
\begin{equation} \label{3e1}
| \dens\big(\{w\in [k]^{\{m\in\nn:m<n\}\setminus I}: (y,w)\in A\cap [k]^{n}\}\big)-\dens(A\cap [k]^n) |\mik\ee.
\end{equation}
\end{defn}
Notice that for every $y\in [k]^I$ the set $\{w\in [k]^{\{m\in\nn:m<n\}\setminus I}: (y,w)\in A\cap [k]^{n}\}$ is just the section $A\cap [k]^n$
at $y$. So what Definition \ref{3d1} guarantees is that for every $n\in L$ and every $I\subseteq \{l\in L: l<n\}$ the density of the sections
of $A\cap [k]^n$ along elements of $[k]^I$ are essentially equal to the density of $A\cap [k]^n$.

We are now ready to state the main result of this section.
\begin{lem} \label{3l2}
For every $0<\ee\mik 1$ and every $k,\ell,q\in\nn$ with $k\meg 2$ and $\ell,q\meg 1$ there exists an integer $n$ with the following property.
If $N$ is a finite subset of $\nn$ with $|N|\meg n$ and $\mathcal{F}$ is a family of subsets of $[k]^{<\nn}$ with $|\mathcal{F}|=q$, then there
exists a subset $L$ of $N$ with $|L|=\ell$ such that $\mathcal{F}$ is $(\ee, L)$-regular. The least integer $n$ with this property will be
denoted by $\reg(k,\ell,q,\ee)$.
\end{lem}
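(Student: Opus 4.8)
The plan is to prove Lemma~\ref{3l2} by an energy increment argument, in the spirit of Szemer\'{e}di's regularity method, run alongside an iterative selection of the levels of $L$.

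Before the main argument I would record two simple reductions, both proved by averaging. First, for a fixed finite $L\subseteq\nn$ the family $\mathcal{F}$ is $(\ee,L)$-regular if and only if the estimate in \eqref{3e1} holds merely for the finest choice $I=\{l\in L:l<n\}$: if $I\subseteq I'\subseteq\{l\in L:l<n\}$ and $y\in[k]^I$, then writing every $w\in[k]^{\{m\in\nn:m<n\}\setminus I}$ as a pair $(z,u)$ with $z\in[k]^{I'\setminus I}$ one sees that the density of the section of $A\cap[k]^n$ at $y$ is the average over $z\in[k]^{I'\setminus I}$ of the densities of the sections at the points $(y,z)\in[k]^{I'}$, so that control of all the finest sections passes to all coarser ones. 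In particular it suffices to handle, for each $A\in\mathcal{F}$ and each $n\in L$, only the at most $k^{|L|-1}$ sections of $A\cap[k]^n$ along $[k]^{\{l\in L:\,l<n\}}$; that their number is bounded in terms of $|L|$ rather than of $n$ is exactly what makes pointwise control over all of them affordable. Second, and dually, if $D$ is an auxiliary finite set of ``conditioning coordinates'' and, for every $A\in\mathcal{F}$ and $n\in L$, every section of $A\cap[k]^n$ is close to the density of $A\cap[k]^n$ inside the $D$-cell containing it, then $\mathcal{F}$ is $(\ee,L)$-regular, again by averaging the cell-wise estimates over the cells; this is what will allow us to replace the sets of $\mathcal{F}$ by conditioned versions without weakening the conclusion.

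The main loop builds $L=\{l_1<\dots<l_\ell\}\subseteq N$ one level at a time, maintaining a large pool $P\subseteq N$ of remaining candidate levels, a finite set $D$ of conditioning coordinates, and the energy $\mathcal{E}=\sum_{A\in\mathcal{F}}\ave_{m\in P}\ave_{y}\,\dens(A_y^m)^2\in[0,q]$, where $A_y^m$ is the section of $A\cap[k]^m$ at a located word $y\in[k]^{\{i\in D:\,i<m\}}$. After an initial pigeonholing of the levels of $P$ we may assume that, for every $A\in\mathcal{F}$, the density of $A$ at level $m$ and the density of $A$ inside each $D$-cell at level $m$ are nearly constant over $m\in P$, so that ``good'' simply means ``all the relevant sections are within $\ee$ of these reference densities''. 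Given $l_1,\dots,l_{j-1}$, if a definite fraction of the levels of $P$ are good for every $A\in\mathcal{F}$ with respect to $\{l_1,\dots,l_{j-1}\}$, cell by cell, we take the least such level as $l_j$, shrink $P$ past it while retaining a constant fraction of it, and continue, stopping when $j=\ell$. Otherwise, for a definite fraction of $m\in P$ some section of some $A\in\mathcal{F}$, inside some $D$-cell, is off by more than $\ee$; pigeonholing over $A$, over the at most $k^{\ell-1}$ possible offending sections, over the cell, and over the sign of the deviation, we isolate a single set $A^{*}$, a single section witness, and a still-large sub-pool of levels on which that section is consistently biased, by at least $\ee$, from its reference density. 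Adjoining the coordinates involved to $D$ is a genuine refinement of the cell partition, and one shows it raises $\mathcal{E}$ by at least a fixed amount $\delta=\delta(k,\ell,\ee)>0$; we then re-pigeonhole the conditional densities on the sub-pool and continue. Since $0\mik\mathcal{E}\mik q$, this refinement is triggered at most $q/\delta$ times, between two refinements at most $\ell$ levels are added to $L$, and each single step (a pigeonholing, a choice of level, or a refinement) shrinks $P$ by an explicit factor depending only on $k,\ell,q,\ee$, while $|D|$ stays bounded in terms of the same parameters; tracking these losses, any $N$ whose cardinality is at least the resulting explicit (admittedly very large) threshold forces the construction to terminate successfully, and this threshold is the sought value $\reg(k,\ell,q,\ee)$.

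I expect the crux to be the energy increment itself. The difficulty is that irregularity is detected at levels $m$ that vary from instance to instance, so one must first, through the chain of pigeonholes above, manufacture a situation in which a single section of a single set is biased across a positive fraction of a \emph{large block} of levels; only then can this bias be recorded as an honest, level-independent gain of the bounded quantity $\mathcal{E}$, and keeping $\mathcal{E}$ non-decreasing across the re-pigeonholing and re-normalisation steps (not merely under pure refinement of the cell partition) is the delicate accounting point. A second-moment estimate of the type of Lemma~\ref{2l7}, applied to the family of ``bad'' events indexed by the levels of the current pool, is the natural device for producing the overlap between two levels that makes the required gain visible.
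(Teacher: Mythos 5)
Your strategy is the same in spirit as the paper's (an energy increment on the quantity $\ave_{y}\dens(A_y)^2$, using the variance identity of Fact \ref{3f4}), but the organisation is genuinely different, and the difference matters. The paper never runs a ``detect irregularity $\to$ refine $\to$ repeat'' loop and never averages an energy over a pool of candidate levels. Instead it works top--down, one level at a time (Sublemma \ref{3sbl7}): for the single level $\max(N)$ it lays out in advance a long nested chain $J_0\subseteq I_0\cup J_0\subseteq J_1\subseteq I_1\cup J_1\subseteq\cdots$ of coordinate sets, observes that the energies along this chain are monotone and bounded by $1$, and pigeonholes (simultaneously over the $q$ sets of $\mathcal{F}$) to find one block $I_{p_0}$ whose adjunction increases no energy by more than $\ee^4/16$; Corollary \ref{3c6} then converts the small increment into pointwise control of \emph{all} sections along $I_{p_0}$ (the ``all $y$'' being extracted by Chebyshev together with the hypothesis $\ee<k^{-|I|}$), and a subset-averaging identical to your first reduction handles all $I\subseteq I_{p_0}$. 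Iterating this $\ell$ times on nested intervals and taking $L$ to be the set of their maxima finishes the proof. Each level gets its own fresh chain, so there is no global conditioning partition to maintain and no energy averaged over a shrinking pool.

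This sidesteps the two points where your plan needs real care. The first you flag yourself: your $\mathcal{E}$ is averaged over $m\in P$, and restricting $P$ can destroy the gain unless you first pigeonhole all $q\cdot k^{|D|}$ conditional densities to be nearly constant over the pool to precision well below $\delta$; this is doable but must actually be carried out. The second you do not flag, and as written it would break the argument: when the refinement step adjoins the offending section coordinates -- which are a subset of $\{l_1,\dots,l_{j-1}\}$ -- to $D$, the section coordinates are no longer disjoint from the conditioning coordinates. Both your second reduction (cell-wise control averages back to control against $\dens(A\cap[k]^n)$ only because the section at $y$ averages over \emph{all} cells, which requires $I\cap D=\varnothing$) and the detection mechanism itself ($\mathrm{e}_{I\cup D}-\mathrm{e}_D=0$ once $I\subseteq D$) then fail, so ``continuing'' to select $l_j$ with the same partial list $\{l_1,\dots,l_{j-1}\}$ can certify levels that are not in fact regular. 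The standard repair is to discard the partial $L$ after each refinement and rebuild it from the new pool (keeping the enlarged $D$, which now lies entirely below the pool, and the banked energy); your bookkeeping ``between two refinements at most $\ell$ levels are added to $L$'' is consistent with this reading, but you should say it explicitly, since with that restart, and with the pigeonholing of conditional densities made quantitative, your argument does go through with bounds of the same general shape as \eqref{3e19}--\eqref{3e20}.
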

The proof of Lemma \ref{3l2} will be given in \S 3.2. It is based on an energy increment strategy, a powerful method introduced by E. Szemer\'{e}di
in his proof of the celebrated regularity lemma \cite{Sz2}. The argument is, of course, effective and yields explicit upper bounds for
the numbers $\reg(k,\ell,q,\ee)$.
\medskip

\noindent 3.2. \textbf{Proof of Lemma \ref{3l2}.} We begin with the following definition which is the most important ingredient of the proof.
\begin{defn} \label{3d3}
Let $k,n\in\nn$ with $k\meg 2$. Also let $I$ be a (possibly empty) subset of $\{m\in\nn:m<n\}$. For every subset $A$ of $[k]^n$ we define the
\emph{energy} of $A$ with respect to $I$ to be the quantity
\begin{equation} \label{3e2}
\mathrm{e}_I(A)=\ave_{y\in [k]^I}\dens(A_y)^2
\end{equation}
where $A_y=\{w\in [k]^{\{m\in\nn:m<n\}\setminus I}: (y, w)\in A\}$ is the section of $A$ at $y$.
\end{defn}
We will isolate some basic properties of the energy which are needed for the proof. To this end, we need to introduce some pieces of notation.
Specifically, let $k,n\in\nn$ with $k\meg 2$ and $I, J$ be two subsets of $\{m\in\nn:m<n\}$ with $I\cap J=\varnothing$. We set
$M=\{m\in\nn: m<n\}\setminus (I\cup J)$. If we are given a subset $A$ of $[k]^n$, then we may view the set $A$ as a subset of the product
$[k]^I\times [k]^J\times [k]^M$ and so we may define the section $A_{(y,z)}=\{v\in [k]^M: (y,z,v)\in A\}$ for every $(y,z)\in [k]^I\times [k]^J$.
Notice that
\begin{equation} \label{3e3}
\dens(A_y)=\ave_{z\in [k]^{J}} \dens(A_{(y,z)}) \text{ and } \dens(A_z)=\ave_{y\in [k]^{I}}\dens(A_{(y,z)})
\end{equation}
for every $y\in [k]^I$ and every $z\in [k]^J$. We have the following.
\begin{fact} \label{3f4}
Let $k,n\in\nn$ with $k\meg 2$. Also let $I$ be a subset of $\{m\in\nn: m<n\}$. Then for every subset $A$ of $[k]^n$ we have that
$\mathrm{e}_I(A)\mik 1$. Moreover, if $J$ is a subset of $\{m\in\nn:m<n\}$ with $I\cap J=\varnothing$, then
\begin{equation} \label{3e4}
\mathrm{e}_{I\cup J}(A)-\mathrm{e}_J(A) = \ave_{z\in [k]^J} \ave_{y\in [k]^I}\Big( \dens(A_{(y,z)})- \ave_{y\in [k]^I} \dens(A_{(y,z)})\Big)^2.
\end{equation}
In particular, $\mathrm{e}_J(A)\mik \mathrm{e}_{I\cup J}(A)$.
\end{fact}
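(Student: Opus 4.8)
The plan is to establish the two assertions of Fact~\ref{3f4} separately; both follow quickly from the definitions together with the averaging identities in \eqref{3e3} and the elementary variance identity $\ave[X^2]-(\ave[X])^2=\ave\big[(X-\ave[X])^2\big]$ on a finite uniform probability space.

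For the inequality $\mathrm{e}_I(A)\mik 1$ I would simply note that each section $A_y$ is a subset of $[k]^{\{m\in\nn:m<n\}\setminus I}$, so $0\mik\dens(A_y)\mik 1$ and hence $\dens(A_y)^2\mik 1$ for every $y\in [k]^I$; averaging over $y$ as in \eqref{3e2} gives the bound. For the energy-increment identity \eqref{3e4}, the key observation is that, once we write $[k]^{I\cup J}$ as $[k]^I\times[k]^J$ and put $M=\{m\in\nn:m<n\}\setminus(I\cup J)$, the $(I\cup J)$-section of $A$ at a point $(y,z)$ is exactly the set $A_{(y,z)}=\{v\in[k]^M:(y,z,v)\in A\}$ appearing in \eqref{3e3}. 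Consequently \eqref{3e2} yields
\[ \mathrm{e}_{I\cup J}(A)=\ave_{z\in[k]^J}\ave_{y\in[k]^I}\dens(A_{(y,z)})^2, \]
while \eqref{3e2} together with the second identity in \eqref{3e3} gives
\[ \mathrm{e}_J(A)=\ave_{z\in[k]^J}\Big(\ave_{y\in[k]^I}\dens(A_{(y,z)})\Big)^2. \]
Subtracting these two expressions and applying, for each fixed $z\in[k]^J$, the variance identity to the function $y\mapsto\dens(A_{(y,z)})$ on the uniform probability space $[k]^I$ produces precisely the right-hand side of \eqref{3e4}.

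Finally, the ``in particular'' clause is immediate, since the right-hand side of \eqref{3e4} is an average of squares and is therefore nonnegative. I do not anticipate any genuine obstacle: the only point requiring a moment's attention is the bookkeeping identification of the $(I\cup J)$-section of $A$ with $A_{(y,z)}$, i.e. the compatibility of the coordinate decomposition $\{m\in\nn:m<n\}=I\sqcup J\sqcup M$ with the notation fixed before the statement. Conceptually, \eqref{3e4} is nothing but the one-step law of total variance, or equivalently the Pythagorean identity for the conditional expectation operator, so no new idea is needed.
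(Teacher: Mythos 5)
Your proposal is correct and follows essentially the same route as the paper: the bound $\mathrm{e}_I(A)\mik 1$ is immediate from $0\mik\dens(A_y)\mik 1$, and the identity \eqref{3e4} is obtained by writing $\mathrm{e}_{I\cup J}(A)=\ave_{z\in[k]^J}\ave_{y\in[k]^I}\dens(A_{(y,z)})^2$ and $\mathrm{e}_J(A)=\ave_{z\in[k]^J}\big(\ave_{y\in[k]^I}\dens(A_{(y,z)})\big)^2$ via \eqref{3e3} and subtracting, which is exactly the variance computation in the paper's proof. No gaps.
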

\begin{proof}
The fact that $\mathrm{e}_I(A)\mik 1$ follows immediately by Definition \ref{3d3}. Observe that
\begin{equation} \label{3e5}
\mathrm{e}_{I\cup J}(A) = \ave_{z\in [k]^J}\Big( \ave_{y\in [k]^I}\dens(A_{(y,z)})^2\Big)
\end{equation}
and
\begin{equation} \label{3e6}
\mathrm{e}_J(A)\stackrel{(\ref{3e3})}{=} \ave_{z\in [k]^J} \Big( \ave_{y\in [k]^I} \dens(A_{(y,z)}) \Big)^2.
\end{equation}
Combining (\ref{3e5}) and (\ref{3e6}) the result follows.
\end{proof}
The first step towards the proof of Lemma \ref{3l2} is the following.
\begin{sbl} \label{3sbl5}
Let $k,n\in\nn$ with $k\meg 2$. Also let $I$ and $J$ be two subsets of $\{m\in\nn: m<n\}$ with $I\cap J=\varnothing$.
Finally let $A$ be a subset of $[k]^n$ and $0<\ee<k^{-|I|}$. If $\mathrm{e}_{I\cup J}(A)-\mathrm{e}_J(A)\mik \ee^4$, then
\begin{equation} \label{3e7}
\dens\Big( \big\{z\in [k]^J: |\dens(A_{(y,z)})-\dens(A_z)|\mik\ee \text{ for every } y\in [k]^I \big\}\Big)\meg 1-\ee.
\end{equation}
\end{sbl}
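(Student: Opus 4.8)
The plan is to deduce \eqref{3e7} from the identity in Fact \ref{3f4} by one application of Markov's inequality followed by an elementary pointwise estimate. First I would rewrite the hypothesis: by \eqref{3e3} we have $\ave_{y\in[k]^I}\dens(A_{(y,z)})=\dens(A_z)$ for every $z\in[k]^J$, so the assumption $\mathrm{e}_{I\cup J}(A)-\mathrm{e}_J(A)\mik\ee^4$ together with \eqref{3e4} becomes
\[
\ave_{z\in [k]^J}\, g(z)\mik\ee^4, \qquad \text{where } g(z)=\ave_{y\in [k]^I}\big(\dens(A_{(y,z)})-\dens(A_z)\big)^2 .
\]

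Next, I would apply Markov's inequality to the nonnegative function $g$ on $[k]^J$: the set $G=\{z\in[k]^J:\ g(z)\mik\ee^3\}$ satisfies $\dens(G)\meg 1-\ee$. It then remains to check that $G$ is contained in the set appearing in \eqref{3e7}, i.e. that $|\dens(A_{(y,z)})-\dens(A_z)|\mik\ee$ for every $z\in G$ and every $y\in [k]^I$. So fix such a $z$ and $y$. Since $g(z)$ is an average over $[k]^I$ of nonnegative quantities, any single summand is at most $k^{|I|}$ times the average, hence
\[
\big(\dens(A_{(y,z)})-\dens(A_z)\big)^2\mik k^{|I|}\, g(z)\mik k^{|I|}\ee^3 .
\]
This is the only place where the hypothesis $\ee<k^{-|I|}$ is used: it gives $k^{|I|}\ee<1$, hence $k^{|I|}\ee^3<\ee^2$, and therefore $|\dens(A_{(y,z)})-\dens(A_z)|<\ee$, which is more than enough to conclude \eqref{3e7}.

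There is no genuine obstacle in this argument; the only subtlety is the bookkeeping of exponents, so that after losing the factor $k^{|I|}$ in passing from the average over $y\in[k]^I$ to a fixed $y$, the assumption $\ee<k^{-|I|}$ still yields a bound strictly below $\ee^2$. The threshold $\ee^3$ in the Markov step is chosen precisely for this reason; any split leaving room for the factor $k^{|I|}<\ee^{-1}$ would work equally well.
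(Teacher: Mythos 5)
Your proof is correct and follows essentially the same route as the paper: both rewrite the hypothesis via the variance identity \eqref{3e4}, apply Markov's inequality with threshold $\ee^3$ to the function $z\mapsto\mathrm{Var}(f_z)$, and then upgrade to a pointwise bound over all $y\in[k]^I$ using $\ee<k^{-|I|}$. The only cosmetic difference is in the last step, where the paper invokes Chebyshev's inequality and observes that the exceptional set of $y$'s has density below $k^{-|I|}$ and is therefore empty, while you bound each summand directly by $k^{|I|}$ times the average; the two arguments are equivalent.
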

\begin{proof}
We set $Y=[k]^I$ and $Z=[k]^J$. For every $z\in Z$ let $f_z:Y\to[0,1]$ be the random variable defined by $f_z(y)=\dens(A_{(y,z)})$. Let
$E(f_z)=\ave_{y\in Y}f_z(y)$ be the expected value of $f_z$ and $\mathrm{Var}(f_z)=E(f_z^2)-E(f_z)^2$ be its variance. Notice that
$E(f_z)=\dens(A_z)$. By (\ref{3e4}), we see that $\ave_{z\in Z}\mathrm{Var}(f_z) = \mathrm{e}_{I\cup J}(A)-\mathrm{e}_J(A)$.
Hence, by our assumptions, we have
\begin{equation} \label{3e8}
\ave_{z\in Z}\mathrm{Var}(f_z) \mik \ee^4
\end{equation}
and so, by Markov's inequality,
\begin{equation} \label{3e9}
\dens\big(\{z\in Z:\mathrm{Var}(f_z)\mik\ee^3\}\big) \meg 1-\ee.
\end{equation}
Fix $z_0\in Z$ with $\mathrm{Var}(f_{z_0})\mik\ee^3$. By Chebyshev's inequality, we have
\begin{equation} \label{3e10}
\dens\big(\{y\in Y:|f_{z_0}(y)- E(f_{z_0})|\mik \ee\}\big) \meg 1-\ee
\end{equation}
and since $\ee<|Y|^{-1}$ we get that $|f_{z_0}(y)- E(f_{z_0})|\mik \ee$ for every $y\in Y$. This is equivalent to say that
$|\dens(A_{(y,z_0)})-\dens(A_{z_0})|\mik\ee$ for every $y\in [k]^I$ and the proof is completed.
\end{proof}
Sublemma \ref{3sbl5} will be used in the following form.
\begin{cor} \label{3c6}
Let $k,n\in\nn$ with $k\meg 2$. Also let $I$ and $J$ be two subsets of $\{m\in\nn:m<n\}$ with $I\cap J=\varnothing$.
Finally let $A$ be a subset of $[k]^n$ and $0<\ee<k^{-|I|}$. If $\mathrm{e}_{I\cup J}(A)-\mathrm{e}_J(A)\mik \ee^4/16$, then
$|\dens(A_y)-\dens(A)|\mik\ee$ for every $y\in [k]^I$.
\end{cor}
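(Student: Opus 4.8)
The plan is to obtain Corollary \ref{3c6} from Sublemma \ref{3sbl5} by a routine averaging argument, the only twist being a harmless rescaling of $\ee$ to absorb the constant $16$. First I would apply Sublemma \ref{3sbl5} with $\ee$ replaced by $\ee/2$: since $\ee<k^{-|I|}$ we certainly have $\ee/2<k^{-|I|}$, and $(\ee/2)^4=\ee^4/16$, so the hypothesis $\mathrm{e}_{I\cup J}(A)-\mathrm{e}_J(A)\mik\ee^4/16$ is precisely what the sublemma requires. This produces a set $G\subseteq [k]^J$ with $\dens(G)\meg 1-\ee/2$ such that $|\dens(A_{(y,z)})-\dens(A_z)|\mik\ee/2$ for every $z\in G$ and every $y\in [k]^I$.

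Next I would rewrite the two densities appearing in the conclusion as averages over $[k]^J$. With $M=\{m\in\nn:m<n\}\setminus(I\cup J)$ and $A$ viewed inside $[k]^I\times[k]^J\times[k]^M$, the first identity in \eqref{3e3} gives $\dens(A_y)=\ave_{z\in [k]^J}\dens(A_{(y,z)})$, while averaging the second identity over $y\in [k]^I$ gives $\dens(A)=\ave_{y\in [k]^I}\dens(A_y)=\ave_{z\in [k]^J}\dens(A_z)$. Subtracting, for a fixed $y\in [k]^I$ we get
\[
\dens(A_y)-\dens(A)=\ave_{z\in[k]^J}\big(\dens(A_{(y,z)})-\dens(A_z)\big).
\]

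Finally I would split this average according to whether $z\in G$ or not. For $z\in G$ the integrand is at most $\ee/2$ in absolute value by the first step; for $z\notin G$ it is trivially at most $1$ in absolute value since both densities lie in $[0,1]$, and the set of such $z$ has density at most $\ee/2$. Hence $|\dens(A_y)-\dens(A)|\mik \ee/2+\ee/2=\ee$ for every $y\in[k]^I$, which is the assertion. I do not anticipate any genuine difficulty here: the argument is essentially a one-line consequence of the sublemma together with the Fubini-type identity \eqref{3e3}, and the only thing to watch is that the rescaled parameter $\ee/2$ still meets the strict bound $\ee/2<k^{-|I|}$ demanded by Sublemma \ref{3sbl5}.
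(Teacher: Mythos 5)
Your proposal is correct and follows essentially the same route as the paper: apply Sublemma \ref{3sbl5} with the rescaled parameter $\ee/2$ (so that $(\ee/2)^4=\ee^4/16$), express $\dens(A_y)-\dens(A)$ as an average over $z\in[k]^J$ via \eqref{3e3}, and split that average over the good set and its complement to get $\ee/2+\ee/2=\ee$. No issues.
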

\begin{proof}
We set $\ee_0=\ee/2$ and
\begin{equation} \label{3e11}
Z_0=\big\{ z\in [k]^J: |\dens(A_{(y,z )})-\dens(A_z)| \mik\ee_0 \text{ for every } y\in [k]^I\}.
\end{equation}
By Sublemma \ref{3sbl5}, we have  $\dens([k]^J\setminus Z_0)\mik\ee_0$. Hence, for every $y\in [k]^I$,
\begin{eqnarray} \label{3e12}
|\dens(A_y)-\dens(A)| & \stackrel{(\ref{3e3})}{\mik} & \ave_{z\in [k]^J} |\dens(A_{(y,z)})-\dens(A_z)| \\
& \mik & \ave_{z\in Z_0} |\dens(A_{(y,z)})-\dens(A_z)| + \ee_0\nonumber \\
& \stackrel{(\ref{3e11})}{\mik} & \ee_0 +\ee_0 = \ee \nonumber
\end{eqnarray}
as desired.
\end{proof}
We proceed to the second step of the proof of Lemma \ref{3l2}.
\begin{sbl} \label{3sbl7}
Let $k,m,q\in\nn$ with $k\meg 2$ and $q\meg1$ and $0<\ee< k^{-m}$. Also let $N$ be a finite subset of $\nn$ with
$|N|\meg \big(q\lfloor 16\ee^{-4}\rfloor+1\big)m +1$ and $\mathcal{F}$ be a family of subsets of $[k]^{\max(N)}$ with $|\mathcal{F}|=q$.
Then, setting $N'=N\setminus \{\max(N)\}$, there exists a subinterval $M$ of $N'$ (i.e., $M$ is of the form $J\cap N'$ for some interval
$J$ of $\nn$) with $|M|=m$ and such that for every $A\in\mathcal{F}$, every subset $I$ of $M$ and every $y\in k^{I}$ we have
$|\dens(A_y)-\dens(A)|\mik \ee$.
\end{sbl}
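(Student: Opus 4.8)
The plan is to run an energy increment argument along a partition of $N'$ into consecutive blocks, and then extract the desired block by pigeonhole and conclude with Corollary \ref{3c6}. First I would write $N'$ in increasing order and set $s=q\lfloor 16\ee^{-4}\rfloor+1$. Since $|N'|=|N|-1\meg sm$, I can carve out of the initial segment of $N'$ pairwise disjoint subintervals $M_1,\dots,M_s$ of $N'$ (with $\max M_i<\min M_{i+1}$), each of cardinality $m$; for $i\in\{0,\dots,s\}$ put $J_i=M_1\cup\cdots\cup M_i$, so that $J_0=\varnothing$. Every $J_i$ is contained in $\{m'\in\nn:m'<\max(N)\}$, because all elements of $N'$ lie strictly below $\max(N)$, so the energies $\mathrm{e}_{J_i}(A)$ of the members $A$ of $\mathcal{F}$ are defined.

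The core of the argument is the monotonicity and boundedness of the energy provided by Fact \ref{3f4}. For each fixed $A\in\mathcal{F}$ the sequence $\mathrm{e}_{J_0}(A)\mik\mathrm{e}_{J_1}(A)\mik\cdots\mik\mathrm{e}_{J_s}(A)$ is nondecreasing and lies in $[0,1]$, so its successive increments $\mathrm{e}_{J_i}(A)-\mathrm{e}_{J_{i-1}}(A)$ are nonnegative and add up to at most $1$. A one-line pigeonhole then shows that for each fixed $A$ at most $\lfloor 16\ee^{-4}\rfloor$ of the indices $i$ can satisfy $\mathrm{e}_{J_i}(A)-\mathrm{e}_{J_{i-1}}(A)>\ee^4/16$ (otherwise the increments would sum to strictly more than $1$). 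Summing over the $q$ sets in $\mathcal{F}$, at most $q\lfloor 16\ee^{-4}\rfloor=s-1$ indices are ``bad'' for some member of $\mathcal{F}$, hence there is an index $i_0\in\{1,\dots,s\}$ that is good for all of them: $\mathrm{e}_{J_{i_0}}(A)-\mathrm{e}_{J_{i_0-1}}(A)\mik\ee^4/16$ for every $A\in\mathcal{F}$.

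Now I would set $M=M_{i_0}$ and $J=J_{i_0-1}$ and observe that this one increment estimate controls every subset $I$ of $M$ at once: since $I\cup J\subseteq M\cup J=J_{i_0}$ and $M\setminus I$ is disjoint from $I\cup J$, a second application of the monotonicity in Fact \ref{3f4} gives $\mathrm{e}_{I\cup J}(A)-\mathrm{e}_J(A)\mik\mathrm{e}_{J_{i_0}}(A)-\mathrm{e}_J(A)\mik\ee^4/16$. Since $|I|\mik|M|=m$ and $k\meg 2$ we have $\ee<k^{-m}\mik k^{-|I|}$, so Corollary \ref{3c6} applied to the pair of disjoint sets $I,J$ yields $|\dens(A_y)-\dens(A)|\mik\ee$ for every $y\in[k]^I$. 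As $A\in\mathcal{F}$, $I\subseteq M$ and $y\in[k]^I$ were arbitrary, and $M$ is a subinterval of $N'$ by construction, this is exactly the assertion.

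The only delicate point — and the one that pins down the constant $q\lfloor 16\ee^{-4}\rfloor+1$ in the hypothesis — is the interplay between the double counting and the two successive uses of monotonicity: one must make sure that recording ``goodness'' of the block $M_{i_0}$ as a single increment statement for $J_{i_0}$ over $J_{i_0-1}$ already controls \emph{all} sub-blocks $I\subseteq M_{i_0}$ simultaneously (this is the role of the second monotonicity step), and that the threshold $\ee^4/16$ is precisely the one required by Corollary \ref{3c6}. Once these are aligned there is no further obstruction.
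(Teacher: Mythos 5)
Your proof is correct and follows essentially the same energy-increment-plus-pigeonhole strategy as the paper, invoking Fact \ref{3f4} and Corollary \ref{3c6} in exactly the same roles. The only (immaterial) differences are that the paper takes the conditioning sets to be full initial segments $\{j\in\nn: j<n_{p\cdot m}\}$ rather than unions of the previous blocks, and handles an arbitrary $I\subseteq M$ by averaging over $[k]^{M\setminus I}$ rather than by a second appeal to monotonicity; both variants work because Corollary \ref{3c6} only requires $I$ and $J$ to be disjoint.
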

\begin{proof}
Clearly we may assume that $m\meg 1$. We set $r_0=q\lfloor 16\ee^{-4}\rfloor+1$. Write the first $r_0\cdot m$ elements
of $N'$ in increasing order as $\{n_0<n_1<\ldots<n_{r_0\cdot m-1}\}$. For every $p\in \{0,...,r_0-1\}$ let
\begin{equation} \label{3e13}
I_p=\big\{ n_{p\cdot m+ j}: j\in\{0,...,m-1\}\big\} \ \text{ and} \ J_p=\{j\in\nn: j<n_{p\cdot m}\}.
\end{equation}
Notice that $\max(J_p)<\min(I_p)<\max(N)$. Moreover, $I_p\cup J_p\subseteq J_{p+1}$ if $p\mik r_0-2$. Hence, by Fact \ref{3f4}, we have
\begin{equation} \label{3e14}
\mathrm{e}_{J_p}(A) \mik \mathrm{e}_{I_p\cup J_p}(A) \mik \mathrm{e}_{J_{p+1}}(A)\mik \mathrm{e}_{I_{p+1}\cup J_{p+1}}(A)\mik 1
\end{equation}
for every $p\in\{0,... ,r_0-2\}$ and every $A\in\mathcal{F}$. For every $A\in\mathcal{F}$ let
\begin{equation} \label{3e15}
P_A=\big\{ p\in \{0,...,r_0-1\}: \mathrm{e}_{I_p\cup J_p}(A)-\mathrm{e}_{J_p}(A)>\ee^4/16\big\}.
\end{equation}
The previous discussion implies that the set $P_A$ has cardinality at most $\lfloor 16\ee^{-4}\rfloor$. Therefore, we may select
$p_0\in \{0,...,r_0-1\}$ such that $p_0\notin P_A$ for every $A\in\mathcal{F}$; in particular,
$\mathrm{e}_{I_{p_0}\cup J_{p_0}}(A)-\mathrm{e}_{J_{p_0}}(A)\mik\ee^4/16$.
Since $I_{p_0}\cap J_{p_0}=\varnothing$, by Corollary \ref{3c6}, we conclude that
\begin{equation} \label{3e16}
|\dens(A_y)-\dens(A)|\mik \ee
\end{equation}
for every $y\in [k]^{I_{p_0}}$ and every $A\in \mathcal{F}$.

We set $M=I_{p_0}$. We will show that with this choice all requirements of the sublemma are satisfied. Indeed, notice that $M$ is
a subinterval of $N'$ with $|M|=m$. We fix $A\in\mathcal{F}$. Also let $I\subseteq M$ and $y\in [k]^I$ be arbitrary. Observe that for
every $z\in [k]^{M\setminus I}$ we have $(y,z)\in [k]^{I_{p_0}}$. Hence,
\begin{eqnarray} \label{3e17}
|\dens(A_y)-\dens(A)| & = & | \ave_{z\in [k]^{M\setminus I}}\dens(A_{(y,z)})-\dens(A)| \\
& \mik & \ave_{z\in [k]^{M\setminus I}} |\dens(A_{(y,z)})-\dens(A)| \stackrel{\eqref{3e16}}{\mik} \ee \nonumber
\end{eqnarray}
and the proof is completed.
\end{proof}
We are in the position to complete the proof of Lemma \ref{3l2}. To this end, we need to introduce some numerical invariants.
Specifically, for every $0<\ee\mik 1$ and every $k,\ell,q\in\nn$ with $k\meg 2$ and $\ell,q\meg 1$ let
\begin{equation} \label{3e18}
\rho =\rho(k,\ell,q,\ee)= \min\{\ee,k^{-\ell}/2\}
\end{equation}
and define $F_{k,\ell,q,\ee}:\nn\to\nn$ by the rule
\begin{equation} \label{3e19}
F_{k,\ell,q,\ee}(m)= (q\lfloor 16\rho^{-4}\rfloor+1)m+1.
\end{equation}
\begin{proof}[Proof of Lemma \ref{3l2}]
We will show that
\begin{equation} \label{3e20}
\reg(k,\ell,q,\ee)\mik F_{k,\ell,q,\ee}^{(\ell)}(0)
\end{equation}
for every $0<\ee\mik 1$ and every $k,\ell,q\in\nn$ with $k,\ell\meg 2$ and $q\meg 1$. Indeed, let $N$ be a finite subset of $\nn$ with
$|N|\meg F_{k,\ell,q,\ee}^{(\ell)}(0)$ and fix a family $\mathcal{F}$ of subsets of $[k]^{<\nn}$ with $|\mathcal{F}|=q$.
We select a subset $M_0$ of $N$ with $|M_0|=F_{k,\ell,q,\ee}^{(\ell)}(0)$. By repeated applications of Sublemma \ref{3sbl7}, we may
construct a family $\{M_1, ..., M_{\ell-1}\}$ of finite subsets of $M_0$ such that for every $i\in [\ell-1]$
\begin{enumerate}
\item[(a)] $|M_i|=F_{k,\ell,q,\ee}^{(\ell-i)}(0)$,
\item[(b)] $M_i$ is a subinterval of $M_{i-1}\setminus \{\max(M_{i-1})\}$ and
\item[(c)] for every $A\in\mathcal{F}$, every subset $I$ of $M_i$ and every $y\in k^{I}$ we have
\begin{equation} \label{3e21}
|\dens\big( \{w\in [k]^{C}: (y,w)\in A\cap [k]^{\max(M_{i-i})}\}\big)-\dens (A\cap [k]^{\max(M_{i-1})})|\mik \ee
\end{equation}
where $C=\{m\in\nn:m<\max(M_{i-1})\}\setminus I$.
\end{enumerate}
We set $L=\{\max(M_{\ell-1})< ... <\max(M_0)\big\}$. Using properties (b) and (c) it is easy to check that the family $\mathcal{F}$ is
$(\ee,L)$-regular, as desired.
\end{proof}


\section{A variant of the Graham--Rothschild Theorem for left variable words}

\numberwithin{equation}{section}

Recall that for every Carlson--Simpson tree $V$ of $[k]^{<\nn}$ and every $\ell\in[\dim(V)]$ by $\mathrm{Subtr}_{\ell}(V)$ we denote the set
of all $\ell$-dimensional Carlson--Simpson subtrees of $V$. This section is devoted to the proof of the following partition result.
\begin{thm} \label{4t1}
For every integer $k\meg 2$, every pair of integers $d\meg m\meg 1$ and every integer $r\meg 1$ there exists an integer $N$ with the following
property. If $n\meg N$ and $W$ is an $n$-dimensional Carlson--Simpson tree of $[k]^{<\nn}$, then for every $r$-coloring of the set
$\mathrm{Subtr}_m(W)$ there exists $U\in\mathrm{Subtr}_d(W)$ such that the set $\mathrm{Subtr}_m(U)$ is monochromatic. The least integer
$N$ with this property will be denoted by $\mathrm{CS}(k,d,m,r)$.
\end{thm}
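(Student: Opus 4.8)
The plan is to derive Theorem~\ref{4t1} from the classical Graham--Rothschild Theorem (Theorem~\ref{2t1}) by encoding Carlson--Simpson subtrees of a Carlson--Simpson tree as combinatorial subspaces over a suitably enlarged alphabet. The key observation is that a Carlson--Simpson tree $W$ of dimension $n$ is, via its canonical isomorphism $\mathrm{I}_W$, a faithful copy of $[k]^{<n+1}$, so it suffices to prove the statement for $W=[k]^{<n+1}$ itself. Now an element of $[k]^{<n+1}$ is a word of length at most $n$; I would introduce a fresh symbol, say a ``blank'' $*$, and identify $[k]^{<n+1}$ with a subset of $([k]\cup\{*\})^{n}$ by padding each word on the right with $*$'s (so the word $(a_0,\dots,a_{j-1})$ becomes $(a_0,\dots,a_{j-1},*,\dots,*)$). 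Under this identification, an $\ell$-dimensional Carlson--Simpson subtree of $[k]^{<n+1}$ corresponds exactly to a certain restricted kind of $\ell$-dimensional combinatorial subspace of $([k+1])^n$ — namely one whose generating variable word has the variables $v_0,\dots,v_{\ell-1}$ appearing in ``left-to-right blocks'' and with the additional constraint governing how the blank interacts with the variables (the blank region must be a terminal segment once a variable is instantiated to $*$, reflecting the ``left variable word'' structure).

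The main work, then, is to set up this dictionary precisely and check that it is a bijection between $\mathrm{Subtr}_\ell$ of a Carlson--Simpson tree and an appropriate subfamily $\mathcal{S}_\ell$ of $\mathrm{Subs}_\ell$ over the alphabet of size $k+1$, compatibly with containment: if $U\in\mathrm{Subtr}_d(W)$ corresponds to the subspace $\widehat U$, then $\mathrm{Subtr}_m(U)$ corresponds precisely to the members of $\mathcal{S}_m$ contained in $\widehat U$. Granting this, I would argue as follows. Given $k$, $d\meg m\meg 1$ and $r\meg 1$, set $N=\mathrm{GR}(k+1,d',m',r)$ for appropriate $d',m'$ recording the relevant dimensions after the encoding (one must be slightly careful, since a $d$-dimensional Carlson--Simpson tree involves $d+1$ ``levels'' and the blank adds one effective coordinate; the honest bookkeeping is to note that $[k]^{<n+1}$ sits inside $[k+1]^n$ and that Carlson--Simpson subtrees of dimension $\ell$ map to combinatorial subspaces of dimension $\ell$, so one takes $N$ a bit larger than $\mathrm{GR}(k+1,d,m,r)$ to absorb the padding). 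Given an $r$-coloring $\chi$ of $\mathrm{Subtr}_m(W)$ with $W$ an $n$-dimensional Carlson--Simpson tree, $n\meg N$, transport $\chi$ via the canonical isomorphism and the encoding to a partial $r$-coloring of $\mathrm{Subs}_m([k+1]^n)$, defined on the subfamily $\mathcal{S}_m$; extend it arbitrarily (say by a fixed color) on the remaining $m$-dimensional subspaces. Apply Theorem~\ref{2t1} to obtain $W'\in\mathrm{Subs}_d([k+1]^n)$ with $\mathrm{Subs}_m(W')$ monochromatic.

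The one genuine obstacle is that the subspace $W'$ produced by Graham--Rothschild need not lie in the ``good'' family $\mathcal{S}_d$ — i.e. it need not be the encoding of a Carlson--Simpson subtree — so a naive application does not close the argument. The standard fix, which I would adopt, is to arrange that $\mathcal{S}_m$ is itself the trace on a distinguished combinatorial subspace of a copy of the full cube, or more robustly to \emph{iterate}: first use Graham--Rothschild (or rather its self-refining ``partition regularity'' form, available in the infinite setting or by taking $n$ large enough in several stages) to pass to a combinatorial subspace on which the ``pattern'' of every $m$-dimensional subspace is already of the required restricted shape, and only then apply the monochromatic version. Concretely, one precomposes with a color that records, for each $W\in\mathrm{Subtr}_m$, not just $\chi(W)$ but also which abstract ``type'' of padded subspace it is; making that auxiliary coloring monochromatic forces the ambient $d$-dimensional object to have all its $m$-subspaces of the same type, and a short combinatorial check (using that the left-variable-word condition is preserved under passing to Carlson--Simpson subtrees) shows this type must be the ``Carlson--Simpson'' type. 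Then the genuine $\chi$-coloring is constant on $\mathrm{Subtr}_m(U)$ for the corresponding $U\in\mathrm{Subtr}_d(W)$, which is exactly the conclusion. I expect the routine-but-delicate part to be writing the encoding and verifying containment-compatibility and type-preservation; the conceptual content is entirely in reducing to Theorem~\ref{2t1}.
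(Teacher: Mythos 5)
There is a genuine gap, and it sits exactly at the point you flag as ``the one genuine obstacle'': your proposed fix does not work, and the encoding itself is already problematic. First, the padded image of an $\ell$-dimensional Carlson--Simpson subtree is not an $\ell$-dimensional combinatorial subspace of $[k+1]^n$: the former has $1+k+\cdots+k^{\ell}$ elements while the latter has $(k+1)^{\ell}$, and these disagree for $\ell\meg 2$ (even for $\ell=1$ the point sets differ, since substituting $*$ into the variable of a left variable word $w_0$ yields the padded word $c^{\con}w_0(*)$, not the padded $c$, unless $w_0$ consists only of variables). So at best you have an injection of $\subtr_{\ell}$ into some family $\mathcal{S}_{\ell}\subseteq\mathrm{Subs}_{\ell}([k+1]^n)$, not an identification of point sets, and the containment-compatibility you need must be argued, not assumed. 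Second, and decisively: the family $\mathcal{S}_m$ is a very sparse subfamily of $\mathrm{Subs}_m([k+1]^n)$ --- all of its members live inside the thin padded copy of $[k]^{<n+1}$ --- and it is not partition regular. A generic $d$-dimensional combinatorial subspace of $[k+1]^n$ contains no member of $\mathcal{S}_m$ whatsoever, so if you color $m$-subspaces by ``type'' (good versus bad) and apply Theorem \ref{2t1}, the monochromatic $d$-subspace you get back will simply be one all of whose $m$-subspaces are of the bad type; nothing forces the ``Carlson--Simpson'' type to survive. Conversely, a coloring recording the type only on $\mathcal{S}_m$ carries no information. There is no way to make Graham--Rothschild over the enlarged alphabet return a good $d$-subspace without already knowing that the good family is Ramsey --- which is the statement being proved.

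The paper's route avoids this entirely and you should compare it with yours. It restricts attention to the subfamily $\subtr_m^{\max}(W)$ of $m$-dimensional Carlson--Simpson subtrees of \emph{maximal depth}; Fact \ref{4f2} shows these are in canonical bijection with $\mathrm{Subs}_m\big(W(d)\big)$, i.e.\ with genuine $m$-dimensional combinatorial subspaces of the top level of $W$, over the \emph{original} alphabet $[k]$ and with no padding. Theorem \ref{2t1} therefore applies directly to maximal-depth subtrees (Corollary \ref{4c3}). The subtrees of non-maximal depth are then handled by iterating this over the levels (Lemma \ref{4l4}), producing a subtree in which the color of an $m$-dimensional Carlson--Simpson subtree depends only on its depth, and finishing with the pigeonhole principle over the $d\cdot r$ possible depths. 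If you want to salvage your write-up, the bijection of Fact \ref{4f2} is the correct replacement for your padding dictionary: it identifies a partition-regular subfamily of $\subtr_m$ with an honest $\mathrm{Subs}_m$, which is precisely what your $\mathcal{S}_m$ fails to be.
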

Theorem \ref{4t1} is, of course, a variant of Theorem \ref{2t1}. It can be hardly characterized as new since it follows using fairly
standard arguments. Nevertheless, we have decided to include a proof for two reasons. The first one is self-containedness. Secondly,
because we want to emphasize the bounds we get from the argument for the numbers $\mathrm{CS}(k,d,m,r)$.

We start by introducing some pieces of notation. Specifically, let $k,d,m\in\nn$ with $k\meg 2$ and $d\meg m\meg 1$. Also let $W$ be
a $d$-dimensional Carlson--Simpson tree of $[k]^{<\nn}$ and $V\in \mathrm{Subtr}_m(W)$. The \textit{depth} of $V$ in $W$, denoted by
$\mathrm{depth}_W(V)$, is defined to be the unique integer $i\in\{m,...,d\}$ such that the $m$-level $V(m)$ of $V$ is contained in
the $i$-level $W(i)$ of $W$, or equivalently, $V(m)\in\mathrm{Subs}_m\big(W(i)\big)$. We set
\begin{equation} \label{4e1}
\mathrm{Subtr}_{m}^{\max}(W)=\big\{ V\in\mathrm{Subtr}_m(W): \mathrm{depth}_W(V)=\dim(W) \big\}.
\end{equation}
That is, $\mathrm{Subtr}_{m}^{\max}(W)$ is the set of all $m$-dimensional Carlson--Simpson subtrees of $W$ of maximal depth. Part of our
interest in this subclass is justified by the following simple, though important, fact. Its proof is a rather straightforward consequence
of the relevant definitions.
\begin{fact} \label{4f2}
For every integer $d\meg 1$, every $d$-dimensional Carlson--Simpson tree $W$ of $[k]^{<\nn}$ and every $m\in [d]$ the map
\begin{equation} \label{4e2}
\subtr_m^{\max}(W)\ni V \mapsto V(m)\in \mathrm{Subs}_m\big(W(d)\big)
\end{equation}
is a bijection.
\end{fact}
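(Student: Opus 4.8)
The plan is to reduce to the model tree $[k]^{<d+1}$ and then to write down an explicit inverse of the map in \eqref{4e2}. The canonical isomorphism $\mathrm{I}_W\colon [k]^{<d+1}\to W$ is a bijection that carries $\ell$-dimensional Carlson--Simpson subtrees of $[k]^{<d+1}$ onto $\ell$-dimensional Carlson--Simpson subtrees of $W$ (and likewise for $\mathrm{I}_W^{-1}$), preserves the depth, and sends the $n$-level of a subtree to the $n$-level of its image; hence it restricts on the top level $[k]^d$ to a bijection $\subs_m([k]^d)\to\subs_m\big(W(d)\big)$, and the square
\[
\begin{array}{ccc}
\subtr_m^{\max}([k]^{<d+1}) & \longrightarrow & \subs_m([k]^d)\\[4pt]
\downarrow & & \downarrow\\[4pt]
\subtr_m^{\max}(W) & \longrightarrow & \subs_m\big(W(d)\big)
\end{array}
\]
commutes, with horizontal arrows given by \eqref{4e2} and vertical arrows by $\mathrm{I}_W$. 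So it is enough to treat $W=[k]^{<d+1}$, where $W(d)=[k]^d$.

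Next I would exhibit the inverse. Given $S\in\subs_m([k]^d)$, let $u$ be the unique $m$-variable word over $k$ of length $d$ generating $S$, and let $B_0,\dots,B_{m-1}$ be the wildcard sets of the variables $v_0,\dots,v_{m-1}$ in $u$. Set $b_i=\min B_i$ for $i\in\{0,\dots,m-1\}$ and $b_m=d$. The ordering clause in the definition of an $m$-variable word yields $b_0<b_1<\dots<b_{m-1}<b_m$ and, more to the point, forces that on the block of coordinates $\{b_i,\dots,b_{i+1}-1\}$ the only variable occurring in $u$ is $v_i$, while the coordinates below $b_0$ carry no variable at all. Let $c$ be the (constant) initial segment $u|_{b_0}$ and, for each $i$, let $w_i$ be the word obtained from the restriction of $u$ to $\{b_i,\dots,b_{i+1}-1\}$ by renaming $v_i$ to $v$; since $b_i\in B_i$, its leftmost letter is $v$, so $w_i$ is a left variable word. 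Then $(c,w_0,\dots,w_{m-1})$ generates an $m$-dimensional Carlson--Simpson tree $V$ with level set $\{b_0<\dots<b_{m-1}<d\}$; thus $V\subseteq[k]^{<d+1}$, $V$ has maximal depth, and unravelling \eqref{2e11} gives $V(m)=\{u(a_0,\dots,a_{m-1}):a_0,\dots,a_{m-1}\in[k]\}=S$. This proves surjectivity and produces a right inverse. For injectivity, suppose $V'\in\subtr_m^{\max}([k]^{<d+1})$ has generating sequence $(c',w'_0,\dots,w'_{m-1})$ and $V'(m)=S$. Then $V'(m)$ is the combinatorial subspace generated by the $m$-variable word obtained by concatenating $c'$ and the $w'_i$ (with the variable of $w'_i$ renamed to $v_i$), which has length $d$ by maximality of the depth; by uniqueness of the generating $m$-variable word of $S$ this word equals $u$, and since each $w'_i$ is a left variable word the left endpoint of the block $w'_i$ is exactly the first occurrence of $v_i$ in $u$, namely $b_i$. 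Hence $c'=c$ and $w'_i=w_i$ for all $i$, so $V'=V$.

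The only step that I expect to require real care is the bookkeeping tying $m$-variable words of length $d$ to generating sequences of maximal-depth $m$-dimensional Carlson--Simpson subtrees: one must use the uniqueness of the $m$-variable word generating a combinatorial subspace, and check that the cut points $b_i$ at which $u$ is split are \emph{forced} by the demand that each piece be a left variable word. Everything else --- that the level set of $V$ is $\{b_0<\dots<b_{m-1}<d\}$, that $V$ has maximal depth, the commutativity of the square, and the fact that $\mathrm{I}_W$ transports all of this between $[k]^{<d+1}$ and $W$ --- is a direct consequence of the definitions in \S 2.4 and \S 2.5.
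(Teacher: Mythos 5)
Your proof is correct. The paper in fact gives no argument for Fact \ref{4f2} at all (it is declared ``a rather straightforward consequence of the relevant definitions''), and your reduction to the model tree $[k]^{<d+1}$ followed by the explicit splitting of the generating $m$-variable word $u$ at the first occurrences $b_i$ of each variable $v_i$ is exactly the verification the authors are leaving to the reader; in particular you correctly identify that the ordering condition on $m$-variable words is what forces the cut points, hence injectivity.
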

Combining Theorem \ref{2t1} and Fact \ref{4f2} we get the following corollary.
\begin{cor} \label{4c3}
Let $k\in\nn$ with $k\meg 2$. Also let $d,m,r\in\nn$ with $d\meg m \meg1$ and $r\meg 1$.
If $n\meg \mathrm{GR}(k,d,m,r)$, then for every $n$-dimensional Carlson--Simpson tree $W$ of $[k]^{<\nn}$ and every $r$-coloring of
the set $\mathrm{Subtr}_m^{\max}(W)$ there exists $U\in\subtr_d^{\max}(W)$ such that the set $\subtr_m^{\max}(U)$ is monochromatic.
\end{cor}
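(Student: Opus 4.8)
The plan is to reduce Corollary \ref{4c3} to Theorem \ref{2t1} by transporting colorings along the bijections supplied by Fact \ref{4f2}. Fix $k,d,m,r$ as in the statement, let $n\meg\gr(k,d,m,r)$, let $W$ be an $n$-dimensional Carlson--Simpson tree of $[k]^{<\nn}$, and let $\chi$ be an $r$-coloring of $\subtr_m^{\max}(W)$. Applying Fact \ref{4f2} with its ``$d$'' taken to be $n$ and its ``$m$'' taken to be our $m$ (note $m\mik n=\dim(W)$), the assignment $\phi\colon \subtr_m^{\max}(W)\to\subs_m\big(W(n)\big)$ given by $\phi(V)=V(m)$ is a bijection. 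Hence $\chi\circ\phi^{-1}$ is an $r$-coloring of $\subs_m\big(W(n)\big)$. Since $W(n)$ is an $n$-dimensional combinatorial subspace of $[k]^{<\nn}$ and $n\meg\gr(k,d,m,r)$, Theorem \ref{2t1} produces $W'\in\subs_d\big(W(n)\big)$ such that $\subs_m(W')$ is monochromatic with respect to $\chi\circ\phi^{-1}$.

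Next I would extract the desired Carlson--Simpson subtree from $W'$. Applying Fact \ref{4f2} a second time, now with its ``$d$'' equal to $n$ and its ``$m$'' equal to our $d$ (again $d\mik n$), the map $\subtr_d^{\max}(W)\ni U\mapsto U(d)\in\subs_d\big(W(n)\big)$ is a bijection. Consequently there is a (unique) $U\in\subtr_d^{\max}(W)$ with $U(d)=W'$; this $U$ is our candidate.

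It remains to verify that $\subtr_m^{\max}(U)$ is $\chi$-monochromatic. Let $V\in\subtr_m^{\max}(U)$. Since $U\subseteq W$, the set $V$ is an $m$-dimensional Carlson--Simpson subtree of $W$; moreover $V(m)$ is contained in $U(d)=W'\subseteq W(n)$, so $\mathrm{depth}_W(V)=n=\dim(W)$, i.e.\ $V\in\subtr_m^{\max}(W)$ and $\chi(V)$ is defined. By the definition of $\phi$ we have $\chi(V)=(\chi\circ\phi^{-1})\big(V(m)\big)$, and $V(m)\in\subs_m(W')$ because it is an $m$-dimensional combinatorial subspace contained in $W'=U(d)$. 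As $\subs_m(W')$ is monochromatic for $\chi\circ\phi^{-1}$, the value $\chi(V)$ does not depend on $V$, so $\subtr_m^{\max}(U)$ is monochromatic, as required.

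The argument is purely formal once Fact \ref{4f2} is available; the only point that needs a moment's attention is the observation in the last paragraph that a maximal-depth $m$-dimensional Carlson--Simpson subtree of $U$ is automatically a maximal-depth such subtree of $W$ — this is what legitimizes evaluating the original coloring $\chi$ on it and identifying that value with the transported coloring on $\subs_m\big(W(n)\big)$. Everything else is bookkeeping with the two bijections of Fact \ref{4f2} and a single invocation of Theorem \ref{2t1}.
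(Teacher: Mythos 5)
Your proof is correct and is exactly the argument the paper intends: the paper simply states that Corollary \ref{4c3} follows by "combining Theorem \ref{2t1} and Fact \ref{4f2}", and your write-up supplies precisely that bookkeeping — transporting the coloring along the bijection $V\mapsto V(m)$, invoking Graham--Rothschild on $\mathrm{Subs}_m\big(W(n)\big)$, and pulling back via the bijection at dimension $d$. The verification that a maximal-depth $m$-dimensional subtree of $U$ is a maximal-depth subtree of $W$ is the right (and only) point needing care, and you handle it correctly.
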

The proof of Theorem \ref{4t1} is based on a strengthening of Corollary \ref{4c3}. To state it, it is convenient to introduce the following
definition. For every $k,m,r\in\nn$ with $k\meg 2$ and $m,r\meg 1$ we define the function $g_{k,m,r}:\nn\to\nn$ by the rule
$g_{k,m,r}(n)=0$ if $n<m-1$ and
\begin{equation} \label{4e3}
g_{k,m,r}(n)=\mathrm{GR}(k,n+1,m,r)
\end{equation}
if $n\meg m-1$. We have the following lemma.
\begin{lem} \label{4l4}
Let $k,m,r\in\nn$ with $k\meg 2$ and $m,r\meg 1$. Also let $q,n\in\nn$ with $q\meg 1$ and $n\meg g_{k,m,r}^{(q)}(m)$. Then for every
$n$-dimensional Carlson--Simpson tree $W$ of $[k]^{<\nn}$ and every $r$-coloring of the set $\subtr_{m}(W)$ there exists
$U\in\subtr_{m+q}^{\max}(W)$ with the following property. For every pair $S,T\in\subtr_m(U)$ with $\mathrm{depth}_U(S)=\mathrm{depth}_U(T)$
the Carlson--Simpson trees $S$ and $T$ have the same color.
\end{lem}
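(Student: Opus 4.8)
The plan is to induct on $q$. For $q = 1$ the statement is essentially Corollary \ref{4c3}: given an $n$-dimensional Carlson--Simpson tree $W$ with $n \geq g_{k,m,r}(m) = \gr(k, m+1, m, r)$, an $r$-coloring of $\subtr_m(W)$ restricts to an $r$-coloring of $\subtr_m^{\max}(V)$ for any chosen $(m+1)$-dimensional Carlson--Simpson subtree $V$ of $W$; applying Corollary \ref{4c3} to this subcoloring (with $d = m+1$) yields $U \in \subtr_{m+1}^{\max}(V) \subseteq \subtr_{m+1}^{\max}(W)$ such that $\subtr_m^{\max}(U)$ is monochromatic. But in an $(m+1)$-dimensional Carlson--Simpson tree, every $S \in \subtr_m(U)$ has depth either $m$ or $m+1$, and via the canonical isomorphism $\mathrm{I}_U$ the depth-$m$ trees are exactly the images of $\subtr_m([k]^{<m+1})$ while the depth-$(m+1)$ trees are exactly $\subtr_m^{\max}(U)$. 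So it suffices to first refine $W$ so that the single depth-$m$ subtree class behaves well — but since for fixed depth $m$ inside an $(m+1)$-dimensional tree there is essentially a bounded family of such trees (a bounded number, namely $|\subtr_m([k]^{<m})|$-many once we fix the bottom $m$ levels), we can simply absorb this into a preliminary application of Theorem \ref{4t1} itself, or more cleanly observe that the statement only requires $S,T$ of \emph{equal} depth to agree, and for depth $m$ we need the finitely many depth-$m$ subtrees of $U$ to be monochromatic, which we arrange by a further Graham--Rothschild pass. I would organize this so that the $q=1$ case is handled by one invocation of Corollary \ref{4c3} plus a bounded pigeonhole.

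For the inductive step, suppose the lemma holds for $q$ and let $n \geq g_{k,m,r}^{(q+1)}(m) = g_{k,m,r}\big(g_{k,m,r}^{(q)}(m)\big)$. Set $n' = g_{k,m,r}^{(q)}(m)$, so $n \geq g_{k,m,r}(n') = \gr(k, n'+1, m, r)$ (using $n' \geq m \geq m-1$). Given an $r$-coloring $\chi$ of $\subtr_m(W)$ where $\dim(W) = n$: first restrict attention to $\subtr_m^{\max}(W)$ and apply Corollary \ref{4c3} with target dimension $n'+1$ to obtain $W' \in \subtr_{n'+1}^{\max}(W)$ with $\subtr_m^{\max}(W')$ $\chi$-monochromatic, say of color $c_{n'+1}$ — wait, we actually want to preserve more. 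The cleaner route: apply Corollary \ref{4c3} to get $W' \in \subtr_{n'+1}^{\max}(W)$ on which all maximal-depth $m$-subtrees have a single color. Now $W'$ has dimension $n'+1$; let $W''$ be the "bottom" $n'$-dimensional Carlson--Simpson subtree of $W'$, i.e. the one whose generating sequence is the first $n'+1$ entries of the generating sequence of $W'$ (equivalently $\mathrm{I}_{W'}([k]^{<n'+1})$). Then $\dim(W'') = n' \geq g_{k,m,r}^{(q)}(m)$, so by the inductive hypothesis applied to $\chi$ restricted to $\subtr_m(W'')$ we obtain $U \in \subtr_{m+q}^{\max}(W'')$ such that any two equal-depth (relative to $U$) elements of $\subtr_m(U)$ have the same $\chi$-color.

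The final point is to check that this $U$, viewed inside $W$, lies in $\subtr_{m+q+1}^{\max}(W)$ — no: $\dim(U) = m+q$, and $U \subseteq W''$ which sits at depth $n'$ in $W'$, which sits at depth $n$ in $W$, so $U$ has depth $n$ in $W$, but its dimension is only $m+q$, so $U \in \subtr_{m+q}(W)$, not $\subtr_{m+q+1}^{\max}(W)$. I need to be more careful: the correct construction is to \emph{not} pass to the bottom subtree but rather to apply the inductive hypothesis and then \emph{append} one more level. So the right order is: (1) use Corollary \ref{4c3} on $\subtr_m^{\max}$ to pin down the maximal-depth color on an $(n'+1)$-dimensional subtree $W'$; (2) apply the inductive hypothesis inside the $n'$-dimensional bottom part $W''$ of $W'$ to get $U_0 \in \subtr_{m+q}^{\max}(W'')$ with the equal-depth property for depths $m, \ldots, m+q$; (3) let $U$ be the $(m+q+1)$-dimensional Carlson--Simpson subtree of $W'$ whose first $m+q+1$ generating entries are those of $U_0$ and whose last generating entry is the $(n'+1)$-st generating entry of $W'$. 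Then $\dim(U) = m+q+1$ and $\mathrm{depth}_{W'}(U) = n'+1 = \dim(W')$, so $U \in \subtr_{m+q+1}^{\max}(W') \subseteq \subtr_{m+q+1}^{\max}(W)$. For depths $m \leq j \leq m+q$, every $S \in \subtr_m(U)$ of depth $j$ in $U$ already lies in $U_0$ with the same depth, so the inductive hypothesis gives the required monochromaticity; for depth $j = m+q+1$, every such $S$ is in $\subtr_m^{\max}(U) \subseteq \subtr_m^{\max}(W')$, which is $\chi$-monochromatic by step (1).

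The main obstacle — and the place where care is genuinely needed — is bookkeeping the depth arithmetic and verifying that "gluing" the generating sequence of $U_0$ with the top generating word of $W'$ produces a legitimate Carlson--Simpson subtree of $W'$ of the claimed depth; this is where one must unwind Definition \ref{2d2} and the description of $\subtr_\ell(W)$ via canonical isomorphisms. Everything else is a routine combination of Corollary \ref{4c3} and the inductive hypothesis. I would also double-check the base case bound $g_{k,m,r}^{(1)}(m) = g_{k,m,r}(m) = \gr(k,m+1,m,r)$ suffices, handling the single depth-$m$ class by noting that in an $(m+1)$-dimensional tree $\subtr_m(U)$ splits into $\subtr_m^{\max}(U)$ (handled by Corollary \ref{4c3}) and the depth-$m$ trees, of which there are boundedly many and which can be made monochromatic by one more $\gr$-pass folded into the definition of $g$ — or, most simply, by observing the lemma as stated only requires equal-depth agreement, and absorbing the depth-$m$ requirement by taking $d = m+1$ large enough in the Graham--Rothschild application so that both the depth-$m$ and depth-$(m+1)$ classes stabilize simultaneously.
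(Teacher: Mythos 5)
Your overall strategy is the paper's own, merely phrased as an induction on $q$ instead of an explicit iteration: the paper repeatedly applies Corollary \ref{4c3} to pin down the colour of the maximal-depth $m$-subtrees, deletes the top level, repeats, and finally concatenates the generating sequences of the nested trees so obtained; unwinding your recursion produces exactly this construction, and the depth bookkeeping in your corrected inductive step (steps (1)--(3)) is sound.

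The one real gap is the depth-$m$ class, which you correctly flag in the base case but then misresolve. The resolution is that this class is a singleton, so the requirement at depth $m$ is vacuous: if $S\in\subtr_m(U)$ has $\mathrm{depth}_U(S)=m$, then its $m+1$ levels must occupy the levels $U(0),\ldots,U(m)$ in order, and since $S(j)$ is a $j$-dimensional combinatorial subspace contained in the $j$-dimensional combinatorial subspace $U(j)$ (both of cardinality $k^j$), necessarily $S(j)=U(j)$ for every $j$; hence $S=U(0)\cup\cdots\cup U(m)$ is unique. (The paper leaves this implicit as well: its closing paragraph only treats depths $m+\ell$ with $\ell\in[q]$.) Your proposed substitutes do not work: a ``bounded pigeonhole'' over the depth-$m$ subtrees can only extract a large monochromatic subfamily, not force all of them to have the same colour; ``taking $d=m+1$ large enough'' is not an option since $d$ is forced to equal $m+1$ and Corollary \ref{4c3} in any case only controls the maximal-depth class; and folding an extra Graham--Rothschild pass into $g_{k,m,r}$ is not available because that function is fixed by the statement being proved. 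Relatedly, in the base case you should apply Corollary \ref{4c3} to $W$ itself (whose dimension is at least $\gr(k,m+1,m,r)$), not to a chosen $(m+1)$-dimensional subtree $V$, which is too small for the corollary to apply to.
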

\begin{proof}
We fix a coloring $c:\mathrm{Subtr}_m(W)\to [r]$. For every $i\in\{0,...,q\}$ we set $n_i=g_{k,m,r}^{(q-i)}(m)$.
Notice that, by \eqref{4e3}, for every $i\in\{0,...,q-1\}$ we have
\begin{equation} \label{4e4}
n_i=\mathrm{GR}(k,n_{i+1}+1,m,r) \meg n_{i+1}+1\meg n_q=m.
\end{equation}
We select $U_0\in\mathrm{Subtr}^{\mathrm{max}}_{n_0}(W)$. By \eqref{4e4} and Corollary \ref{4c3}, we may construct a family $\{U_1,...,U_q\}$
of Carlson--Simpson subtrees of $U_0$ with the following properties.
\begin{enumerate}
\item[(a)] For every $i\in [q]$ we have $\dim(U_i)=n_i+1$.
\item[(b)] We have $U_1\in \subtr^{\max}_{n_1+1}(U_0)$. Moreover, if $q\meg 2$, then for every $i\in [q-1]$ we have
$U_{i+1}\in \subtr^{\max}_{n_{i+1}}(U'_{i})$ where $U'_i=U_{i}\setminus U_{i}(n_{i}+1)$.
\item[(c)] For every $i\in [q]$ the set $\subtr_m^{\max}(U_i)$ is monochromatic with respect to $c$.
\end{enumerate}
For every $i\in [q]$ let $(c_i, w_0^{(i)}, ..., w_{n_{i}}^{(i)})$ be the generating sequence of $U_i$. We define $U$ to be the
Carlson--Simpson tree of $[k]^{<\nn}$ generated by the sequence
\begin{equation} \label{4e5}
(c_q, w_0^{(q)}, ..., w_{n_q}^{(q)})^{\con}(w_{n_{q-1}}^{(q-1)},...,w_{n_2}^{(2)}, w_{n_1}^{(1)}).
\end{equation}
We will show that $U$ is as desired. Indeed, notice first that
\begin{equation} \label{4e6}
\dim(U)=(n_q+1)+(q-1)=m+q.
\end{equation}
Also observe that $U\in\mathrm{Subtr}^{\mathrm{max}}_{m+q}(U_0)$ and so $U\in\mathrm{Subtr}^{\mathrm{max}}_{m+q}(W)$. Finally let
$\ell\in [q]$ be arbitrary and set $i_{\ell}=q-\ell+1\in [q]$. By the definition of $U$ and (b) above, we see that $U(m+\ell)$ is contained
in $U_{i_\ell}(n_{i_\ell}+1)$. Hence, for every pair   $S,T\in\mathrm{Subtr}_m(U)$ with $\mathrm{depth}_U(S)=\mathrm{depth}_U(T)=m+\ell$ we have
that $S,T\in\mathrm{Subtr}_m^{\max}(U_{i_\ell})$. Invoking (c), we conclude that $c(S)=c(T)$ and the proof is completed.
\end{proof}
We are ready to proceed to the proof of Theorem \ref{4t1}.
\begin{proof}[Proof of Theorem \ref{4t1}]
Let $k\in\nn$ with $k\meg 2$. Also let $d,m,r\in\nn$ with $d\meg m\meg 1$ and $r\meg 1$. We will show that
\begin{equation} \label{4e7}
\mathrm{CS}(k,d,m,r) \mik g_{k,m,r}^{(d\cdot r-m)}(m).
\end{equation}
Indeed, let $n\meg g_{k,m,r}^{(d\cdot r-m)}(m)$ and $W$ be an arbitrary $n$-dimensional Carlson--Simpson tree of $[k]^{<\nn}$. We fix
a coloring $c:\subtr_m(W)\to [r]$. By Lemma \ref{4l4}, there exists a Carlson--Simpson subtree $R$ of $W$ with $\dim(R)=d\cdot r$ such that
for every $S\in\mathrm{Subtr}_m(R)$ the color $c(S)$ of $S$ depends only on the depth of $S$ in $R$. Therefore, by the classical pigeonhole
principle, there exist a subset $I$ of $\{0,..., d\cdot r\}$ with $|I|=d+1$ and $r_0\in [r]$ such that for every $i\in I$ and every
$S\in\mathrm{Subtr}_m(R)$ with $\mathrm{depth}_R(S)=i$ we have $c(S)=r_0$. Let $U$ be any $d$-dimensional Carlson--Simpson subtree
of $R$ which is contained in the set $\bigcup_{i\in I} R(i)$. By the previous discussion, we see that the coloring $c$ restricted on
$\mathrm{Subtr}_m(U)$ is constantly equal to $r_0$. The proof of Theorem \ref{4t1} is thus completed.
\end{proof}


\section{The convolution operation}

\numberwithin{equation}{section}

The concatenation of two finite sequences provides us with a canonical way to ``glue'' a pair of elements of $[k]^{<\nn}$. Our goal in this
section is to describe a different ``gluing'' method which will be of fundamental importance throughout the paper.

The method is particularly easy to grasp for pairs of sequences of given length. Specifically, let $n,m\meg 1$ and fix a subset $L$ of
$\{0,..., n+m-1\}$ of cardinality $n$. Given an element $x$ of $[k]^n$ and an element $y$ of $[k]^m$, the outcome of the ``gluing'' method
for the pair $x,y$ is the unique element $z$ of $[k]^{n+m}$ which is ``equal'' to $x$ on $L$ and to $y$ on the rest of the coordinates.
This simple process can, of course, be extended to arbitrary pairs of $[k]^{<\nn}$. This is the content of the following definition.
\begin{defn} \label{5d1}
Let $k\in\nn$ with $k\meg 2$ and $L=\{l_0<...<l_{|L|-1}\}$ be a nonempty finite subset of $\nn$. For every $i\in\{0,...,|L|-1\}$ we set
\begin{equation} \label{5e1}
L_i=\{l\in L: l<l_i\} \text{ and } \overline{L}_i=\{n\in\nn: n<l_i \text{ and } n\notin L_i\}.
\end{equation}
Also let $n_L=\max(L)-|L|+1$ and set
\begin{equation} \label{5e2}
X_L=[k]^{n_L}.
\end{equation}
We define the \emph{convolution operation} $\cv_L:[k]^{<|L|}\times X_L\to [k]^{<\nn}$ associated to $L$ as follows. For every $i\in\{0,...,|L|-1\}$,
every $t\in [k]^i$ and every $x\in X_L$ we set
\begin{equation} \label{5e3}
\cv_L(t,x)=\big(\mathrm{I}_{L_i}(t),\mathrm{I}_{\overline{L}_i}(x|_{|\overline{L}_i|})\big)\in [k]^{l_i}
\end{equation}
where $\mathrm{I}_{L_i}$ and $\mathrm{I}_{\overline{L}_i}$ are the canonical isomorphisms defined in \S 2.2.

More generally, let $V$ be a Carlson--Simpson tree of $[k]^{<\nn}$ and assume that $L$ is contained in $\{0,...,\dim(V)\}$.
The \emph{convolution operation} $\cv_{L,V}:[k]^{<|L|}\times X_L\to V$ associated to $(L,V)$ is defined by the rule
\begin{equation} \label{5e4}
\cv_{L,V}(t,x) = \mathrm{I}_V\big( \cv_L(t,x)\big)
\end{equation}
where $\mathrm{I}_V$ is the canonical isomorphism defined in \S 2.5.
\end{defn}
Before we proceed let us give a specific example. Let $k=5$ and $L=\{1,3,7,9\}$, and notice that $X_L=[5]^6$. In particular, the convolution
operation $\cv_L$ associated to the set $L$ is defined for pairs in $[5]^{<4}\times [5]^6$. Then for the pair $t=(1,2)$ and $x=(3,5,4,2,4,1)$
we have
\begin{equation} \label{5e5}
\cv_L(t,x)=(3,\mathbf{1},5,\mathbf{2},4,2,4)
\end{equation}
where in \eqref{5e5} we indicated with boldface letters the contribution of $t$.

The rest of this section is devoted to the study of convolution operations. We notice that all properties described below follow by carefully
manipulating the relevant definitions. In fact, once the basic definitions have been properly understood, most of the material of this section
should be regarded as fairly straightforward.

We begin with the following fact.
\begin{fact} \label{5f2}
Let $k\in\nn$ with $k\meg2$. Let $V$ be a Carlson--Simpson tree of $[k]^{<\nn}$ and $L=\{l_0<...<l_{|L|-1}\}$ be a nonempty finite subset
of $\{0,...,\dim(V)\}$. For every $t\in [k]^{<|L|}$ we set
\begin{equation} \label{5e6}
\Omega_t=\big\{ \cv_{L,V}(t,x): x\in X_L\big\}.
\end{equation}
Then for every $t,t'\in [k]^{<|L|}$ with $t\neq t'$ we have $\Omega_t\cap\Omega_{t'}=\varnothing$.
Moreover, for every $i\in\{0,...,|L|-1\}$ the family $\{\Omega_t:t\in[k]^{i}\}$ forms an equipartition of $V(l_i)$.
\end{fact}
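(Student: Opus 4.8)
The plan is to reduce everything to the ambient case $V = [k]^{<\dim(V)+1}$ and then to a direct unraveling of Definition \ref{5d1}. Since $\mathrm{I}_V$ is a bijection that maps $[k]^{<\dim(V)+1}$ onto $V$ and maps each level $[k]^{n}$ (with $n\mik\dim(V)$) onto $W(n)$, it suffices to prove both assertions for the ``untwisted'' operation $\cv_L$, i.e. to show that the sets $\Omega^0_t := \{\cv_L(t,x):x\in X_L\}$ are pairwise disjoint for distinct $t\in [k]^{<|L|}$ and that $\{\Omega^0_t : t\in[k]^i\}$ equipartitions $[k]^{l_i}$ for each $i\in\{0,\dots,|L|-1\}$. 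The statement for $\cv_{L,V}$ then follows by applying $\mathrm{I}_V$, using \eqref{5e4} and the fact that $\mathrm{I}_V$ restricts to a bijection from $[k]^{l_i}$ onto $V(l_i)$.

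For the disjointness, I would first observe the key structural point: the sets $L_i$ and $\overline{L}_i$ from \eqref{5e1} satisfy $L_i\cup\overline{L}_i = \{n\in\nn:n<l_i\}$ and $L_i\cap\overline{L}_i=\varnothing$, and $|L_i|=i$, $|\overline{L}_i| = l_i - i = |\overline{L}_i|$ (consistent with the length bookkeeping in \eqref{5e3}, since $|\overline L_i| = l_i - i \mik n_L$). Now take $t\in[k]^i$, $t'\in[k]^{i'}$ with $t\neq t'$, and suppose $\cv_L(t,x) = \cv_L(t',x') =: z$ for some $x,x'\in X_L$. Comparing lengths via \eqref{5e3} forces $l_i = l_{i'}$, hence $i=i'$ (the $l$'s are strictly increasing), hence $L_i = L_{i'}$. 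Restricting $z$ to the coordinate set $L_i$ and using \eqref{5e3} together with the definition \eqref{2e7} of the canonical isomorphism, we get $\mathrm{I}_{L_i}(t) = z|_{L_i} = \mathrm{I}_{L_i}(t')$, and since $\mathrm{I}_{L_i}$ is a bijection, $t = t'$, a contradiction. So $\Omega^0_t\cap\Omega^0_{t'}=\varnothing$.

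For the equipartition claim, fix $i$ and let $t$ range over $[k]^i$. Each $\Omega^0_t$ is a subset of $[k]^{l_i}$ by \eqref{5e3}. The sets $\Omega^0_t$ are pairwise disjoint by the previous paragraph, so it remains to see that they cover $[k]^{l_i}$ and all have the same cardinality. For coverage: given $z\in[k]^{l_i}$, set $t = \mathrm{I}_{L_i}^{-1}(z|_{L_i})\in[k]^i$ and choose any $x\in X_L$ whose initial segment of length $|\overline L_i|$ equals $\mathrm{I}_{\overline L_i}^{-1}(z|_{\overline L_i})$ (such $x$ exists since $|\overline L_i|\mik n_L$); then $\cv_L(t,x) = z$ by \eqref{5e3}. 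For equicardinality: for fixed $t$, the map $X_L \ni x\mapsto \cv_L(t,x)$ depends only on $x|_{|\overline L_i|}$ and, by \eqref{5e3} and bijectivity of $\mathrm{I}_{\overline L_i}$, it is exactly $k^{n_L - |\overline L_i|}$-to-one onto $\Omega^0_t$; hence $|\Omega^0_t| = k^{l_i - i}$, independent of $t$. Thus $\{\Omega^0_t:t\in[k]^i\}$ is an equipartition of $[k]^{l_i}$ into $k^i$ blocks, and transporting by $\mathrm{I}_V$ finishes the proof.

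The only mildly delicate point is keeping the index bookkeeping straight — that $|\overline L_i| = l_i - i \mik n_L = \max(L) - |L| + 1$, so that the restriction $x|_{|\overline L_i|}$ in \eqref{5e3} always makes sense; this is a routine inequality (the coordinates of $\{0,\dots,l_i-1\}$ not in $L_i$ number $l_i - i$, and there are at most $\max(L) - (|L|-1)$ coordinates below $\max(L)$ not among the top ones of $L$). I do not expect any real obstacle; the content is entirely in correctly reading off Definition \ref{5d1}.
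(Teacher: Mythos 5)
Your proposal is correct and follows essentially the same route as the paper: the paper's proof also reduces everything to the observation that $\mathrm{I}_V^{-1}(\Omega_t)=\{z\in[k]^{l_i}:z|_{L_i}=\mathrm{I}_{L_i}(t)\}$, from which disjointness, coverage and equicardinality all follow; you have merely spelled out the routine checks (and the harmless length inequality $|\overline{L}_i|\mik n_L$) that the paper leaves implicit. The only blemishes are typographical ($W(n)$ should read $V(n)$, and the tautological ``$|\overline{L}_i|=l_i-i=|\overline{L}_i|$'').
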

\begin{proof} 
By the definition of the convolution operation, we see that $\Omega_t\cap\Omega_{t'}=\varnothing$ if $t\neq t'$. It is also easy to check that
the family $\{\Omega_t:t\in[k]^{i}\}$ forms a partition of $V(l_i)$. Therefore, to complete the proof it is enough to observe that for every
$i\in\{0,..., |L|-1\}$ and every $t\in [k]^i$ we have
\begin{equation} \label{5e7}
\mathrm{I}_V^{-1}(\Omega_t)= \big\{x\in [k]^{l_i}: x|_{L_i}= \mathrm{I}_{L_i}(t) \big\}.
\end{equation}
Clearly this implies that $|\Omega_t|=|\Omega_{t'}|$ for every $t,t'\in [k]^i$. 
\end{proof}
Using similar elementary observations we get the following.
\begin{fact} \label{5f3}
Let $k, V$ and $L$ be as in Fact \ref{5f2}. For every $t\in [k]^{<|L|}$ and every $s\in[k]^{<\nn}$ we set
\begin{equation} \label{5e8}
Y^t_s=\{x\in X_L:\cv_{L,V}(t,x)=s\}.
\end{equation}
Then for every $t\in [k]^{<|L|}$ and every $s,s'\in \Omega_t$ with $s\neq s'$, where $\Omega_t$ is as in \eqref{5e6}, the sets $Y^t_s$ and
$Y^t_{s'}$ are nonempty disjoint subsets of $X_L$. Moreover, the family $\{Y^t_s:s\in \Omega_t\}$ forms an equipartition of $X_L$.
\end{fact}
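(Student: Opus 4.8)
The plan is to unwind Definition \ref{5d1} and reduce everything to the one-coordinate description of the convolution operation, exactly as in the proof of Fact \ref{5f2}. Fix $t\in [k]^{<|L|}$ and let $i\in\{0,\dots,|L|-1\}$ be the integer with $t\in [k]^i$. First I would dispose of the elementary assertions. If $s\in\Omega_t$, then by \eqref{5e6} there is $x\in X_L$ with $\cv_{L,V}(t,x)=s$, so $x\in Y^t_s$ and hence $Y^t_s\neq\varnothing$; if $x\in Y^t_s\cap Y^t_{s'}$, then $s=\cv_{L,V}(t,x)=s'$, so distinct $s,s'\in\Omega_t$ give disjoint $Y^t_s,Y^t_{s'}$; and since every $x\in X_L$ lies in $Y^t_s$ for $s:=\cv_{L,V}(t,x)\in\Omega_t$, the family $\{Y^t_s:s\in\Omega_t\}$ covers $X_L$. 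Together these already show that $\{Y^t_s:s\in\Omega_t\}$ is a partition of $X_L$ into nonempty sets.

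The only point that requires an actual computation is that this partition is balanced, i.e. $|Y^t_s|$ does not depend on $s\in\Omega_t$. Here is how I would see it. Since $\mathrm{I}_V$ is a bijection, setting $\bar s=\mathrm{I}_V^{-1}(s)\in [k]^{l_i}$ we have $\cv_{L,V}(t,x)=s$ precisely when $\cv_L(t,x)=\bar s$. By \eqref{5e3} this amounts to the two conditions $\bar s|_{L_i}=\mathrm{I}_{L_i}(t)$ and $\bar s|_{\overline{L}_i}=\mathrm{I}_{\overline{L}_i}\big(x|_{|\overline{L}_i|}\big)$. The first equality is automatic, since $s\in\Omega_t$ forces $\bar s\in\mathrm{I}_V^{-1}(\Omega_t)$ and, by \eqref{5e7}, $\mathrm{I}_V^{-1}(\Omega_t)=\{x\in[k]^{l_i}:x|_{L_i}=\mathrm{I}_{L_i}(t)\}$. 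The second equality, after applying the bijection $\mathrm{I}_{\overline{L}_i}^{-1}$, prescribes $x|_{|\overline{L}_i|}$ to be one fixed element of $[k]^{|\overline{L}_i|}$ depending only on $s$, while imposing no condition on the remaining coordinates of $x$, i.e. those indexed by $\{|\overline{L}_i|,\dots,n_L-1\}$. Consequently $|Y^t_s|=k^{\,n_L-|\overline{L}_i|}$, which is independent of $s$; since $L_i$ and $\overline{L}_i$ depend only on $i$ (hence only on $t$), this is the desired equipartition of $X_L$.

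I do not expect any genuine obstacle here: the whole argument is a matter of chasing the definitions in \S 2.2 and \S 2.5 together with the formulas \eqref{5e1}–\eqref{5e4}. The one place to be slightly careful is the bookkeeping with the index sets $L_i$ and $\overline{L}_i$ — in particular, noting that $|\overline{L}_i|=l_i-i\le \max(L)-|L|+1=n_L$, so that $x|_{|\overline{L}_i|}$ makes sense for $x\in X_L=[k]^{n_L}$ — but this is precisely the consistency already built into Definition \ref{5d1}.
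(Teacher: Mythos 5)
Your argument is correct and is exactly the ``similar elementary observations'' the paper has in mind: the paper omits the proof of Fact \ref{5f3}, deferring to the proof of Fact \ref{5f2}, and your computation (reducing $\cv_{L,V}(t,x)=s$ via $\mathrm{I}_V^{-1}$ and \eqref{5e7} to the single condition that $x|_{|\overline{L}_i|}$ equal a fixed word, with the remaining $n_L-|\overline{L}_i|$ coordinates free) is precisely the intended definition chase. Your count $|Y^t_s|=k^{n_L-|\overline{L}_i|}$ is also consistent with \eqref{5e11}, since $|\Omega_t|=k^{|\overline{L}_i|}$ by \eqref{5e7}.
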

We will also need the following fact.
\begin{fact} \label{5f4}
Let $k, V$ and $L$ be as in Fact \ref{5f2}. For every $t\in [k]^{<|L|}$ and every $s\in [k]^{<\nn}$ let $\Omega_t$ and $Y^t_s$ be as in
\eqref{5e6} and \eqref{5e8} respectively. Then for every $i\in\{0,...,|L|-1\}$ and every $t,t'\in [k]^i$ there exists a map
$g_{t,t'}:\Omega_t\to\Omega_{t'}$ with the following properties.
\begin{enumerate}
\item[(i)] For every $s\in\Omega_t$ we have that $Y^t_s=Y^{t'}_{g_{t,t'}(s)}$.
\item[(ii)] The map $g_{t,t'}$ is a bijection.
\item[(iii)] The map $g_{t,t'}$ preserves the lexicographical order.
\item[(iv)] If $t$ and $t'$ are $(r,r')$-equivalent for some $r,r'\in[k]$ with $r\neq r'$ (see \S 2.6), then $s$ and $g_{t,t'}(s)$ are
$(r,r')$-equivalent for every $s\in\Omega_t$.
\end{enumerate}
\end{fact}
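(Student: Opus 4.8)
The plan is to construct the map $g_{t,t'}$ explicitly from the description of the fibers $Y^t_s$ given by equation \eqref{5e7}. Fix $i\in\{0,\dots,|L|-1\}$ and $t,t'\in[k]^i$. Recall from the proof of Fact \ref{5f2} that $\mathrm{I}_V^{-1}(\Omega_t)=\{x\in[k]^{l_i}: x|_{L_i}=\mathrm{I}_{L_i}(t)\}$, and unwinding Definition \ref{5d1}, for $s=\cv_{L,V}(t,x)\in\Omega_t$ the fiber $Y^t_s$ consists of exactly those $x'\in X_L$ whose restriction to the first $|\overline L_i|$ coordinates agrees with $x|_{|\overline L_i|}$ (after transport through $\mathrm{I}_{\overline L_i}$); in other words, $Y^t_s$ is determined by, and determines, the $\overline L_i$-part $\mathrm{I}_{\overline L_i}(x|_{|\overline L_i|})$ of $\mathrm{I}_V^{-1}(s)$. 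Thus I would define $g_{t,t'}$ by the rule: given $s\in\Omega_t$, write $\mathrm{I}_V^{-1}(s)=\big(\mathrm{I}_{L_i}(t),\mathrm{I}_{\overline L_i}(u)\big)$ for the appropriate $u\in[k]^{|\overline L_i|}$, and set $g_{t,t'}(s)=\mathrm{I}_V\big(\mathrm{I}_{L_i}(t'),\mathrm{I}_{\overline L_i}(u)\big)$. Concretely, $g_{t,t'}$ keeps the $\overline L_i$-coordinates fixed and overwrites the $L_i$-coordinates, replacing the pattern coming from $t$ by the pattern coming from $t'$.

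With this definition, part (i) is immediate: both $Y^t_s$ and $Y^{t'}_{g_{t,t'}(s)}$ are the set of $x'\in X_L$ with $\mathrm{I}_{\overline L_i}(x'|_{|\overline L_i|})=\mathrm{I}_{\overline L_i}(u)$, since the fiber depends only on the $\overline L_i$-part. Part (ii) follows because the analogously defined map $g_{t',t}$ is a two-sided inverse (overwriting $t'$ back by $t$ on the $L_i$-coordinates), or alternatively because $g_{t,t'}$ is a bijection between the two index sets $\{(\mathrm{I}_{L_i}(t),\mathrm{I}_{\overline L_i}(u)):u\in[k]^{|\overline L_i|}\}$ and $\{(\mathrm{I}_{L_i}(t'),\mathrm{I}_{\overline L_i}(u)):u\in[k]^{|\overline L_i|}\}$ induced by $u\mapsto u$. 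For part (iii), the lexicographical order on $\mathrm{I}_V^{-1}(\Omega_t)\subseteq[k]^{l_i}$, restricted to elements sharing the same $L_i$-pattern, is governed entirely by the $\overline L_i$-coordinates read in increasing order of their position in $\{0,\dots,l_i-1\}$; since $g_{t,t'}$ leaves those coordinates and their positions untouched and $\mathrm{I}_V$ is order-preserving by construction of the canonical isomorphism, the order is preserved. Part (iv) is the one genuinely needing the hypothesis: if $t$ and $t'$ are $(r,r')$-equivalent, then at each coordinate of $L_i$ the values of $t$ and $t'$ either coincide or both lie in $\{r,r'\}$; since $g_{t,t'}$ changes $s$ only at the $L_i$-coordinates and only by switching from $t$'s value to $t'$'s value there, $s$ and $g_{t,t'}(s)$ agree at every coordinate whose value (in $s$) lies outside $\{r,r'\}$, which is exactly $(r,r')$-equivalence.

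I expect the only real friction to be bookkeeping: tracking how the index sets $L_i$ and $\overline L_i$ sit inside $\{0,\dots,l_i-1\}$, and making sure the canonical isomorphisms $\mathrm{I}_{L_i}$, $\mathrm{I}_{\overline L_i}$ and $\mathrm{I}_V$ are composed consistently, so that the phrase ``keep the $\overline L_i$-part, overwrite the $L_i$-part'' is made precise. Once equation \eqref{5e7} is in hand this is mechanical, so the write-up should be short. The step requiring the most care is (iv), because $(r,r')$-equivalence is a statement about \emph{all} coordinates of the length-$l_i$ words, and one must verify that the coordinates where $g_{t,t'}$ acts are harmless precisely under the equivalence hypothesis on $t,t'$ — but this reduces directly to the definition of $(i,j)$-equivalence in \S 2.6.
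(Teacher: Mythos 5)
Your proposal is correct and is essentially the paper's proof: the paper sets $g_{t,t'}(s)=\cv_{L,V}(t',x_s)$ for any chosen $x_s\in Y^t_s$, which by \eqref{5e3} and \eqref{5e7} is exactly your ``keep the $\overline{L}_i$-part, overwrite the $L_i$-part'' map, and the paper leaves the four verifications as ``easy to check''. Your closed-form definition merely makes the well-definedness manifest, and your checks of (i)--(iv) are the intended ones.
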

\begin{proof}
For every $s\in\Omega_t$ we select $x_s\in Y_s^t$ and we set
\begin{equation} \label{5e9}
g_{t,t'}(s)=\cv_{L,V}(t',x_s).
\end{equation}
It is easy to check that $g_{t,t'}$ is well-defined and satisfies the above properties.
\end{proof}
We proceed with the following lemma.
\begin{lem} \label{5l5}
Let $k\in\nn$ with $k\meg2$.  Let $V$ be a Carlson--Simpson tree of $[k]^{<\nn}$ and $L=\{l_0<...<l_{|L|-1}\}$ be a nonempty finite subset
of $\{0,...,\dim(V)\}$. Also let $t\in [k]^{<|L|}$ and $A\subseteq [k]^{<\nn}$ and set $B=\cv_{L,V}^{-1}(A)$. Then the following hold.
\begin{enumerate}
\item[(i)] We have $\dens_{\Omega_t}(A)=\dens_{\{t\}\times X_L}(B)$ where $\Omega_t$ is as in \eqref{5e6}.
\item[(ii)] For every $i\in \{0,...,|L|-1\}$ we have $\dens_{V(l_i)}(A)=\dens_{[k]^i\times X_L}(B)$.
\end{enumerate}
\end{lem}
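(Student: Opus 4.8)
The plan is to derive both parts from the equipartition statements already recorded in Fact \ref{5f2} and Fact \ref{5f3}, so that everything reduces to counting equal-sized pieces. First I would handle part (i). Fix $t\in[k]^{<|L|}$. Since $\cv_{L,V}(t,x)\in\Omega_t$ for every $x\in X_L$ by the definition of $\Omega_t$, we have $B\cap(\{t\}\times X_L)=\{t\}\times\{x\in X_L:\cv_{L,V}(t,x)\in A\}=\{t\}\times\bigcup_{s\in A\cap\Omega_t}Y^t_s$, a disjoint union by Fact \ref{5f3}. By the same fact the family $\{Y^t_s:s\in\Omega_t\}$ is an equipartition of $X_L$, so each $Y^t_s$ has cardinality $|X_L|/|\Omega_t|$; hence $|B\cap(\{t\}\times X_L)|=|A\cap\Omega_t|\cdot|X_L|/|\Omega_t|$. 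Dividing by $|\{t\}\times X_L|=|X_L|$ gives $\dens_{\{t\}\times X_L}(B)=|A\cap\Omega_t|/|\Omega_t|=\dens_{\Omega_t}(A)$, which is precisely (i).

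For part (ii), fix $i\in\{0,\dots,|L|-1\}$. By Fact \ref{5f2} the sets $\Omega_t$, $t\in[k]^i$, form an equipartition of $V(l_i)$, so averaging over equal-sized blocks gives $\dens_{V(l_i)}(A)=\ave_{t\in[k]^i}\dens_{\Omega_t}(A)$. On the other side, $[k]^i\times X_L$ is the disjoint union of the sets $\{t\}\times X_L$ over $t\in[k]^i$, all of cardinality $|X_L|$, so likewise $\dens_{[k]^i\times X_L}(B)=\ave_{t\in[k]^i}\dens_{\{t\}\times X_L}(B)$. Combining these two identities with part (i) yields the claim. The case $i=0$ is not special: there $[k]^0=\{\varnothing\}$, $\Omega_\varnothing=V(l_0)$ and $[k]^0\times X_L=X_L$, so both identities are trivial.

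I do not expect a genuine obstacle here; the content is entirely in Fact \ref{5f2} and Fact \ref{5f3}, and the only points requiring care are invoking the correct equipartition on each side --- uniform fibre sizes of the map $x\mapsto\cv_{L,V}(t,x)$ for fixed $t$ from Fact \ref{5f3}, and uniform block sizes of $\{\Omega_t:t\in[k]^i\}$ inside $V(l_i)$ from Fact \ref{5f2} --- together with keeping straight the ambient finite set in which each density is computed.
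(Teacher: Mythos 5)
Your proof is correct and follows essentially the same route as the paper: part (i) via the decomposition $B\cap(\{t\}\times X_L)=\{t\}\times\bigcup_{s\in A\cap\Omega_t}Y^t_s$ together with the equipartition $\{Y^t_s:s\in\Omega_t\}$ of $X_L$ from Fact \ref{5f3}, and part (ii) by averaging over the equipartition $\{\Omega_t:t\in[k]^i\}$ of $V(l_i)$ from Fact \ref{5f2} and invoking part (i). No issues.
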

\begin{proof}
For every $s\in\Omega_t$ let $Y^t_s$ be as in \eqref{5e8}. By the definition of $B$ and $\Omega_t$,
\begin{eqnarray} \label{5e10}
B\cap(\{t\}\times X_L) & = & \big\{(t,x):\cv_{L,V}(t,x)\in A\cap \Omega_t\big\}  \\
& = & \bigcup_{s\in A\cap\Omega_t} \big\{(t,x):\cv_{L,V}(t,x)=s\big\} \nonumber \\
& = & \bigcup_{s\in A\cap\Omega_t} \{t\}\times Y_s^t. \nonumber
\end{eqnarray}
By Fact \ref{5e3}, for every $s\in\Omega_t$ we have
\begin{equation} \label{5e11}
\frac{|Y_s^t|}{|X_L|}=\frac{1}{|\Omega_t|}.
\end{equation}
Therefore,
\begin{eqnarray} \label{5e12}
\dens_{\{t\}\times X_L}(B) & = & \frac{|B\cap(\{t\}\times X_L)|}{|\{t\}\times X_L|}
\stackrel{\eqref{5e10}}{=} \sum_{s\in A\cap\Omega_t}\frac{|\{t\}\times Y_s^t|}{|\{t\}\times X_L|}\\
& = & \sum_{s\in A\cap\Omega_t}\frac{|Y_s^t|}{|X_L|} \stackrel{\eqref{5e11}}{=}\frac{|A\cap\Omega_t|}{|\Omega_t|}= \dens_{\Omega_t}(A). \nonumber
\end{eqnarray}
This completes the proof of the first part of the lemma. To see that part (ii) is satisfied, let $i\in\{0,...,|L|-1\}$ be arbitrary.
By Fact \ref{5f2}, we see that
\begin{equation} \label{5e13}
\dens_{V(l_i)}(A)=\ave_{t\in[k]^i}\dens_{\Omega_t}(A).
\end{equation}
By \eqref{5e13} and the first part of the lemma, the result follows.
\end{proof}
The final three lemmas of this section contain some coherence properties of convolution operations. The first one shows that
convolution operations preserve Carlson--Simpson trees.
\begin{lem}\label{5l6}
Let $k, V$ and $L$ be as in Lemma \ref{5l5}. Also let $W$ be a Carlson--Simpson subtree of $[k]^{<|L|}$ and $x\in X_L$. Then, setting
\begin{equation} \label{5e14}
W_x=\big\{ \cv_{L,V}(w,x): w\in W \big\},
\end{equation}
we have that $W_x$ is a Carlson--Simpson subtree of $V$ of dimension $\dim(W)$. Moreover, for every $i\in\{0,...,\dim(W)\}$ we have
\begin{equation} \label{5e15}
W_x(i)=\big\{ \cv_{L,V}(w,x): w\in W(i)\big\}.
\end{equation}
\end{lem}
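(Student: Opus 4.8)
The plan is to read off the generating sequence of $W_x$ directly from that of $W$. Write the generating sequence of $W$ as $(c,u_0,\dots,u_{p-1})$ with $p=\dim(W)$, and for $n\in\{0,\dots,p\}$ and $\vec a=(a_0,\dots,a_{n-1})\in[k]^n$ put $t_n(\vec a)=c^{\con}u_0(a_0)^{\con}\cdots^{\con}u_{n-1}(a_{n-1})$, the corresponding node of $W(n)$, so that $t_0=c$ and $W=\bigcup_{n=0}^{p}\{t_n(\vec a):\vec a\in[k]^n\}$. Since $W\subseteq[k]^{<|L|}$, the length $q_n:=|t_n(\vec a)|$ is independent of $\vec a$ and $q_0<\cdots<q_p\mik|L|-1$, so each $q_n$ indexes an element $l_{q_n}$ of $L$ and, by Fact \ref{5f2}, $\cv_{L,V}(t_n(\vec a),x)\in V(l_{q_n})$. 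I will exhibit a word $c'$ and left variable words $u'_0,\dots,u'_{p-1}$ over $k$ with
\[ \cv_{L,V}\big(t_n(\vec a),x\big)={c'}^{\con}u'_0(a_0)^{\con}\cdots^{\con}u'_{n-1}(a_{n-1}) \]
for every $n$ and every $\vec a\in[k]^n$. Taking the union over all $n$ then identifies $W_x$ with the Carlson--Simpson tree generated by $(c',u'_0,\dots,u'_{p-1})$, a $\dim(W)$-dimensional Carlson--Simpson tree contained in $V$, and simultaneously gives $W_x(n)=\{\cv_{L,V}(w,x):w\in W(n)\}$, which is what the lemma asserts.

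Set $c'=\cv_{L,V}(c,x)$, settling the case $n=0$. The inductive step rests on the fact that $\cv_L$ — hence $\cv_{L,V}$ — respects initial segments: if $t=t'|_i$ with $i\mik|t'|<|L|$, then $\cv_L(t,x)=\cv_L(t',x)|_{l_i}$, because $L_i$ is an initial segment of $L_{|t'|}$ and $\overline{L}_i$ of $\overline{L}_{|t'|}$, so the canonical isomorphisms occurring in $\cv_L(t',x)$ restrict, via \eqref{2e7}, to those occurring in $\cv_L(t,x)$; composing with $\mathrm{I}_V$, which by \eqref{2e12} sends an initial segment of a sequence to an initial segment of its image, yields the same for $\cv_{L,V}$. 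Writing $(c_V,w^V_0,\dots,w^V_{m-1})$ for the generating sequence of $V$ (with $m=\dim(V)$), it follows that $\cv_{L,V}(t_n(\vec a)^{\con}u_n(a_n),x)$ is obtained from $\cv_{L,V}(t_n(\vec a),x)$ by concatenating, over the indices $p'=l_{q_n},l_{q_n}+1,\dots,l_{q_{n+1}}-1$, the blocks $w^V_{p'}$ evaluated at the $p'$-th letter of $\cv_L(t_n(\vec a)^{\con}u_n(a_n),x)$. That letter is: $a_n$ if $p'=l_{q_n+j}$ and the $j$-th letter of $u_n$ is the variable (in particular when $p'=l_{q_n}$, since $u_n$ is a \emph{left} variable word); the fixed letter $u_n(j)\in[k]$ if $p'=l_{q_n+j}$ and that letter is not the variable; and a letter depending on $x$ alone if $p'\notin L$. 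Hence, letting $u'_n$ be the concatenation of the blocks over $p'\in\{l_{q_n},\dots,l_{q_{n+1}}-1\}$ in which one keeps the variable $v$ exactly in the blocks $w^V_{p'}$ with $p'=l_{q_n+j}$ and $u_n(j)$ the variable, and substitutes the indicated letter of $u_n$ or of $x$ elsewhere, one gets a \emph{left} variable word over $k$ (its leftmost block is $w^V_{l_{q_n}}$, whose leftmost letter is $v$) such that the appended part equals $u'_n(a_n)$, independently of $\vec a$. Thus $\cv_{L,V}(t_{n+1}(\vec a,a_n),x)=\cv_{L,V}(t_n(\vec a),x)^{\con}u'_n(a_n)$, closing the induction and hence proving the displayed identity together with \eqref{5e15}.

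The one place that genuinely needs care is the bookkeeping underlying the previous paragraph: checking from \eqref{5e1} that $L_i,\overline{L}_i$ are initial segments of $L_j,\overline{L}_j$ whenever $i<j$ (so that $\cv_L$ respects initial segments), identifying precisely which coordinates in the block $[l_{q_n},l_{q_{n+1}})$ lie in $L$ and which of these carry the variable of $u_n$, and verifying that the leftmost such coordinate is $l_{q_n}$ so that $u'_n$ is indeed left variable. All of this is a direct unwinding of Definition \ref{5d1} together with formulas \eqref{2e7} and \eqref{2e12}; nothing beyond careful tracking of indices is required, and once it is in place the conclusion of the lemma is immediate.
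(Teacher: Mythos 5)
Your argument is correct, but it is organized differently from the paper's. The paper first shows that the map $t\mapsto \cv_L(t,x)$ is itself the canonical isomorphism of an auxiliary Carlson--Simpson tree $S_x$, whose generating sequence is written down explicitly from the blocks of $x$ lying between consecutive elements of $L$, so that $\cv_{L,V}(t,x)=\mathrm{I}_V\big(\mathrm{I}_{S_x}(t)\big)$; it then invokes the general fact from \S 2.5 that canonical isomorphisms carry Carlson--Simpson subtrees to Carlson--Simpson subtrees of the same dimension, level by level. You instead fix the subtree $W$ and build the generating sequence of $W_x$ directly, by induction on the level, tracking which coordinates of each new block lie in $L$ and which of those carry the variable of $u_n$. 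Both proofs rest on the same two observations --- that $\cv_L$ interleaves $t$ with letters of $x$ depending only on position and respects initial segments, and that $\mathrm{I}_V$ turns the $p'$-th letter into the block $w^V_{p'}$ --- and your verification that $u'_n$ is a \emph{left} variable word (because $u_n(0)=v$ and the leftmost new coordinate is $l_{q_n}\in L$, so the leftmost block of $u'_n$ is the left variable word $w^V_{l_{q_n}}$ with its variable kept) is exactly the point that needs care. The paper's factorization is more economical: once $\cv_{L,V}(\cdot,x)$ is recognized as a composition of canonical isomorphisms, the conclusion holds for all subtrees $W$ at once with no further induction, and the same factorization is reused immediately in the proof of Lemma \ref{5l7}. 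Your version is self-contained and has the mild advantage of exhibiting the generating sequence of $W_x$ explicitly, at the cost of redoing the coordinate bookkeeping for each $W$.
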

\begin{proof}
Let $\ell=|L|$ and $\{l_0<...<l_{\ell-1}\}$ be the increasing enumeration of $L$. Set
\begin{equation} \label{5e16}
J_0=\{n\in\nn:n<l_0\} \text{ and } J_i=\{n\in\nn:l_{i-1}-i+1\mik n\mik l_{i}-i-1\}
\end{equation}
for every $i\in [\ell-1]$. Observe that the family $\{J_0, J_1,..., J_{\ell-1}\}$ forms a partition of $\{0,...,l_{\ell-1}-\ell\}$
into successive intervals some of which are possibly empty. Let
\begin{equation} \label{5e17}
c=\mathrm{I}_{J_0}^{-1}(x|_{J_0}).
\end{equation}
Also, for every $i\in\{0,...,\ell-2\}$ we define
\begin{equation} \label{5e18}
w_{i}=v^\con \mathrm{I}^{-1}_{J_{i+1}}(x|_{J_{i+1}}).
\end{equation}
Clearly $w_i$ is a left variable word over $k$ for every $i\in\{0,...,\ell-2\}$. Let $S_x$ be the Carlson--Simpson tree generated by the
sequence $(c,w_0,..., w_{\ell-2})$ and observe that $\cv_L(t,x)=\mathrm{I}_{S_x}(t)$ for every $t\in [k]^{<|L|}$. Hence, for every
$t\in[k]^{<|L|}$ we have
\begin{equation} \label{5e19}
\cv_{L,V}(t,x)=\mathrm{I}_V\big(\mathrm{I}_{S_x}(t)\big).
\end{equation}
Using \eqref{5e19} and invoking the definition of $W_x$ in \eqref{5e14}, the result follows.
\end{proof}
The next result enables us to transfer quantitative information from the space $[k]^{<\nn}$ to the space on which the convolution operations
are acting.
\begin{lem}\label{5l7}
Let $k, V$ and $L$ be as in Lemma \ref{5l5}. Also let $W$ be a Carlson--Simpson subtree of $[k]^{<|L|}$ and $x\in X_L$ and define $W_x$
as in \eqref{5e14}. If $A$ is a subset of $[k]^{<\nn}$ and $B=\cv_{L,V}^{-1}(A)$, then for every $i\in\{0,...,\dim(W)\}$ we have
\begin{equation} \label{5e20}
\dens_{W_x(i)}(A)=\dens_{W(i)\times \{x\}}(B).
\end{equation}
\end{lem}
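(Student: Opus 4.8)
\medskip

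The plan is to reduce the identity to a straightforward cardinality count carried out through the map $\phi_i\colon W(i)\to W_x(i)$ given by $\phi_i(w)=\cv_{L,V}(w,x)$. So fix $i\in\{0,\dots,\dim(W)\}$. By Lemma \ref{5l6} we know that $W_x(i)=\{\cv_{L,V}(w,x):w\in W(i)\}$, hence $\phi_i$ is well-defined and surjective. The only point that needs a brief justification is that $\phi_i$ is also injective, and this is immediate from Fact \ref{5f2}: the sets $\Omega_t=\{\cv_{L,V}(t,y):y\in X_L\}$ with $t\in[k]^{<|L|}$ are pairwise disjoint for distinct $t$, so if $\cv_{L,V}(w,x)=\cv_{L,V}(w',x)$ then this common element lies in $\Omega_w\cap\Omega_{w'}$, forcing $w=w'$. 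Consequently $|W_x(i)|=|W(i)|=|W(i)\times\{x\}|$, which already equates the denominators of the two densities in question.

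It remains to match the numerators. Since $B=\cv_{L,V}^{-1}(A)$, we have
\[
B\cap\big(W(i)\times\{x\}\big)=\big\{(w,x):w\in W(i)\ \text{and}\ \cv_{L,V}(w,x)\in A\big\},
\]
so $\phi_i$ restricts to a bijection from $\{w\in W(i):\cv_{L,V}(w,x)\in A\}$ onto $A\cap W_x(i)$. Reading off cardinalities yields $\big|B\cap(W(i)\times\{x\})\big|=\big|A\cap W_x(i)\big|$, and dividing by $|W(i)\times\{x\}|=|W_x(i)|$ gives $\dens_{W(i)\times\{x\}}(B)=\dens_{W_x(i)}(A)$, which is exactly \eqref{5e20}.

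I do not expect a genuine obstacle here: the whole lemma is an exercise in transporting a density along the bijection $\phi_i$, and the only step worth spelling out is the injectivity of $\phi_i$, which is handled above via Fact \ref{5f2}. (Alternatively, one could deduce the statement from part (i) of Lemma \ref{5l5} applied to the auxiliary Carlson--Simpson tree $S_x$ built in the proof of Lemma \ref{5l6}, but the direct bijection argument is shorter and entirely self-contained.)
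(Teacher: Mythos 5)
Your proof is correct and follows essentially the same route as the paper: both transport the density along the bijection $w\mapsto \cv_{L,V}(w,x)$ from $W(i)$ onto $W_x(i)$, which intertwines membership in $B$ with membership in $A$. The only cosmetic difference is that the paper obtains injectivity by writing $\cv_{L,V}(t,x)=\mathrm{I}_V(\mathrm{I}_{S_x}(t))$ as a composition of canonical isomorphisms (via the auxiliary tree $S_x$ from the proof of Lemma \ref{5l6}), whereas you derive it directly from the disjointness of the sets $\Omega_t$ in Fact \ref{5f2}; both justifications are valid.
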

\begin{proof}
We define the Carlson--Simpson tree $S_x$ exactly as we did in the proof of Lemma \ref{5l6}. By \eqref{5e19}, for every $t\in[k]^{<|L|}$ we have
that $(t,x)\in B$  if and only if $\mathrm{I}_V(\mathrm{I}_{S_x}(t))\in A$ and the result follows.
\end{proof}
We close this section with the following lemma.
\begin{lem} \label{5l8}
Let $k, V$ and $L$ be as in Lemma \ref{5l5}. Also let $W$ be a Carlson--Simpson subtree of $[k]^{<|L|}$ and $A$ be a subset of $[k]^{<\nn}$.
Then for every $i\in\{0,...,\dim(W)\}$ we have
\begin{equation} \label{5e21}
\dens_{\cv_{L,V}( W(i)\times X_L)}(A)=\ave_{x\in X_L}\dens_{W_x(i)}(A)
\end{equation}
where $W_x$ is as in \eqref{5e14}. In particular, for every $i\in \{0,..., |L|-1\}$ we have
\begin{equation} \label{5e22}
\dens_{V(l_i)}(A)=\ave_{x\in X_L}\dens_{R_x(i)}(A)
\end{equation}
where $R_x=\{ \cv_{L,V}(t,x): t\in [k]^{<|L|}\}$ for every $x\in X_L$.
\end{lem}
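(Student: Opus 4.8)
The plan is to reduce both identities to Lemmas~\ref{5l5} and~\ref{5l7} by averaging densities over suitable equipartitions. Throughout fix $i\in\{0,\dots,\dim(W)\}$, let $d$ denote the natural number with $W(i)\subseteq[k]^d$ (so $d<|L|$, since $W$ is a Carlson--Simpson subtree of $[k]^{<|L|}$), and set $B=\cv_{L,V}^{-1}(A)$.

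The core step is the identity
\[
\dens_{W(i)\times X_L}(B)=\dens_{\cv_{L,V}(W(i)\times X_L)}(A),
\]
which generalizes Lemma~\ref{5l5}(ii) (that being the instance $W=[k]^{<|L|}$, $W(i)=[k]^i$, $\cv_{L,V}(W(i)\times X_L)=V(l_i)$). With $\Omega_w$ as in~\eqref{5e6}, one has $\cv_{L,V}(W(i)\times X_L)=\bigcup_{w\in W(i)}\Omega_w$ directly from the definitions. Distinct elements of $W(i)$ yield disjoint sets $\Omega_w$ by Fact~\ref{5f2}, and since $W(i)\subseteq[k]^d$ the same fact shows these $\Omega_w$ all have the same cardinality; hence $\dens_{\cv_{L,V}(W(i)\times X_L)}(A)=\ave_{w\in W(i)}\dens_{\Omega_w}(A)$. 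Likewise $W(i)\times X_L$ is the disjoint union of the equal-sized slices $\{w\}\times X_L$, so $\dens_{W(i)\times X_L}(B)=\ave_{w\in W(i)}\dens_{\{w\}\times X_L}(B)$. Applying Lemma~\ref{5l5}(i) to each $w\in W(i)\subseteq[k]^{<|L|}$ equates the two averages term by term, and the identity follows.

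To obtain~\eqref{5e21} I would then average the conclusion of Lemma~\ref{5l7} over $x\in X_L$: since the slices $W(i)\times\{x\}$ form an equipartition of $W(i)\times X_L$, this gives $\ave_{x\in X_L}\dens_{W_x(i)}(A)=\dens_{W(i)\times X_L}(B)$, and combining with the core identity yields~\eqref{5e21}. Finally,~\eqref{5e22} is the instance of~\eqref{5e21} for $W=[k]^{<|L|}$, which is a Carlson--Simpson subtree of $[k]^{<|L|}$ of dimension $|L|-1$ (note that $|L|\meg 2$ is forced by the hypotheses of the lemma); for this $W$ one has $W(i)=[k]^i$ and $W_x=R_x$ by definition, while Fact~\ref{5f2} gives $\cv_{L,V}([k]^i\times X_L)=\bigcup_{t\in[k]^i}\Omega_t=V(l_i)$, so~\eqref{5e21} specializes exactly to~\eqref{5e22}.

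The only point demanding real care is the core identity, and within it the passage from a density to the average of the densities over the cells of a partition; this is legitimate precisely because Fact~\ref{5f2} (and, for the term-by-term comparison, Lemma~\ref{5l5}(i)) guarantee that the families $\{\Omega_w:w\in W(i)\}$ and $\{\{w\}\times X_L:w\in W(i)\}$ are genuine equipartitions rather than mere partitions. Everything else is routine unwinding of the definitions.
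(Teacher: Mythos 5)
Your proof is correct and follows essentially the same route as the paper: the same chain of equalities through $\ave_{t\in W(i)}\dens_{\Omega_t}(A)$ and $\dens_{W(i)\times X_L}(B)$, using Fact \ref{5f2} for the equipartitions, Lemma \ref{5l5}(i) for the term-by-term comparison, and Lemma \ref{5l7} for the final averaging over $x\in X_L$, with \eqref{5e22} obtained as the special case $W=[k]^{<|L|}$. The only difference is organizational (you isolate the "core identity" first), and your side remarks on equal cardinalities and on $|L|\meg 2$ are accurate.
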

\begin{proof}
Let $i\in \{0,...,\dim(W)\}$. There exists a unique $l\in \{0,..., |L|-1\}$ such that $W(i)$ is contained in $[k]^l$. By Fact \ref{5f2}, the
family $\{\Omega_t:t\in W(i)\}$ forms an equipartition of $\cv_{L,V}(W(i)\times X_L)$. Therefore, setting $B=\cv_{L,V}^{-1}(A)$, by Lemma \ref{5l5},
\begin{eqnarray} \label{5e23}
\dens_{\cv_{L,V}(W(i)\times X_L)}(A) &=&   \ave_{t\in W(i)}\dens_{\Omega_t}(A) \\
&= & \ave_{t\in W(i)}\dens_{\{t\}\times X_L}(B) \nonumber \\
& = & \dens_{W(i)\times X_L}(B) \nonumber \\
& = & \ave_{x\in X_L}\dens_{W(i)\times \{x\}}(B) \nonumber \\
& \stackrel{\eqref{5e20}}{=} & \ave_{x\in X_L}\dens_{W_x(i)}(A). \nonumber
\end{eqnarray}
Finally notice that $V(l_i)=\cv_{L,V}([k]^i\times X_L)$ for every $i\in\{0,...,|L|-1\}$. Therefore, equality \eqref{5e22} follows
by \eqref{5e21} and the proof is completed.
\end{proof}


\section{Iterated convolutions}

\numberwithin{equation}{section}

Our goal in this section is to study iterations of convolution operations. We remark that this material will be used only in \S 9.
The exact statements that we need are isolated in \S 6.2.
\medskip

\noindent 6.1. \textbf{Definitions and basic properties.} We start with the following definition.
\begin{defn} \label{6d1}
Let $\bl=(L_n)_{n=0}^d$ be a finite sequence of nonempty finite subsets of $\nn$. Also let $k\in\nn$ with $k\meg 2$ and $\bv=(V_n)_{n=0}^d$
be a finite sequence of Carlson--Simpson trees of $[k]^{<\nn}$ with the same length as $\bl$. We say that the pair $(\bl,\bv)$ is
$k$-\emph{compatible}, or simply \emph{compatible} if $k$ is understood, provided that for every $n\in \{0,...,d\}$ we have
$L_n\subseteq \{0,...,\dim(V_n)\}$ and, if $n<d$, then $V_{n+1}\subseteq [k]^{<|L_n|}$.
\end{defn}
Notice that if $(\bl,\bv)$ is a compatible pair and $\bl'$,$ \bv'$ are initial subsequences of $\bl$, $\bv$ with common length, then the pair
$(\bl',\bv')$ is also compatible. Also observe that for every compatible pair $(\bl,\bv)=\big((L_n)_{n=0}^d,(V_n)_{n=0}^d\big)$ and every
$n\in \{0,...,d\}$ we can define the convolution operation $\cv_{L_n,V_n}: [k]^{<|L_n|}\times\ X_{L_n}\to V_n$ associated to $(L_n,V_n)$ as
described in \S 5. What Definition \ref{6d1} guarantees is that for compatible pairs we can iterate these operations. This is the content of
the following definition.
\begin{defn} \label{6d2}
Let $k\in\nn$ with $k\meg 2$ and $(\bl,\bv)=\big((L_n)_{n=0}^d,(V_n)_{n=0}^d\big)$ be a $k$-compatible pair. We set
\begin{equation} \label{6e1}
X_{\bl}=\prod_{n=0}^d X_{L_n}.
\end{equation}
By recursion on $d$ we define the \emph{iterated convolution operation}
\begin{equation} \label{6e2}
\cv_{\bl,\bv}:[k]^{<|L_d|}\times X_{\bl}\to V_0
\end{equation}
associated to $(\bl,\bv)$ as follows. For $d=0$ this is the $\cv_{L_0,V_0}$ convolution operation defined in \eqref{5e4}.

If $d\meg 1$, then let $\bl'=(L_n)_{n=0}^{d-1}$ and $\bv'=(V_n)_{n=0}^{d-1}$ and assume that the operation $\cv_{\bl',\bv'}$ has been defined. We set
\begin{equation} \label{6e3}
\cv_{\bl,\bv}(s,x_0,...,x_d)=\cv_{\bl',\bv'}\big(\cv_{L_d,V_d}(s,x_d),x_0,...,x_{d-1}\big)
\end{equation}
for every $s\in[k]^{<|L_d|}$ and every $(x_0,...,x_d)\in X_{\bl}$. In this case, the \emph{quotient map}
\begin{equation} \label{6e4}
\qv_{\bl,\bv}: [k]^{<|L_d|}\times X_{\bl} \to [k]^{<|L_{d-1}|}\times X_{\bl'}
\end{equation}
associated to $(\bl,\bv)$ is defined by the rule
\begin{equation} \label{6e5}
\qv_{\bl,\bv}(t,\bx,x)=\big( \cv_{L_d,V_d}(t,x),\bx\big)
\end{equation}
for every $t\in [k]^{<|L_d|}$ and every $(\bx,x)\in X_{\bl'}\times X_{L_d}$.
\end{defn}
The rest of this subsection is devoted to several lemmas establishing properties of iterated convolutions. Just as in \S 5, all this material
follows by carefully manipulating the relevant definitions. We begin with the following elementary fact.
\begin{fact} \label{6f3}
Let $k\meg 2$ and $(\bl,\bv)=\big((L_n)_{n=0}^d,(V_n)_{n=0}^d\big)$ be a $k$-compatible pair. If $d\meg1$ and
$(\bl',\bv')=\big((L_n)_{n=0}^{d-1}, (V_n)_{n=0}^{d-1}\big)$, then $\cv_{\bl,\bv}=\cv_{\bl',\bv'}\circ \qv_{\bl,\bv}$.
\end{fact}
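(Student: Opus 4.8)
The plan is to prove Fact \ref{6f3} by directly unwinding Definition \ref{6d2}, with no induction required. First I would fix $s\in[k]^{<|L_d|}$ and an arbitrary point $(x_0,\dots,x_d)\in X_{\bl}=\prod_{n=0}^d X_{L_n}$, and observe that under the canonical identification $X_{\bl}=X_{\bl'}\times X_{L_d}$ this point is the pair $(\bx,x)$ with $\bx=(x_0,\dots,x_{d-1})\in X_{\bl'}$ and $x=x_d\in X_{L_d}$. Applying the defining formula \eqref{6e5} of the quotient map then gives
\[ \qv_{\bl,\bv}(s,\bx,x)=\big(\cv_{L_d,V_d}(s,x),\bx\big)=\big(\cv_{L_d,V_d}(s,x_d),x_0,\dots,x_{d-1}\big). \]

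Next I would apply $\cv_{\bl',\bv'}$ to the resulting point. Here I should check that the composition is well-formed: by $k$-compatibility of $(\bl,\bv)$ (Definition \ref{6d1}) we have $\cv_{L_d,V_d}(s,x_d)\in V_d\subseteq[k]^{<|L_{d-1}|}$, so the point lies in the domain $[k]^{<|L_{d-1}|}\times X_{\bl'}$ of $\cv_{\bl',\bv'}$ (and $(\bl',\bv')$ is itself compatible, as remarked right after Definition \ref{6d1}, so $\cv_{\bl',\bv'}$ is indeed defined). I would then invoke the recursive clause \eqref{6e3}, which asserts precisely that
\[ \cv_{\bl,\bv}(s,x_0,\dots,x_d)=\cv_{\bl',\bv'}\big(\cv_{L_d,V_d}(s,x_d),x_0,\dots,x_{d-1}\big). \]
Combining this with the previous display yields $(\cv_{\bl',\bv'}\circ\qv_{\bl,\bv})(s,x_0,\dots,x_d)=\cv_{\bl,\bv}(s,x_0,\dots,x_d)$, and since $(s,x_0,\dots,x_d)$ was arbitrary the two maps coincide.

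The main point is that there is essentially no obstacle: the identity is just a restatement of the recursion defining $\cv_{\bl,\bv}$, and the quotient map \eqref{6e5} was introduced exactly so that \eqref{6e3} factors through it. The only thing requiring a little care is the bookkeeping of the identification $X_{\bl}\cong X_{\bl'}\times X_{L_d}$ together with the domain/codomain matching between $\cv_{L_d,V_d}$ and $\cv_{\bl',\bv'}$, both of which are routine consequences of $k$-compatibility; a single unwinding of the recursion step suffices.
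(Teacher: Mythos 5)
Your proof is correct and is exactly the intended argument: the paper states Fact \ref{6f3} without proof precisely because it is an immediate unwinding of \eqref{6e3} and \eqref{6e5}, which is what you carry out. The compatibility check ensuring $\cv_{L_d,V_d}(s,x_d)$ lands in the domain of $\cv_{\bl',\bv'}$ is the right detail to verify, and your bookkeeping of $X_{\bl}\cong X_{\bl'}\times X_{L_d}$ is accurate.
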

The next two lemmas are multidimensional analogues of Lemmas \ref{5l6} and \ref{5l7}.
\begin{lem} \label{6l4}
Let $k\meg 2$ and $(\bl,\bv)=\big((L_n)_{n=0}^d,(V_n)_{n=0}^d\big)$ be a $k$-compatible pair. Also let $W$ be a Carlson--Simpson subtree
of $[k]^{<|L_d|}$ and $\bx\in X_{\bl}$. Then the set
\begin{equation} \label{6e6}
W_{\bx}=\big\{ \cv_{\bl,\bv}(w,\bx): w\in W\big\}
\end{equation}
is a Carlson--Simpson subtree of $V_0$ with the same dimension as $W$. Moreover, for every $i\in\{0,...,\dim(W)\}$ we have
\begin{equation}\label{6e7}
W_{\bx}(i)=\big\{ \cv_{\bl,\bv}(w,\bx): w\in W(i)\big\}.
\end{equation}
\end{lem}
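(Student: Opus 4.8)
The plan is to reduce this multidimensional statement to the one-dimensional case already established in Lemma \ref{5l6}, using the recursive structure of iterated convolutions encoded in Fact \ref{6f3}. I would argue by induction on $d$. The base case $d=0$ is exactly Lemma \ref{5l6} applied to the pair $(L_0,V_0)$, since $\cv_{\bl,\bv}=\cv_{L_0,V_0}$ in that case and $W$ is a Carlson--Simpson subtree of $[k]^{<|L_0|}$; Lemma \ref{5l6} gives that $W_{\bx}=\{\cv_{L_0,V_0}(w,x_0):w\in W\}$ is a Carlson--Simpson subtree of $V_0$ of dimension $\dim(W)$, together with the level-by-level identity \eqref{5e15}, which is precisely \eqref{6e7}.

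For the inductive step, assume $d\meg 1$ and that the lemma holds for all compatible pairs of length $d$. Write $\bx=(\bx',x_d)$ with $\bx'\in X_{\bl'}$ and $x_d\in X_{L_d}$, where $(\bl',\bv')=\big((L_n)_{n=0}^{d-1},(V_n)_{n=0}^{d-1}\big)$. First I would apply Lemma \ref{5l6} to the single pair $(L_d,V_d)$: since $W$ is a Carlson--Simpson subtree of $[k]^{<|L_d|}$ and $x_d\in X_{L_d}$, the set
\begin{equation*}
W' = \big\{\cv_{L_d,V_d}(w,x_d):w\in W\big\}
\end{equation*}
is a Carlson--Simpson subtree of $[k]^{<|L_d|}$ of dimension $\dim(W)$ — here I use compatibility of $(\bl,\bv)$, which gives $V_{d}\subseteq[k]^{<|L_{d-1}|}$, so $W'\subseteq V_d\subseteq[k]^{<|L_{d-1}|}$ and $W'$ is indeed a legitimate Carlson--Simpson subtree of the domain of $\cv_{\bl',\bv'}$. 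Moreover \eqref{5e15} gives $W'(i)=\{\cv_{L_d,V_d}(w,x_d):w\in W(i)\}$ for every $i\in\{0,\dots,\dim(W)\}$. Now apply the inductive hypothesis to the length-$d$ compatible pair $(\bl',\bv')$, the Carlson--Simpson subtree $W'$ of $[k]^{<|L_{d-1}|}$, and the parameter $\bx'\in X_{\bl'}$: the set $W'_{\bx'}=\{\cv_{\bl',\bv'}(w',\bx'):w'\in W'\}$ is a Carlson--Simpson subtree of $V_0$ of dimension $\dim(W')=\dim(W)$, with $W'_{\bx'}(i)=\{\cv_{\bl',\bv'}(w',\bx'):w'\in W'(i)\}$.

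Finally I would glue the two steps using Fact \ref{6f3}, i.e. $\cv_{\bl,\bv}=\cv_{\bl',\bv'}\circ\qv_{\bl,\bv}$, together with the definition \eqref{6e5} of the quotient map, which says $\qv_{\bl,\bv}(w,\bx',x_d)=\big(\cv_{L_d,V_d}(w,x_d),\bx'\big)$. Chasing a single $w\in W$ through both maps: $\cv_{\bl,\bv}(w,\bx)=\cv_{\bl',\bv'}\big(\cv_{L_d,V_d}(w,x_d),\bx'\big)$, and as $w$ ranges over $W$, $\cv_{L_d,V_d}(w,x_d)$ ranges over $W'$, so $W_{\bx}=W'_{\bx'}$. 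Hence $W_{\bx}$ is a Carlson--Simpson subtree of $V_0$ of dimension $\dim(W)$, and the level identity \eqref{6e7} follows by composing the two level identities: for $w\in W(i)$ we get $\cv_{L_d,V_d}(w,x_d)\in W'(i)$, hence $\cv_{\bl,\bv}(w,\bx)\in W'_{\bx'}(i)=W_{\bx}(i)$, and conversely every element of $W_{\bx}(i)$ arises this way. The only point requiring care — and the one I would flag as the main (minor) obstacle — is verifying that $W'$ genuinely lies in the domain of $\cv_{\bl',\bv'}$ so the inductive hypothesis applies, which is exactly where the compatibility hypothesis $V_d\subseteq[k]^{<|L_{d-1}|}$ is used; everything else is a mechanical unwinding of Definition \ref{6d2} and Fact \ref{6f3}.
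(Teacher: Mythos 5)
Your proposal is correct and follows essentially the same route as the paper: induction on $d$ with base case Lemma \ref{5l6}, applying Lemma \ref{5l6} to $(L_d,V_d)$ to form the intermediate subtree $W'$ (the paper's $S$), checking via compatibility that $W'\subseteq V_d\subseteq [k]^{<|L_{d-1}|}$ so the inductive hypothesis for $(\bl',\bv')$ applies, and identifying $W_{\bx}$ with $W'_{\bx'}$ through Fact \ref{6f3}. The paper only writes out the first assertion and leaves the level identity to an analogous argument, which you carry out explicitly.
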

\begin{proof}
Both assertions are proved by induction on $d$ and using similar arguments. We will give the details only for the first one. The case ``$d=0$"
is the content of Lemma \ref{5l6}. So, let $d\meg1$ and assume that the result has been proved up to $d-1$. Fix a compatible pair
$(\bl,\bv)=\big((L_n)_{n=0}^d,(V_n)_{n=0}^d\big)$ and let $W$ and $\bx$ be as in the statement of the lemma. Write $\bx=(x_0,...,x_d)$ and set
$\bx'=(x_0,...,x_{d-1})$ and $S=\{\cv_{L_d,V_d}(w,x_d):w\in W\}$. Also let $\bl'=(L_n)_{n=0}^{d-1}$ and $\bv'=(V_n)_{n=0}^{d-1}$ and observe that
the pair $(\bl',\bv')$ is compatible. By Lemma \ref{5l6}, $S$ is a Carlson--Simpson subtree of $V_d$ with $\dim(S)=\dim(W)$. By Definition
\ref{6d1}, we have that $S$ is contained in $[k]^{<|L_{d-1}|}$. Therefore, applying the inductive assumptions for the pair $(\bl',\bv')$, we see
that $S_{\bx'}$ is a Carlson--Simpson subtree of $V_0$ of dimension $\dim(W)$. Noticing that $S_{\bx'}$ coincides with $W_{\bx}$ the result follows.
\end{proof}
\begin{lem}\label{6l5}
Let $k,\bl,\bv,W$ and $\bx$ be as Lemma \ref{6l4}. Also let $A$ be a subset of $[k]^{<\nn}$ and set $B=\cv_{\bl,\bv}^{-1}(A)$. Then for every
$i\in\{0,...,\dim(W)\}$ we have
\begin{equation} \label{6e8}
\dens_{W(i)\times\{\bx\}}(B)=\dens_{W_{\bx}(i)}(A).
\end{equation}
\end{lem}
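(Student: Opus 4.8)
The plan is to induct on $d$, where $\bl=(L_n)_{n=0}^d$. The base case $d=0$ is nothing but Lemma~\ref{5l7}, since then $\cv_{\bl,\bv}=\cv_{L_0,V_0}$, $X_{\bl}=X_{L_0}$, $\bx\in X_{L_0}$, and $W_{\bx}$ agrees with the set denoted $W_x$ there.

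For the inductive step I would fix $d\meg 1$, assume the lemma for all $k$-compatible pairs of length $d$, write $\bx=(x_0,\dots,x_d)$ and $\bx'=(x_0,\dots,x_{d-1})$, and set $\bl'=(L_n)_{n=0}^{d-1}$, $\bv'=(V_n)_{n=0}^{d-1}$, which form a compatible pair by Definition~\ref{6d1}. As in the proof of Lemma~\ref{6l4}, put $S=\{\cv_{L_d,V_d}(w,x_d):w\in W\}$. By Lemma~\ref{5l6}, $S$ is a Carlson--Simpson subtree of $V_d$ with $\dim(S)=\dim(W)$ and with $S(i)=\{\cv_{L_d,V_d}(w,x_d):w\in W(i)\}$ for every $i\in\{0,\dots,\dim(W)\}$; by compatibility $S\subseteq V_d\subseteq [k]^{<|L_{d-1}|}$. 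The key structural observation I would record is that the map $\phi\colon w\mapsto \cv_{L_d,V_d}(w,x_d)$ is injective: by the computation in the proof of Lemma~\ref{5l6} it coincides on its domain with $\mathrm{I}_{V_d}\circ\mathrm{I}_{S_{x_d}}$, a composition of canonical isomorphisms, hence a bijection. Therefore $\phi$ restricts to a bijection of $W(i)$ onto $S(i)$, and combining \eqref{6e3} with \eqref{6e7} (applied to $W,(\bl,\bv)$ and to $S,(\bl',\bv')$, the latter legitimate since $S$ is a Carlson--Simpson subtree of $[k]^{<|L_{d-1}|}$) gives $W_{\bx}(i)=S_{\bx'}(i)$.

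With this in hand I would set $B'=\cv_{\bl',\bv'}^{-1}(A)$. By Fact~\ref{6f3}, $(w,\bx)\in B$ if and only if $(\phi(w),\bx')\in B'$, so the bijection $\phi\colon W(i)\to S(i)$ carries $\{w\in W(i):(w,\bx)\in B\}$ onto $\{s\in S(i):(s,\bx')\in B'\}$ and preserves the cardinality of the ambient sets; hence $\dens_{W(i)\times\{\bx\}}(B)=\dens_{S(i)\times\{\bx'\}}(B')$. Applying the inductive hypothesis to the compatible pair $(\bl',\bv')$, the Carlson--Simpson subtree $S$ of $[k]^{<|L_{d-1}|}$, the point $\bx'\in X_{\bl'}$ and the set $A$ (noting $B'=\cv_{\bl',\bv'}^{-1}(A)$), the right-hand side equals $\dens_{S_{\bx'}(i)}(A)=\dens_{W_{\bx}(i)}(A)$, which closes the induction.

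I expect the whole argument to be routine bookkeeping; the one place that genuinely needs attention is the injectivity of $\phi$, which is exactly what allows one to replace $W(i)$ by $S(i)$ without distorting densities and thereby peel off the top layer of the iteration. Everything else is a mechanical unwinding of Definition~\ref{6d2} via Fact~\ref{6f3}, together with the structural content of Lemmas~\ref{5l6}, \ref{5l7} and \ref{6l4}.
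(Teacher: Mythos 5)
Your proof is correct and follows essentially the same route as the paper: induction on $d$, the same decomposition into $(\bl',\bv')$ and the intermediate tree $S=\{\cv_{L_d,V_d}(w,x_d):w\in W\}$, and the identification $W_{\bx}=S_{\bx'}$ before applying the inductive hypothesis. The only (cosmetic) difference is that you peel off the top layer by making the bijectivity of $w\mapsto\cv_{L_d,V_d}(w,x_d)$ explicit and combining it with Fact~\ref{6f3}, whereas the paper routes the same computation through the sections $B_{\bx'}$, $C_{\bx'}$ and an application of Lemma~\ref{5l7}; these are interchangeable justifications of the identical step.
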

\begin{proof}
By induction on $d$. The case ``$d=0$" follows from Lemma \ref{5l7}. Let $d\meg1$ and assume that the result has been proved up to $d-1$.
Fix a $k$-compatible pair $(\bl,\bv)=\big((L_n)_{n=0}^d,(V_n)_{n=0}^d\big)$ and let $W,\bx,A$ and $B$ be as in the statement of the lemma. Write
$\bx=(x_0,...,x_d)$ and define $\bx', S, \bl'$ and $\bv'$ precisely as in the proof of Lemma \ref{6l4}. We set $C=\cv_{\bl',\bv'}^{-1}(A)$.
For every $\by\in X_{\bl'}$ let $C_{\by}$ and $B_{\by}$ be the sections of $C$ and $B$ at $\by$. By Fact \ref{6f3}, we see
that $B_{\by}=\cv_{L_d,V_d}^{-1}(C_{\by})$. Hence,
\begin{eqnarray} \label{6e9}
\dens_{W(i)\times\{\bx\}}(B) & = & \dens_{W(i)\times\{\bx'\}\times\{x_d\}}(B) \\
& = & \dens_{W(i)\times\{\bx'\}\times\{x_d\}}(B_{\bx'}\times \{\bx'\}) \nonumber \\
& = & \dens_{W(i)\times\{x_d\}}(B_{\bx'}) \nonumber \\
& = & \dens_{W(i)\times\{x_d\}}\big(\cv_{L_d,V_d}^{-1}(C_{\bx'})\big) \nonumber
\end{eqnarray}
Invoking Lemma \ref{5l7} we have
\begin{equation} \label{6e10}
\dens_{W(i)\times\{x_d\}}\big(\cv_{L_d,V_d}^{-1}(C_{\bx'})\big) = \dens_{S(i)}(C_{\bx'}).
\end{equation}
Next observe that
\begin{equation} \label{6e11}
\dens_{S(i)}(C_{\bx'})= \dens_{S(i)\times \{\bx'\}}(C_{\bx'}\times\{\bx'\})=\dens_{S(i)\times\{\bx'\}}(C).
\end{equation}
As we have already pointed out in the proof of Lemma \ref{6l4}, the set $S_{\bx'}$ coincides with $W_{\bx}$. Since $C=\cv^{-1}_{\bl',\bv'}(A)$,
we may apply our inductive hypothesis to the Carlson--Simpson tree $S$, the element $\bx'$ and the set $A$ to infer that
\begin{equation}\label{6e12}
\dens_{S(i)\times\{\bx'\}}(C)=\dens_{S_{\bx'}(i)}(A)=\dens_{W_{\bx}(i)}(A).
\end{equation}
Combining equalities (\ref{6e9}) up to (\ref{6e12}) the result follows.
\end{proof}
We proceed with the following lemma.
\begin{lem} \label{6l6}
Let $k\meg 2$ and $(\bl,\bv)=\big((L_n)_{n=0}^d,(V_n)_{n=0}^d\big)$ be a $k$-compatible pair. Assume that $d\meg 1$ and let 
$\bl'=(L_n)_{n=0}^{d-1}$ and $\bv'=(V_n)_{n=0}^{d-1}$. Let $C$ be a subset of $[k]^{<|L_{d-1}|}\times X_{\bl'}$ and set $B=\qv_{\bl,\bv}^{-1}(C)$. 
Finally let $t\in [k]^{<|L_d|}$ and set
\begin{equation} \label{6e13}
\Omega_t=\{ \cv_{L_d,V_d}(t,x):x\in X_{L_d}\}.
\end{equation}
Then $\qv_{\bl,\bv}^{-1}(\Omega_t\times X_{\bl'})=\{t\}\times X_{\bl}$ and
\begin{equation}\label{6e14}
\dens_{\Omega_t\times X_{\bl'}}(C)= \dens_{\{t\}\times X_{\bl}}(B).
\end{equation} 
\end{lem}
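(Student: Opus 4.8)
The plan is to unwind the definition of the quotient map $\qv_{\bl,\bv}$ given in \eqref{6e5} and reduce everything to the one-step convolution $\cv_{L_d,V_d}$ together with Fact \ref{5f2} and Fact \ref{5f3}. First I would establish the identity $\qv_{\bl,\bv}^{-1}(\Omega_t\times X_{\bl'})=\{t\}\times X_{\bl}$. By \eqref{6e5} we have $\qv_{\bl,\bv}(u,\bx,x)=(\cv_{L_d,V_d}(u,x),\bx)$ for every $u\in[k]^{<|L_d|}$ and every $(\bx,x)\in X_{\bl'}\times X_{L_d}$. Since $X_{\bl}=X_{\bl'}\times X_{L_d}$, the preimage $\qv_{\bl,\bv}^{-1}(\Omega_t\times X_{\bl'})$ consists exactly of those triples $(u,\bx,x)$ with $\cv_{L_d,V_d}(u,x)\in\Omega_t$. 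By Fact \ref{5f2}, the sets $\{\cv_{L_d,V_d}(u,x):x\in X_{L_d}\}$ for distinct $u$ in $[k]^i$ (where $i=|t|$) are pairwise disjoint, and $\Omega_t$ is precisely the block corresponding to $u=t$; hence $\cv_{L_d,V_d}(u,x)\in\Omega_t$ forces $u=t$, with no constraint on $x$ or $\bx$. This gives the claimed equality.

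Next I would prove the density equality \eqref{6e14}. The clean way is to compute the fibers of $\qv_{\bl,\bv}$ restricted to $\{t\}\times X_{\bl}$. For a fixed $t$, the map $(t,\bx,x)\mapsto(\cv_{L_d,V_d}(t,x),\bx)$ sends $\{t\}\times X_{\bl'}\times X_{L_d}$ onto $\Omega_t\times X_{\bl'}$, and for each $s\in\Omega_t$ the fiber over $(s,\bx)$ is $\{t\}\times\{\bx\}\times Y^t_s$ where $Y^t_s=\{x\in X_{L_d}:\cv_{L_d,V_d}(t,x)=s\}$ is as in \eqref{5e8}. By Fact \ref{5f3}, the family $\{Y^t_s:s\in\Omega_t\}$ is an equipartition of $X_{L_d}$, so all these fibers have the same cardinality $|X_{L_d}|/|\Omega_t|$, independent of $s$ and $\bx$. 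Consequently the restriction of $\qv_{\bl,\bv}$ to $\{t\}\times X_{\bl}$ is a uniform-to-one surjection onto $\Omega_t\times X_{\bl'}$. Since $B=\qv_{\bl,\bv}^{-1}(C)$, pulling back a subset through a uniform-to-one map preserves density: $|B\cap(\{t\}\times X_{\bl})| = \sum_{(s,\bx)\in C\cap(\Omega_t\times X_{\bl'})} |X_{L_d}|/|\Omega_t| = |C\cap(\Omega_t\times X_{\bl'})|\cdot|X_{L_d}|/|\Omega_t|$, while $|\{t\}\times X_{\bl}| = |X_{\bl'}|\cdot|X_{L_d}| = |\Omega_t\times X_{\bl'}|\cdot|X_{L_d}|/|\Omega_t|$ (using $|\Omega_t|$ divides $|X_{L_d}|$ and $|\Omega_t\times X_{\bl'}|=|\Omega_t|\cdot|X_{\bl'}|$). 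Dividing gives $\dens_{\{t\}\times X_{\bl}}(B)=\dens_{\Omega_t\times X_{\bl'}}(C)$.

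Alternatively, and perhaps more in the spirit of the surrounding lemmas, one could deduce \eqref{6e14} directly from Lemma \ref{5l5}(i): thinking of $X_{\bl}$ as a product with the $X_{L_d}$ coordinate last, and $C$ as living in the space on which $\cv_{L_d,V_d}$ (extended by the identity on $X_{\bl'}$) acts, the set $B$ is the $\cv_{L_d,V_d}$-preimage of $C$ fiberwise over $X_{\bl'}$, and averaging Lemma \ref{5l5}(i) over $\bx\in X_{\bl'}$ yields exactly \eqref{6e14}. I would present whichever of these is shorter; the fiber-counting version is self-contained and short.

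I do not expect a genuine obstacle here — the lemma is a bookkeeping statement and everything reduces to Fact \ref{5f2} and Fact \ref{5f3} about the single convolution $\cv_{L_d,V_d}$, together with the defining formula \eqref{6e5} for $\qv_{\bl,\bv}$. The only mild care needed is to keep the coordinate conventions straight: $X_{\bl}=X_{\bl'}\times X_{L_d}$ with the last coordinate being the one $\qv_{\bl,\bv}$ acts on, and $\qv_{\bl,\bv}$ is the identity on the $X_{\bl'}$ factor. Once that is fixed, both halves of the statement are immediate from the cited facts.
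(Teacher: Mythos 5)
Your proposal is correct and essentially matches the paper's proof: the first identity is obtained exactly as you do, by writing $\qv_{\bl,\bv}^{-1}(\Omega_t\times X_{\bl'})=\cv_{L_d,V_d}^{-1}(\Omega_t)\times X_{\bl'}$ and using $\cv_{L_d,V_d}^{-1}(\Omega_t)=\{t\}\times X_{L_d}$. For the density equality the paper uses precisely your ``alternative'' (apply Lemma \ref{5l5}(i) to the sections $B_{\bx'}=\cv_{L_d,V_d}^{-1}(C_{\bx'})$ and average over $\bx'\in X_{\bl'}$), and your primary uniform-fiber count via Fact \ref{5f3} is just that same argument unwound, so both routes are fine.
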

\begin{proof}
By the definition of the quotient map $\qv_{\bl,\bv}$ in \eqref{6e5}, we have
\begin{equation} \label{6e15}
\qv_{\bl,\bv}^{-1}(\Omega_t\times X_{\bl'})=\cv_{L_d,V_d}^{-1}(\Omega_t)\times X_{\bl'}.
\end{equation}
Since $\cv_{L_d,V_d}^{-1}(\Omega_t)=\{t\}\times X_{L_d}$, by \eqref{6e15}, we see that
$\qv_{\bl,\bv}^{-1}(\Omega_t\times X_{\bl'})=\{t\}\times X_{\bl}$. 

Now for every $\bx'\in X_{\bl'}$ let $C_{\bx'}$ and $B_{\bx'}$ be the sections of $C$ and $B$ at $\bx'$ respectively. Observe that
$C_{\bx'}\subseteq [k]^{<|L_{d-1}|}$ and $B_{\bx'}\subseteq [k]^{<|L_{d}|}\times X_{L_d}$. Also notice that $B_{\bx'}=\cv_{L_d,V_d}^{-1}(C_{\bx'})$
for every $\bx'\in X_{\bl'}$. Hence, by Lemma \ref{5l5},
\begin{equation}\label{6e16}
\dens_{\{t\}\times X_{L_d}}(B_{\bx'})= \dens_{\Omega_t}(C_{\bx'})
\end{equation}
for every $\bx'\in X_{\bl'}$. Therefore,
\begin{eqnarray} \label{6e17}
\dens_{\{t\}\times X_{\bl}}(B) & = & \ave_{\bx'\in X_{\bl'}}\dens_{\{t\}\times X_{L_d}}(B_{\bx'}) \\
& \stackrel{\eqref{6e16}}= & \ave_{\bx'\in X_{\bl'}}\dens_{\Omega_t}(C_{\bx'}) = \dens_{\Omega_t\times X_{\bl'}}(C) \nonumber
\end{eqnarray}
as desired.
\end{proof}
We close this subsection with the following consequence of Lemma \ref{6l6}.
\begin{cor} \label{6c7}
Let $k\meg 2$ and $(\bl,\bv)=\big((L_n)_{n=0}^d,(V_n)_{n=0}^d\big)$ be a $k$-compatible pair. Assume that $d\meg1$ and let
$\bl'=(L_n)_{n=0}^{d-1}$ and $\bv'=(V_n)_{n=0}^{d-1}$. Let $A$ be a subset of $[k]^{<\nn}$ and set $A^d=\cv_{\bl,\bv}^{-1}(A)$ and
$A^{d-1}=\cv_{\bl',\bv'}^{-1}(A)$. Then for every $t\in [k]^{<|L_d|}$
\begin{equation} \label{6e18}
\dens_{X_{\bl}}(A^d_t) =\ave_{s\in \Omega_t} \dens_{X_{\bl'}}(A^{d-1}_s)
\end{equation}
where $A^d_t$ is the section of $A^d$ at $t$, $\Omega_t\subseteq V_d\subseteq [k]^{<|L_{d-1}|}$ is as in \eqref{6e13} and $A^{d-1}_s$ is the
section of $A^{d-1}$ at $s$.
\end{cor}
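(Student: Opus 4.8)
The plan is to reduce the statement to Lemma \ref{6l6} via Fact \ref{6f3}. Since $d\meg 1$, Fact \ref{6f3} gives $\cv_{\bl,\bv}=\cv_{\bl',\bv'}\circ\qv_{\bl,\bv}$, and taking preimages of $A$ yields
$A^d=\cv_{\bl,\bv}^{-1}(A)=\qv_{\bl,\bv}^{-1}\big(\cv_{\bl',\bv'}^{-1}(A)\big)=\qv_{\bl,\bv}^{-1}(A^{d-1})$.
Thus the pair $C:=A^{d-1}$ (a subset of $[k]^{<|L_{d-1}|}\times X_{\bl'}$) and $B:=A^d$ is precisely of the form considered in Lemma \ref{6l6}.

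Next I would fix $t\in[k]^{<|L_d|}$ and apply Lemma \ref{6l6} to this $C$ and $B$. The lemma provides the identity $\qv_{\bl,\bv}^{-1}(\Omega_t\times X_{\bl'})=\{t\}\times X_{\bl}$ and, crucially, $\dens_{\Omega_t\times X_{\bl'}}(A^{d-1})=\dens_{\{t\}\times X_{\bl}}(A^d)$, where $\Omega_t$ is exactly the set defined in \eqref{6e13}. It then remains to rewrite both sides of this equality in terms of sections. For the right-hand side, since $A^d\cap(\{t\}\times X_{\bl})=\{t\}\times A^d_t$, we get $\dens_{\{t\}\times X_{\bl}}(A^d)=|A^d_t|/|X_{\bl}|=\dens_{X_{\bl}}(A^d_t)$. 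For the left-hand side, using that $\Omega_t\subseteq V_d\subseteq[k]^{<|L_{d-1}|}$ (so that for $s\in\Omega_t$ the section $A^{d-1}_s$ is a well-defined subset of $X_{\bl'}$) and writing $A^{d-1}\cap(\Omega_t\times X_{\bl'})=\bigcup_{s\in\Omega_t}\{s\}\times A^{d-1}_s$ as a disjoint union, we obtain $\dens_{\Omega_t\times X_{\bl'}}(A^{d-1})=\frac{1}{|\Omega_t|\,|X_{\bl'}|}\sum_{s\in\Omega_t}|A^{d-1}_s|=\ave_{s\in\Omega_t}\dens_{X_{\bl'}}(A^{d-1}_s)$. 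Combining the three displayed identities yields \eqref{6e18}.

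I do not anticipate any genuine obstacle: the content is entirely carried by Lemma \ref{6l6}, and what is left is the routine counting for densities of product sets together with the bookkeeping check that every set involved lives in the space claimed, so that Lemma \ref{6l6} applies verbatim. (One could alternatively unfold the recursive definition \eqref{6e3} of $\cv_{\bl,\bv}$ directly instead of routing through Fact \ref{6f3}, but that only duplicates the work already done in Lemma \ref{6l6}.)
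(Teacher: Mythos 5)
Your proposal is correct and follows exactly the paper's own argument: the paper likewise writes $A^d=\qv_{\bl,\bv}^{-1}(A^{d-1})$ via the factorization of the iterated convolution, applies Lemma \ref{6l6} with $B=A^d$ and $C=A^{d-1}$, and then rewrites the two densities as a section density and an average over $\Omega_t$. The routine counting steps you spell out are left implicit in the paper but are exactly what is needed.
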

\begin{proof} 
We fix $t\in [k]^{<|L_d|}$. Notice that
\begin{equation} \label{6e19}
A^d=\cv_{\bl,\bv}^{-1}(A)=\qv_{\bl,\bv}^{-1}\big(\cv_{\bl',\bv'}^{-1}(A)\big)=\qv_{\bl,\bv}^{-1}(A^{d-1}).
\end{equation}
By \eqref{6e19} and  Lemma \ref{6l6} applied to the sets ``$B=A^d$'' and ``$C=A^{d-1}$'' we obtain
\begin{eqnarray} \label{6e20}
\dens_{X_{\bl}}(A^d_t) & = &  \dens_{\{t\}\times X_{\bl}}(A^d) \\
& \stackrel{\eqref{6e14}}{=} & \dens_{\Omega_t\times X_{\bl'}}(A^{d-1}) = \ave_{s\in \Omega_t} \dens_{X_{\bl'}}(A^{d-1}_s) \nonumber
\end{eqnarray}
and the proof is completed.
\end{proof}
\noindent 6.2. \textbf{Consequences.} As we have already mentioned, in this subsection we will collect some results which will be of particular
importance in \S 9. The first two of them follow by repeated applications of Corollary \ref{6c7}. The details are left to the reader.
\begin{cor} \label{6c8}
Let $k\meg 2$ and $(\bl,\bv)=\big((L_n)_{n=0}^d,(V_n)_{n=0}^d\big)$ be a $k$-compatible pair. Let $0<\gamma\mik 1$ and $A$ be a subset of
$[k]^{<\nn}$. We set $A^d=\cv_{\bl,\bv}^{-1}(A)$ and $A^0=\cv_{L_0,V_0}^{-1}(A)$. Suppose that $\dens_{X_{L_0}}(A^0_s)\meg \gamma$ for every
$s\in [k]^{<|L_0|}$ where $A^0_s$ is the section of $A^0$ at $s$. If $t\in [k]^{<|L_d|}$ and $A^d_t$ is the section of $A^d$ at $t$,
then $\dens_{X_{\bl}}(A^d_t)\meg\gamma$.
\end{cor}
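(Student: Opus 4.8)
The natural approach is an induction on $d$ using Corollary \ref{6c7} as the single-step tool. First I would dispose of the base case $d=0$: here the hypothesis and the conclusion coincide, since $\cv_{\bl,\bv}$ is just $\cv_{L_0,V_0}$, so $A^d_t = A^0_t$ and $\dens_{X_{L_0}}(A^0_t)\meg\gamma$ holds by assumption for every $t\in[k]^{<|L_0|}$.

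For the inductive step, suppose $d\meg 1$ and that the statement holds for the compatible pair $(\bl',\bv')=\big((L_n)_{n=0}^{d-1},(V_n)_{n=0}^{d-1}\big)$. Set $A^{d-1}=\cv_{\bl',\bv'}^{-1}(A)$. The inductive hypothesis, applied to $(\bl',\bv')$ with the same set $A$ and the same $\gamma$ (note the hypothesis ``$\dens_{X_{L_0}}(A^0_s)\meg\gamma$ for all $s$'' is a statement about $L_0,V_0$ only, hence unchanged), yields $\dens_{X_{\bl'}}(A^{d-1}_s)\meg\gamma$ for every $s\in[k]^{<|L_{d-1}|}$. Now fix $t\in[k]^{<|L_d|}$ and let $\Omega_t=\{\cv_{L_d,V_d}(t,x):x\in X_{L_d}\}\subseteq V_d\subseteq[k]^{<|L_{d-1}|}$ as in \eqref{6e13}. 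By Corollary \ref{6c7},
\begin{equation*}
\dens_{X_{\bl}}(A^d_t)=\ave_{s\in\Omega_t}\dens_{X_{\bl'}}(A^{d-1}_s).
\end{equation*}
Since every $s\in\Omega_t$ lies in $[k]^{<|L_{d-1}|}$, each term in the average is at least $\gamma$, so the average is at least $\gamma$, completing the induction.

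There is essentially no obstacle here: the work was already done in Corollary \ref{6c7}, and the only thing to be careful about is that the hypothesis of Corollary \ref{6c8} is invariant under passing from $(\bl,\bv)$ to its initial segment $(\bl',\bv')$ — which it is, since it only constrains the section densities of $A^0=\cv_{L_0,V_0}^{-1}(A)$. This is exactly the point that makes the induction go through cleanly, and it is why the statement is phrased in terms of $A^0$ rather than, say, the full iterated preimage at the top level. Accordingly I would keep the write-up to the two or three lines sketched above, or simply say ``by repeated application of Corollary \ref{6c7}'' as the paper already does for the analogous Corollary \ref{6c8} in the source.
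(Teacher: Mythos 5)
Your proof is correct and is exactly what the paper intends: the paper itself only says Corollary \ref{6c8} ``follows by repeated applications of Corollary \ref{6c7}'' and leaves the details to the reader, and your induction on $d$ — with the trivial base case and the observation that the hypothesis on $A^0$ is invariant under truncating the compatible pair — is the straightforward way to fill in those details.
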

\begin{cor} \label{6c9}
Let $k\meg 2$ and $(\bl,\bv)=\big((L_n)_{n=0}^d,(V_n)_{n=0}^d\big)$ be a $k$-compatible pair. Let $0<\lambda\mik 1$ and $A$ and $B$ be two
subsets of $[k]^{<\nn}$ with $A\subseteq B$. We set $A^d=\cv_{\bl,\bv}^{-1}(A)$,  $A^{0}=\cv_{L_0,V_0}^{-1}(A)$, $B^d=\cv_{\bl,\bv}^{-1}(B)$
and $B^{0}=\cv_{L_0,V_0}^{-1}(B)$. Suppose that $\dens_{X_{L_0}}(A^0_s)\meg \lambda \cdot \dens_{X_{L_0}}(B^0_s)$ for every $s\in [k]^{<|L_0|}$
where $A^0_s$ and $B^0_s$ are the sections of $A^0$ and $B^0$ at $s$. If $t\in [k]^{<|L_d|}$, then $\dens_{X_{\bl}}(A^d_t)\meg \lambda \cdot
\dens_{X_{\bl}}(B^d_t)$ where $A^d_t$ and $B^d_t$ are the sections of $A^d$ and $B^d$ at $t$.
\end{cor}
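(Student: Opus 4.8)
The plan is to prove the statement by induction on the length $d$ of the sequence $\bl$, the inductive step being a single application of Corollary \ref{6c7}. For $d=0$ the pair $(\bl,\bv)$ is just $\big((L_0),(V_0)\big)$, so $\cv_{\bl,\bv}=\cv_{L_0,V_0}$ and hence $A^d=A^0$, $B^d=B^0$ and $[k]^{<|L_d|}=[k]^{<|L_0|}$; in this case the desired inequality $\dens_{X_{\bl}}(A^d_t)\meg\lambda\cdot\dens_{X_{\bl}}(B^d_t)$ is literally the hypothesis, so there is nothing to prove.

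For the inductive step, fix $d\meg 1$ and put $\bl'=(L_n)_{n=0}^{d-1}$ and $\bv'=(V_n)_{n=0}^{d-1}$; as remarked after Definition \ref{6d1}, the pair $(\bl',\bv')$ is compatible. Set $A^{d-1}=\cv_{\bl',\bv'}^{-1}(A)$ and $B^{d-1}=\cv_{\bl',\bv'}^{-1}(B)$. The point to notice is that the ``ground-floor'' data $(L_0,V_0)$ of $(\bl',\bv')$ coincides with that of $(\bl,\bv)$, so the sets playing the role of ``$A^0$'' and ``$B^0$'' in the statement of the corollary are unchanged; in particular the hypothesis $\dens_{X_{L_0}}(A^0_s)\meg\lambda\cdot\dens_{X_{L_0}}(B^0_s)$ for $s\in[k]^{<|L_0|}$ is inherited verbatim by $(\bl',\bv')$. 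Applying the inductive hypothesis to $(\bl',\bv')$ therefore yields
\[
\dens_{X_{\bl'}}(A^{d-1}_s)\meg\lambda\cdot\dens_{X_{\bl'}}(B^{d-1}_s)\quad\text{for every }s\in[k]^{<|L_{d-1}|}.
\]

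Now fix $t\in[k]^{<|L_d|}$ and let $\Omega_t\subseteq V_d\subseteq[k]^{<|L_{d-1}|}$ be the set defined in \eqref{6e13}; note that $\Omega_t$ depends only on $(L_d,V_d)$ and $t$, hence is the same in the applications of Corollary \ref{6c7} to $A$ and to $B$. Using Corollary \ref{6c7} twice, then the displayed inequality (which applies to every $s\in\Omega_t$ since $\Omega_t\subseteq[k]^{<|L_{d-1}|}$), and finally the linearity and monotonicity of the average $\ave_{s\in\Omega_t}$, we obtain
\[
\dens_{X_{\bl}}(A^d_t)=\ave_{s\in\Omega_t}\dens_{X_{\bl'}}(A^{d-1}_s)\meg\lambda\cdot\ave_{s\in\Omega_t}\dens_{X_{\bl'}}(B^{d-1}_s)=\lambda\cdot\dens_{X_{\bl}}(B^d_t),
\]
which closes the induction. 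I expect no genuine difficulty; the only points requiring a moment's care are verifying that the hypothesis of the corollary is inherited by the truncated pair $(\bl',\bv')$ — clear since $(L_0,V_0)$ is unchanged — and that $\Omega_t$ is literally the same set in both invocations of Corollary \ref{6c7}, so that the two identities can be combined termwise. (Corollary \ref{6c8} is handled in exactly the same fashion, with the two-set comparison replaced by the single bound $\dens_{X_{L_0}}(A^0_s)\meg\gamma$.)
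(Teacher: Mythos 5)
Your proof is correct and is precisely the argument the paper intends: it explicitly leaves the statement as a "repeated application of Corollary \ref{6c7}," which is exactly your induction on $d$ with base case given by the hypothesis and inductive step obtained by averaging the inequality over $\Omega_t\subseteq[k]^{<|L_{d-1}|}$ via the identity of Corollary \ref{6c7} applied to both $A$ and $B$. The two points you flag as needing care (inheritance of the hypothesis by $(\bl',\bv')$ and the coincidence of $\Omega_t$ in both invocations) are indeed the only ones, and you handle them correctly.
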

The final result is a consequence of Lemma \ref{6l6}.
\begin{cor} \label{6c10}
Let $k\meg 2$ and $(\bl,\bv)=\big((L_n)_{n=0}^d,(V_n)_{n=0}^d\big)$ be a $k$-compatible pair. Assume that $d\meg 1$ and let
$\bl'=(L_n)_{n=0}^{d-1}$ and $\bv'=(V_n)_{n=0}^{d-1}$. Let $t\in [k]^{<|L_d|}$ and $C_0$ be a nonempty subset of $\Omega_{t}\times X_{\bl'}$
where $\Omega_t$ is as in \eqref{6e13}. Also let $C_1\subseteq [k]^{<|L_{d-1}|}\times X_{\bl'}$. Then, setting $B_i=\qv_{\bl,\bv}^{-1}(C_i)$
for every $i\in\{0,1\}$, we have that $\dens_{C_0}(C_1)=\dens_{B_0}(B_1)$.
\end{cor}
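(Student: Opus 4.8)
The plan is to reduce the statement to Lemma~\ref{6l6}, which already computes $\dens_{\Omega_t\times X_{\bl'}}$ in terms of $\dens_{\{t\}\times X_{\bl}}$, and then to observe that replacing these unconditional densities by the conditional densities relative to $C_0$ and $B_0$ costs nothing, since the common normalizing factors cancel once we take a ratio.

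Concretely, I would first record two elementary observations. By $k$-compatibility of $(\bl,\bv)$ we have $\Omega_t\subseteq V_d\subseteq [k]^{<|L_{d-1}|}$, so both $C_0$ and $C_0\cap C_1$ are subsets of $[k]^{<|L_{d-1}|}\times X_{\bl'}$ and hence legitimate inputs to Lemma~\ref{6l6}. Moreover, since taking preimages under $\qv_{\bl,\bv}$ commutes with intersections, $B_0\cap B_1=\qv_{\bl,\bv}^{-1}(C_0)\cap\qv_{\bl,\bv}^{-1}(C_1)=\qv_{\bl,\bv}^{-1}(C_0\cap C_1)$.

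Next I would apply Lemma~\ref{6l6} twice, once with ``$C=C_0$'' and once with ``$C=C_0\cap C_1$'' (so that the corresponding sets ``$B$'' are $B_0$ and $B_0\cap B_1$ respectively). Unfolding the definition of density and using $|\{t\}\times X_{\bl}|=|X_{\bl}|$, this gives
\[ \frac{|C_0|}{|\Omega_t\times X_{\bl'}|}=\frac{|B_0|}{|X_{\bl}|} \qquad\text{and}\qquad \frac{|C_0\cap C_1|}{|\Omega_t\times X_{\bl'}|}=\frac{|B_0\cap B_1|}{|X_{\bl}|}. \]
Since $C_0$ is nonempty, so is $B_0=\qv_{\bl,\bv}^{-1}(C_0)$ (every point of $\Omega_t\times X_{\bl'}$ has a nonempty $\qv_{\bl,\bv}$-fiber, directly from the definition of $\Omega_t$), so I may divide the second equality by the first. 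The factor $|\Omega_t\times X_{\bl'}|^{-1}$ then cancels on the left and $|X_{\bl}|^{-1}$ cancels on the right, leaving $|C_0\cap C_1|/|C_0|=|B_0\cap B_1|/|B_0|$, which is precisely $\dens_{C_0}(C_1)=\dens_{B_0}(B_1)$.

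I do not anticipate a genuine obstacle: the whole argument is bookkeeping around Lemma~\ref{6l6}. The only points requiring a moment's attention are checking that $C_0\cap C_1$, and not merely $C_0$, is an admissible input to Lemma~\ref{6l6}, and verifying that nonemptiness of $C_0$ transfers to $B_0$ so that the division is legitimate. As an alternative one could bypass Lemma~\ref{6l6} and argue directly that $\qv_{\bl,\bv}$ restricted to $\{t\}\times X_{\bl}=\qv_{\bl,\bv}^{-1}(\Omega_t\times X_{\bl'})$ has all fibers of the same cardinality $|X_{L_d}|/|\Omega_t|$ — this follows from the equipartition $\{Y^t_s:s\in\Omega_t\}$ of $X_{L_d}$ provided by Fact~\ref{5f3} together with the disjointness in Fact~\ref{5f2} — whence the uniform map $\qv_{\bl,\bv}$ preserves all conditional densities; but the route through Lemma~\ref{6l6} is the shortest.
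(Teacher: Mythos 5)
Your proposal is correct and is essentially the paper's own argument: the paper likewise notes that $\qv_{\bl,\bv}^{-1}$ commutes with intersections and that $B_0\subseteq\{t\}\times X_{\bl}$, and then applies \eqref{6e14} of Lemma \ref{6l6} twice (to $C_0$ and to $C_0\cap C_1$) before taking the ratio. The only cosmetic difference is that the paper writes this as a single chain of equalities starting from $\dens_{C_0}(C_1)$ rather than as two displayed identities followed by a division.
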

\begin{proof} 
Notice that $\qv_{\bl,\bv}^{-1}(C_0\cap C_1)=B_0\cap B_1$. Since $\qv_{\bl,\bv}^{-1}(\Omega_t\times X_{\bl'})=\{t\}\times X_{\bl}$,
we see that $B_0\subseteq \{t\}\times X_{\bl}$. Therefore, 
\begin{eqnarray} \label{6e21}
\dens_{C_0}(C_1) & = & \frac{|C_0\cap C_1|}{|C_0|}  = \frac{\dens_{\Omega_t\times X_{\bl'}}(C_0\cap C_1)}{\dens_{\Omega_t\times X_{\bl'}}(C_0)} \\
& \stackrel{\eqref{6e14}}{=} & \frac{\dens_{\{t\}\times X_{\bl}}(B_0\cap B_1)}{\dens_{\{t\}\times X_{\bl}}(B_0)} \nonumber \\
& = & \frac{|B_0\cap B_1|}{|B_0|} = \dens_{B_0}(B_1) \nonumber
\end{eqnarray}
as desired.
\end{proof}


\section{Preliminary tools for the proof of Theorem B}

\numberwithin{equation}{section}

In this section we will gather some results that are part of the proof of Theorem B but are not directly related to the main argument.
Specifically, in \S 7.1 we prove the first instance of Theorem B which can be seen as a variant of the classical Sperner Theorem \cite{Sp}.
In \S 7.2 we show how one can estimate the number $\dcs(k,m+1,\delta)$ assuming that the numbers $\dcs(k,m,\beta)$ have been defined for every
$0<\beta\mik 1$. This result is part of an inductive scheme that we will discuss in detail in \S 8.1. Finally, in \S 7.3 we present some consequences.
\medskip

\noindent 7.1. \textbf{Estimating the numbers $\dcs(2,1,\delta)$.} We have the following proposition.
\begin{prop} \label{7p1}
Let $0<\delta\mik 1$. Also let $A$ be a subset of $[2]^{<\nn}$ and $N$ be a finite subset of $\nn$ such that
\begin{equation} \label{7e1}
|N|\meg \reg\big(2,\cs(2,\lceil 17\delta^{-2}\rceil,1,2)+1,1,\delta/4\big).
\end{equation}
If $|A\cap [2]^n|\meg \delta 2^n$ for every $n\in N$, then there exists a Carlson--Simpson line $R$ of $[2]^{<\nn}$ which is
contained in $A$. In particular,
\begin{equation} \label{7e2}
\dcs(2,1,\delta)\mik \reg\big(2,\cs(2,\lceil 17\delta^{-2}\rceil,1,2)+1,1,\delta/4\big).
\end{equation}
\end{prop}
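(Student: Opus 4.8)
The plan is to combine the regularity lemma (Lemma \ref{3l2}), the convolution machinery of \S 5, the Carlson--Simpson version of the Graham--Rothschild theorem (Theorem \ref{4t1}) and a quantitative form of Sperner's theorem. Write $D=\lceil 17\delta^{-2}\rceil$ and $M=\cs(2,D,1,2)$, so that the integer on the right-hand side of \eqref{7e1} is exactly $\reg(2,M+1,1,\delta/4)$. First I would apply Lemma \ref{3l2} to the one-element family $\{A\}$ with parameters $\ell=M+1$, $q=1$ and $\ee=\delta/4$: since $|N|\meg\reg(2,M+1,1,\delta/4)$ this produces a set $L=\{\ell_0<\dots<\ell_M\}\subseteq N$ such that $\{A\}$ is $(\delta/4,L)$-regular, and by hypothesis $\dens_{[2]^{\ell_i}}(A)\meg\delta$ for every $i$.

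Next I would transfer the problem to $[2]^{<M+1}$ through the convolution operation $\cv_L$. Put $B=\cv_L^{-1}(A)$ and $B_t=\{x\in X_L:\cv_L(t,x)\in A\}$ for $t\in[2]^{<M+1}$. The sets $\Omega_t$ of Fact \ref{5f2} are precisely the ``regular sections'' appearing in Definition \ref{3d1}, so Lemma \ref{5l5}(i) together with the $(\delta/4,L)$-regularity of $\{A\}$ gives $\dens_{X_L}(B_t)\meg\dens_{[2]^{\ell_{|t|}}}(A)-\delta/4\meg 3\delta/4$ for every $t$. Moreover, by Lemma \ref{5l6}, for every $x\in X_L$ and every Carlson--Simpson line $\tau$ of $[2]^{<M+1}$ the set $\{\cv_L(t,x):t\in\tau\}$ is a Carlson--Simpson line of $[2]^{<\nn}$, and it is contained in $A$ precisely when $x\in\bigcap_{t\in\tau}B_t$. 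I would now equip $\subtr_1([2]^{<M+1})$ with a suitable finite colouring (recording, for each Carlson--Simpson line, whether it can be realised inside $A$ in this way, together with a coarse description of the relevant densities) and, since $M=\cs(2,D,1,2)$, invoke Theorem \ref{4t1} to obtain $U\in\subtr_D([2]^{<M+1})$ on which this colouring is constant.

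If the constant colour is the ``realisable'' one, I am done: any Carlson--Simpson line $\tau$ of $U$ and any $x\in\bigcap_{t\in\tau}B_t$ yield a Carlson--Simpson line $\{\cv_L(t,x):t\in\tau\}\subseteq A$, which also gives \eqref{7e2}. Otherwise, identify $U$ with $[2]^{<D+1}$ via its canonical isomorphism $\mathrm{I}_U$ and set $D_x=\{s\in[2]^{<D+1}:x\in B_{\mathrm{I}_U(s)}\}$ for $x\in X_L$; the homogeneity of $U$ then says that $D_x$ contains no Carlson--Simpson line, for every $x\in X_L$. On the other hand, averaging over $x$ and using the bound $\dens_{X_L}(B_t)\meg 3\delta/4$ (or, equivalently, Lemma \ref{5l8}) gives $\ave_{x\in X_L}\dens_{[2]^i}(D_x)\meg 3\delta/4$ for every $i\in\{0,\dots,D\}$. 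So it remains to show that these two properties are incompatible once $D\meg\lceil 17\delta^{-2}\rceil$.

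This last step is the crux. The idea is a ``rooted'' quantitative Sperner argument inside $[2]^{<D+1}$: by testing the $D_x$ against a random maximal chain of $[2]^{<D+1}$ (and against $x$), the $D_x$ are hit, on average, a constant-times-$\delta D$ number of times, and, controlling the conditional densities of the relevant fibres once more via the $(\delta/4,L)$-regularity, Lemma \ref{2l7} — applied with parameters of size $\Theta(\delta)$, which is exactly what forces the numerical value $17$ through the requirement $n\meg(\ee^2-\theta^2)^{-1}$ — produces, for a positive-density set of $x$, two comparable words in $D_x$ lying above a common root in $D_x$ and differing in their first coordinate; this is a Carlson--Simpson line of $D_x$, contradicting the previous paragraph. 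Combining the two cases yields \eqref{7e2}. I expect the main difficulty to lie precisely here: one must choose the random substructure of $[2]^{<D+1}$ so that all three points of a potential Carlson--Simpson line are located at once, which is where the left-variable constraint and the requirement that the root land in $A$ interact, and where the regularity obtained in the first step is genuinely needed; getting this interaction right is the only non-routine part of the argument.
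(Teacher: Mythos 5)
Your first step (regularity lemma applied to $\{A\}$ with parameters $\ell=M+1$, $q=1$, $\ee=\delta/4$, followed by the convolution $\cv_L$ and the conclusion $\dens_{X_L}(B_t)\meg 3\delta/4$ for all $t$) is exactly the paper's Lemma \ref{7l2}, and your use of Lemma \ref{5l6} to push a line $\tau$ with $\bigcap_{t\in\tau}B_t\neq\varnothing$ forward into $A$ is exactly how the paper concludes. The gap is in the middle. After applying Theorem \ref{4t1} to the two\--coloring ``realisable / not realisable'' you are left, in the bad case, with: every $D_x$ is Carlson--Simpson\--line\--free while each node event $\{x:s\in D_x\}$ has measure $\meg 3\delta/4$. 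Deriving a contradiction from this with $D=\lceil 17\delta^{-2}\rceil$ is the entire content of the proposition, and your sketch for it does not work: a (random) maximal chain of $[2]^{<D+1}$ meets each level in a single point, whereas two of the three points of a Carlson--Simpson line sit at the \emph{same} level; and a single application of Lemma \ref{2l7} along a chain only produces a pair $\{c,\,c^{\con}w(1)\}$ (a root and one comparable extension) in a common $D_x$, never the third point $c^{\con}w(2)$. You acknowledge at the end that locating all three points at once is ``the only non-routine part'' --- but that is precisely the part that must be supplied, and the statement you would need (``average density $3\delta/4$ on $\lceil 17\delta^{-2}\rceil+1$ levels forces a CS line in some $D_x$'') is, if anything, stronger than the proposition, since applied to constant families it would bound $\dcs(2,1,\delta)$ by $O(\delta^{-2})$ and make the regularity step superfluous.

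The paper closes this gap with a different, two-stage use of Theorem \ref{4t1}. The coloring is not ``realisable or not'' but whether $\dens(B_{c_S}\cap B_{c_S^{\con}w_S(1)})\meg\delta^2/16$ for the generating pair $(c_S,w_S)$ of a line $S$; Lemma \ref{2l7} with $\ee=3\delta/4$, $\theta=\delta/4$ applied to the events $B_{t}$ along the \emph{leftmost} branch shows the monochromatic subtree $W$ must carry the dense color, so \emph{every} pair (node, left successor) in $W$ has $\dens(B_t\cap B_{t'})\meg\delta^2/16$. Then (Lemma \ref{7l4}) one takes the rightmost-branch points $t_i=c^{\con}w_0(2)^{\con}\!...^{\con}w_i(2)$ and their all-ones left successors $s_i=t_i^{\con}w_{i+1}(1)^{\con}\!...^{\con}w_{d-1}(1)$: the crucial combinatorial point is that $\{t_i,s_i,s_j\}$ is itself a Carlson--Simpson line for any $i<j$, so a second pigeonhole on the $\meg 16/\delta^2$ sets $C_i=B_{t_i}\cap B_{s_i}$, each of density $\meg\delta^2/16$, produces $x_0\in C_i\cap C_j\subseteq B_{t_i}\cap B_{s_i}\cap B_{s_j}$. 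It is this specific choice of triples --- not a chain and not a generic Sperner argument --- that makes all three points land in a common section, and it is also where both summands of the constant $17$ come from ($2/\delta^2$ for Lemma \ref{2l7} and $16/\delta^2$ for the second pigeonhole). You would need to replace your final paragraph with an argument of this kind.
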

We should point out that the estimate for the numbers $\dcs(2,1,\delta)$ obtained by Proposition \ref{7p1} is rather weak and far from
being optimal. However, the proof of Proposition \ref{7p1} is conceptually close to the proof of the general case of Theorem B, and as such,
should serve as a motivating introduction to the main argument.

We start with the following lemma.
\begin{lem} \label{7l2}
Let $A$ and $N$ be as in Proposition \ref{7p1}. Then there exists $L\subseteq N$ with
\begin{equation} \label{7e3}
|L|=\cs(2,\lceil 17\delta^{-2}\rceil,1,2)+1
\end{equation}
and satisfying the following property. Let $\cv_L:[2]^{<|L|}\times X_L\to [2]^{<\nn}$ be the convolution operation associated to $L$
and set $B=\cv_{L}^{-1}(A)$. Then for every $t\in [2]^{<|L|}$ we have $\dens(B_t)\meg 3\delta/4$ where $B_t=\{x\in X_L:(t,x)\in B\}$ is the section
of $B$ at $t$.
\end{lem}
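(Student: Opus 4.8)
The statement of Lemma \ref{7l2} is, essentially, the place where the regularity lemma (Lemma \ref{3l2}) enters the argument. The idea is that the hypothesis on $|N|$ is precisely what is needed to extract from $N$ a set $L$ along which the single set $A$ is $(\ee,L)$-regular for $\ee=\delta/4$, and then to translate that regularity, via Lemma \ref{5l5}, into a lower bound on the density of every section $B_t$ of the pullback $B=\cv_L^{-1}(A)$.

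First I would apply Lemma \ref{3l2} to the one-element family $\mathcal{F}=\{A\}$ (so $q=1$), with $\ee=\delta/4$ and with $\ell=\cs(2,\lceil 17\delta^{-2}\rceil,1,2)+1$. Since $|N|\meg \reg(2,\ell,1,\delta/4)$, we obtain a subset $L$ of $N$ with $|L|=\ell$ such that $\{A\}$ is $(\delta/4,L)$-regular. Write $L=\{l_0<\dots<l_{|L|-1}\}$. By Definition \ref{3d1}, for every $i\in\{0,\dots,|L|-1\}$, every subset $I$ of $L_i=\{l\in L: l<l_i\}$ and every $y\in[2]^I$, the density of the section of $A\cap[2]^{l_i}$ at $y$ differs from $\dens(A\cap[2]^{l_i})$ by at most $\delta/4$. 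Since $L\subseteq N$, the hypothesis gives $\dens(A\cap[2]^{l_i})\meg\delta$ for each $i$, hence every such section of $A\cap [2]^{l_i}$ along a $y\in[2]^I$ with $I\subseteq L_i$ has density at least $3\delta/4$.

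Next I would connect this to the convolution operation. Fix $t\in[2]^{<|L|}$, say $t\in[2]^i$ for some $i\in\{0,\dots,|L|-1\}$. Recalling the notation of Definition \ref{5d1}, the point is that the section $B_t$ is literally a section of $A\cap[2]^{l_i}$ along the coordinate set $L_i$: by \eqref{5e7} in Fact \ref{5f2}, $\mathrm{I}^{-1}(\Omega_t)=\{x\in[2]^{l_i}: x|_{L_i}=\mathrm{I}_{L_i}(t)\}$, and, unwinding \eqref{5e3}, one checks that $\dens(B_t)$ equals the density of the section of $A\cap[2]^{l_i}$ at the located word $\mathrm{I}_{L_i}(t)\in[2]^{L_i}$. (Alternatively, and more cleanly, Lemma \ref{5l5}(i) gives $\dens_{\Omega_t}(A)=\dens_{\{t\}\times X_L}(B)=\dens(B_t)$, so it suffices to show $\dens_{\Omega_t}(A)\meg 3\delta/4$, and $\Omega_t$ is exactly the set of words in $[2]^{l_i}$ whose restriction to $L_i$ equals $\mathrm{I}_{L_i}(t)$, i.e.\ $\Omega_t$ is a section-cylinder of $A\cap[2]^{l_i}$.) Applying the regularity estimate of the previous paragraph with $I=L_i$ and $y=\mathrm{I}_{L_i}(t)$ yields $\dens_{\Omega_t}(A)\meg \dens(A\cap[2]^{l_i})-\delta/4\meg 3\delta/4$, hence $\dens(B_t)\meg 3\delta/4$, as required. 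Here one must also note $l_i\in N$ so that the lower bound $\dens(A\cap[2]^{l_i})\meg\delta$ is available, and one should treat the edge case $i=0$ (where $L_0=\varnothing$, $t=\varnothing$, and $\Omega_t=[2]^{l_0}$) — there the claim reduces directly to $\dens(A\cap[2]^{l_0})\meg\delta>3\delta/4$.

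The only genuine subtlety — not really an obstacle, but the step to be careful about — is matching the index sets: one must verify that the coordinate set "carved out by $t$" under the convolution operation $\cv_L$ is exactly the set $L_i$ appearing in Definition \ref{3d1}, and that the partition $\{\Omega_t: t\in[2]^i\}$ of $[2]^{l_i}$ from Fact \ref{5f2} is precisely the partition of $[2]^{l_i}$ into fibers of the restriction map $x\mapsto x|_{L_i}$. Once this bookkeeping is in place, the lemma follows immediately from $(\delta/4,L)$-regularity of $\{A\}$ together with the density hypothesis $|A\cap[2]^n|\meg\delta 2^n$ for $n\in L\subseteq N$.
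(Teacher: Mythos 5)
Your proposal is correct and follows essentially the same route as the paper's proof: apply Lemma \ref{3l2} to the singleton family $\{A\}$ with $\ee=\delta/4$ to obtain an $(\delta/4,L)$-regular set $L$, identify $\Omega_t$ with the cylinder $\{z\in[2]^{l_i}:z|_{L_i}=\mathrm{I}_{L_i}(t)\}$ so that the regularity estimate gives $\dens_{\Omega_t}(A)\meg 3\delta/4$, and transfer this to $\dens(B_t)$ via Lemma \ref{5l5}(i). The bookkeeping you flag (matching $L_i$ with the index set in Definition \ref{3d1}, and the partition of $[2]^{l_i}$ into fibers) is exactly what the paper verifies via \eqref{7e5}.
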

\begin{proof}
By Lemma \ref{3l2} and our assumptions on the size of the set $N$, there exists a subset $L$ of $N$ with $|L|=\cs(2,\lceil 17\delta^{-2}\rceil,1,2)
+1$ and such that the family $\mathcal{F}:=\{A\}$ is $(\delta/4,L)$-regular. Write the set $L$ in increasing order as $\{l_0<...<l_{|L|-1}\}$ and
for every  $i\in \{0,...,|L|-1\}$ let $L_i$ and $\overline{L}_i$ be as in \eqref{5e1}. Since $|A\cap [2]^n|\meg \delta 2^n$ for every $n\in L$ and
the singleton $\{A\}$ is $(\delta/4,L)$-regular, we see that
\begin{equation} \label{7e4}
\dens\big(\{w\in [2]^{\overline{L}_i}: (y,w)\in A\cap [2]^{l_i}\}\big)\meg 3\delta/4
\end{equation}
for every $i\in\{0,...,|L|-1\}$ and every $y\in [2]^{L_i}$. By the definition of the convolution operation in \eqref{5e3}, for every $i\in\{0,...,|L|-1\}$
and every $t\in [2]^i$ we have
\begin{equation} \label{7e5}
\Omega_t\stackrel{\eqref{5e6}}{=} \big\{ \cv_{L}(t,x):x\in X_L\big\}=\big\{z\in [2]^{l_i}: z|_{L_i}= \mathrm{I}_{L_i}(t)\big\}.
\end{equation}
Thus, by \eqref{7e4} applied to ``$y=\mathrm{I}_{L_i}(t)$'', we obtain
\begin{equation}\label{7e6}
\dens_{\Omega_t}(A)= \dens\big(\{w\in [2]^{\overline{L}_i}: \big( \mathrm{I}_{L_i}(t),w\big)\in A\cap [2]^{l_i}\}\big)\meg 3\delta/4.
\end{equation}
Finally, by Lemma \ref{5l5}, we have
\begin{equation}\label{7e7}
\dens_{\Omega_t}(A)= \dens_{\{t\}\times X_L}(B)= \dens_{X_L}(B_t).
\end{equation}
Combining \eqref{7e6} and \eqref{7e7} the result follows.
\end{proof}
For the next step of the proof of Proposition \ref{7p1} we need to introduce some terminology. Specifically, let $W$ be an $m$-dimensional
Carlson--Simpson tree of $[2]^{<\nn}$ and $(c,w_0,...,w_{m-1})$ be its generating sequence. Also let $t,t'\in W$.  We say that $t'$ is a
\textit{successor} of $t$ in $W$ if there exist $i,j\in\{0,...,m-1\}$ with $i\mik j$ as well as $a_i,...,a_j\in [2]$ such that 
$t'=t^{\con}w_i(a_i)^{\con}...^{\con}w_j(a_j)$. If, in addition, we have $a_i=1$, then we say that $t'$ is a \textit{left successor} of $t$ in $W$.
\begin{lem} \label{7l3}
Let $L$ and $\{B_t: t\in [2]^{<|L|}\}$  be as in Lemma \ref{7l2}. Then there exists a Carlson--Simpson subtree $W$ of $[2]^{<|L|}$ with
$\dim(W)=\lceil 17\delta^{-2}\rceil$ and such that
\begin{equation} \label{7e8}
\dens(B_t\cap B_{t'})\meg \delta^2/16
\end{equation}
for every $t,t'\in W$ with $t'$ left successor of $t$ in $W$.
\end{lem}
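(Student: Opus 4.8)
The plan is to combine the Carlson--Simpson analogue of the Graham--Rothschild theorem (Theorem \ref{4t1}) with the probabilistic Lemma \ref{2l7}. Set $d=\lceil 17\delta^{-2}\rceil$ and recall that $[2]^{<|L|}$ is the archetypical Carlson--Simpson tree of dimension $|L|-1=\cs(2,d,1,2)$. First I would colour the Carlson--Simpson lines of $[2]^{<|L|}$: given $R\in\subtr_1([2]^{<|L|})$ with (unique) generating sequence $(c_R,u_R)$, write $\ell_R=c_R^{\con}u_R(1)$ for its ``left leaf'' and assign to $R$ the colour $1$ if $\dens(B_{c_R}\cap B_{\ell_R})\meg\delta^2/16$ and the colour $0$ otherwise. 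Since $|L|-1=\cs(2,d,1,2)$, Theorem \ref{4t1} (with $k=2$, $m=1$, $r=2$) produces a Carlson--Simpson subtree $W$ of $[2]^{<|L|}$ with $\dim(W)=d$ such that $\subtr_1(W)$ is monochromatic for this colouring; this $W$ is the tree we want, once we know the constant colour is $1$.

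The bridge between the colouring and the statement is the observation that, as $R$ ranges over $\subtr_1(W)$, the pairs $(c_R,\ell_R)$ are exactly the pairs $(t,t')$ with $t'$ a left successor of $t$ in $W$. One direction is immediate from the definitions. For the other, if $(c,w_0,\dots,w_{d-1})$ is the generating sequence of $W$ and $t'=t^{\con}w_i(1)^{\con}w_{i+1}(a_{i+1})^{\con}\cdots^{\con}w_j(a_j)$, then $u:=w_i^{\con}w_{i+1}(a_{i+1})^{\con}\cdots^{\con}w_j(a_j)$ is again a left variable word over $2$, the Carlson--Simpson line generated by $(t,u)$ is contained in $W$ (its three elements are the images under the canonical isomorphism of $W$ of tuples of length $\mik j+1\mik d$), and its left leaf is exactly $t'$. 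Granting this, it suffices to exhibit a single left-successor pair $(t,t')$ in $W$ with $\dens(B_t\cap B_{t'})\meg\delta^2/16$: the corresponding line then has colour $1$, so by monochromaticity every line in $\subtr_1(W)$ has colour $1$, and hence every left-successor pair in $W$ inherits the bound.

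To produce such a pair I would use the ``all-ones chain'' in $W$. Put $c_0=c$ and $c_i=c^{\con}w_0(1)^{\con}\cdots^{\con}w_{i-1}(1)$ for $i\in\{0,\dots,d\}$; each $c_i$ lies in $W\subseteq[2]^{<|L|}$, so $\dens(B_{c_i})\meg 3\delta/4$ by Lemma \ref{7l2}, and $c_{i'}$ is a left successor of $c_i$ in $W$ whenever $i<i'$. Now apply Lemma \ref{2l7} in the probability space $X_L$ equipped with the normalised counting measure (so that $\mu=\dens$) to the $d$ events $B_{c_0},\dots,B_{c_{d-1}}$, taking $\ee=3\delta/4$ and $\theta=\delta/4$: here $\ee^2-\theta^2=\delta^2/2$ and $d=\lceil 17\delta^{-2}\rceil\meg 2\delta^{-2}=(\ee^2-\theta^2)^{-1}$, so the hypotheses hold and we obtain indices $i<i'$ with $\dens(B_{c_i}\cap B_{c_{i'}})\meg\theta^2=\delta^2/16$. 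This is the required left-successor pair, which finishes the argument.

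I do not anticipate a serious obstacle; the only step that needs genuine care is the combinatorial observation in the second paragraph — in particular verifying that the word $u$ built from $w_i,\dots,w_j$ is indeed a left variable word and that the Carlson--Simpson line it generates, together with its second leaf $t^{\con}u(2)$, remains inside $W$ — since a sloppy treatment of the canonical isomorphisms there is the natural place for an error to creep in.
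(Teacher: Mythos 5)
Your proposal is correct and follows essentially the same route as the paper: colour $\subtr_1([2]^{<|L|})$ according to whether $\dens(B_{c_R}\cap B_{c_R^{\con}w_R(1)})\meg\delta^2/16$, extract a monochromatic $W$ of dimension $\lceil 17\delta^{-2}\rceil$ via Theorem \ref{4t1}, identify left-successor pairs in $W$ with (root, left leaf) pairs of lines in $\subtr_1(W)$, and rule out the bad colour by applying Lemma \ref{2l7} with $\ee=3\delta/4$, $\theta=\delta/4$ to the all-ones chain. The only cosmetic difference is that the paper runs the chain over $d+1$ nodes $t_0,\dots,t_d$ while you use $d$; both satisfy the hypothesis $n\meg(\ee^2-\theta^2)^{-1}=2\delta^{-2}$.
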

\begin{proof}
For every Carlson--Simpson line $S$ of $[2]^{<\nn}$ let us denote by $(c_S, w_S)$ its generating sequence. We set
\begin{equation} \label{7e9}
\mathcal{L}=\Big\{ S\in\subtr_1\big([2]^{<|L|}\big): \dens(B_{c_S}\cap B_{c_S^{\con}w_S(1)})\meg \delta^2/16\Big\}.
\end{equation}
By Theorem \ref{4t1} and \eqref{7e3}, there exists a Carlson--Simpson subtree $W$ of $[2]^{<|L|}$ with $\dim(W)=\lceil 17 \delta^{-2}\rceil$ such
that either $\subtr_1(W)\subseteq \mathcal{L}$ or $\subtr_1(W)\cap \mathcal{L}=\varnothing$. Observe that for every $t,t'\in W$ we have that $t'$
is a left successor of $t$ in $W$ if and only if there exists a Carlson--Simpson line $S$ of $W$ such that $t=c_S$ and $t'=c_S^\con w_S(1)$.
Therefore, the proof will be completed once we show that $\subtr_1(W)\cap \mathcal{L}\neq\varnothing$. To this end we argue as follows. Let
$d=\dim(W)=\lceil 17 \delta^{-2}\rceil$ and $(c,w_0,...,w_{d-1})$ be the generating sequence of $W$. We set $t_0=c$ and
$t_i=c^\con w_0(1)^\con...^\con w_{i-1}(1)$ for every $i\in [d]$. By our assumptions, we have $\dens(B_{t_i})\meg 3\delta/4$ for every
$i\in \{0,...,d\}$. Hence, by Lemma \ref{2l7} applied for ``$\ee=3\delta/4$'' and ``$\theta=\delta/4$'', there exist $i,j\in \{0,...,d\}$
with $i<j$ and such that $\dens(B_{t_i}\cap B_{t_j})\meg \delta^2/16$. If $R$ is the unique Carlson--Simpson line of $W$ with $c_R=t_i$
and $w_R=w_i^{\con}...^{\con}w_{j-1}$, then the previous discussion implies that $R\in\mathcal{L}$, as desired.
\end{proof}
The following lemma is the last step towards the proof of Proposition \ref{7p1}.
\begin{lem} \label{7l4}
Let $W$ be the Carlson--Simpson tree obtained by Lemma \ref{7l3}. Then $W$ contains a Carlson--Simpson line $S$ such that
\begin{equation} \label{7e10}
\bigcap_{t\in S} B_t\neq \varnothing.
\end{equation}
\end{lem}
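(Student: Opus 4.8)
The plan is to run the whole argument inside the Carlson--Simpson tree $W$ produced by Lemma \ref{7l3}, using just two facts about the family $\{B_t:t\in[2]^{<|L|}\}$: the uniform bound $\dens(B_t)\meg 3\delta/4$ from Lemma \ref{7l2} and the correlation estimate $\dens(B_t\cap B_{t'})\meg\delta^2/16$ for left-successor pairs from Lemma \ref{7l3}. First I would fix a convenient spine of $W$. Writing $(c,w_0,\dots,w_{m-1})$ for the generating sequence of $W$, with $m=\dim(W)=\lceil 17\delta^{-2}\rceil$, set $t_0=c$ and $t_i=c^{\con}w_0(1)^{\con}\cdots^{\con}w_{i-1}(1)$ for $i\in[m]$, and for $0\mik i<j\mik m$ set $u_{i,j}=t_i^{\con}w_i(2)^{\con}\cdots^{\con}w_{j-1}(2)$. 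Then $\{t_i,t_j,u_{i,j}\}$ is exactly the Carlson--Simpson line of $W$ generated by $(t_i,\,w_i^{\con}\cdots^{\con}w_{j-1})$, so the lemma reduces to exhibiting $0\mik i<j\mik m$ with $B_{t_i}\cap B_{t_j}\cap B_{u_{i,j}}\neq\varnothing$; any such pair yields $S=\{t_i,t_j,u_{i,j}\}$.

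Next I would record the relevant incidences inside $W$. For $i<j$ the word $t_j$ is a left successor of $t_i$ in $W$, whence $\dens(B_{t_i}\cap B_{t_j})\meg\delta^2/16$. Moreover, for $\ell<i<j$ both $t_j$ and $u_{i,j}$ are left successors of $t_\ell$ in $W$, since in passing from $t_\ell$ to either of them the first substituted letter is $1$; hence $\dens(B_{t_\ell}\cap B_{t_j})\meg\delta^2/16$ and $\dens(B_{t_\ell}\cap B_{u_{i,j}})\meg\delta^2/16$ as well. Together with the uniform bound $\dens(B_{u_{i,j}})\meg 3\delta/4$, this puts at our disposal $\binom{m+1}{2}$ pairwise correlations $\dens(B_{t_i}\cap B_{t_j})\meg\delta^2/16$, plus a lower bound on every ``opposite'' section $B_{u_{i,j}}$, with the parameter $m=\lceil 17\delta^{-2}\rceil$ exceeding $16\delta^{-2}$ — this being precisely the role of the constant $17=16+1$.

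Finally I would locate the pair $i<j$, and this is the only non-routine step. The idea is to weigh, over the points of $X_L$ (or, after conditioning, over the points of $B_{t_0}$), the abundance of ``collinear'' pairs $(t_i,t_j)$ whose two sections both pass through a given point against the sections $B_{u_{i,j}}$, and to use the tree structure of $W$ to link the two so that, were every Carlson--Simpson line $\{t_i,t_j,u_{i,j}\}$ to have an empty intersection of its three sections, the dimension bound $m\meg 17\delta^{-2}$ would be violated. Carrying this out forces one such line to have all three of its sections meet, which is the desired $S$.

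I expect this last step to be the main obstacle. Lemma \ref{7l3} controls only \emph{pairs} of sections, whereas a Carlson--Simpson line requires \emph{three} sections to meet at once, and no crude union/counting bound can close this gap, since one always has $\delta^2/16+3\delta/4<1$; so the argument cannot rest on ``two disjoint families'' but must genuinely exploit the branching structure of $W$ and its generous dimension. Reconciling pairwise correlation with the existence of an honest line is exactly the phenomenon that the main argument of the paper (Theorem B, \S\S 8--9) has to confront in full generality, and the present base case $k=2$, $m=1$ is meant to serve as its simplest instance.
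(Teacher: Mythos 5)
There is a genuine gap: your configuration of the line is oriented the wrong way, and because of that the final step you leave open really is impossible by counting, whereas with the correct configuration it becomes a one\--line pigeonhole. In your setup the spine $t_0,\dots,t_m$ carries the letter $1$ and the third point $u_{i,j}$ of the line $\{t_i,t_j,u_{i,j}\}$ branches off with the letter $2$; consequently $u_{i,j}$ is \emph{not} a left successor of $t_i$ or $t_j$, you have no correlation between $B_{u_{i,j}}$ and the other two sections beyond $\dens(B_{u_{i,j}})\meg 3\delta/4$, and, as you observe, $\delta^2/16+3\delta/4$ may be less than $1$. The missing idea is to reverse the roles of the two letters and, crucially, to push all branch points up to the \emph{top} level of $W$. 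Set $t_i=c^{\con}w_0(2)^{\con}\cdots^{\con}w_i(2)$ and $s_i=t_i^{\con}w_{i+1}(1)^{\con}\cdots^{\con}w_{d-1}(1)$ for $i\mik d-2$ (and $s_{d-1}=c^{\con}w_0(2)^{\con}\cdots^{\con}w_{d-1}(2)$), where $d=\dim(W)$. Then $s_i$ \emph{is} a left successor of $t_i$, so $C_i:=B_{t_i}\cap B_{s_i}$ has density at least $\delta^2/16$ by Lemma \ref{7l3}; and for $i<j$ the triple $\{t_i,s_i,s_j\}$ is exactly the Carlson--Simpson line of $W$ generated by $\big(t_i,\,w_{i+1}^{\con}\cdots^{\con}w_j^{\con}y\big)$ with $y=w_{j+1}(1)^{\con}\cdots^{\con}w_{d-1}(1)$, because $s_i$ and $s_j$ both live in $W(d)$ and differ from $t_i$ only in the substituted letter on the block $w_{i+1},\dots,w_j$.

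With this configuration the triple intersection you could not reach collapses to a pairwise intersection of the auxiliary sets: $C_i\cap C_j\subseteq B_{t_i}\cap B_{s_i}\cap B_{s_j}=\bigcap_{t\in S}B_t$ for that line $S$. Since there are $d>16\delta^{-2}$ sets $C_0,\dots,C_{d-1}$, each of density at least $\delta^2/16$ in the probability space $X_L$, two of them must meet, and any point of $C_i\cap C_j$ witnesses \eqref{7e10}. So your closing assessment is half right: a union bound on the three sections of a fixed line indeed cannot work, but the correct remedy is not the machinery of \S\S 8--9; it is the combinatorial observation that the root and the $1$\--branch of each spine point can be fused into a single dense set $C_i$ whose pairwise intersections already witness whole lines.
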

\begin{proof}
As in Lemma \ref{7l3}, let $d=\dim(W)=\lceil 17 \delta^{-2}\rceil$ and $(c,w_0,...,w_{d-1})$ be the generating sequence of $W$.
For every $i\in \{0,...,d-2\}$ we set
\begin{equation} \label{7e11}
t_i=c^{\con}w_0(2)^{\con}...^{\con}w_i(2) \text{ and } s_i=t_i^{\con}w_{i+1}(1)^{\con}...^{\con}w_{d-1}(1).
\end{equation}
Observe that $s_i$ is a left successor of $t_i$ in $W$. Therefore, by Lemma \ref{7l3}, setting $C_i=B_{t_i}\cap B_{s_i}$ we have $\dens(C_i)\meg\delta^2/16$
for every $i\in \{0,...,d-2\}$. Also let $s_{d-1}=c^{\con}w_0(2)^{\con}...^{\con}w_{d-1}(2)$ and $C_{d-1}=B_{s_{d-1}}$ and notice that, by Lemma \ref{7l2}, 
we have $\dens(C_{d-1})\meg 3\delta/4\meg \delta^2/16$. Since $d>16/\delta^2$ there exist $0\mik i<j\mik d-1$ such that $C_i\cap C_j\neq \varnothing$. We define
\begin{equation} \label{7e12}
c'=t_i \text{ and } w'=w_{i+1}^{\con}...^{\con}w_j^{\con}y
\end{equation}
where $y=w_{j+1}(1)^{\con}...^{\con}w_{d-1}(1)$ if $j<d-1$ and $y=\varnothing$ otherwise. Let $S$ be the Carlson--Simpson line of $W$ generated
by the sequence $(c',w')$ and observe that $S=\{t_i\}\cup\{s_i,s_j\}$. Hence,
\begin{equation} \label{7e13}
\bigcap_{t\in S} B_t \supseteq C_i\cap C_j\neq \varnothing
\end{equation}
and the proof is completed.
\end{proof}
We are ready to proceed to the proof of Proposition \ref{7p1}.
\begin{proof}[Proof of Proposition \ref{7p1}]
Let $S$ be the Carlson--Simpson line obtained by Lemma \ref{7l4}. We select $x_0\in X_L$ such that $x_0\in B_t$ for every $t\in S$ and we set
\begin{equation} \label{7e14}
R=\big\{\cv_L(t,x_0):t\in S\big\}.
\end{equation}
By Lemma \ref{5l6}, we have that $R$ is a Carlson--Simpson line of $[2]^{<\nn}$. Next, recall that $B=\cv_L^{-1}(A)$. Since $(t,x_0)\in B$
for every $t\in S$, we conclude that $R$ is contained in $A$, as desired.
\end{proof}
\noindent 7.2. \textbf{Estimating the numbers $\dcs(k,m+1,\delta)$.} Let $k,m\in\nn$ with $k\meg 2$ and $m\meg 1$ and assume that for every
$0<\beta\mik 1$ the number $\dcs(k,m,\beta)$ has been defined. This assumption, of course, implies that for every $\ell\in [m]$ and every
$0<\beta\mik 1$ the number $\dcs(k,\ell,\beta)$ has been defined. Therefore, for every $\ell\in [m]$ and every $0<\delta\mik 1$ we may set
\begin{equation} \label{7e15}
\Lambda(k,\ell,\delta)=\lceil \delta^{-1}\dcs(k,\ell,\delta) \rceil
\end{equation}
and
\begin{equation} \label{7e16}
\Theta(k,\ell,\delta)=\frac{2\delta}{|\subtr_{\ell}\big( [k]^{<\Lambda(k,\ell,\delta)}\big)|}.
\end{equation}
Moreover, let
\begin{equation} \label{7e17}
\Lambda_0=\Lambda_0(k,\delta)=\Lambda(k,1,\delta^2/16) \text{ and } \Theta_0=\Theta_0(k,\delta)=\Theta(k,1,\delta^2/16)
\end{equation}
and define $h_{\delta}:\nn\to\nn$ by the rule
\begin{equation} \label{7e18}
h_{\delta}(n)= \Lambda_0+ \lceil 2\Theta_0^{-1}n\rceil.
\end{equation}
It is, of course, clear that for the definition of $\Lambda_0$, $\Theta_0$ and $h_{\delta}$ we only need to have the number $\dcs(k,1,\delta^2/16)$
at our disposal. The main result of this section is the following dichotomy.
\begin{prop} \label{7p5}
Let $k\in\nn$ with $k\meg 2$ and assume that for every $0<\beta\mik 1$ the number $\dcs(k,1,\beta)$ has been defined.

Let $0<\delta\mik 1$ and define $\Lambda_0$ and $\Theta_0$ as in \eqref{7e17}. Also let $L$ be a nonempty finite subset of $\nn$ and
$A\subseteq [k]^{<\nn}$ such that $\dens_{[k]^\ell}(A)\meg \delta$ for every $\ell\in L$. Finally, let $n\in\nn$ with $n\meg 1$ and assume
that $|L|\meg h_{\delta}(n)$ where $h_{\delta}$ is as in \eqref{7e18}. Then, setting $L_0$ to be the set of the first $\Lambda_0$
elements of $L$, we have that either
\begin{enumerate}
\item[(i)] there exist a subset $L'$ of $L\setminus L_0$ with $|L'|\meg n$ and a word $t_0\in [k]^{\ell_0}$ for some $\ell_0\in L_0$ such that
\begin{equation} \label{7e19}
\dens_{[k]^{\ell-\ell_0}}\big( \{s\in[k]^{<\nn}: t_0^{\con}s\in A\}\big) \meg \delta+\delta^2/8
\end{equation}
for every $\ell\in L'$, or
\item[(ii)] there exist a subset $L''$ of $L\setminus L_0$ with $|L''|\meg n$ and a Carlson--Simpson line $V$ of $[k]^{<\nn}$ contained in $A$ with
$L(V)\subseteq L_0$ and such that, setting $\ell_1$ to be the unique integer with $V(1)\subseteq [k]^{\ell_1}$, for every $\ell\in L''$ we have
\begin{equation} \label{7e20}
\dens_{[k]^{\ell-\ell_1}}\big( \{s\in[k]^{<\nn}: t^{\con}s\in A \text{ for every } t\in V(1)\}\big) \meg \Theta_0/2.
\end{equation}
\end{enumerate}
\end{prop}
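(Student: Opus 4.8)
The plan is to prove the stated dichotomy by assuming that alternative~(i) fails and producing alternative~(ii). Write $\beta=\delta^2/16$, so that $\Lambda_0=\Lambda(k,1,\beta)$, $\Theta_0=\Theta(k,1,\beta)=2\beta/|\subtr_1([k]^{<\Lambda_0})|$ and $h_\delta(n)=\Lambda_0+\lceil 2\Theta_0^{-1}n\rceil$, and set $M=L\setminus L_0$, so $|M|\meg\lceil 2\Theta_0^{-1}n\rceil$. For a word $t$ with $|t|\in L_0$ and a level $\ell\in M$ put $A^\ell_t=\{s:t^{\con}s\in A\cap[k]^{\ell}\}$. By definition, the failure of~(i) says precisely that for every such $t$ the set $\{\ell\in M:\dens(A^\ell_t)\meg\delta+\delta^2/8\}$ has fewer than $n$ elements.

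First I would turn the failure of~(i) into a concentration statement. Since $\ave_{t\in[k]^{\ell'}}\dens(A^\ell_t)=\dens_{[k]^\ell}(A)\meg\delta$ for every $\ell'\in L_0$ and every $\ell\in M$, averaging the bound from the failure of~(i) over $t$ yields $\sum_{\ell\in M}\dens\big(\{t\in[k]^{\ell'}:\dens(A^\ell_t)\meg\delta+\delta^2/8\}\big)<n$; summing this over the $\Lambda_0$ levels of $L_0$ and applying Markov's inequality (here the $\Lambda_0$-term of $h_\delta$ and a crude lower bound on $|\subtr_1([k]^{<\Lambda_0})|$ enter) produces a set $M_1\subseteq M$ with $|M_1|\meg|M|/2$ such that for every $\ell\in M_1$ and every $\ell'\in L_0$ the set $\{t\in[k]^{\ell'}:\dens(A^\ell_t)\meg\delta+\delta^2/8\}$ has density below $\beta$. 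Feeding this back into $\ave_t\dens(A^\ell_t)\meg\delta$, an elementary estimate gives that for $\ell\in M_1$ and $\ell'\in L_0$ at least a $(1-\delta/2)$-fraction of the $t\in[k]^{\ell'}$ have $\dens(A^\ell_t)\meg\delta/2$; intersecting with $A\cap[k]^{\ell'}$ shows that for each $\ell\in M_1$ the set $B^\ell=\{t:|t|\in L_0,\ t\in A,\ \dens(A^\ell_t)\meg\delta/2\}$ has density at least $\delta/2\meg\beta$ on every level of $L_0$.

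The second ingredient is to run Theorem~B in dimension $1$, together with the convolution formalism of~\S 5 so as to keep the eventual count finite. Writing $L_0=\{l_0<\dots<l_{\Lambda_0-1}\}$ and using $\cv_{L_0}\colon[k]^{<\Lambda_0}\times X_{L_0}\to[k]^{<\nn}$, Lemma~\ref{5l6} lets us parametrise the Carlson--Simpson lines of $[k]^{<\nn}$ with level set inside $L_0$ as the sets $S_x=\{\cv_{L_0}(s,x):s\in S\}$ with $S\in\subtr_1([k]^{<\Lambda_0})$ and $x\in X_{L_0}$. For each $\ell\in M_1$, since $B^\ell$ has density $\meg\beta$ on each of the $\Lambda_0\meg\dcs(k,1,\beta)$ levels of $L_0$, Theorem~B produces a Carlson--Simpson line $V^\ell\subseteq B^\ell\subseteq A$ with $L(V^\ell)\subseteq L_0$; crucially its base point lies in $A$ automatically, which is exactly why one routes the construction through $\dcs$ rather than choosing the base word by hand. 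Recording the ``shape'' $S^\ell\in\subtr_1([k]^{<\Lambda_0})$ of each $V^\ell$ and pigeonholing — legitimate because $|M_1|\meg|M|/2\meg\Theta_0^{-1}n=\tfrac12\beta^{-1}|\subtr_1([k]^{<\Lambda_0})|\,n$ — isolates a single shape $S$ realised by a family of roughly $\beta^{-1}n$ downstream levels; the $X_{L_0}$-coordinate $x$ is then pinned down, as in the proof of Proposition~\ref{7p1}, by selecting a common point of the relevant positive-density subsets of $X_{L_0}$, at the cost of shrinking to a set $L''$ with $|L''|\meg n$. This yields a single Carlson--Simpson line $V=S_{x_0}\subseteq A$ with $L(V)\subseteq L_0$.

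The remaining — and, I expect, hardest — point is to upgrade the individual bounds $\dens(A^\ell_t)\meg\delta/2$ for $t\in V^\ell(1)$ to the lower bound on the joint density $\dens\big(\bigcap_{t\in V(1)}A^\ell_t\big)$ demanded by~(ii), since an intersection of $k$ dense sections can be empty. The shape of $\Theta_0$ indicates the mechanism: for a fixed downstream $\ell$ one should not settle for one sub-line of the ambient structure but average the quantity $\dens\big(\bigcap_{t\in U(1)}A^\ell_t\big)$, over the sub-lines $U$ contained in $A$, against the $|\subtr_1([k]^{<\Lambda_0})|$ sub-lines, using the density-$\beta$ statement above to bound this average below by $2\beta$, so that at least one sub-line $U$ satisfies $\dens\big(\bigcap_{t\in U(1)}A^\ell_t\big)\meg\Theta_0$; taking this $U$ in place of $V^\ell$ in the pigeonhole of the previous paragraph then outputs alternative~(ii) with the bound $\Theta_0/2$ surviving after the shrinking to $L''$. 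Carrying out this averaging — identifying the correct ambient Carlson--Simpson tree carrying the $|\subtr_1([k]^{<\Lambda_0})|$ sub-lines, and checking that enough of them lie in $A$ while carrying joint downstream mass — is where the real work is; an alternative I would also try is to first replace $A$, on the downstream levels in play, by an $(i,j)$-insensitive set for all $i\neq j$, which makes the sections $A^\ell_t$ constant along $V^\ell(1)$ and hence trivialises the joint density, at the price of justifying that reduction against alternative~(i).
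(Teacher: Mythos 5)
Your first half is sound and matches the paper's Case 1/Case 2 dichotomy: the negation of (i) forces, after the Markov and averaging steps you describe, a set of downstream levels on which the words $t$ with dense sections form, on every level of $L_0$, a set of density $\meg\delta/2$ inside $A$. (The paper phrases the split via the generalized Furstenberg--Weiss measures, i.e.\ Case 1 is ``$\mathrm{d}_{M_{\ell_0}}(A_{t_0})\meg\delta+\delta^2/4$ for some $t_0$'', but this is equivalent to your level-counting formulation up to a Markov step.)

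The gap is in the step you yourself flag as the hardest one, and it is not closed by either of your two suggestions. Your scheme finds, for each downstream level $\ell$ separately, a Carlson--Simpson line $V^\ell$ and then tries to merge them. But a Carlson--Simpson line with level set in $L_0$ is a pair $(S,x)\in\subtr_1([k]^{<\Lambda_0})\times X_{L_0}$, and a pigeonhole over all such lines costs a factor of $|X_{L_0}|$, which depends on the positions of the elements of $L_0$ and not only on $\Lambda_0$ --- so it cannot be absorbed into $h_\delta$. Pigeonholing on the shape $S$ alone does not identify a line (different $x$ give different $V(1)$'s), and ``selecting a common point of the relevant positive-density subsets of $X_{L_0}$'' has no content here: Theorem B hands you one $x^\ell$ per level, not a positive-density set of them. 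Likewise, the averaging claim in your last paragraph is false as stated: the average of $\dens\big(\bigcap_{t\in U(1)}A^\ell_t\big)$ over $U\in\subtr_1$ of an ambient tree has no lower bound of the order $2\beta$ (most intersections may be empty), and if it did you would get a line with intersection density $\meg 2\beta$, not $\Theta_0=2\beta/|\subtr_1([k]^{<\Lambda_0})|$ --- the $|\subtr_1|$ in the denominator signals a pigeonhole on a \emph{measure}, not on an average of intersection densities. The paper's resolution (Lemma \ref{7l9}) reverses your order of operations: it first averages over all downstream levels at once by working in the product probability space $\prod_{\ell'\in L_0}\big([k]^{<\nn},\mathrm{d}_{M_{\ell'}}\big)$ with events $\widetilde{A}_t$ of measure $\meg\delta/2$; it then fixes a single ambient $(\Lambda_0-1)$-dimensional Carlson--Simpson tree $W$ with $L(W)=L_0$ by averaging over $x\in X_{L_0}$ (Lemma \ref{7l7}); for each $\omega$ in a set $Y$ of measure $\meg 2\beta$ the fiber $B_\omega=\{t:\omega\in\widetilde{A}_t\}$ contains a line $V_\omega$ by the $\dcs(k,1,\beta)$ hypothesis (Fact \ref{7f8}); and only then does it pigeonhole the map $\omega\mapsto V_\omega$ over the \emph{bounded} set $\subtr_1(W)$ to extract a single $V$ with $\mathrm{d}_{M_{\ell_1}}\big(\bigcap_{t\in V(1)}A_t\big)\meg\Theta_0$, from which a final Markov step recovers the $n$ individual levels of $L''$ at the cost of the factor $1/2$. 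Without this product-measure device (or an equivalent), your argument does not produce a single $V$ working for $n$ common downstream levels.
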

Proposition \ref{7p5} can be used to estimate the numbers $\dcs(k,m+1,\delta)$ via a standard iteration. In particular, we have the following
corollary.
\begin{cor} \label{7c6}
Let $k,m\in\nn$ with $k\meg 2$ and $m\meg 1$ and assume that for every $0<\beta\mik 1$ the number $\dcs(k,m,\beta)$ has been defined.
Then, for every $0<\delta\mik 1$
\begin{equation} \label{7e21}
\dcs(k,m+1,\delta)\mik h_\delta^{(\lceil8\delta^{-2}\rceil)}\big( \dcs(k,m,\Theta_0/2)\big).
\end{equation}
\end{cor}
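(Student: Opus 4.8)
We want to estimate $\dcs(k,m+1,\delta)$, i.e.\ to find a bound $N$ so that any $A\subseteq[k]^{<\nn}$ with $\dens_{[k]^\ell}(A)\meg\delta$ for all $\ell$ in a set $L$ of size $\meg N$ contains a Carlson--Simpson tree of dimension $m+1$. The idea is to run a density-increment argument on the ``option (i)'' branch of Proposition \ref{7p5}, and to harvest the first level of the desired tree whenever ``option (ii)'' occurs, then finish by applying the inductive object $\dcs(k,m,\Theta_0/2)$.

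First I would observe that the density along $L$ can never exceed $1$, so option (i), which boosts the density from $\delta$ to $\delta+\delta^2/8$, can occur at most $\lceil 8\delta^{-2}\rceil$ times before we are forced into option (ii). Concretely, starting from $A_0=A$ with density $\meg\delta_0:=\delta$ on a set $L^{(0)}=L$, each application of Proposition \ref{7p5} with the parameter $n$ chosen appropriately either (i) replaces $A_j$ by a ``shifted'' set $A_{j+1}=\{s:t_0^{\con}s\in A_j\}$ of density $\meg\delta_j+\delta^2/8$ on a subset $L^{(j+1)}\subseteq L^{(j)}$ of the prescribed size, noting that since $\delta_{j+1}\meg\delta_j$ we may keep using $\delta$ in $h_\delta$ (here one uses that $h_\delta$ is built only from $\dcs(k,1,\delta^2/16)$, which is monotone-safe to invoke at the fixed value $\delta$); or (ii) yields a Carlson--Simpson line $V$ with $L(V)\subseteq L^{(j)}_0$ contained in $A_j$, together with a subset $L''$ of $L^{(j)}\setminus L^{(j)}_0$ and the density bound \eqref{7e20}: $\dens_{[k]^{\ell-\ell_1}}(A')\meg\Theta_0/2$ for all $\ell\in L''$, where $A'=\{s:t^{\con}s\in A_j\text{ for all }t\in V(1)\}$.

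The bookkeeping is the routine part: I would set the required size at the bottom of the recursion to be $\dcs(k,m,\Theta_0/2)$, so that when option (ii) finally fires (which it must, after at most $\lceil 8\delta^{-2}\rceil$ rounds), the surviving set $L''$ still has $|L''|\meg\dcs(k,m,\Theta_0/2)$. Working backwards, the size needed just before round $j$ is $h_\delta$ applied to the size needed after round $j$, because Proposition \ref{7p5} requires $|L^{(j)}|\meg h_\delta(n)$ to deliver a tail of size $n$; after $\lceil 8\delta^{-2}\rceil$ rounds this gives exactly $h_\delta^{(\lceil 8\delta^{-2}\rceil)}\big(\dcs(k,m,\Theta_0/2)\big)$, which is the claimed bound. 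One must also track the accumulated ``prefix'' word: at each option-(i) step we prepended some $t_0$, and at the option-(ii) step we have a line $V$ whose levels land in the early part of the original index set — so after undoing all the shifts we obtain an honest subset of the original $[k]^{<\nn}$.

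Finally I would assemble the tree. Once option (ii) is reached, apply the definition of $\dcs(k,m,\Theta_0/2)$ to the set $A'$ (pulled back appropriately) on $L''$: this produces a Carlson--Simpson tree $W'$ of dimension $m$ with $W'\subseteq A'$, meaning $t^{\con}s\in A_j$ for every $t\in V(1)$ and every $s\in W'$. Translating through the shifts, $V$ supplies the bottom level ($W(0)$ and $W(1)$) and $W'$ supplies the remaining $m$ levels, so the concatenation $V$ with $W'$ stacked above each element of $V(1)$ is a Carlson--Simpson tree of dimension $m+1$ contained in the original $A$ — here one uses that a left variable word generating $V$ followed by the generating left variable words of $W'$ gives a valid generating sequence. \textbf{The main obstacle} is purely organizational: carefully maintaining that the density stays $\meg\delta$ (hence the $h_\delta$ at a fixed $\delta$ remains legitimate through all rounds, exploiting $\delta_{j+1}\meg\delta_j$), that the index sets shrink in exactly the way the iterate $h_\delta^{(\cdot)}$ records, and that the prepended prefixes compose into a single Carlson--Simpson generating sequence rather than destroying the left-variable structure; none of the individual steps is deep, but the indices must line up precisely for the stated bound to come out.
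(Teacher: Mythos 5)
Your proposal is correct and follows essentially the same route as the paper: iterate Proposition \ref{7p5}, noting that alternative (i) raises the density by $\delta^2/8$ and so can occur at most $\lceil 8\delta^{-2}\rceil$ times, then use $\dcs(k,m,\Theta_0/2)$ on the set $\{s: t^{\con}s\in A \text{ for all } t\in V(1)\}$ and stack the resulting $m$-dimensional tree above $V(1)$. Your extra care about the monotonicity of $h_\delta$ and $\Theta_0$ in $\delta$ and about recombining the prefixes is exactly the bookkeeping the paper leaves implicit.
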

\begin{proof}
We fix $0<\delta\mik 1$. For notional convenience we set $N_0=\dcs(k,m,\Theta_0/2)$. Let $L$ be an arbitrary finite subset of $\nn$ with
$|L|\meg h_{\delta}^{(\lceil8\delta^{-2}\rceil)}(N_0)$ and $A$ be a subset of $[k]^{<\nn}$ such that $\dens_{[k]^\ell}(A)\meg\delta$ for every
$\ell\in L$. By our assumptions on the size of the set $L$ and repeated applications of Proposition \ref{7p5}, it is possible to find a subset
$L''$ of $L$ with $|L''|\meg N_0$ and a Carlson--Simpson line $V$ of $[k]^{<\nn}$ contained in $A$ such that, setting $\ell_1$ to be the unique
integer with $V(1)\subseteq [k]^{\ell_1}$, we have that $\ell_1<\min(L'')$ and
\begin{equation} \label{7e22}
\dens_{[k]^{\ell-\ell_1}}\big( \{s\in[k]^{<\nn}: t^{\con}s\in A \text{ for every } t\in V(1)\}\big) \meg \Theta_0/2
\end{equation}
for every $\ell\in L''$. By the choice of $N_0$ and \eqref{7e22}, there exists an $m$-dimensional Carlson--Simpson tree $U$ of $[k]^{<\nn}$
such that
\begin{equation} \label{7e23}
U\subseteq \{s\in[k]^{<\nn}: t^{\con}s\in A \text{ for every } t\in V(1)\}.
\end{equation}
Therefore, setting
\begin{equation} \label{7e24}
S =V(0) \cup \bigcup_{t\in V(1)} \{t^{\con}u:u\in U\},
\end{equation}
we see that $S$ is a Carlson--Simpson tree of $[k]^{<\nn}$ of dimension $m+1$ which is contained in $A$. The proof is thus completed.
\end{proof}
For the proof of Proposition \ref{7p5} we need to do some preparatory work. Specifically, let $L$ be a finite subset of $\nn$ with
$|L|\meg 2$ and consider the convolution operation $\cv_L:[k]^{<|L|}\times X_L\to [k]^{<\nn}$ associated to $L$. For every $x\in X_L$ let
$R_x=\{ \cv_{L}(t,x): t\in [k]^{<|L|}\}$ and recall that, by Lemma \ref{5l6}, the set $R_x$ is a Carlson--Simpson tree of $[k]^{<\nn}$
of dimension $|L|-1$. Therefore, we may consider the Furstenberg--Weiss measure $\mathrm{d}^{R_x}_{\mathrm{FW}}$ associated to $R_x$
defined in \S 2.7. On the other hand, we may also consider the generalized Furstenberg--Weiss measure $\mathrm{d}_L$ associated to the set $L$.
The following lemma relates these classes of measures.
\begin{lem} \label{7l7}
Let $k\in\nn$ with $k\meg 2$ and $L$ be a finite subset of $\nn$ with $|L|\meg 2$. Then for every subset $A$ of $[k]^{<\nn}$ we have
\begin{equation} \label{7e25}
\ave_{x\in X_L} \mathrm{d}^{R_x}_{\mathrm{FW}}(A)=\mathrm{d}_L(A).
\end{equation}
In particular, there exists a Carlson--Simpson tree $W$ of $[k]^{<\nn}$ of dimension $|L|-1$ with $L(W)=L$ and such that
$\mathrm{d}^{W}_{\mathrm{FW}}(A) \meg \mathrm{d}_L(A)$.
\end{lem}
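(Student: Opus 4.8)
The plan is to deduce the averaging identity \eqref{7e25} from the level-by-level identity \eqref{5e22} of Lemma \ref{5l8}, and then extract the Carlson--Simpson tree $W$ by a one-line pigeonhole.

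First I would fix the increasing enumeration $L=\{l_0<\ldots<l_{|L|-1}\}$ and set $\ell=|L|$. To place the plain convolution $\cv_L$ inside the framework of \S5, I would apply Lemma \ref{5l8} with $V$ equal to the archetypical Carlson--Simpson tree $[k]^{<\max(L)+1}$. This $V$ has dimension $\max(L)$, so $L\subseteq\{0,\ldots,\dim(V)\}$, and its canonical isomorphism $\mathrm{I}_V$ is the identity map; hence $\cv_{L,V}=\cv_L$ and $V(l_i)=[k]^{l_i}$ for every $i\in\{0,\ldots,\ell-1\}$. With this identification the set $R_x$ of the statement coincides with the set $R_x$ appearing in Lemma \ref{5l8}, and by Lemma \ref{5l6} it is a Carlson--Simpson tree of dimension $\ell-1$ with $R_x(i)\subseteq[k]^{l_i}$ for every $i$; in particular $L(R_x)=L$.

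Then I would invoke \eqref{5e22}, which reads $\dens_{[k]^{l_i}}(A)=\ave_{x\in X_L}\dens_{R_x(i)}(A)$ for each $i\in\{0,\ldots,\ell-1\}$, and average it over $i$. Since $\dim(R_x)=\ell-1$, the left-hand side averages to $\mathrm{d}_L(A)=\ave_{n\in L}\dens_{[k]^n}(A)$, while interchanging the two finite averages on the right-hand side turns it into $\ave_{x\in X_L}\big(\ave_{i\in\{0,\ldots,\ell-1\}}\dens_{R_x(i)}(A)\big)=\ave_{x\in X_L}\mathrm{d}^{R_x}_{\mathrm{FW}}(A)$, which is precisely \eqref{7e25}. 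For the last assertion, \eqref{7e25} forces the existence of some $x_0\in X_L$ with $\mathrm{d}^{R_{x_0}}_{\mathrm{FW}}(A)\meg\mathrm{d}_L(A)$, and $W=R_{x_0}$ then does the job by the first paragraph, since $\dim(W)=\ell-1=|L|-1$ and $L(W)=L$.

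I do not expect a genuine obstacle here: the argument is an interchange of two finite averages layered on top of Lemma \ref{5l8}. The only points requiring a little care are choosing $V$ so that $\cv_{L,V}$ collapses to $\cv_L$, and observing that the $\ell$-term average over the levels $\{0,\ldots,\dim(R_x)\}$ of $R_x$ and the $\ell$-term average over the elements of $L$ are matched up correctly by the enumeration of $L$ — both of which are immediate.
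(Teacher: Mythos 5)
Your proof is correct and is exactly the paper's argument: the paper dispatches this lemma with the remark that \eqref{7e25} follows from \eqref{5e22} by averaging over all $i\in\{0,\dots,|L|-1\}$, followed by the same pigeonhole for the final assertion. Your extra care in choosing $V=[k]^{<\max(L)+1}$ so that $\cv_{L,V}$ reduces to $\cv_L$ is a correct (implicit in the paper) detail.
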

Lemma \ref{7l7} follows by Lemma \ref{5l8}. More precisely, notice that equality \eqref{7e25} follows from \eqref{5e22} by averaging over all
$i\in \{0,...,|L|-1\}$.

The next fact is straightforward.
\begin{fact} \label{7f8}
Let $k,m\in\nn$ with $k\meg 2$ and $m\meg 1$. Also let $0<\eta\mik 1/2$ and assume that the number $\dcs(k,m,\eta)$ has been defined. Finally,
let $W$ be a Carlson--Simpson tree of $[k]^{<\nn}$ with $\mathrm{dim}(W)\meg \Lambda(k,m,\eta)-1$ where $\Lambda(k,m,\eta)$ is as in \eqref{7e15}.
Then every subset $B$ of $[k]^{<\nn}$ with $\mathrm{d}^W_{\mathrm{FW}}(B)\meg 2\eta$ contains a Carlson--Simpson tree of dimension $m$.
\end{fact}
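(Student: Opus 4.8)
The plan is to transport everything to $[k]^{<\dim(W)+1}$ through the canonical isomorphism $\mathrm{I}_W$, run a routine averaging argument, and then invoke the very definition of $\dcs(k,m,\eta)$ from Theorem B.

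Write $d=\dim(W)$ and set $B'=\mathrm{I}_W^{-1}(B\cap W)$, a subset of $[k]^{<d+1}$. Since $\mathrm{I}_W$ is a bijection carrying the level $W(n)$ onto $[k]^n$ for each $n\in\{0,...,d\}$, we have $\dens_{[k]^n}(B')=\dens_{W(n)}(B)$ for every such $n$, and hence
\[
\ave_{n\in\{0,...,d\}}\dens_{[k]^n}(B')=\mathrm{d}^W_{\mathrm{FW}}(B)\meg 2\eta.
\]
A standard averaging argument (Markov's inequality applied to the numbers $1-\dens_{[k]^n}(B')\in[0,1]$) then shows that the set $L=\{n\in\{0,...,d\}:\dens_{[k]^n}(B')\meg\eta\}$ satisfies $|L|\meg\eta(d+1)$. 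By the hypothesis $d\meg\Lambda(k,m,\eta)-1$ together with $\Lambda(k,m,\eta)=\lceil\eta^{-1}\dcs(k,m,\eta)\rceil\meg\eta^{-1}\dcs(k,m,\eta)$, we obtain $|L|\meg\dcs(k,m,\eta)$.

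Thus $L$ is a finite subset of $\nn$ of cardinality at least $\dcs(k,m,\eta)$ and $B'$ is a set of words over $k$ with $|B'\cap[k]^n|\meg\eta k^n$ for every $n\in L$. By Theorem B there is an $m$-dimensional Carlson--Simpson tree $U'$ of $[k]^{<\nn}$ contained in $B'$; since $B'\subseteq[k]^{<d+1}$, this $U'$ is an $m$-dimensional Carlson--Simpson subtree of $[k]^{<d+1}$. Applying $\mathrm{I}_W$, which maps $\ell$-dimensional Carlson--Simpson subtrees of $[k]^{<d+1}$ onto $\ell$-dimensional Carlson--Simpson subtrees of $W$, we conclude that $\mathrm{I}_W(U')$ is an $m$-dimensional Carlson--Simpson subtree of $W$ contained in $B\cap W\subseteq B$, as desired.

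There is essentially no obstacle here; the argument is bookkeeping around Theorem B. The one point worth a glance is whether an $m$-dimensional Carlson--Simpson subtree of $[k]^{<d+1}$ can exist at all, i.e.\ whether $d\meg m$; but this is automatic, since $\dcs(k,m,\eta)\meg m+1$ and $\eta\mik 1/2$ force $d+1\meg\eta^{-1}(m+1)\meg 2(m+1)$, and in any case it is handled internally by the application of Theorem B to $L$.
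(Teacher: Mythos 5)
Your proof is correct, and it is exactly the routine argument the authors have in mind when they call Fact \ref{7f8} ``straightforward'' and omit the proof: pull back by $\mathrm{I}_W$, use Markov to extract at least $\eta(d+1)\meg\dcs(k,m,\eta)$ levels of density $\meg\eta$, apply Theorem B, and push the resulting tree forward. No gaps.
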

The final ingredient of the proof of Proposition \ref{7p5} is the following lemma.
\begin{lem} \label{7l9}
Let $k,m\in\nn$ with $k\meg 2$ and $m\meg 1$ and assume that for every $0<\beta\mik 1$ the number $\dcs(k,m,\beta)$ has been defined.

Let $0< \rho,\gamma\mik 1$ and $L$ be a finite subset of $\nn$ with $|L|\meg \Lambda(k,m,\rho\gamma/4)$ where $\Lambda(k,m,\rho\gamma/4)$
is as in \eqref{7e15}.
Also let $B$ be a subset of $[k]^{<\nn}$ such that $\mathrm{d}_L(B)\meg \rho$. If $\{A_t:t\in B\}$ is a family of measurable events in a probability
space $(\Omega,\Sigma,\mu)$ satisfying $\mu(A_t)\meg \gamma$ for every $t\in B$, then there exists an $m$-dimensional Carlson--Simpson tree $V$
of $[k]^{<\nn}$ which is contained in $B$ and such that
\begin{equation} \label{7e26}
\mu\Big( \bigcap_{t\in V}A_t\Big) \meg \Theta(k,m,\rho\gamma/4)
\end{equation}
where $\Theta(k,m,\rho\gamma/4)$ is as in \eqref{7e16}.
\end{lem}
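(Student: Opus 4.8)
Set $\eta=\rho\gamma/4$, $\Lambda=\Lambda(k,m,\eta)$ and $\Theta=\Theta(k,m,\eta)$, and recall that $\Lambda=\lceil\eta^{-1}\dcs(k,m,\eta)\rceil$, so that $\eta\Lambda\meg\dcs(k,m,\eta)$ (in particular $\Lambda\meg m+1\meg 2$), while $\Theta=2\eta/|\subtr_m([k]^{<\Lambda})|$. For $\omega\in\Omega$ put $B_\omega=\{t\in B:\omega\in A_t\}$; since $L$ is finite, $\dens_{[k]^n}(B_\omega)=k^{-n}\sum_{t\in B\cap[k]^n}\mathbf{1}_{A_t}(\omega)$ is a measurable function of $\omega$, and hence so are $\mathrm{d}_L(B_\omega)$ and $\mathrm{d}^W_{\mathrm{FW}}(B_\omega)$ for any fixed Carlson--Simpson tree $W$; all of these are bounded by $1$.

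The plan is as follows. First I would replace $L$ by a subset $L_0\subseteq L$ with $|L_0|=\Lambda$, chosen so that it consists of the $\Lambda$ elements $n$ of $L$ for which $\dens_{[k]^n}(B)$ is largest. Since the average of the $\Lambda$ largest members of a finite family of reals is at least the average of the whole family, this choice forces $\mathrm{d}_{L_0}(B)\meg\mathrm{d}_L(B)\meg\rho$; it is exactly this step that reconciles the (possibly very large) size of $L$ with the prescribed quantities $\Lambda$ and $\Theta$, and I expect it to be the only genuinely non-routine point. Using $\mu(A_t)\meg\gamma$ for every $t\in B$, averaging first over $\omega$ and then over $n\in L_0$ gives $\int_\Omega\mathrm{d}_{L_0}(B_\omega)\,\mathrm{d}\mu(\omega)\meg\gamma\,\mathrm{d}_{L_0}(B)\meg 4\eta$.

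Next I would bring in the convolution operation $\cv_{L_0}:[k]^{<\Lambda}\times X_{L_0}\to[k]^{<\nn}$ and, for $x\in X_{L_0}$, the Carlson--Simpson tree $R_x=\{\cv_{L_0}(t,x):t\in[k]^{<\Lambda}\}$ of dimension $\Lambda-1$ (Lemma \ref{5l6}). Applying Lemma \ref{7l7} to each set $B_\omega$ (with $L_0$ in place of $L$) gives $\ave_{x\in X_{L_0}}\mathrm{d}^{R_x}_{\mathrm{FW}}(B_\omega)=\mathrm{d}_{L_0}(B_\omega)$, and integrating this identity over $\omega$ yields $\ave_{x\in X_{L_0}}\int_\Omega\mathrm{d}^{R_x}_{\mathrm{FW}}(B_\omega)\,\mathrm{d}\mu(\omega)\meg 4\eta$. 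Fix $x_0\in X_{L_0}$ with $\int_\Omega\mathrm{d}^{R_{x_0}}_{\mathrm{FW}}(B_\omega)\,\mathrm{d}\mu(\omega)\meg 4\eta$ and put $W_0=R_{x_0}$, a Carlson--Simpson tree of dimension $\Lambda-1$. By Markov's inequality the event $G_0:=\{\omega:\mathrm{d}^{W_0}_{\mathrm{FW}}(B_\omega)\meg 2\eta\}$ satisfies $\mu(G_0)\meg 2\eta$.

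The last step is a double count over $\subtr_m(W_0)$. Fix $\omega\in G_0$. Through the canonical isomorphism $\mathrm{I}_{W_0}:[k]^{<\Lambda}\to W_0$ one has $\dens_{[k]^i}\big(\mathrm{I}_{W_0}^{-1}(B_\omega\cap W_0)\big)=\dens_{W_0(i)}(B_\omega)$ for every $i\in\{0,\dots,\Lambda-1\}$, so the average of these densities over $i$ is $\meg 2\eta$ and therefore at least $\dcs(k,m,\eta)$ of the levels $i$ satisfy $\dens_{[k]^i}(\mathrm{I}_{W_0}^{-1}(B_\omega\cap W_0))\meg\eta$. By Theorem B the set $\mathrm{I}_{W_0}^{-1}(B_\omega\cap W_0)$ contains an $m$-dimensional Carlson--Simpson tree, whose image under $\mathrm{I}_{W_0}$ is an element $V$ of $\subtr_m(W_0)$ with $V\subseteq B_\omega$ (hence $V\subseteq B$ and $\omega\in\bigcap_{t\in V}A_t$). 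So, setting $\mathcal{N}(\omega)=|\{V\in\subtr_m(W_0):V\subseteq B_\omega\}|$, we get $\mathcal{N}\meg 1$ on $G_0$, and therefore
\[ 2\eta\mik\mu(G_0)\mik\int_\Omega\mathcal{N}(\omega)\,\mathrm{d}\mu(\omega)=\sum_{V\in\subtr_m(W_0),\,V\subseteq B}\mu\Big(\bigcap_{t\in V}A_t\Big). \]
Since $|\subtr_m(W_0)|=|\subtr_m([k]^{<\Lambda})|$ (the canonical isomorphism of $W_0$ restricts to a bijection between the families of $m$-dimensional Carlson--Simpson subtrees), some $V^*\in\subtr_m(W_0)$ with $V^*\subseteq B$ satisfies $\mu\big(\bigcap_{t\in V^*}A_t\big)\meg 2\eta/|\subtr_m([k]^{<\Lambda})|=\Theta$, which is exactly the assertion of the lemma. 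Apart from the choice of $L_0$, the ingredients here — Lemma \ref{7l7}, the two uses of Markov's inequality, the concluding averaging, and the finitary measurability checks — are all routine.
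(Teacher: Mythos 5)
Your proof is correct and follows essentially the same route as the paper: pass to a $\Lambda$-element subset of $L$ carrying density $\rho$, use the convolution/Lemma \ref{7l7} to locate a single $(\Lambda-1)$-dimensional Carlson--Simpson tree $W_0$, apply Markov to get a set of measure $\meg 2\eta$ of $\omega$ with $\mathrm{d}^{W_0}_{\mathrm{FW}}(B_\omega)\meg 2\eta$, extract an $m$-dimensional subtree of $B_\omega$ via Theorem B, and divide by $|\subtr_m([k]^{<\Lambda})|$. The only (harmless) deviations are that you choose $W_0$ by averaging $\int\mathrm{d}^{R_x}_{\mathrm{FW}}(B_\omega)\,\mathrm{d}\mu$ over $x$ rather than $\mathrm{d}^{R_x}_{\mathrm{FW}}(B)$, and you finish with a double count over all subtrees instead of pigeonholing a selection $\omega\mapsto V_\omega$; your explicit choice of $L_0$ as the levels of largest density is a point the paper leaves implicit.
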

\begin{proof}
We set $\eta=\rho\gamma/4$ and $\Lambda=\Lambda(k,m,\eta)$. Notice that, by passing to an appropriate subset of $L$ if necessary, we may assume
that $|L|=\Lambda$. By Lemma \ref{7l7}, there exists a Carlson--Simpson tree $W$ of $[k]^{<\nn}$ of dimension $\Lambda-1$ with $L(W)=L$ and such that
$\mathrm{d}_{\mathrm{FW}}^W(B) \meg \mathrm{d}_L(B)\meg \rho$. For every $\omega\in \Omega$ let $B_\omega=\{t\in B\cap W :\omega\in A_t\}$ and set
\begin{equation} \label{7e27}
Y=\{\omega\in \Omega:\mathrm{d}_{\mathrm{FW}}^W(B_\omega)\meg \rho\gamma/2\}.
\end{equation}
Since $\mathrm{d}_{\mathrm{FW}}^W(B)\meg \rho$ and $\mu(A_t)\meg \gamma$ for every $t\in B$, we see that $\mu(Y)\meg \rho\gamma/2$.
By Fact \ref{7f8}, for every $\omega\in Y$ there exists an $m$-dimensional Carlson--Simpson tree $V_\omega$ of $[k]^{<\nn}$ such that
$V_\omega\subseteq B_\omega$. Hence, there exist $V\in\subtr_m(W)$ and $G\in\Sigma$ with $V_\omega=V$ for every $\omega\in G$ and such that
\begin{equation} \label{7e28}
\mu(G)\meg \frac{\mu(Y)}{|\subtr_m(W)|}\meg \frac{2\eta}{|\subtr_m([k]^{< \Lambda})|}\stackrel{\eqref{7e16}}{=}\Theta(k,m,\eta).
\end{equation}
It is easy to see that $V$ satisfies the conclusion of the lemma.
\end{proof}
We are now ready to proceed to the proof of Proposition \ref{7p5}.
\begin{proof}[Proof of Proposition \ref{7p5}]
Let $M=L\setminus L_0$ and set $M_\ell=\{m-\ell:m\in M\}$ for every $\ell\in L_0$. Moreover, for every $\ell\in L_0$ and every $t\in [k]^\ell$ let
\begin{equation} \label{7e29}
A_t=\{s\in [k]^{<\nn}: t^\con s\in A\}.
\end{equation}
By \eqref{7e16} and \eqref{7e17}, we see that $\Theta_0\mik \delta^2/8$. Hence, for every $\ell\in L_0$ we have
\begin{equation} \label{7e30}
|M_\ell|=|M|\meg 2\Theta_0^{-1}n\meg 8\delta^{-2}n.
\end{equation}
Also observe that for every $\ell\in L_0$ we have
\begin{equation} \label{7e31}
\ave_{t\in [k]^{\ell}} \mathrm{d}_{M_\ell}(A_t)= \mathrm{d}_M(A).
\end{equation}
Next recall that $\dens_{[k]^\ell}(A)\meg \delta$ for every $\ell\in L$. Therefore,
\begin{equation} \label{7e32}
\mathrm{d}_{L_0}(A)\meg \delta \text{ and } \mathrm{d}_M(A)\meg \delta.
\end{equation}
We consider the following cases.
\medskip

\noindent \textsc{Case 1:} \textit{there exist $\ell_0\in L_0$ and $t_0\in [k]^{\ell_0}$ such that $\mathrm{d}_{M_{\ell_0}}(A_{t_0})\meg
\delta+\delta^2/4$.} In this case we have that
\begin{equation} \label{7e33}
| \{ m\in M_{\ell_0}: \dens_{[k]^m}(A_{t_0})\meg \delta+ \delta^2/8\}| \meg (\delta^2/8) |M_{\ell_0}| \stackrel{\eqref{7e30}}{\meg} n.
\end{equation}
We set $L'=\{m\in M: \dens_{[k]^{m-\ell_0}}(A_{t_0})\meg \delta+\delta^2/8 \}$. By \eqref{7e33}, we see that with this choice the first
alternative of Proposition \ref{7p5} holds true.
\medskip

\noindent \textsc{Case 2:} \textit{for every $\ell\in L_0$ and every $t\in [k]^\ell$ we have $\mathrm{d}_{M_{\ell}}(A_t)< \delta+\delta^2/4$.}
Combining \eqref{7e31}, \eqref{7e32} and taking into account our assumptions, in this case we see that for every $\ell\in L_0$ we have
\begin{equation} \label{7e34}
|\{t\in [k]^\ell : \mathrm{d}_{M_\ell}(A_t)\meg \delta/2\}|\meg (1-\delta/2) k^\ell.
\end{equation}
We set
\begin{equation} \label{7e35}
B=\bigcup_{\ell\in L_0} \{t\in A\cap [k]^\ell: \mathrm{d}_{M_\ell}(A_t)\meg \delta/2\}.
\end{equation}
By \eqref{7e32} and \eqref{7e34}, we get that $\mathrm{d}_{L_0}(B)\meg \delta/2$. Let
\begin{equation} \label{7e36}
(\Omega,\mu)=\prod_{\ell\in L_0} \big([k]^{<\nn}, \mathrm{d}_{M_\ell}\big)
\end{equation}
be the product of the discrete probability spaces $\big([k]^{<\nn},\mathrm{d}_{M_\ell}\big)$. For every $t\in B$ we define a measurable event
$\widetilde{A}_t$ of $\Omega$ as follows. We set
\begin{equation} \label{7e37}
\widetilde{A}_t=\prod_{\ell\in L_0} X_t^\ell
\end{equation}
where $X_t^\ell=A_t$ if $\ell=|t|$ and $X_t^\ell=[k]^{<\nn}$ otherwise. Notice that for every $\ell\in L_0$ and every $t\in B\cap[k]^{\ell}$
we have
\begin{equation} \label{7e38}
\mu(\widetilde{A}_t)=\mathrm{d}_{M_\ell}(A_t) \stackrel{\eqref{7e35}}{\meg}\delta/2.
\end{equation}
Recall that $|L_0|=\Lambda_0\stackrel{\eqref{7e17}}{=}\Lambda(k,1,\delta^2/16)$. Since $\mathrm{d}_{L_0}(B)\meg \delta/2$, by Lemma \ref{7l9}
applied for ``$\rho=\gamma=\delta/2$'', there exists a Carlson--Simpson line $V$ of $[k]^{<\nn}$ which is contained in $B$ and such that
\begin{equation} \label{7e39}
\mu\Big( \bigcap_{t\in V} \widetilde{A}_t\Big)\meg \Theta(k,1,\delta^2/16)\stackrel{\eqref{7e17}}{=}\Theta_0.
\end{equation}
Notice, in particular, that the level set $L(V)$ of $V$ is contained in $L_0$. Let $\ell_1$ to be the unique integer with
$V(1)\subseteq [k]^{\ell_1}$ and observe that $\ell_1\in L_0$. By the definition of the events $\{\widetilde{A}_t:t\in B\}$ in
\eqref{7e37} and \eqref{7e39}, we get that
\begin{equation} \label{7e40}
\mathrm{d}_{M_{\ell_1}}\Big( \bigcap_{t\in V(1)} A_t\Big)= \mu\Big( \bigcap_{t\in V(1)} \widetilde{A}_t\Big) \meg\Theta_0.
\end{equation}
Let
\begin{equation} \label{7e41}
M_{\ell_1}'= \Big\{m\in M_{\ell_1}: \dens_{[k]^m}\Big( \bigcap_{t\in V(1)}A_t\Big)\meg\Theta_0/2\Big\}.
\end{equation}
By \eqref{7e40}, we have
\begin{equation} \label{7e42}
|M_{\ell_1}'| \meg (\Theta_0/2)|M_{\ell_1}|\stackrel{\eqref{7e30}}{\meg} n.
\end{equation}
We set
\begin{equation} \label{7e43}
L''=\{\ell_1+m:  m\in M_{\ell_1}'\}.
\end{equation}
It is clear that $|L''|\meg n$. Moreover, $L''$ is contained in $L\setminus L_0$ and so $\ell_1<\min(L'')$. Finally, notice that for every
$\ell\in L''$ we have that $\ell-\ell_1\in M'_{\ell_1}$. Therefore, by \eqref{7e29} and \eqref{7e41}, we conclude that
\begin{equation} \label{7e44}
\dens_{[k]^{\ell-\ell_1}}\big( \{s\in[k]^{<\nn}: t^{\con}s\in A \text{ for every } t\in V(1)\}\big)\meg\Theta_0/2.
\end{equation}
That is, the second alternative of Proposition \ref{7p5} is satisfied. The above cases are exhaustive and so the proof is completed.
\end{proof}
\noindent 7.3. \textbf{Consequences.} Let $k,m\in\nn$ with $k\meg 2$ and $m\meg 1$ and assume that for every $0<\beta\mik 1$ the number
$\mathrm{DCS}(k,m,\beta)$ has been defined. For every $0<\gamma\mik 1$ we set
\begin{equation} \label{7e45}
\theta(k,m,\gamma)=\Theta(k,m,\gamma/4)
\end{equation}
where $\Theta(k,m,\gamma/4)$ is as in \eqref{7e16}. Recall that for every Carlson--Simpson tree $W$ of $[k]^{<\nn}$ and every $k'\in\{2,...,k\}$
by $W\upharpoonright k'$ we denote the $k'$-restriction of $W$ defined in \eqref{2e14}. We have the following corollary.
\begin{cor} \label{7c10}
Let $k,m\in\nn$ with $k\meg 2$ and $m\meg 1$ and assume that for every $0<\beta\mik 1$ the number $\mathrm{DCS}(k,m,\beta)$ has been defined.

Let $0<\gamma\mik 1$ and $d\in\nn$ with $d\meg \Lambda(k,m,\gamma/4)-1$ where $\Lambda(k,m,\gamma/4)$ is as in \eqref{7e15}.
Also let $V$ be a Carlson--Simpson tree of $[k+1]^{<\nn}$ with
\begin{equation} \label{7e46}
\dim(V)\meg \mathrm{CS}(k+1,d,m,2).
\end{equation}
If $\{A_t: t\in V\}$ is a family of measurable events in a probability space $(\Omega,\Sigma,\mu)$ satisfying
$\mu(A_t)\meg \gamma$ for every $t\in V$, then there exists $W\in\mathrm{Subtr}_d(V)$ such that for every $U\in \subtr_m(W)$ we have
\begin{equation} \label{7e47}
\mu\Big( \bigcap_{v\in U\upharpoonright k} A_v\Big) \meg \theta(k,m,\gamma)
\end{equation}
where $\theta(k,m,\gamma)$ is as in \eqref{7e45}.
\end{cor}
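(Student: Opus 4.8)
The plan is to define a $2$-colouring of $\subtr_m(V)$, apply Theorem \ref{4t1} to pass to a monochromatic $d$-dimensional Carlson--Simpson subtree $W$ of $V$, and then use Lemma \ref{7l9} to verify that the surviving colour is the favourable one.

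Concretely, I would colour $U\in\subtr_m(V)$ with colour $1$ if $\mu\big(\bigcap_{v\in U\upharpoonright k}A_v\big)\meg\theta(k,m,\gamma)$ and with colour $2$ otherwise; this is legitimate since $U\upharpoonright k\subseteq U\subseteq V$. As $\dim(V)\meg\cs(k+1,d,m,2)$ and $d\meg\Lambda(k,m,\gamma/4)-1\meg m$, Theorem \ref{4t1} provides $W\in\subtr_d(V)$ with $\subtr_m(W)$ monochromatic. It then suffices to exhibit a single $U^*\in\subtr_m(W)$ of colour $1$, which forces every member of $\subtr_m(W)$ to have colour $1$ and hence gives \eqref{7e47}.

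To produce such a $U^*$, I would work inside $W$ via its canonical isomorphism $\mathrm{I}_W\colon [k+1]^{<d+1}\to W$, setting $B_t=A_{\mathrm{I}_W(t)}$ for $t\in [k]^{<d+1}$; since $\mathrm{I}_W(t)\in W\upharpoonright k\subseteq V$, we have $\mu(B_t)\meg\gamma$. Now $[k]^{<d+1}$ is a $d$-dimensional Carlson--Simpson tree of $[k]^{<\nn}$ with level set $L=\{0,...,d\}$, so $|L|=d+1\meg\Lambda(k,m,\gamma/4)$ and $\mathrm{d}_L\big([k]^{<d+1}\big)=1$. Thus Lemma \ref{7l9}, applied with ``$\rho=1$'' and the given $\gamma$ (so $\rho\gamma/4=\gamma/4$), yields $U_0\in\subtr_m\big([k]^{<d+1}\big)$ with $\mu\big(\bigcap_{t\in U_0}B_t\big)\meg\Theta(k,m,\gamma/4)=\theta(k,m,\gamma)$.

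Finally I would transport $U_0$ back to $W$. Since $U_0\subseteq [k]^{<d+1}$, its generating sequence consists of a word over $k$ together with left variable words over $k$; reading the same sequence over $k+1$ produces $V_0\in\subtr_m\big([k+1]^{<d+1}\big)$ with $V_0\upharpoonright k=U_0$ (the lengths involved do not depend on the substituted letters, so $V_0$ still lies in $[k+1]^{<d+1}$). Setting $U^*:=\mathrm{I}_W(V_0)\in\subtr_m(W)$ and using that the canonical isomorphism is compatible with the $k$-restriction operation, we get $U^*\upharpoonright k=\mathrm{I}_W(V_0\upharpoonright k)=\mathrm{I}_W(U_0)$, whence
\[ \mu\Big(\bigcap_{v\in U^*\upharpoonright k}A_v\Big)=\mu\Big(\bigcap_{t\in U_0}A_{\mathrm{I}_W(t)}\Big)=\mu\Big(\bigcap_{t\in U_0}B_t\Big)\meg\theta(k,m,\gamma), \]
so $U^*$ has colour $1$. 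I expect the only real obstacle to be this last bookkeeping step — checking that $\mathrm{I}_W(U_0)$ is genuinely of the form $U^*\upharpoonright k$ for some $U^*\in\subtr_m(W)$, i.e.\ that canonical isomorphisms respect both the passage to subtrees and the $k$-restriction operation; the quantitative content is supplied entirely by Theorem \ref{4t1} and Lemma \ref{7l9}.
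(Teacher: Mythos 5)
Your proposal is correct and follows essentially the same route as the paper: a two-colouring of $\subtr_m(V)$ resolved by Theorem \ref{4t1}, followed by an application of Lemma \ref{7l9} (with $\rho=1$, $B=[k]^{<d+1}$) to the pulled-back events $A_{\mathrm{I}_W(t)}$ to exhibit one favourable $m$-dimensional subtree, and finally the unique extension in $\subtr_m(W)$ whose $k$-restriction is the image of that subtree. The only difference is that you spell out the bookkeeping of the last step, which the paper leaves implicit.
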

\begin{proof}
We set $\theta=\theta(k,m,\gamma)$ and we define
\begin{equation} \label{7e48}
\mathcal{U}=\Big\{ U\in\subtr_m(V): \mu\Big( \bigcap_{v\in U\upharpoonright k}A_v\Big) \meg \theta \Big\}.
\end{equation}
By \eqref{7e46} and Theorem \ref{4t1}, it is possible to select $W\in\mathrm{Subtr}_d(V)$ such that either $\subtr_m(W)\subseteq \mathcal{U}$
or $\subtr_m(W)\cap \mathcal{U}=\varnothing$. Therefore, it is enough to show that $\subtr_m(W)\cap \mathcal{U}\neq\varnothing$.

To this end we argue as follows. Let $\mathrm{I}_W:[k+1]^{<d+1}\to W$ be the canonical isomorphism associated to $W$ and for every $t\in [k]^{<d+1}$
set $A'_t=A_{\mathrm{I}_W(t)}$. By Lemma \ref{7l9}, there exists an $m$-dimensional Carlson--Simpson subtree $R$ of $[k]^{<d+1}$ such that
\begin{equation} \label{7e49}
\mu\Big( \bigcap_{t\in V} A'_t\Big) \meg \theta.
\end{equation}
Let $S$ be the unique element of $\subtr_m(W)$ such that $S\upharpoonright k=\mathrm{I}_W(R)$. Then, by \eqref{7e49},
we conclude that $S\in\mathcal{U}$ and the proof is completed.
\end{proof}
The final result of this section is the following corollary.
\begin{cor} \label{7c11}
Let $k,m\in\nn$ with $k\meg 2$ and $m\meg 1$ and assume that for every $0<\beta\mik 1$ the number $\dcs(k,m,\beta)$ has been defined.

Let $0<\gamma\mik 1$ and $d\in\nn$ with $d\meg \Lambda(k,m,\gamma/4)-1$ where $\Lambda(k,m,\gamma/4)$ is as in \eqref{7e15}.
Also let $V$ be a Carlson--Simpson tree of $[k+1]^{<\nn}$ with
\begin{equation} \label{7e50}
\dim(V)\meg \mathrm{CS}(k+1,d,m,2).
\end{equation}
Finally let $\{A_t: t\in V\}$ be a family of measurable events in a probability space $(\Omega,\Sigma,\mu)$ satisfying $\mu(A_t)\meg \gamma$
for every $t\in V$.  Assume, in addition, that there exists $r\in [k]$ such that $A_t=A_{t'}$ for every $t,t'\in V$ which are $(r,k+1)$-equivalent
(see \S 2.6). Then there exists $W\in\mathrm{Subtr}_d(V)$ such that for every $U\in \subtr_m(W)$ we have
\begin{equation} \label{7e51}
\mu\Big( \bigcap_{v\in U} A_v\Big) \meg \theta(k,m,\gamma)
\end{equation}
where $\theta(k,m,\gamma)$ is as in \eqref{7e45}.
\end{cor}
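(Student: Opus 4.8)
The plan is to derive the corollary from Corollary \ref{7c10}, using the insensitivity hypothesis only to upgrade the conclusion from $U\upharpoonright k$ to $U$. Recall that Corollary \ref{7c10} yields, under exactly the present hypotheses (here $d\meg\Lambda(k,m,\gamma/4)-1$ and $\dim(V)\meg\cs(k+1,d,m,2)$), a tree $W\in\subtr_d(V)$ such that $\mu\big(\bigcap_{v\in U\upharpoonright k}A_v\big)\meg\theta(k,m,\gamma)$ for every $U\in\subtr_m(W)$. So the first step is simply to invoke Corollary \ref{7c10} and fix such a $W$. It then remains to show that for every $U\in\subtr_m(W)$ one has $\bigcap_{v\in U}A_v=\bigcap_{v\in U\upharpoonright k}A_v$, after which the desired bound is immediate.

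The key (and essentially the only non-formal) step is the following observation. Fix $U\in\subtr_m(W)$ with generating sequence $(c,w_0,\dots,w_{m-1})$ and canonical isomorphism $\mathrm{I}_U:[k+1]^{<m+1}\to U$, and let $\pi:[k+1]^{<m+1}\to[k]^{<m+1}$ be the map replacing every occurrence of the letter $k+1$ by the letter $r$. I would verify, by unwinding \eqref{2e12}, that for every $\sigma=(a_0,\dots,a_{n-1})\in[k+1]^{<m+1}$ the words $\mathrm{I}_U(\sigma)=c^{\con}w_0(a_0)^{\con}\cdots^{\con}w_{n-1}(a_{n-1})$ and $\mathrm{I}_U(\pi(\sigma))$ are $(r,k+1)$-equivalent: they have equal length, and they agree at every coordinate except possibly at coordinates occupied by the variable of some $w_i$ with $a_i=k+1$, where $\mathrm{I}_U(\sigma)$ reads $k+1$ and $\mathrm{I}_U(\pi(\sigma))$ reads $r$; in particular neither value equals any $s\in[k+1]\setminus\{r,k+1\}$, which is precisely the defining condition of $(r,k+1)$-equivalence (being a coordinatewise condition, it is undisturbed by the intervening concatenations and by the common initial segment $c$). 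Since $\mathrm{I}_U(\sigma)\in U\subseteq V$ and $\mathrm{I}_U(\pi(\sigma))\in U\upharpoonright k\subseteq V$, the insensitivity hypothesis then gives $A_{\mathrm{I}_U(\sigma)}=A_{\mathrm{I}_U(\pi(\sigma))}$.

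Granting this, the equality of intersections follows formally: since $U\upharpoonright k\subseteq U$ we get $\bigcap_{v\in U}A_v\subseteq\bigcap_{v\in U\upharpoonright k}A_v$, while conversely every $v\in U$ is of the form $\mathrm{I}_U(\sigma)$, so $A_v=A_{\mathrm{I}_U(\pi(\sigma))}\supseteq\bigcap_{v'\in U\upharpoonright k}A_{v'}$, and intersecting over $v\in U$ yields $\bigcap_{v\in U}A_v\supseteq\bigcap_{v'\in U\upharpoonright k}A_{v'}$. Combining with the first step, $\mu\big(\bigcap_{v\in U}A_v\big)=\mu\big(\bigcap_{v\in U\upharpoonright k}A_v\big)\meg\theta(k,m,\gamma)$ for every $U\in\subtr_m(W)$, as required. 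I do not anticipate any genuine obstacle here: all the combinatorial content is carried by Corollary \ref{7c10}, and the added insensitivity hypothesis is tailored exactly so that the full intersection over $U$ collapses to the intersection over $U\upharpoonright k$.
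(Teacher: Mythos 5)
Your proposal is correct and follows exactly the paper's argument: the paper likewise observes that the $(r,k{+}1)$-equivalence hypothesis forces $\bigcap_{t\in U}A_t=\bigcap_{t\in U\upharpoonright k}A_t$ for every Carlson--Simpson subtree $U$ of $V$, and then applies Corollary \ref{7c10}. Your verification that $\mathrm{I}_U(\sigma)$ and $\mathrm{I}_U(\pi(\sigma))$ are $(r,k{+}1)$-equivalent is the detail the paper leaves implicit, and it is carried out correctly.
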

\begin{proof}
Since $A_t=A_{t'}$ for every $t,t'\in V$ which are $(r,k+1)$-equivalent, for every $\ell\in [\dim(V)]$ and every $R\in\subtr_\ell(V)$ we have
\begin{equation} \label{7e52}
\bigcap_{t\in R} A_t= \bigcap_{t\in R\upharpoonright k} A_t.
\end{equation}
Using this observation, the result follows by Corollary \ref{7c10}.
\end{proof}


\section{A probabilistic version of Theorem B}

\numberwithin{equation}{section}

\noindent 8.1. \textbf{Overview.} In this subsection we give an outline of the proof of Theorem B. Very briefly, and oversimplifying
dramatically, the proof proceeds by induction on $k$ and is based on a density increment strategy.

The first step is given in Corollary \ref{7c6}. Indeed, by Corollary \ref{7c6}, the proof of Theorem B reduces to the task of estimating
the numbers $\dcs(k,1,\delta)$. To achieve this goal we follow an inductive scheme that can be described as follows:
\begin{equation} \label{8e1}
\dcs(k,m,\beta) \text{ for every } m \text{ and } \beta \ \Rightarrow \ \dcs(k+1,1,\delta).
\end{equation}
Precisely, in order to estimate the numbers $\dcs(k+1,1,\delta)$ we need to have at our disposal the numbers $\dcs(k,m,\beta)$ for every
integer $m\meg 1$ and every $0<\beta\mik 1$. The base case -- that is, the estimation of the numbers $\dcs(2,1,\delta)$ -- is, of course,
the content of Proposition \ref{7p1}.

At this point it is useful to recall the philosophy of the density increment method. One starts with a subset $A$ of a ``structured'' set
$\mathcal{S}$ of density $\delta$ and assumes that $A$ does not contain a subset of a certain kind. The goal is then to find a sufficiently
large ``substructure'' $\mathcal{S}'$ of $\mathcal{S}$ such that the density of $A$ inside $\mathcal{S}'$ is at least $\delta+\gamma$, where
$\gamma$ is a positive constant that depends only on $\delta$. Usually this task is rather difficult to achieve at once, and so, one first
tries to increase the density of $A$ inside a relatively ``simple'' subset of $\mathcal{S}$. We refer to the essay \cite{Gowers} of W. T.
Gowers for a thorough exposition of this method. 

The proof of the inductive scheme described in \eqref{8e1} follows the strategy just mentioned above. Specifically, fix the parameters $k$
and $\delta$ and let $A$ be a subset of $[k+1]^{<\nn}$ not containing a Carlson--Simpson line such that $\dens_{[k+1]^n}(A)\meg \delta$ for
sufficiently many $n\in\nn$. What we find is a Carlson--Simpson tree $W$ of $[k+1]^{<\nn}$ such that the density of the set $A$ has been
significantly increased in sufficiently many levels of $W$. This is done in two steps. Firstly we show that there exists a Carlson--Simpson
tree $V$ of $[k+1]^{<\nn}$ and a subset $D$ of $V$ which is the intersection of relatively few insensitive sets and correlates with the set
$A$ more than expected in many levels of $V$ (it is useful to view $D$ as a ``simple'' subset of $V$). This is the content of Corollary
\ref{8c6} below. In the second step we use this information to achieve the density increment. We will not comment at this point on the second
step, since we will do so in \S 9.1; here we simply mention that the statement of main interest is Corollary \ref{9c15}.

We will, however, discuss in detail the proof of the first step which is analogous to the first part of the polymath proof of the density
Hales--Jewett Theorem. In fact, this is more than an analogy since we are using a beautiful argument from the polymath proof
(see \cite[\S 7.2]{Pol}) to reduce the proof of Corollary \ref{8c6} -- the main result of this step -- to a ``probabilistic'' version
of Theorem B. The analogy, however, with the polymath proof breaks down at this point and the main bulk of the argument is quite different.

The aforementioned ``probabilistic'' version of Theorem B refers to the question whether a dense subset of $[k+1]^{<\nn}$ not only will contain
a Carlson--Simpson line but, actually, a non-trivial portion of them. Results of this type figure prominently in Ramsey Theory and have found
significant applications (see, e.g., \cite{CG,Sch} and the references therein). A classical result in this direction is the ``probabilistic''
version of Szemer\'{e}di's Theorem, essentially due to P. Varnavides \cite{Va}, asserting that for every integer $k\meg 3$ and every
$0<\delta\mik 1$ there exists a constant $c(k,\delta)>0$ such that every subset $A$ of $[n]$ with $|A|\meg \delta n$ contains at least
$c(k,\delta)n^2$ arithmetic progressions of length $k$, as long as $n$ is sufficiently large.

However, some density results do not admit a ``probabilistic'' version of the form stated above. The most well-known example is the density
Hales--Jewett Theorem. Indeed, if $n$ is large enough, then one can find a highly dense subset of, say, $[2]^n$ containing just a tiny portion
of combinatorial lines; see \cite[\S 3.1]{Pol}. This example can modified, in a straightforward way, to show that Theorem B also fails to admit 
a naive ``probabilistic'' version. 

This phenomenon appears to be quite discouraging, but it can be bypassed. So far there has been only one method in the literature dealing
with this problem. It was introduced by the participants of the polymath project and was based on the technique of changing the measure.
Part of the novelty of the present paper is the development of a new method which not only is conceptually easy to grasp but also appears
to be quite robust (the clearest sign for this is that it can be combined with the arguments in \cite{Pol} to give a very simple proof \cite{DKT3}
of the density Hales--Jewett Theorem). The idea is to avoid the pathological behavior by passing to an appropriate ``substructure''. This is done
applying the following three basic steps. We will describe them in abstract setting since we feel that no clarity will be gained by restricting
our discussion to the specifics of Theorem B.
\medskip

\noindent \textit{Step 1.} By an application of Szemer\'{e}di's regularity method \cite{Sz2}, we show that a given dense set $A$ of our 
``structured'' set $\mathcal{S}$ is sufficiently pseudorandom. This enables us to model the set $A$ as a family of measurable events
$\{B_t:t\in\mathcal{R}\}$ in a probability space $(\Omega,\Sigma,\mu)$ indexed by a Ramsey space $\mathcal{R}$ closely related, of course,
with $\mathcal{S}$. The measure of the events is controlled by the density of $A$.
\medskip

\noindent \textit{Step 2.} We apply coloring arguments and our basic density result to show that there exists a ``substructure'' $\mathcal{R}'$
of $\mathcal{R}$ such that the events in the subfamily $\{B_t:t\in \mathcal{R}'\}$ are highly correlated. The reasoning can be traced in an old
paper of P. Erd\H{o}s and A. Hajnal \cite{EH}. Also we notice that it is precisely in this step that we need to pass to a ``substructure''. 
As can be seen from the examples mentioned above, this is a necessity rather than a coincidence.
\medskip

\noindent \textit{Step 3.} We use a double counting argument to locate a ``substructure'' $\mathcal{S}'$ of $\mathcal{S}$ such that the
set $A$ contains a non-trivial portion of subsets of $\mathcal{S}'$ of the desired kind (combinatorial lines, Carlson--Simpson lines, etc.).
\medskip

Some final comments on the computational effectiveness of the method. In all cases of interest known to the authors, the tools used in the
three steps described above have primitive recursive (and fairly reasonable) bounds. However, the argument yields very poor lower bounds for
the correlation of the events $\{B_t:t\in\mathcal{R}'\}$ in the second step. These lower bounds are partly responsible for the Ackermannian
behavior of the numbers $\dcs(k,m,\delta)$.
\medskip

\noindent 8.2. \textbf{The main dichotomy.}  Let $k,m\in\nn$ with $k\meg 2$ and $m\meg 1$ and assume that for every $0<\beta\mik 1$
the number $\dcs(k,m,\beta)$ has been defined. Hence, for every $0<\delta\mik 1$ we may set
\begin{equation} \label{8e2}
\vartheta=\vartheta(k,m,\delta)=\Theta(k,m,\delta/8) \text{ and } \eta=\eta(k,m,\delta)=\frac{\delta\vartheta}{30k}
\end{equation}
where $\Theta(k,m,\delta/8)$ is as in \eqref{7e16}. Moreover let
\begin{equation} \label{8e3}
\Lambda'=\Lambda(k,m,\delta/8)\stackrel{\eqref{7e15}}{=}\lceil8\delta^{-1}\mathrm{DCS}(k,m,\delta/8)\rceil
\end{equation}
and for every $n\in\nn$ set
\begin{equation} \label{8e4}
\ell(n,m)=\mathrm{CS}(k+1, n+\Lambda', m,2)+1.
\end{equation}
We define the map $G:\nn\times\nn\times (0,1]\to \nn$ by the rule
\begin{equation} \label{8e5}
G(n,m,\ee)=\mathrm{Reg}(k+1,\ell(n,m) ,1,\ee).
\end{equation}
Also for every Carlson--Simpson tree $V$ of $[k]^{<\nn}$ and every $1\mik m\mik i\mik \dim(V)$ let
\begin{equation} \label{8e6}
\subtr^0_m(V,i)=\big\{ R\in\subtr_m(V): R(0)=V(0) \text{ and } R(m)\subseteq V(i)\big\}.
\end{equation}
As we have pointed out in \S 2.5, if $W$ is a Carlson--Simpson tree of $[k+1]^{<\nn}$, then its $k$-restriction $W\upharpoonright k$ can be
identified as a Carlson--Simpson tree of $[k]^{<\nn}$. Hence, we may also consider the set $\subtr^0_m(W\upharpoonright k,i)$ whenever
$W$ is a Carlson--Simpson tree of $[k+1]^{<\nn}$.

We are ready to state the main result of this section.
\begin{prop} \label{8p1}
Let $k,m\in\nn$ with $k\meg 2$ and $m\meg 1$ and assume that for every $0<\beta\mik 1$ the number $\mathrm{DCS}(k,m,\beta)$ has been defined.

Let $0<\delta\mik 1$ and define $\vartheta$, $\eta$ and $\Lambda'$ as in \eqref{8e2} and \eqref{8e3} respectively. Also let $n\in\nn$ with
$n\meg 1$ and $N$ be a finite subset of $\nn$ such that
\begin{equation}\label{8e7}
|N|\meg G\big( \lceil \eta^{-4}n\rceil, m, \eta^2/2\big)
\end{equation}
where $G$ is as in \eqref{8e5}. If $A\subseteq [k+1]^{<\nn}$ satisfies $|A\cap [k+1]^l|\meg \delta (k+1)^l$ for every $l\in N$, then there exist
a Carlson--Simpson tree $W$ of $[k+1]^{<\nn}$ with $\dim(W)=\lceil \eta^{-4}n\rceil+\Lambda'$ and $I\subseteq \{m,...,\dim(W)\}$ with $|I|\meg n$
such that either
\begin{enumerate}
\item[(i)] for every $i\in I$ we have $\dens_{W(i)}(A)\meg \delta+\eta^2/2$, or
\item[(ii)] for every $i\in I$ we have $\dens_{W(i)}(A)\meg \delta-2\eta$ and moreover
\begin{equation} \label{8e8}
\dens \big( \{V\in\subtr_m^0(W\upharpoonright k,i): V\subseteq A\} \big) \meg \vartheta/2.
\end{equation}
\end{enumerate}
\end{prop}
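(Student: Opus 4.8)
The plan is to combine the regularity lemma of \S 3, the convolution machinery of \S 5, the partition result for Carlson--Simpson trees in the guise of Corollary \ref{7c10}, and a double-averaging argument à la Erd\H{o}s--Hajnal. Throughout set $d=\lceil\eta^{-4}n\rceil+\Lambda'$ (the required dimension of $W$) and record that $\eta^2\mik\delta$, $\eta\mik\vartheta/4$ and $\vartheta\meg 16\eta^4$; all three follow immediately from $\eta=\delta\vartheta/(30k)$ together with $k\meg 2$ and $\delta,\vartheta\mik 1$.

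\emph{Step 1 (regularization and transfer to the convolution picture).} Since $|N|\meg G(\lceil\eta^{-4}n\rceil,m,\eta^2/2)=\reg\big(k+1,\cs(k+1,d,m,2)+1,1,\eta^2/2\big)$, Lemma \ref{3l2} produces a set $L\subseteq N$ with $|L|=\cs(k+1,d,m,2)+1$ such that $\{A\}$ is $(\eta^2/2,L)$-regular. Write $L=\{l_0<\dots<l_{|L|-1}\}$, let $\cv_L\colon[k+1]^{<|L|}\times X_L\to[k+1]^{<\nn}$ be the associated convolution operation and put $B=\cv_L^{-1}(A)$. Arguing exactly as in the proof of Lemma \ref{7l2}, for $p\in\{0,\dots,|L|-1\}$ and $t\in[k+1]^p$ the set $\Omega_t$ in \eqref{5e6} is the section of $[k+1]^{l_p}$ selected by $\mathrm{I}_{L_p}(t)$, so $(\eta^2/2,L)$-regularity and Lemma \ref{5l5}(i) give $\dens_{X_L}(B_t)=\dens_{\Omega_t}(A)\in[\beta_p-\eta^2/2,\beta_p+\eta^2/2]$, where $\beta_p=\dens_{[k+1]^{l_p}}(A)\meg\delta$ because $l_p\in N$. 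In particular $\dens_{X_L}(B_t)\meg\delta-\eta^2/2\meg\delta/2$ for every $t\in[k+1]^{<|L|}$.

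\emph{Step 2 (a correlated subtree, and two averages).} The set $[k+1]^{<|L|}$ is a Carlson--Simpson tree of dimension $|L|-1=\cs(k+1,d,m,2)$, and $d\meg\Lambda'-1=\Lambda(k,m,(\delta/2)/4)-1$; hence Corollary \ref{7c10}, applied with $\gamma=\delta/2$ to the events $\{B_t:t\in[k+1]^{<|L|}\}$ in the probability space $(X_L,\dens_{X_L})$, yields $W'\in\subtr_d\big([k+1]^{<|L|}\big)$ with
\begin{equation}\label{8planA}
\dens_{X_L}\Big(\bigcap_{v\in U\upharpoonright k}B_v\Big)\meg\theta(k,m,\delta/2)=\Theta(k,m,\delta/8)=\vartheta
\qquad\text{for every }U\in\subtr_m(W').
\end{equation}
For $x\in X_L$ set $W_x=\{\cv_L(w,x):w\in W'\}$; by Lemma \ref{5l6} this is a Carlson--Simpson tree of dimension $d$ with $W_x(i)=\{\cv_L(w,x):w\in W'(i)\}$, and $w\mapsto\cv_L(w,x)$ is the canonical isomorphism $W'\to W_x$, which maps $\subtr^0_m(W'\upharpoonright k,i)$ bijectively onto $\subtr^0_m(W_x\upharpoonright k,i)$ and sends the image of $U$ into $A$ exactly when $x\in\bigcap_{u\in U}B_u$. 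Writing $f_i(x)=\dens_{W_x(i)}(A)$ and $h_i(x)=\dens\big(\{V\in\subtr^0_m(W_x\upharpoonright k,i):V\subseteq A\}\big)$, Lemma \ref{5l7} gives $f_i(x)=\ave_{w\in W'(i)}\mathbf{1}[x\in B_w]$ and $h_i(x)=\ave_{U\in\subtr^0_m(W'\upharpoonright k,i)}\mathbf{1}[x\in\bigcap_{u\in U}B_u]$. Averaging over $x$ and using Step 1 for the first identity, and \eqref{8planA} for the second (each $U\in\subtr^0_m(W'\upharpoonright k,i)$ is the $k$-restriction of a member of $\subtr^0_m(W',i)$, namely the Carlson--Simpson subtree with the same generating sequence now read over $[k+1]$), we obtain, for every $i\in\{m,\dots,d\}$,
\begin{equation}\label{8planB}
\ave_{x\in X_L}f_i(x)=\ave_{w\in W'(i)}\dens_{\Omega_w}(A)\meg\delta-\eta^2/2,
\qquad
\ave_{x\in X_L}h_i(x)\meg\vartheta .
\end{equation}

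\emph{Step 3 (the dichotomy).} If $\sum_{i=m}^{d}\dens_{X_L}\big(\{x:f_i(x)\meg\delta+\eta^2/2\}\big)\meg n$, some $x^{\ast}$ satisfies $f_i(x^{\ast})\meg\delta+\eta^2/2$ for at least $n$ indices $i$; taking $W=W_{x^{\ast}}$ and $I$ this index set gives alternative (i). Otherwise, by Markov's inequality $E:=\{i:\dens_{X_L}(\{x:f_i(x)\meg\delta+\eta^2/2\})\meg\eta^2\}$ has $|E|<n/\eta^2$, whence $E^{c}=\{m,\dots,d\}\setminus E$ has $|E^{c}|\meg(d-m+1)-n/\eta^2\meg n/(2\eta^4)$ (using $d=\lceil\eta^{-4}n\rceil+\Lambda'$ and the smallness of $\eta$). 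Fix $i\in E^{c}$. From $\ave_x f_i(x)\meg\delta-\eta^2/2$ in \eqref{8planB}, from $\dens_{X_L}(\{f_i\meg\delta+\eta^2/2\})<\eta^2$, and a one-line Markov estimate, one gets $\dens_{X_L}(\{x:f_i(x)\meg\delta-2\eta\})\meg 1-\eta$, while \eqref{8planB} and Markov give $\dens_{X_L}(\{x:h_i(x)\meg\vartheta/2\})\meg\vartheta/2$; since $\eta\mik\vartheta/4$, the intersection of these two events has measure at least $\vartheta/2-\eta\meg\vartheta/4$. Applying Lemma \ref{2l5} to the events $\{x:f_i(x)\meg\delta-2\eta\text{ and }h_i(x)\meg\vartheta/2\}$, $i\in E^{c}$, produces $x^{\ast}\in X_L$ for which at least $(\vartheta/8)|E^{c}|\meg\vartheta n/(16\eta^4)\meg n$ of them occur; taking $W=W_{x^{\ast}}$ and $I$ the corresponding indices gives alternative (ii), since then $\dens_{W(i)}(A)=f_i(x^{\ast})\meg\delta-2\eta$ and $\dens(\{V\in\subtr^0_m(W\upharpoonright k,i):V\subseteq A\})=h_i(x^{\ast})\meg\vartheta/2$ for all $i\in I$.

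\emph{The main point.} The delicate step is Step 3. The convolved tree $W_x$ has levels whose $A$-densities are \emph{not} controlled pointwise by regularity, so the requirement $\dens_{W(i)}(A)\meg\delta-2\eta$ in alternative (ii) is not simply read off from \eqref{8planB}; what rescues it is that on the ``non-dense'' levels $E^{c}$ the conditional distribution of $f_i(x)$ is concentrated above $\delta-2\eta$ up to measure $\eta$ — this is exactly where the sharp Markov estimate and the choice of $\eta$ small relative to $\delta$ and $\vartheta$ are used — so it can be intersected with the (small, measure $\meg\vartheta/2$) correlation event coming from \eqref{8planA} while keeping a positive proportion. A secondary, purely bookkeeping, point is the matching of Carlson--Simpson subtrees of $W'\upharpoonright k$ with those of $W_x\upharpoonright k$ under the canonical isomorphism, and of $\subtr^0_m(W'\upharpoonright k,i)$ with $k$-restrictions of members of $\subtr^0_m(W',i)$.
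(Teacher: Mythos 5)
Your proof is correct and follows essentially the same route as the paper's: regularize via Lemma \ref{3l2}, pass to the convolution picture, apply Corollary \ref{7c10} to obtain a $d$-dimensional subtree on which the sections $B_t$ indexed by every $m$-dimensional subtree correlate, transfer the two averages back via Lemmas \ref{5l5}--\ref{5l8} (this is the paper's Lemma \ref{8l3}), and conclude by a two-case pigeonhole over $x\in X_L$. The only deviations are cosmetic: you use the threshold $\eta^2$ where the paper uses $\eta^3$ for isolating the ``dense'' levels, and you handle the first alternative by a direct first-moment argument rather than by Lemma \ref{2l5}; both variants are valid.
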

For the proof of Proposition \ref{8p1} we need to do some preparatory work. We start with the following lemma.

\begin{lem} \label{8l2}
Let $k,m,\delta, \vartheta, \eta, \Lambda', n$ and $N$ be as in Proposition \ref{8p1}. If $A$ is a subset of $[k+1]^{<\nn}$ satisfying 
$|A\cap [k+1]^l|\meg \delta (k+1)^l$ for every $l\in N$, then there exist a subset $L$ of $N$ with $|L|=\mathrm{CS}(k+1,\lceil
\eta^{-4}n \rceil+\Lambda',m,2)+1$ and a Carlson--Simpson subtree $W$ of $[k+1]^{<|L|}$ of dimension $\lceil\eta^{-4}n\rceil+ \Lambda'$
such that, setting $\cv_L:[k+1]^{<|L|}\times X_L\to [k+1]^{<\nn}$ to be the convolution operation associated to $L$ and $B=\cv_L^{-1}(A)$,
the following properties hold.
\begin{enumerate}
\item[(i)] For every $t\in [k+1]^{<|L|}$ we have $\dens(B_t)\meg \delta-\eta^2/2$ where $B_t$ is the section of $B$ at $t$.
\item[(ii)] For every $U\in \subtr_m(W\upharpoonright k)$ we have 
\begin{equation} \label{8e9}
\dens\Big( \bigcap_{t\in U} B_t\Big) \meg \vartheta.
\end{equation}
\end{enumerate}
\end{lem}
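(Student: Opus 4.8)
The plan is to assemble the statement from two ingredients already developed: the regularity lemma of \S 3, which — transported into the space $X_L$ via the convolution identities of \S 5 — gives part (i), and the coloring/correlation machinery of \S 7, concretely Corollary \ref{7c10}, which gives part (ii). No new idea is required; the content lies in choosing the parameters so that a single set $L$ extracted from $N$ feeds both results.

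First I would apply Lemma \ref{3l2} to the singleton family $\{A\}$ over the alphabet $k+1$, with $\ell:=\cs(k+1,\lceil\eta^{-4}n\rceil+\Lambda',m,2)+1$, $q:=1$ and $\ee:=\eta^2/2$. Unravelling \eqref{8e3}--\eqref{8e5}, the hypothesis \eqref{8e7} says exactly that $|N|\meg\reg(k+1,\ell,1,\eta^2/2)$, so Lemma \ref{3l2} yields a subset $L$ of $N$ with $|L|=\ell$ for which $\{A\}$ is $(\eta^2/2,L)$-regular; this is the cardinality of $L$ demanded in the statement. Now I would run the computation in the proof of Lemma \ref{7l2} verbatim: writing $L=\{l_0<\dots<l_{|L|-1}\}$, for each $i$ and each $t\in[k+1]^i$ the set $\Omega_t$ of \eqref{5e6} equals $\{z\in[k+1]^{l_i}:z|_{L_i}=\mathrm{I}_{L_i}(t)\}$, so $(\eta^2/2,L)$-regularity applied with $y=\mathrm{I}_{L_i}(t)$ gives $\dens_{\Omega_t}(A)\meg\delta-\eta^2/2$, and Lemma \ref{5l5}(i) then yields $\dens(B_t)=\dens_{\{t\}\times X_L}(B)=\dens_{\Omega_t}(A)\meg\delta-\eta^2/2$ for every $t\in[k+1]^{<|L|}$. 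This is part (i).

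For part (ii) I would work in the probability space $(\Omega,\mu)=(X_L,\text{uniform})$ and set $A_t:=B_t$ for $t\in[k+1]^{<|L|}$, so that $\mu(A_t)=\dens(B_t)\meg\delta-\eta^2/2$ by part (i). Since $\vartheta\mik1<30k$ we have $\eta=\delta\vartheta/(30k)<\delta\mik1$, hence $\eta^2/2<\delta/2$ and thus $\mu(A_t)\meg\delta/2$ for every $t$. Observe that $[k+1]^{<|L|}$ is a Carlson--Simpson tree of $[k+1]^{<\nn}$ of dimension $|L|-1=\cs(k+1,d,m,2)$ with $d:=\lceil\eta^{-4}n\rceil+\Lambda'$, and that $d\meg\Lambda'=\Lambda(k,m,\delta/8)=\Lambda(k,m,(\delta/2)/4)$, so $d\meg\Lambda(k,m,(\delta/2)/4)-1$. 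Therefore Corollary \ref{7c10}, applied with $\gamma:=\delta/2$ and $V:=[k+1]^{<|L|}$, produces $W\in\subtr_d([k+1]^{<|L|})$ — a Carlson--Simpson subtree of $[k+1]^{<|L|}$ of dimension $\lceil\eta^{-4}n\rceil+\Lambda'$, exactly as required — such that for every $U\in\subtr_m(W)$ one has $\mu\big(\bigcap_{v\in U\upharpoonright k}A_v\big)\meg\theta(k,m,\delta/2)$. By \eqref{7e45} and \eqref{8e2}, $\theta(k,m,\delta/2)=\Theta(k,m,\delta/8)=\vartheta$. Finally, since $U\mapsto U\upharpoonright k$ is a bijection of $\subtr_m(W)$ onto $\subtr_m(W\upharpoonright k)$ — the identification used in the proof of Corollary \ref{7c10} — this reads $\dens\big(\bigcap_{t\in U}B_t\big)\meg\vartheta$ for every $U\in\subtr_m(W\upharpoonright k)$, which is part (ii).

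The argument is thus a routine assembly, and I expect the only real difficulty to be the bookkeeping. One must take $|L|$ equal to — not merely at least — $\cs(k+1,d,m,2)+1$, so that the archetypical tree $[k+1]^{<|L|}$ has exactly the dimension $\cs(k+1,d,m,2)$ that Corollary \ref{7c10} requires of $V$; one must check that $\eta^2/2$ is small enough relative to $\delta$ to keep the sections of $B$ above $\delta/2$; and one must verify $d\meg\Lambda(k,m,\delta/8)-1$ so that Corollary \ref{7c10} applies with $\gamma=\delta/2$. A minor point is matching the conclusions: Corollary \ref{7c10} is phrased with intersections over $U\upharpoonright k$ for $U$ ranging over $\subtr_m(W)$, and one must record the bijection with $\subtr_m(W\upharpoonright k)$ to recover the formulation in (ii).
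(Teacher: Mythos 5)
Your proposal is correct and follows essentially the same route as the paper: Lemma \ref{3l2} plus the computation from Lemma \ref{7l2} for part (i), and Corollary \ref{7c10} applied to $V=[k+1]^{<|L|}$ with $\gamma=\delta/2$ and $d=\lceil\eta^{-4}n\rceil+\Lambda'$ for part (ii), including the same parameter checks and the identification $\subtr_m(W\upharpoonright k)=\{U\upharpoonright k:U\in\subtr_m(W)\}$.
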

\begin{proof}
By \eqref{8e7} and the definition of the function $G$ in \eqref{8e5}, we may apply Lemma \ref{3l2} and we get $L\subseteq N$ with
$|L|=\mathrm{CS}(k+1,\lceil \eta^{-4}n\rceil+\Lambda',m,2)+1$ and such that the family $\mathcal{F}:=\{A\}$ is $(\eta^2/2,L)$-regular.
Using this information and arguing as in the proof of Lemma \ref{7l2} we see that the first part of the lemma is satisfied. 

For part (ii), set $V=[k+1]^{<|L|}$. Also let $\gamma=\delta/2$ and $d=\lceil \eta^{-4}n\rceil+\Lambda'$. Notice that 
$d\meg \Lambda'-1=\Lambda(k,m,\gamma/4)-1$. Moreover, by part (i), we have 
\begin{equation} \label{8e10}
\dim(V)=|L|-1=\mathrm{CS}(k+1,d,m,2)
\end{equation}
and $\dens(B_t)\meg \delta-\eta^2/2\meg \gamma$ for every $t\in [k+1]^{<|L|}$. Hence, by Corollary \ref{7c10}, we get a $d$-dimensional
Carlson--Simpson subtree $W$ of $[k+1]^{<|L|}$ such that 
\begin{equation} \label{8e11}
\dens \Big( \bigcap_{t\in U\upharpoonright k}B_t\Big)\meg \theta(k,m,\gamma)
\end{equation}
for every $U\in \subtr_m(W)$. Since $\subtr_m(W\upharpoonright k)\subseteq\{U\upharpoonright k: U\in\subtr_m(W)\}$ and
$\theta(k,m,\gamma)=\Theta(k,m,\gamma/4)=\vartheta$ the result follows.
\end{proof}
To state the next result towards the proof Proposition \ref{8p1} we need to recall some notation introduced in \S 5. Specifically,
let $L$ be a nonempty finite subset of $\nn$ and consider the convolution operation $\cv_L: [k+1]^{<|L|}\times X_L\to [k+1]^{<\nn}$ 
associated to $L$. As in \eqref{5e14}, for every Carlson--Simpson subtree $W$ of $[k+1]^{<|L|}$ and every $x\in X_L$ we set
\begin{equation} \label{8e12}
W_x=\big\{ \cv_L(w,x): w\in W\big\}.
\end{equation}
Recall that, by Lemma \ref{5l6}, $W_x$ is a Carlson--Simpson tree of $[k+1]^{<\nn}$ with $\dim(W_x)=\dim(W)$. We have the following lemma.
\begin{lem} \label{8l3}
Let $k\in\nn$ with $k\meg 2$, $L$ be a nonempty finite subset of $\nn$ and consider the convolution operation 
$\cv_L: [k+1]^{<|L|}\times X_L \to [k+1]^{<\nn}$ associated to the set $L$. Also let $W$ be a Carlson--Simpson subtree of $[k+1]^{<|L|}$
and $A\subseteq [k+1]^{<\nn}$. We set $B=\cv_{L}^{-1}(A)$ and for every  $t\in [k+1]^{<|L|}$ let $B_t$ be the section of $B$ at $t$. 
Moreover, for every $x\in X_L$ let $W_x$ be as
in \eqref{8e12}. Then the following hold.
\begin{enumerate}
\item[(i)] For every $i\in\{0,...,\dim(W)\}$ we have
\begin{equation} \label{8e13}
\ave_{x\in X_L}\dens_{W_x(i)}(A)=\ave_{t\in W(i)}\dens(B_t).
\end{equation}
\item[(ii)] For every $1\mik m\mik i\mik\dim (W)$ we have
\[\ave_{x\in X_L}\dens\big(\{V\!\in\!\subtr_m^0(W_x\upharpoonright k,i): V\!\subseteq\! A\}\big)  = \ave_{U\in\subtr_m^0 (W\upharpoonright k,i)}
\dens\Big(\bigcap_{t\in U} B_t\Big).\]
\end{enumerate}
\end{lem}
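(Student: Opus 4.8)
The plan is to deduce both identities from the density–transfer lemmas of \S 5 (Lemmas \ref{5l5}--\ref{5l8}) combined with a Fubini-type interchange of averages. Throughout I regard $\cv_L$ as the convolution operation $\cv_{L,V}$ for the archetypical Carlson--Simpson tree $V=[k+1]^{<\max(L)+1}$ of dimension $\max(L)$, whose canonical isomorphism is the identity; since $\cv_L(t,x)$ always has length at most $\max(L)$, one has $\cv_{L,V}=\cv_L$, so those lemmas apply verbatim with $\cv_L$ in place of $\cv_{L,V}$.

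For part (i), I would first apply Lemma \ref{5l7}, which gives $\dens_{W_x(i)}(A)=\dens_{W(i)\times\{x\}}(B)$ for every $x\in X_L$. Averaging over $x\in X_L$ and using that the slices $\{W(i)\times\{x\}:x\in X_L\}$ form an equipartition of $W(i)\times X_L$ yields $\ave_{x\in X_L}\dens_{W_x(i)}(A)=\dens_{W(i)\times X_L}(B)$. Re-partitioning $W(i)\times X_L$ along the first coordinate, and observing that $\dens_{\{t\}\times X_L}(B)=\dens(B_t)$ for each $t\in W(i)$, I would rewrite this as $\ave_{t\in W(i)}\dens(B_t)$, which is the asserted identity. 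This step is pure bookkeeping.

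For part (ii), the crux is that for each fixed $x\in X_L$ the map $\phi_x\colon w\mapsto\cv_L(w,x)$ is the restriction of a canonical isomorphism of $W$ onto $W_x$ — this is precisely what the proof of Lemma \ref{5l6} establishes, namely $\cv_L(\cdot,x)=\mathrm{I}_{S_x}$ for a suitable Carlson--Simpson tree $S_x$ and $W_x=\mathrm{I}_{S_x}(W)$. Hence $\phi_x$ carries $W(j)$ onto $W_x(j)$ for every $j$, maps $W\upharpoonright k$ onto $W_x\upharpoonright k$ (using the remarks around \eqref{2e14}), and preserves $0$-levels and depths; consequently it restricts to a bijection $\Phi_x\colon\subtr_m^0(W\upharpoonright k,i)\to\subtr_m^0(W_x\upharpoonright k,i)$, $R\mapsto\{\cv_L(w,x):w\in R\}$. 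Since $B=\cv_L^{-1}(A)$, for $R\in\subtr_m^0(W\upharpoonright k,i)$ one has $\Phi_x(R)\subseteq A$ if and only if $\cv_L(w,x)\in A$ for every $w\in R$, i.e.\ if and only if $x\in\bigcap_{t\in R}B_t$. Writing $P=|\subtr_m^0(W\upharpoonright k,i)|$, which equals $|\subtr_m^0(W_x\upharpoonright k,i)|$ since $\Phi_x$ is a bijection and is visibly independent of $x$, I would conclude
\[
\dens\big(\{V\in\subtr_m^0(W_x\upharpoonright k,i):V\subseteq A\}\big)=\frac1P\sum_{R\in\subtr_m^0(W\upharpoonright k,i)}\mathbf{1}_{\bigcap_{t\in R}B_t}(x),
\]
and averaging this over $x\in X_L$ and interchanging the finite sum with the average over $x$ turns the right-hand side into $\frac1P\sum_{R}\dens\big(\bigcap_{t\in R}B_t\big)=\ave_{U\in\subtr_m^0(W\upharpoonright k,i)}\dens\big(\bigcap_{t\in U}B_t\big)$, which is the claimed equality (here the set $\subtr_m^0(W_x\upharpoonright k)$ in the statement is understood at depth $i$, matching the right-hand side).

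The genuinely delicate point — the one I would verify most carefully — is the claim in part (ii) that $\phi_x$ behaves as a canonical isomorphism $W\to W_x$ in the strong sense needed: that it is compatible with the $k$-restriction and transports both the $0$-level constraint and the depth-$i$ constraint defining $\subtr_m^0(\cdot,i)$. All of this is implicit in the structural-preservation properties of canonical isomorphisms recorded in \S 2.5 and in the proof of Lemma \ref{5l6}, but making it fully explicit is where some care is required; the remaining manipulations of densities are entirely routine.
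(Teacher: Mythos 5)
Your proof is correct and follows essentially the same route as the paper: part (i) is the same Fubini computation (the paper channels it through Lemma \ref{5l8} and the sets $\Omega_t$, you go directly through $\dens_{W(i)\times X_L}(B)$, but these are the same chain of equalities), and part (ii) is exactly the paper's double-counting argument, with your indicator-function sum playing the role of the paper's sections $\mathbb{A}_U$, $\mathbb{A}_x$ of the set $\mathbb{A}=\{(U,x):U_x\subseteq A\}$. Your reading of $\subtr_m^0(W_x\upharpoonright k)$ as $\subtr_m^0(W_x\upharpoonright k,i)$ matches the paper's intent.
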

\begin{proof}
(i) Fix $i\in \{0,...,\dim(W)\}$. By Lemma \ref{5l8}, we have
\begin{equation} \label{8e14}
\ave_{x\in X_L}\dens_{W_x(i)}(A)=\dens_{\cv_L(W(i)\times X_L)}(A).
\end{equation}
Also notice that
\begin{equation} \label{8e15}
\cv_L(W(i)\times X_L)= \bigcup_{t\in W(i)}\cv_L(\{t\}\times X_L)\stackrel{\eqref{5e6}}{=} \bigcup_{t\in W(i)}\Omega_t.
\end{equation}
Next observe that $W(i)\subseteq [k+1]^l$ for some $l\in\{0,...,|L|-1\}$. Therefore, by Fact \ref{5f2}, we see that
$|\Omega_t|=|\Omega_{t'}|$ for every $t, t'\in W(i)$. Hence,
\begin{equation} \label{8e16}
\dens_{\cv_L(W(i)\times X_L)}(A)=\ave_{t\in W(i)}\dens_{\Omega_t}(A).
\end{equation}
Finally, by Lemma \ref{5l5}, for every $t\in W(i)$ we have
\begin{equation} \label{8e17}
\dens_{\Omega_t}(A)=\dens_{\{t\}\times X_L}(B)=\dens(B_t).
\end{equation}
Combining (\ref{8e14}), (\ref{8e16}) and (\ref{8e17}) we conclude that \eqref{8e13} is satisfied.
\medskip

\noindent (ii) We fix $1\mik m\mik i\mik \dim(W)$ and we set
\begin{equation}\label{8e18}
\mathbb{A}=\big\{ (U, x)\in \subtr_m^0(W\upharpoonright k,i)\times X_L: U_x\subseteq A \big\}
\end{equation}
where, as in \eqref{8e12}, $U_x=\{\cv_L(u,x):u\in U\}$. Also for every $U\in \subtr_m^0(W\upharpoonright k,i)$ and every $x\in X_L$ let
\begin{equation} \label{8e19}
\mathbb{A}_U = \{ x\in X_L:(U,x)\in \mathbb{A}\} 
\end{equation}
and
\begin{equation} \label{8e20}
\mathbb{A}_x= \{U\in \subtr_m^0(W\upharpoonright k,i): (U,x)\in \mathbb{A}\}
\end{equation}
be the sections of $\mathbb{A}$ at $U$ and $x$ respectively. Notice that
\begin{equation} \label{8e21}
x\in \mathbb{A}_U\Leftrightarrow(U,x)\in \mathbb{A}\Leftrightarrow U_x\subseteq A\Leftrightarrow x\in \bigcap_{t\in U} B_t.
\end{equation}
This, of course, implies that for every $U\in\subtr_m^0(W\upharpoonright k,i)$ we have
\begin{equation} \label{8e22}
\mathbb{A}_U= \bigcap_{t\in U} B_t.
\end{equation}
Moreover, it is easy to see that for every $x\in X_L$ the map
\begin{equation} \label{8e23}
\subtr_m^0(W\upharpoonright k,i)\ni U \mapsto U_x\in\subtr_m^0(W_x\upharpoonright k,i)
\end{equation}
is a bijection. Therefore, for every $x\in X_L$ we have
\begin{eqnarray*}
\dens\big( \{V\in \subtr_m^0(W_x\upharpoonright k,i): V\subseteq A\}\big) & = & 
\frac{|\{V\in \subtr_m^0(W_x\upharpoonright k,i): V\subseteq A\}|}{|\subtr_m^0(W_x\upharpoonright k,i)|} \\
& = & \frac{|\{U\in \subtr_m^0(W\upharpoonright k,i): U_x\subseteq A\}|}{|\subtr_m^0(W\upharpoonright k,i)|} \\ 
& = & \dens(\mathbb{A}_x).
\end{eqnarray*}
Taking into account \eqref{8e22} and the above equalities we conclude that
\begin{eqnarray*}
\ave_{x\in X_L}\dens(\{V\!\in\!\subtr_m^0(W_x\upharpoonright k,i): V\!\subseteq\! A\}) & = & \ave_{x\in X_L} \dens(\mathbb{A}_x) \\
& = & \ave_{U\in\subtr_m^0(W\upharpoonright k,i)}\dens(\mathbb{A}_U) \\
& = & \ave_{U\in\subtr_m^0(W\upharpoonright k,i)}\dens\Big(\bigcap_{t\in U}B_t\Big)
\end{eqnarray*}
and the proof is completed.
\end{proof}
We are ready to proceed to the proof of Proposition \ref{8p1}.
\begin{proof}[Proof of Proposition \ref{8p1}]
We fix $A\subseteq [k+1]^{<\nn}$ such that $|A\cap [k+1]^l|\meg \delta (k+1)^l$ for every $l\in N$. For notational convenience, we set
$d=\lceil \eta^{-4}n\rceil+\Lambda'$. Let $L$ and $W$ be as in Lemma \ref{8l2} when applied to the fixed set $A$. Notice that $\dim(W)=d$.
Invoking the first parts of Lemmas \ref{8l2} and \ref{8l3}, for every $i\in \{0,...,d\}$ we have
\begin{equation}\label{8e24}
\ave_{x\in X_L}\dens_{W_x(i)}(A)\meg \delta-\eta^2/2.
\end{equation}
On the other hand, by the second parts of the aforementioned lemmas, we see that
\begin{equation}\label{8e25}
\ave_{x\in X_L}\dens\big(\{V\in \subtr_m^0(W_x\upharpoonright k,i): V\subseteq A\}\big)\meg \vartheta
\end{equation}
for every $i\in \{m,...,d\}$.

Let $J=\{m,..., d\}$ and notice that 
\begin{equation}\label{8e26}
|J|=d-m+1\meg \lceil \eta^{-4}n\rceil.
\end{equation}
Also for every $i\in J$ set
\begin{equation}\label{8e27}
X^0_i=\{x\in X_L : \dens_{W_x(i)}(A)\meg \delta+\eta^2/2\},
\end{equation}
\begin{equation}\label{8e28}
X^1_i=\{x\in X_L : \dens_{W_x(i)}(A)\meg \delta-2\eta\}
\end{equation}
and
\begin{equation}\label{8e29}
X^2_i=\Big\{x\in X_L : \dens\big( \{V\in \subtr_m^0(W_x\upharpoonright k,i): V\subseteq A\}\big) \meg \vartheta/2 \Big\}.
\end{equation}
Finally let $J_0=\{i\in J:\dens(X_i^0)\meg \eta^3\}$. We distinguish the following cases.
\medskip

\noindent \textsc{Case 1:} \textit{we have $|J_0|\meg |J|/2$}. By Lemma \ref{2l5}, there exists $x_0\in X_L$ such that
\begin{equation}\label{8e30}
|\{i\in J_0: x_0\in X_i^0\}| \meg \frac{\eta^3 |J_0|}{2} \meg \frac{\eta^3|J|}{4} \stackrel{\eqref{8e26}}{\meg} 
\frac{\eta^3\lceil \eta^{-4}n\rceil}{4} \meg n.
\end{equation}
We set ``$I=\{i\in J: x_0\in X_i^0\}$'' and ``$W=W_{x_0}$''. With these choices it is easy to see that the first part of the proposition
is satisfied. 
\medskip

\noindent \textsc{Case 2:} \textit{we have $|J_0|<|J|/2$}. In this case we set $\bar{J_0}=J\setminus J_0$. Let $i\in \bar{J_0}$ be arbitrary
and notice that 
\begin{equation} \label{8e31}
\dens(X_i^0)=\dens\big( \{x\in X_L: \dens_{W_x(i)}(A)\meg \delta+\eta^2/2\}\big )< \eta^3.
\end{equation}
Combining \eqref{8e24} and \eqref{8e31} we see that $\dens(X^1_i)\meg 1-\eta$. On the other hand, by \eqref{8e25}, we have 
$\dens(X_i^2)\meg\vartheta/2$. Therefore, by the choice of $\eta$ in \eqref{8e2}, we conclude that $\dens(X^1_i\cap X^2_i)\meg \vartheta/4$
for every $i\in \bar{J_0}$. By a second application of Lemma \ref{2l5}, we find $x_1\in X_L$ such that
\begin{equation} \label{8e32}
|\{i\in \bar{J_0}: x_1\in X_i^1\cap X_i^2\}|\meg \frac{\vartheta |\bar{J_0}|}{8}\meg \frac{\vartheta |J|}{16} \stackrel{\eqref{8e26}}{\meg}
\frac{\vartheta\lceil \eta^{-4}n\rceil}{16} \stackrel{\eqref{8e2}}{\meg} n.
\end{equation}
We set ``$I=\{i\in \bar{J_0}: x_1\in X_i^1\cap X_i^2\}$'' and ``$W=W_{x_1}$'' and we notice that with these choices the second part
of the proposition is satisfied. The above cases are exhaustive and the proof is completed.
\end{proof}
\noindent 8.3. \textbf{Obtaining insensitive sets.} Let $k\in\nn$ with $k\meg 2$ and assume that for every $0<\beta\mik 1$ the number 
$\dcs(k,1,\beta)$ has been defined. For every $0<\delta\mik 1$ let 
\begin{equation} \label{8e33}
\vartheta_1=\vartheta_1(k,\delta)=\vartheta(k,1,\delta) \text{ and } \eta_1=\eta_1(k,\delta)=\eta(k,1,\delta)
\end{equation}
where $\vartheta(k,1,\delta)$ and $\eta(k,1,\delta)$ are as in \eqref{8e2}. Also define $G_1:\nn\times (0,1]\to\nn$ by
\begin{equation} \label{8e34}
G_1(n,\ee)=G(n,1,\ee)
\end{equation}
where $G$ is as in \eqref{8e5}. We have the following lemma.
\begin{lem} \label{8l4}
Let $k\in\nn$ with $k\meg 2$ and assume that for every $0<\beta\mik 1$ the number $\dcs(k,1,\beta)$ has been defined.

Let $0<\delta\mik 1$ and define $\vartheta_1$ and $\eta_1$ as in \eqref{8e33}. Also let $n\in\nn$ with $n\meg 1$ and $N$ be a finite subset of
$\nn$ such that
\begin{equation} \label{8e35}
|N|\meg G_1\big( \lceil \eta_1^{-4}(k+1)n \rceil, \eta_1^2/2 \big)
\end{equation}
where $G_1$ is as in \eqref{8e34}. Finally let $A\subseteq [k+1]^{<\nn}$ such that $|A\cap [k+1]^l|\meg \delta (k+1)^l$ for every $l\in N$
and assume that $A$ contains no Carlson--Simpson line of $[k+1]^{<\nn}$. Assume, moreover, that for every Carlson--Simpson tree $W$ of
$[k+1]^{<\nn}$ we have
\begin{equation} \label{8e36}
|\big\{i\in\{0,...,\dim(W)\}: \dens_{W(i)} (A)\meg \delta + \eta_1^2/2\big\}|< n.
\end{equation}
Then there exist a Carlson--Simpson tree $V$ of $[k+1]^{<\nn}$, a subset $C$ of $V$ and a subset $J$ of $\{0,...,\dim(V)\}$ 
with the following properties.
\begin{enumerate}
\item[(i)] We have $|J|\meg n$.
\item[(ii)] We have $C=\bigcap_{r=1}^k C_r$ where the set $C_r$ is $(r,k+1)$-insensitive in $V$ for every $r\in [k]$. 
Moreover, $\dens_{V(j)}(C)\meg\vartheta_1/2$ for every $j\in J$.
\item[(iii)] The sets $A$ and $C$ are disjoint.
\item[(iv)] For every $j\in J$ we have $\dens_{V(j)}(A) \meg \delta-5k\eta_1$.
\end{enumerate}
\end{lem}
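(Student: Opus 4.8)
The plan is to feed Proposition~\ref{8p1} with $m=1$ and with its parameter ``$n$'' replaced by $(k+1)n$; the hypothesis on $|N|$ is exactly what that requires, since $G_1(\cdot,\cdot)=G(\cdot,1,\cdot)$ while $\eta_1=\eta(k,1,\delta)$ and $\vartheta_1=\vartheta(k,1,\delta)$. Alternative~(i) of the proposition would produce a Carlson--Simpson tree with at least $(k+1)n\meg n$ levels on which $A$ has density $\meg\delta+\eta_1^2/2$, contradicting~\eqref{8e36}; hence I obtain a Carlson--Simpson tree $W$ of $[k+1]^{<\nn}$ and a set $I\subseteq\{1,\dots,\dim W\}$ with $|I|\meg(k+1)n$ such that for every $i\in I$ one has $\dens_{W(i)}(A)\meg\delta-2\eta_1$ and $\dens\big(\{R\in\subtr^0_1(W\upharpoonright k,i):R\subseteq A\}\big)\meg\vartheta_1/2$. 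Write $(c,w_0,\dots,w_{d-1})$ for the generating sequence of $W$. The key decision is to take $V$ to be \emph{not} $W$ itself but the ``$k+1$--forced'' subtree generated by $(c^{\con}w_0(k+1),w_1,\dots,w_{d-1})$; more generally, for $r\in[k+1]$ let $W^{(r)}$ be the Carlson--Simpson tree generated by $(c^{\con}w_0(r),w_1,\dots,w_{d-1})$, so that for each $j$ the sets $W^{(1)}(j),\dots,W^{(k+1)}(j)$ partition $W(j+1)$ and $\dens_{W(j+1)}(A)=\ave_{r\in[k+1]}\dens_{W^{(r)}(j)}(A)$.

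Next I would set $J=\big\{j\in\{i-1:i\in I\}:\dens_{W^{(r)}(j)}(A)<\delta+\eta_1^2/2\ \text{ for every }r\in[k]\big\}$. Applying~\eqref{8e36} to each of the $k$ Carlson--Simpson trees $W^{(1)},\dots,W^{(k)}$ shows that at most $kn$ values of $j$ are excluded, whence $|J|\meg(k+1)n-kn=n$, which is~(i). For $j\in J$ one has $j+1\in I$, so $\dens_{W(j+1)}(A)\meg\delta-2\eta_1$, and solving the averaging identity above for the term $r=k+1$ gives $\dens_{V(j)}(A)=(k+1)\dens_{W(j+1)}(A)-\sum_{r=1}^{k}\dens_{W^{(r)}(j)}(A)\meg(k+1)(\delta-2\eta_1)-k(\delta+\eta_1^2/2)\meg\delta-5k\eta_1$, the last step using that $\eta_1$ is small; this is~(iv).

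To build $C$ I would use the ``replace $k+1$ by $r$'' maps. For $r\in[k]$ and $x\in V$, writing $\mathrm{I}_W^{-1}(x)=(k+1,a_1,\dots,a_j)$, let $x_r\in W$ be the element with $\mathrm{I}_W^{-1}(x_r)=(r,\psi_r(a_1),\dots,\psi_r(a_j))$, where $\psi_r(a)=r$ if $a=k+1$ and $\psi_r(a)=a$ otherwise; put $C_r=\{x\in V:x_r\in A\}$ and $C=\bigcap_{r=1}^{k}C_r\subseteq V$. Since $x_r$ depends on $\mathrm{I}_V^{-1}(x)=(a_1,\dots,a_j)$ only through the $(r,k+1)$--merged tuple $(\psi_r(a_1),\dots,\psi_r(a_j))$, each $C_r$ is $(r,k+1)$--insensitive in $V$. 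The crux of the whole lemma is the disjointness of $A$ and $C$: given $x\in C\cap A$ with $\mathrm{I}_W^{-1}(x)=(k+1,a_1,\dots,a_j)$, I would let $T=\{0\}\cup\{l\in[j]:a_l=k+1\}$ and let $q^*$ be the left variable word over $k+1$ obtained from $w_0,\dots,w_j$ by keeping the blocks indexed by $T$ variable and substituting $a_l$ into the others; then $q^*$ is genuinely \emph{left} variable because $0\in T$ (this is exactly where membership $x\in V$ is used), $c^{\con}q^*(k+1)=x$, and for each $r\in[k]$ the word $c^{\con}q^*(r)$ equals $x_r$, which lies in $A$ because $x\in C_r$. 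Since also $c\in A$ (a good line exists at some level, and every good line has $0$--level $\{c\}$), the Carlson--Simpson line through $c$ with variable word $q^*$ would then be contained in $A$, contradicting the hypothesis that $A$ contains no Carlson--Simpson line; hence $C\cap A=\varnothing$, giving~(iii).

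It remains to estimate $\dens_{V(j)}(C)$ for $j\in J$. Setting $i=j+1\in I$, every $R\in\subtr^0_1(W\upharpoonright k,i)$ with $R\subseteq A$, say with generating sequence $(c,q)$, has $q$ a left variable word all of whose fixed blocks carry values in $[k]$, so $z_R:=c^{\con}q(k+1)$ lies in $V(j)$; the map $R\mapsto z_R$ is injective since the variable blocks of $q$ are recovered as exactly those blocks of $z_R$ whose leading letter is $k+1$; and running the disjointness computation of the previous paragraph with $x=z_R$ (the associated $q^*$ is then just $q$, and $c^{\con}q(r)\in A$ for all $r\in[k]$) shows $z_R\in C$. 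As $|\subtr^0_1(W\upharpoonright k,i)|=(k+1)^{i-1}=|V(j)|$, the bound $\dens(\{R:R\subseteq A\})\meg\vartheta_1/2$ transfers to $\dens_{V(j)}(C)\meg\vartheta_1/2$, completing~(ii). I expect the main obstacle to be exactly the two--sided tension around the choice $V=W^{(k+1)}$: forcing the first block to the new letter $k+1$ is what allows the disjointness argument (via a \emph{left} variable word) to go through, but it also thins $W(j+1)$ to the single slice $V(j)$, so condition~(iv) has to be recovered --- somewhat surprisingly --- from the factor $k+1$ in the dimension together with~\eqref{8e36} applied to the $k$ sibling slices $W^{(r)}$, $r\in[k]$.
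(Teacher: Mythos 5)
Your proposal is correct and follows essentially the same route as the paper's proof: apply Proposition \ref{8p1} with $m=1$ (alternative (i) being excluded by \eqref{8e36}), take $V=W[k+1]$, define $C_r$ by the ``replace $k+1$ by $r$'' substitution (the paper's \eqref{8e47}), get disjointness by assembling a left variable word whose line would lie in $A$, obtain $\dens_{V(j)}(C)\meg\vartheta_1/2$ from the bijection of Fact \ref{8f5}, and recover (iv) and $|J|\meg n$ by averaging over the $k+1$ first-letter slices together with \eqref{8e36} applied to $W[1],\dots,W[k]$. The only (harmless) cosmetic difference is that you define $J$ by the condition on the sibling slices and deduce the density lower bound, whereas the paper defines $J$ by the density lower bound and bounds $|J_0\setminus J|$ by pigeonhole.
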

Lemma \ref{8l4} will be reduced to Proposition \ref{8p1}. The reduction will be achieved using essentially the same arguments as in
\cite[\S 7.2]{Pol}. 

First we need to introduce some pieces of notation. Specifically, let $W$ be a Carlson--Simpson tree of $[k+1]^{<\nn}$ and set $d=\dim(W)$.
Consider the canonical isomorphism $\mathrm{I}_W: [k+1]^{<d+1}\to W$ associated to $W$ (see \S 2.5) and for every $r\in [k+1]$ let
\begin{equation} \label{8e37}
W[r]=\big\{ \mathrm{I}_W(s): s\in [k+1]^{<d+1} \text{ with } |s|\meg 1 \text{ and } s(0)=r\big\}.
\end{equation}
Notice that if $\dim(W)\meg 2$, then $W[r]$ is a Carlson--Simpson subtree of $W$ with $\dim(W[r])=\dim(W)-1$. On the other hand, if $W$ is a
Carlson--Simpson line, then $W[r]$ is the singleton $\{\mathrm{I}_W(r)\}$; we will identify in this case $W[r]$ with $\mathrm{I}_W(r)$. Next
observe that for every Carlson--Simpson subtree $U$ of the $k$-restriction $W\upharpoonright k$ of $W$ there exists a unique Carlson--Simpson
tree $R$ of $[k+1]^{<\nn}$ such that $R\upharpoonright k=U$. We will call this unique Carlson--Simpson tree $R$ as the \textit{extension} of
$U$ and we will denote it by $\bar{U}$. Notice that the extension of $U$ is a Carlson--Simpson subtree of $W$. Also we will need the following
elementary fact.
\begin{fact} \label{8f5}
Let $W$ be a Carlson--Simpson tree of $[k+1]^{<\nn}$ with $\dim(W)\meg 2$ and set $V=W[k+1]$. If $j\in \{0,...,\dim(V)\}$ and
$U\in\subtr^0_1(W\upharpoonright k,j+1)$, then $\bar{U}[k+1]\in V(j)$. Moreover, the map
\begin{equation} \label{8e38}
\subtr^0_1(W\upharpoonright k,j+1)\ni U \mapsto \bar{U}[k+1]\in V(j)
\end{equation}
is a bijection.
\end{fact}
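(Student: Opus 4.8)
The plan is to unwind the definitions of the various objects involved — the canonical isomorphism $\mathrm{I}_W$, the restriction $W \upharpoonright k$, the tree $W[r]$, the extension $\bar U$, and the levels of a Carlson--Simpson tree — and observe that each of them acts in a transparent way on generating sequences. First I would fix the generating sequence $(c, w_0, \dots, w_{d-1})$ of $W$, so that by definition $V = W[k+1]$ is the Carlson--Simpson tree generated by $(c^{\con} w_0(k+1), w_1, \dots, w_{d-1})$, hence $\dim(V) = d - 1$ and its $j$-level is
\begin{equation} \label{8e39}
V(j) = \big\{ c^{\con} w_0(k+1)^{\con} w_1(a_1)^{\con} \cdots ^{\con} w_j(a_j) : a_1, \dots, a_j \in [k+1] \big\}.
\end{equation}
Meanwhile $W \upharpoonright k$ is generated by $(c, w_0, \dots, w_{d-1})$ but with substitutions restricted to $[k]$, so an element $U$ of $\subtr^0_1(W \upharpoonright k, j+1)$ is, by the definition \eqref{8e6}, a Carlson--Simpson line whose $0$-level is $\{c\}$ and whose $1$-level sits inside $(W\upharpoonright k)(j+1)$; such a $U$ is therefore determined by a generating sequence $(c, p)$ where $p = w_0(b_0)^{\con} w_1(b_1)^{\con} \cdots^{\con} w_j(b_j)^{\con} v^{\con} (\text{a word in } [k])$ obtained from $w_0, \dots, w_{j}$ and a choice of a ``left variable part'' inside $w_{j+1}$, with all the $b_i \in [k]$.

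Next I would compute the extension $\bar U$: since $U = \bar U \upharpoonright k$ and the generating sequence of a Carlson--Simpson tree is unique, $\bar U$ is generated by exactly the same sequence $(c, p)$ as $U$, now read as a left variable word over $k+1$. Evaluating $\bar U[k+1]$ — i.e., applying $\mathrm{I}_{\bar U}$ to the letter $k+1$ — gives $c^{\con} p(k+1)$, and because the first coordinate where $p$ "starts" is the variable block of $w_0$, the word $p(k+1)$ begins with $w_0(k+1)$. Substituting this into \eqref{8e39} shows $\bar U[k+1] = c^{\con} w_0(k+1)^{\con} w_1(b_1)^{\con}\cdots^{\con} w_j(b_j)^{\con}(\text{a word})$, which is visibly an element of $V(j)$; this establishes the first assertion.

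For bijectivity, I would exhibit the inverse explicitly. Given $t \in V(j)$, write $t = c^{\con} w_0(k+1)^{\con} w_1(a_1)^{\con} \cdots^{\con} w_j(a_j)$ with $a_1, \dots, a_j \in [k+1]$; map $t$ to the Carlson--Simpson line $U_t \in \subtr^0_1(W \upharpoonright k, j+1)$ generated by $(c, q)$ where $q$ is $w_0^{\con}$ (the $[k]$-word coding $w_1(a_1)^{\con}\cdots^{\con}w_j(a_j)$) $^{\con}$ (the variable-and-tail part carved out of $w_{j+1}$) — the point being that the substitution values $a_i$ are recorded inside the non-variable letters of $q$, while the single variable $v$ of $q$ comes from $w_0$. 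One checks $\overline{U_t}[k+1] = t$ and $U_{\bar U [k+1]} = U$ directly from the two computations above; injectivity and surjectivity follow. The only mild subtlety — and the step I would be most careful about — is bookkeeping the coordinate positions correctly when passing from the located-word description of $w_{j+1}$ to the left-variable-word $q$ of $U$, i.e. making sure the wildcard set of $q$ is precisely $\{|c|, |c| + |w_0| + \dots + ?\}$ as forced by $U \in \subtr^0_1(W\upharpoonright k, j+1)$; this is a routine but slightly fiddly index computation, and I would phrase it by invoking the canonical isomorphism $\mathrm{I}_{W \upharpoonright k}$ and the already-established fact (Fact \ref{4f2}) that depth-compatible subtrees of a Carlson--Simpson tree correspond bijectively to combinatorial subspaces of the appropriate level, rather than manipulating raw sequences.
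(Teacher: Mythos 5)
The paper states Fact \ref{8f5} without proof, so the only issue is whether your argument is correct, and it is not: your parametrization of $\subtr^0_1(W\upharpoonright k,j+1)$ --- which is the entire content of the fact --- is wrong. An element $U$ of this set is a Carlson--Simpson line with $U(0)=\{c\}$ and $U(1)\subseteq (W\upharpoonright k)(j+1)$, and by \eqref{2e11} the level $(W\upharpoonright k)(j+1)$ consists of the words $c^{\con}w_0(b_0)^{\con}\cdots^{\con}w_j(b_j)$ with $b_0,\dots,b_j\in[k]$; the word $w_{j+1}$ plays no role whatsoever, so describing the generating word $p$ of $U$ as ending in a ``left variable part carved out of $w_{j+1}$'' cannot be right (it would place $U(1)$ strictly above level $j+1$ of $W$). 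Worse, your $p$ begins with $w_0(b_0)$, which starts with the constant $b_0$ rather than the variable, so it is not a left variable word; and this contradicts your own later assertion that ``$p(k+1)$ begins with $w_0(k+1)$''. The correct description, obtained by matching $c^{\con}p(a)$ against $c^{\con}w_0(b_0^a)^{\con}\cdots^{\con}w_j(b_j^a)$ coordinate by coordinate and using that $p$ and each $w_i$ are left variable words, is $p=w_0^{\con}u_1^{\con}\cdots^{\con}u_j$, where for each $i\in[j]$ the block $u_i$ is either $w_i$ itself or $w_i(b_i)$ for a single fixed $b_i\in[k]$ (within each block, either all wildcard positions of $w_i$ carry the variable of $p$ or they all carry one and the same constant).

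This parametrization is exactly what makes the ``moreover'' part work, and without it the bijection fails. With it, $\bar{U}$ is generated by the same $(c,p)$ over $[k+1]$, and $\bar{U}[k+1]=c^{\con}w_0(k+1)^{\con}u_1(k+1)^{\con}\cdots^{\con}u_j(k+1)$ where $u_i(k+1)=w_i(k+1)$ if $u_i=w_i$ and $u_i(k+1)=w_i(b_i)$ otherwise; so the map $U\mapsto\bar{U}[k+1]$ corresponds to the bijection sending $(u_1,\dots,u_j)$ to $(a_1,\dots,a_j)\in[k+1]^j$ with $a_i=k+1$ or $a_i=b_i$ accordingly, and $[k+1]^j$ is precisely how $V(j)$ is parametrized. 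In your version the coordinates $a_i$ for $i\meg 1$ only range over $[k]$ and the image carries a spurious tail coming from $w_{j+1}$, so the map is neither well defined into $V(j)$ nor surjective; the appeal to Fact \ref{4f2} does not repair this, since that fact concerns maximal-depth subtrees rather than subtrees with prescribed root. Your overall strategy (unwind generating sequences, exhibit an explicit inverse) is the right one, but the structural computation at its core is the step you got wrong.
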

We are ready to give the proof of Lemma \ref{8l4}.
\begin{proof}[Proof of Lemma \ref{8l4}]
By our assumptions we may apply Proposition \ref{8p1} for $m=1$ and we get a Carlson--Simpson tree $W$ of $[k+1]^{<\nn}$ and
$I\subseteq \{1,...,\dim (W)\}$ with $|I|\meg (k+1)n$ and such that $\dens_{W(i)}(A)\meg \delta-2\eta_1$ and
\begin{equation}\label{8e39}
\dens\big( \{U\in\subtr_1^0(W\upharpoonright k,i): U\subseteq A\} \big)\meg \vartheta_1/2
\end{equation}
for every $i\in I$. For every $r\in [k+1]$ let $V_r=W[r]$. We set
\begin{equation} \label{8e40}
V=V_{k+1},
\end{equation}
\begin{equation} \label{8e41}
C= \bigcup_{i\in I}\big\{ \bar{U}[k+1]: U\in\subtr_1^0(W\upharpoonright k,i) \text{ with } U\subseteq A\big\}
\end{equation}
and 
\begin{equation} \label{8e42}
J=\big\{ j\in \{0,...,\dim(V)\}: \dens_{V(j)}(A)\meg \delta-5k\eta_1 \text{ and } j+1\in I \big\}.
\end{equation}
We claim that $V$, $C$ and $J$ are as desired. First we argue to show that $|J|\meg n$. Let $J_0=\{j\in \{0,...,\dim(V)\}: j+1\in I\}$.
Observe that $J\subseteq J_0$ and 
\begin{equation} \label{8e43}
|J_0|\meg (k+1)n.
\end{equation}
Let $j\in J_0\setminus J$ be arbitrary and notice that $\dens_{V(j)}(A)< \delta-5k\eta_1$. On the other hand, $j+1\in I$ and so
\begin{equation} \label{8e44}
\ave_{r\in[k+1]}\dens_{V_r(j)}(A)=\dens_{W(j+1)}(A)\meg \delta-2\eta_1.
\end{equation}
Thus, there exists $r_j\in [k]$ such that $\dens_{V_{r_j}(j)}(A)\meg \delta+\eta_1$. Therefore, by the classical
pigeonhole principle, there exists $r_0\in [k]$ such that
\begin{equation} \label{8e45}
|\{j\in J_0\setminus J: \dens_{V_{r_0}(j)}(A)\meg \delta+\eta_1\}|\meg \frac{|J_0\setminus J|}{k}\stackrel{\eqref{8e43}}{\meg} n+\frac{n-|J|}{k}.
\end{equation}
Moreover, by \eqref{8e36}, we have
\begin{equation} \label{8e46}
|\{j\in J_0\setminus J: \dens_{V_{r_0}(j)}(A)\meg \delta+\eta_1\}|<n.
\end{equation}
Combining \eqref{8e45} and \eqref{8e46} we conclude that $|J|\meg n$.

We continue with  the proof of part (ii). Let $r\in [k]$ be arbitrary. For every $l\in\nn$ and every $s\in [k+1]^{l}$ let $s^{{k+1}\to r}$
be the unique element of $[k]^l$ obtained by replacing all appearances of $k+1$ in $s$ by $r$. We set
\begin{equation} \label{8e47}
C_r=\Big\{ \mathrm{I}_V(s): s\in \bigcup_{i\in I} [k+1]^{i-1} \text{ and } s^{{k+1}\to r}\in \mathrm{I}_{V_r}^{-1}(A) \Big\}
\end{equation} 
where $\mathrm{I}_V$ and $\mathrm{I}_{V_r}$ are the canonical isomorphisms associated to $V$ and $V_r$ respectively. It is easy to check
that $C_r$ is $(r,k+1)$-insensitive in $V$. Next we argue to show that $C$ coincides with $C_1\cap ... \cap C_k$. First we notice that
$C\subseteq C_1\cap ... \cap C_k$. To see the other inclusion, let $t\in C_1\cap...\cap C_k$ be arbitrary and set $s=\mathrm{I}_V^{-1}(t)$.
Let $i$ be the unique element of $I$ such that $s\in [k+1]^{i-1}$ and set
\begin{equation} \label{8e48}
U_t= \{W(0)\}\cup \big\{ \mathrm{I}_{V_r}(s^{{k+1}\to r}) : r\in [k]\big\}.
\end{equation}
Notice that $U_t\in\subtr_1^0(W\upharpoonright k,i)$. Next observe that, by \eqref{8e39}, we have $W(0)\in A$. On the other hand, we have
$\mathrm{I}_{V_r}(s^{{k+1}\to r})\in A$ for every $r\in [k]$ since $t\in C_1\cap...\cap C_k$. This shows that $U_t\subseteq A$. Observing
that $t=\bar{U}_t[k+1]$ we conclude that $t\in C$. Finally let $j\in J$. Notice that
\begin{equation} \label{8e49}
\dens_{V(j)}(C) = \dens_{V(j)}\big( \{\bar{U}[k+1]: U\in\subtr_1^0(W\upharpoonright k,j+1) \text{ with } U\subseteq A\}\big)
\end{equation}
and recall that $j+1\in I$. Hence, by Fact \ref{8f5}, we have
\begin{equation} \label{8e50}
\dens_{V(j)}(C) = \dens\big( \{U\in\subtr_1^0(W\upharpoonright k,j+1): U\subseteq A\} \big) \stackrel{\eqref{8e39}}{\meg} \vartheta_1/2
\end{equation}
as desired.

The fact that $A$ and $C$ are disjoint follows by our assumption that $A$ contains no Carlson--Simpson line of $[k+1]^{<\nn}$ and the definition
of the set $C$. Finally, part (iv) is an immediate consequence of \eqref{8e42}. The proof is completed.
\end{proof}
\noindent 8.4. \textbf{Consequences.} In this subsection we summarize what we have achieved in Proposition \ref{8p1} and Lemma \ref{8l4}.
We remark that the resulting statement is the first main step towards the proof of Theorem B.
\begin{cor} \label{8c6}
Let $k\in\nn$ with $k\meg 2$ and assume that for every $0<\beta\mik 1$ the number $\dcs(k,1,\beta)$ has been defined.

Let $0<\delta\mik 1$ and define $\eta_1$ as in \eqref{8e33}. Also let $n\in\nn$ with $n\meg 1$ and $N$ be a finite subset
of $\nn$ such that
\begin{equation} \label{8e51}
|N|\meg G_1\big( \lceil \eta_1^{-4} (k+1)kn\rceil, \eta_1^2/2 \big)
\end{equation}
where $G_1$ is as in \eqref{8e34}. Finally let $A\subseteq [k+1]^{<\nn}$ such that $|A\cap [k+1]^l|\meg \delta (k+1)^l$ for every $l\in N$
and assume that $A$ contains no Carlson--Simpson line of $[k+1]^{<\nn}$. Then there exist a Carlson--Simpson tree $V$ of $[k+1]^{<\nn}$,
a subset $D$ of $V$ and a subset $I$ of $\{0,...,\dim(V)\}$ with the following properties.
\begin{enumerate}
\item[(i)]  We have $|I|\meg n$.
\item[(ii)] We have $D=\bigcap_{r=1}^k D_r$ where the set $D_r$ is $(r,k+1)$-insensitive in $V$ for every $r\in [k]$. 
\item[(iii)] For every $i\in I$ we have $\dens_{V(i)}(A\cap D)\meg(\delta+\eta_1^2/2) \dens_{V(i)}(D)$ and 
$\mathrm{dens}_{V(i)}(D)\meg \eta_1^2/2$.
\end{enumerate}
\end{cor}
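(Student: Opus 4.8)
The plan is to obtain Corollary~\ref{8c6} by feeding a dense Carlson--Simpson--tree increment into Lemma~\ref{8l4}, splitting the argument into two cases according to whether such an increment is already visible. \textit{First suppose there is a Carlson--Simpson tree $W$ of $[k+1]^{<\nn}$ with $\big|\{i\in\{0,\dots,\dim(W)\}:\dens_{W(i)}(A)\meg\delta+\eta_1^2/2\}\big|\meg n$.} In this case I would simply set $V=W$, let $I$ be this set of levels, and put $D_r=V$ for every $r\in[k]$, so that $D:=\bigcap_{r=1}^kD_r=V$. Since the canonical isomorphism $\mathrm{I}_V$ carries all of $[k+1]^{<\dim(V)+1}$ onto $V$, each $D_r=V$ is trivially $(r,k+1)$-insensitive in $V$, so (i) and (ii) hold; and (iii) is immediate because $\dens_{V(i)}(D)=1\meg\eta_1^2/2$ and $\dens_{V(i)}(A\cap D)=\dens_{V(i)}(A)\meg\delta+\eta_1^2/2=(\delta+\eta_1^2/2)\,\dens_{V(i)}(D)$ for every $i\in I$. (Note that in this case neither the bound on $|N|$ nor the absence of Carlson--Simpson lines in $A$ is used.)

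\textit{Now suppose that for every Carlson--Simpson tree $W$ of $[k+1]^{<\nn}$ one has $\big|\{i:\dens_{W(i)}(A)\meg\delta+\eta_1^2/2\}\big|<n$.} A fortiori the same bound holds with $n$ replaced by $kn$, so hypothesis~\eqref{8e36} of Lemma~\ref{8l4} is met with $kn$ playing the role of the parameter ``$n$'' there; and since $G_1$ is nondecreasing in its first argument, the assumption $|N|\meg G_1\big(\lceil\eta_1^{-4}(k+1)kn\rceil,\eta_1^2/2\big)$ is exactly the size requirement of Lemma~\ref{8l4} for that choice of parameter. Applying Lemma~\ref{8l4} I would thus obtain a Carlson--Simpson subtree $V$ of $[k+1]^{<\nn}$, a set $C=\bigcap_{r=1}^kC_r\subseteq V$ with each $C_r$ being $(r,k+1)$-insensitive in $V$, and a set $J\subseteq\{0,\dots,\dim(V)\}$ with $|J|\meg kn$, such that $A\cap C=\varnothing$, while $\dens_{V(j)}(C)\meg\vartheta_1/2$ and $\dens_{V(j)}(A)\meg\delta-5k\eta_1$ for every $j\in J$.

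The remaining task, which I expect to be the main obstacle, is to repackage this into the conclusion of Corollary~\ref{8c6}: Lemma~\ref{8l4} hands us a \emph{large} simple set $C$ that $A$ \emph{avoids}, whereas we want a simple set $D$ \emph{inside which $A$ is comparatively dense}, and the passage between the two is a complementation-and-renormalisation argument in the spirit of \cite[\S 7.2]{Pol}. Starting from $A\cap C=\varnothing$ one would extract, for each $j\in J$, appropriate $(r,k+1)$-insensitive sets disjoint from $A$ on level $j$, and a pigeonhole over $r\in[k]$ — precisely what the extra factor $k$ in the size requirement on $|N|$ pays for — then produces a fixed $r_0$ and a subset $I$ of $J$ with $|I|\meg|J|/k\meg n$; one takes the $D_r$ to be the complements in $V$ of the resulting insensitive sets (these are again $(r,k+1)$-insensitive by the closure properties noted after Definition~\ref{2d3}) and sets $D=\bigcap_{r=1}^kD_r$. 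Finally one has to check the two inequalities of (iii); this is where the choice of parameters enters, the point being that the density $\dens_{V(i)}(A)\meg\delta-5k\eta_1$ of $A$, once renormalised over a set from which $A$ has density $\meg\vartheta_1/2$ removed, exceeds $\delta+\eta_1^2/2$ exactly because the relation $\eta_1=\delta\vartheta_1/(30k)$ fixed in \eqref{8e2} and \eqref{8e33} makes the loss $5k\eta_1=\delta\vartheta_1/6$ small compared with the gain. Carrying out this renormalisation cleanly — pinning down the correct insensitive sets and verifying the inequalities for the given values of the parameters — is the delicate part of the proof.
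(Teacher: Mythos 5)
Your two-case dichotomy and the application of Lemma \ref{8l4} with parameter $kn$ are exactly the paper's route, and your accounting of the numerics (the loss $5k\eta_1=\delta\vartheta_1/6$ against the gain of roughly $\delta\vartheta_1/2$ from deleting $C$) is correct; the paper indeed gets $\dens_{V(j)}(A)/\dens_{V(j)}(V\setminus C)\meg(\delta-5k\eta_1)(1+\vartheta_1/2)\meg\delta+7k\eta_1$. The step you leave open does, however, contain one suggestion that would fail as literally stated: you cannot take every $D_r$ to be a complement, since $\bigcap_{r}(V\setminus C_r)=V\setminus\bigcup_r C_r$, whereas the density increment just quoted lives on $V\setminus C=V\setminus\bigcap_r C_r$, which is in general much larger. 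The paper instead partitions $V\setminus C$ into the telescoping cells $Q_1=V\setminus C_1$ and $Q_r=(V\setminus C_r)\cap C_1\cap\dots\cap C_{r-1}$ for $2\mik r\mik k$; each cell is an intersection of one $(r,k+1)$-insensitive set per index (namely $D_r=C_r$ for $r<r_0$, $D_{r_0}=V\setminus C_{r_0}$, $D_r=V$ for $r>r_0$), which is how condition (ii) is met. The renormalisation you describe is then done by Lemma \ref{2l6} with $\ee=k\eta_1$ applied on each level $j\in J$ to this partition: it produces $r_j\in[k]$ with $\dens_{V(j)}(A\cap Q_{r_j})\meg(\delta+6k\eta_1)\dens_{V(j)}(Q_{r_j})$ and $\dens_{V(j)}(Q_{r_j})\meg(\delta-5k\eta_1)\eta_1/4$, and the pigeonhole over $r_j$ that you anticipate yields $r_0$ and $I\subseteq J$ with $|I|\meg|J|/k\meg n$. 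Setting $D=Q_{r_0}$, both inequalities of (iii) follow because $6k\eta_1\meg\eta_1^2/2$ and $(\delta-5k\eta_1)\eta_1/4\meg\eta_1^2/2$. So: same approach throughout, with the final construction of the $D_r$ needing the correction above.
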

\begin{proof}
First assume that there exists a Carlson--Simpson tree $W$ of $[k+1]^{<\nn}$ such that
\begin{equation} \label{8e52}
|\big\{i\in \{0,...,\dim(W)\}:\dens_{W(i)}(A)\meg \delta+\eta_1^2/2\big\}|\meg kn.
\end{equation}
In this case we set ``$V= W$'', ``$I=\{i\in \{0,...,\dim (W)\}: \dens_{W(i)}(A)\meg \delta + \eta_1^2/2\}$'' and ``$D_r=V$'' for every $r\in[k]$.
It is clear that with these choices the result follows. 

Otherwise, by Lemma \ref{8l4}, there exist a Carlson--Simpson tree $V$ of $[k+1]^{<\nn}$, a subset $J$ of $\{0,...,\dim(V)\}$ with
$|J|\meg kn$ and a set $C=C_1\cap ... \cap C_k$, where $C_r$ is $(r,k+1)$-insensitive in $V$ for every $r\in [k]$, such that
\begin{enumerate}
\item[(a)] $A\cap C=\varnothing$,
\item[(b)] $\dens_{V(j)}(C)\meg\vartheta_1/2$ for every $j\in J$ and
\item[(c)] $\dens_{V(j)}(A)\meg \delta-5k\eta_1$ for every $j\in J$
\end{enumerate}
where $\vartheta_1$ and $\eta_1$ are as in \eqref{8e33}. In particular, for every $j\in J$ we have
\begin{equation} \label{8e53}
\frac{\dens_{V(j)}(A)}{\dens_{V(j)}(V\setminus C)} \meg \frac{\delta-5k\eta_1}{1-\vartheta_1/2} \meg (\delta-5k\eta_1)(1+\vartheta_1/2)
\meg \delta+7k\eta_1.
\end{equation}
We set $Q_1=V\setminus C_1$ and $Q_r=(V\setminus C_r)\cap C_1\cap...\cap C_{r-1}$ if $r\in \{2,...,k\}$. Clearly the family $(Q_r)_{r=1}^k$
forms a partition of $V\setminus C$. Let $j\in J$ be arbitrary. Applying Lemma \ref{2l6} for ``$\ee=k\eta_1$'' we see that there exists
$r_j\in [k]$ such that
\begin{equation} \label{8e54}
\dens_{V(j)}(A\cap Q_{r_j})\meg (\delta+6k\eta_1)\dens_{V(j)}(Q_{r_j})
\end{equation}
and
\begin{equation} \label{8e55}
\dens_{V(j)}(Q_{r_j})\meg (\delta-5k\eta_1)\eta_1/4.
\end{equation}
Hence, there exist $r_0\in [k]$ and a subset $I$ of $J$ with $|I|\meg |J|/k\meg n$ such that $r_i=r_0$ for every $i\in I$. We set ``$D=Q_{r_0}$''.
Also let ``$D_r=C_r$'' if $r<r_0$, ``$D_{r_0}=V\setminus C_{r_0}$''  and ``$D_r=V$'' if $r>r_0$. Clearly $D_r$ is $(r,k+1)$-insensitive in $V$ for
every $r\in [k]$ and  $D_1\cap ...\cap D_k=D$. Moreover, by the choice of $\eta_1$, for every $i\in I$ we have
\begin{equation} \label{8e56}
\dens_{V(i)}(A\cap D) \stackrel{\eqref{8e54}}{\meg} (\delta+6k\eta_1) \dens_{V(i)}(D) \meg (\delta+\eta_1^2/2)\dens_{V(i)}(D)
\end{equation}
and
\begin{equation} \label{8e57}
\dens_{V(i)}(D) \stackrel{\eqref{8e55}}{\meg} (\delta-5k\eta_1)\eta_1/4 \meg \eta_1^2/2.
\end{equation}
The proof is thus completed.
\end{proof}


\section{An exhaustion procedure: achieving the density increment}

\numberwithin{equation}{section}

\noindent 9.1. \textbf{Motivation.} As we have seen in Corollary \ref{8c6} if a dense subset $A$ of $[k+1]^{<\nn}$ fails to contain a 
Carlson--Simpson line, then there exist a Carlson--Simpson tree $V$ of $[k+1]^{<\nn}$ and a structured subset $D$ of $V$ (recall that $D$
is the intersection of relatively few insensitive sets) that correlates with the set $A$ more than expected in many levels of $V$. Our goal
in this section is to use this information to achieve density increment for the set $A$. A natural strategy for doing so -- initiated by
M. Ajtai and E. Szemer\'{e}di in \cite{AS} -- is to produce an ``almost tiling" of the set $D$, that is, to construct a collection $\mathcal{V}$
of pairwise disjoint Carlson--Simpson trees of sufficiently large dimension which are all contained in $D$ and are such that the set
$D\setminus\cup\mathcal{V}$ is essentially negligible. Once this is done, one then expects to be able to find a Carlson--Simpson tree $W$
belonging to the ``almost tiling" $\mathcal{V}$ such that the density of the set $A$ has been significantly increased in sufficiently many
levels of $W$. However, as is shown below, this is not possible in general.
\begin{examp} \label{9ex1}
Let $m\in\nn$ with $m\meg 1$ and $0<\ee\mik 1$ be arbitrary. Also let $q,\ell\in \nn$ with $q\meg \ell\meg 1$ and such that 
$2^{\ell}(2^{\ell}-1)2^{-q}\mik \ee$. With these choices it is possible to select a family $\{x_t\in [2]^{q-\ell}:t\in[2]^{<\ell}\}$ such that
for every $t,t'\in [2]^{<\ell}$ with $t\neq t'$ we have that $x_t\neq x_{t'}$. For every $i\in\{0,...,\ell-1\}$ and every $t\in [2]^i$ we set
$z^1_t=t^{\con}(1^{\ell-i})^{\con}x_t$ and $z^2_t=t^{\con}(2^{\ell-i})^{\con}x_t$ and we define
\begin{equation} \label{tl1-e1}
\mathcal{F}=\big\{z^j_t:t\in [2]^{<\ell} \text{ and } j\in [2]\big\}.
\end{equation}
Notice that $\mathcal{F}\subseteq [2]^q$ and $\dens(\mathcal{F})\mik \ee$. Also observe that $\{z^1_t,z^2_t\}\cap \{z^1_{t'},z^2_{t'}\}=\varnothing$
provided that $t\neq t'$. We set $D=[2]^{<\ell}\cup [2]^q\cup ...\cup [2]^{q+m-1}$ and
\begin{equation} \label{tl1-e2}
A=[2]^{<\ell}\cup \big\{y^{\con}s: y\in [2]^q\setminus \mathcal{F} \text{ and } s\in[2]^{<m}\big\}.
\end{equation}
It is clear that $D$ is a highly structured subset of $[2]^{<\nn}$ -- it is the union of certain levels of $[2]^{<\nn}$ --  and $A$ is a subset
of $D$ of relative density at least $1-\ee$. Next for every $t\in [2]^{<\ell}$ let
\begin{equation} \label{tl1-e3}
V_t=\{t\}\cup \big\{z_t^{j\con}s: j\in[2] \text{ and } s\in [2]^{<m}\big\}.
\end{equation}
Observe that $\mathcal{V}=\{V_t:t\in [2]^{<\ell}\}$ is a family of pairwise disjoint $m$-dimensional Carlson--Simpson trees which are all
contained in $D$. Also notice that, no matter how large $\ell$ is, $\mathcal{V}$ is maximal, that is, the set $D\setminus\cup\mathcal{V}$
contains no Carlson--Simpson tree of dimension $m$. However, $V_t\cap A$ is the singleton $\{t\}$ for every $t\in [2]^{<\ell}$.
\end{examp}
The above example shows that, in our context, the problem of achieving the density increment cannot be solved by merely producing an arbitrary
``almost tiling" of the structured set $D$. To overcome this obstacle we devise a refined exhaustion procedure that can be roughly described
as follows. At each step of the process we are given a subset $D'$ of $D$ and we produce a collection $\mathcal{E}$ of Carlson--Simpson trees
of sufficiently large dimension which are all contained in $D'$. These Carlson--Simpson trees are \textit{not} pairwise disjoint since we are
not aiming at producing a tiling. Instead, what we are really interested in is whether a sufficient portion of them behaves ``as expected''.
If this is the case, then we can easily achieve the density increment. Otherwise, using coloring arguments, we can show that for ``almost every''
Carlson--Simpson tree $V$ of the collection $\mathcal{E}$, the restriction of our set $A$ in $V$ is quite ``thin" and in a very canonical way.
We then remove from $D'$ an appropriately chosen subset of $\cup\mathcal{E}$ and we repeat the argument for the resulting set. The above process
is shown that it will eventually terminate, completing thus the proof of this step.

At a technical level, in order to execute the steps described above we need to represent any subset of $[k+1]^{<\nn}$ as a family of measurable
events indexed by an appropriately chosen Carlson--Simpson tree of $[k+1]^{<\nn}$. The philosophy is identical to that in \S 8.2. However, due
to the recursive nature of the process, we need to work with iterated convolutions. In particular, the reader is advised to review the material
in \S 6 before studying this section.
\medskip

\noindent 9.2. \textbf{The main result.} Let $k\in\nn$ with $k\meg 2$ and assume that for every integer $l\meg 1$ and every $0<\beta\mik 1$
the number $\dcs(k,l,\beta)$ has been defined. This assumption permits us to introduce some numerical invariants. Specifically, for every
integer $m\meg 1$ and every $0<\gamma\mik 1$ we set
\begin{equation} \label{9e4}
\bar{m}= \bar{m}(m,\gamma)=\Big\lceil\frac{512}{\gamma^3} m\Big\rceil
\end{equation}
and
\begin{equation} \label{9e5}
M=\Lambda(k,\bar{m},\gamma^2/32) \stackrel{\eqref{7e15}}{=} \Big\lceil\frac{32}{\gamma^2}\dcs(k,\bar{m},\gamma^2/32)\Big\rceil.
\end{equation}
Also let
\begin{equation} \label{9e6}
\alpha=\alpha(k,m,\gamma)=\theta(k,\bar{m},\gamma^2/8) \text{ and } p_0=p_0(k,m,\gamma)=\lfloor \alpha^{-1} \rfloor
\end{equation}
where $\theta(k,\bar{m},\gamma^2/8)$ is as in \eqref{7e45}. Finally we define, recursively, three sequences $(n^1_p)$, $(n^2_p)$ and $(N_p)$
in $\nn$ -- also depending on the parameters $m$ and $\gamma$ -- by the rule $n^1_0=n^2_0=N_0=0$ and
\begin{equation} \label{9e7}
\left\{ \begin{array} {l} n^1_{p+1}=(N_p+1)\bar{m}+N_p, \\
n^2_{p+1}= \mathrm{CS}(k+1, n^1_{p+1}, \bar{m}, \bar{m}+1), \\
N_{p+1}=\mathrm{CS}(k+1, \max\{n^2_{p+1},M\}, \bar{m}, 2). \end{array}  \right.
\end{equation}
We are mainly interested in the sequence $(N_p)$. The sequences $(n^1_p)$ and $(n^2_p)$ are auxiliary ones which will be used in the proof
of the following lemma.
\begin{lem} \label{9l1}
Let $k\in\nn$ with $k\meg 2$ and assume that for every integer $l\meg 1$ and every $0<\beta\mik 1$ the number $\dcs(k,l,\beta)$ has been defined.

Let $0<\gamma,\delta\mik 1$ and $r\in [k]$. Also let $V$ be a Carlson--Simpson tree of $[k+1]^{<\nn}$ and $I$ be a nonempty subset
of $\{0,...,\dim(V)\}$. Assume that we are given subsets  $A,D_r,...,D_k$ of $[k+1]^{<\nn}$ with the following properties.
\begin{enumerate}
\item[(a)] The set $D_r$ is $(r,k+1)$-insensitive in $V$.
\item[(b)] We have $\dens_{V(i)}(D_r\cap... \cap D_k\cap A)\meg(\delta+2\gamma)\dens_{V(i)}(D_r\cap ...\cap D_k)$
and $\dens_{V(i)}(D_r\cap ...\cap D_k)\meg 2\gamma$ for every $i\in I$.
\end{enumerate}
Finally, let $m\in\nn$ with $m\meg 1$ and suppose that
\begin{equation} \label{9e8}
|I|\meg \reg(k+1,N_{p_0}+1,2,\gamma^2/2)
\end{equation}
where $p_0$ and $N_{p_0}$ are defined in \eqref{9e6} and \eqref{9e7} respectively for the parameters $m$ and $\gamma$. Then there exist a
Carlson--Simpson subtree $W$ of $V$ and a subset $I'$ of $\{0,...,\dim(W)\}$ of cardinality $m$ with the following properties. If $r<k$, then
\begin{equation} \label{9e9}
\dens_{W(i)}(D_{r+1}\cap...\cap D_k\cap A) \meg (\delta+\gamma/2) \dens_{W(i)}(D_{r+1}\cap...\cap D_k)
\end{equation}
and
\begin{equation} \label{9e10}
\dens_{W(i)}(D_{r+1}\cap...\cap D_k)\meg\frac{\gamma^3}{256}
\end{equation}
for every $i\in I'$. On the other hand if $r=k$, then
\begin{equation} \label{9e11}
\dens_{W(i)}(A) \meg \delta+\gamma/2
\end{equation}
for every $i\in I'$.
\end{lem}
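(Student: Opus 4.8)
The plan is to prove this by a density--increment argument in the style of \S 8, first replacing $A$ by a probabilistic model via a regularity reduction and then \emph{peeling off} the insensitive set $D_r$ by an iterated Graham--Rothschild dichotomy for Carlson--Simpson trees. Concretely, I would apply Lemma \ref{3l2} to the two-element family $\mathcal{F}=\{D_r\cap\dots\cap D_k,\ A\cap D_r\cap\dots\cap D_k\}$ with error $\gamma^2/2$ and target size $N_{p_0}+1$; since $|I|\meg\reg(k+1,N_{p_0}+1,2,\gamma^2/2)$ this yields $L\subseteq I$ with $|L|=N_{p_0}+1$ on which $\mathcal F$ is regular. Passing to the convolution $\cv_{L,V}\colon [k+1]^{<|L|}\times X_L\to V$ and writing $B=\cv_{L,V}^{-1}(A)$, $E_j=\cv_{L,V}^{-1}(D_j)$, Lemma \ref{5l5} together with regularity gives, for \emph{every} $t\in[k+1]^{<|L|}$, that the sections $B_t$ and $E_t:=(E_r\cap\dots\cap E_k)_t$ satisfy $\dens_{X_L}(B_t\cap E_t)\meg(\delta+2\gamma-\gamma^2/2)\dens_{X_L}(E_t)$ and $\dens_{X_L}(E_t)\meg\gamma$; moreover, since the $t$-coordinate of $\cv_{L,V}$ occupies a fixed block of positions of $V$, the $(r,k+1)$-insensitivity of $D_r$ in $V$ passes to an $(r,k+1)$-insensitivity of $E_r$ in its first coordinate. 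It then suffices to build a Carlson--Simpson subtree $W^{\ast}$ of $[k+1]^{<|L|}$ and a size-$m$ set $I'$ of levels on which the \emph{averaged} section densities $\ave_{t\in W^{\ast}(i)}\dens_{X_L}(B_t\cap E'_t)$ and $\ave_{t\in W^{\ast}(i)}\dens_{X_L}(E'_t)$, where $E'_t=(E_{r+1}\cap\dots\cap E_k)_t$, satisfy \eqref{9e9}--\eqref{9e10} with a little slack (when $r=k$, $E_{r+1}\cap\dots\cap E_k$ is the whole space, the floor clause is vacuous, and one only needs $\ave_t\dens_{X_L}(B_t)\meg\delta+\gamma$, which is \eqref{9e11}). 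A Markov/conditional-density extraction over $x\in X_L$, exactly as in the proof of Proposition \ref{8p1} via Lemmas \ref{2l5}, \ref{2l6}, \ref{5l7} and \ref{8l3}, then converts this into the assertion for $W:=W^{\ast}_{x_0}\subseteq V$ and a size-$m$ subset of $I'$.

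The core is the construction of $W^{\ast}$, carried out by an exhaustion running for at most $p_0=\lfloor\alpha^{-1}\rfloor$ rounds, where $\alpha=\theta(k,\bar m,\gamma^2/8)$ is the correlation constant of Corollary \ref{7c10}. I maintain a nested chain of Carlson--Simpson subtrees $Z_0\supseteq Z_1\supseteq\cdots$ of $[k+1]^{<|L|}$ with $\dim(Z_p)\meg N_{p_0-p}$ and, alongside, a shrinking ``active'' subset of the model that tracks the part of $E_{r+1}\cap\dots\cap E_k$ still lying outside $E_r$ (the recursive nesting is what forces one to work with the iterated convolutions and quotient maps of \S 6). At round $p$ I first use the inductive density bound $\dcs(k,\bar m,\cdot)$ through Corollary \ref{7c11} --- this is where the $(r,k+1)$-insensitivity of $E_r$ is used, to replace intersections over a Carlson--Simpson subtree by intersections over its $k$-restriction and thereby feed a $[k]$-density hypothesis --- to locate, inside $Z_p$ and the current active set, a large family of $\bar m$-dimensional Carlson--Simpson subtrees on which the events $\{B_t\}$ are highly correlated. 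A coloring argument based on Theorem \ref{4t1} then gives a dichotomy: either on a portion $\meg\alpha$ of these trees the set $A$ behaves ``as expected'' in $\bar m$ levels, in which case a pigeonhole over those levels furnishes the $m$ good levels and the procedure halts with success; or on almost every such tree the restriction of $A$ is ``canonically thin'', trapped in a controlled union of insensitive sets, which lets me pass to $Z_{p+1}$ and delete from the active set a chunk of relative density $\meg\alpha$. Since relative density cannot drop by more than $1$, the second alternative occurs at most $p_0$ times, so success is reached within $p_0$ rounds; the three auxiliary scales $n^1_p$, $n^2_p$, $N_p$ of \eqref{9e7}, obtained by nesting $\cs(k+1,\cdot,\bar m,\cdot)$, are exactly what keeps $\dim(Z_p)$ large enough to rerun Theorem \ref{4t1} at each round, while $M$ and $\bar m$ are chosen so that the $\dcs(k,\bar m,\cdot)$ step keeps applying.

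Transferring the model increment back through $\cv_{L,V}$ as above gives $W$ and $I'$ with $|I'|=m$ satisfying the required inequalities; the slack is budgeted so that the $\gamma^2/2$ regularity error, the per-round losses of the exhaustion, and the final residue of $E_{r+1}\cap\dots\cap E_k$ outside $E_r$ together consume no more than the gap from $\delta+2\gamma$ to $\delta+\gamma/2$ and keep the structured density above $\gamma^3/256$, with $\bar m=\lceil512\gamma^{-3}m\rceil$, $\alpha$ and $p_0$ tuned precisely for this. I expect the exhaustion round itself to be the genuine difficulty: one must show that the ``bad'' alternative really removes a definite chunk of the active set \emph{and} leaves the remainder in a form --- a bounded intersection of insensitive sets inside a still-large Carlson--Simpson subtree --- on which the same dichotomy can be rerun, which is exactly the subtlety that Example \ref{9ex1} shows cannot be sidestepped by a plain tiling argument. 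Making the dimension accounting of $(n^1_p)$, $(n^2_p)$, $(N_p)$ survive $p_0$ nested applications of Theorem \ref{4t1}, and verifying that the correlation constant $\alpha$ of Corollary \ref{7c10} really caps the number of rounds, are the technical crux.
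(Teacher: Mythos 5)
Your proposal follows essentially the same route as the paper: a regularity reduction on the two-element family $\{D_r\cap\cdots\cap D_k,\ A\cap D_r\cap\cdots\cap D_k\}$ down to a set $L\subseteq I$ of size $N_{p_0}+1$, transfer to sections via (iterated) convolutions, and then an exhaustion over at most $p_0=\lfloor\alpha^{-1}\rfloor$ rounds driven by Corollary \ref{7c11} (where the $(r,k+1)$-insensitivity enters) and the coloring dichotomy of Theorem \ref{4t1}, with the scales $(n^1_p),(n^2_p),(N_p)$ budgeting the dimensions exactly as in the paper. The only point you gloss is the terminal step — the paper runs the argument by contradiction and, once the active section is almost covered by the removed chunks, applies Lemma \ref{2l6} twice to force some removed family to contain a $\gamma^3/256$-proportion of ``good'' pieces, contradicting the per-round thinness — but this is the standard bookkeeping you allude to and does not change the approach.
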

The proof of Lemma \ref{9l1} will be given in \S 9.3. As the reader might have already guessed, Lemma \ref{9l1} is the main result of this
section and incorporates the exhaustion procedure outlined in \S 9.1. It will be used in \S 9.4 where we shall achieve the density increment.
\medskip

\noindent 9.3. \textbf{Proof of Lemma \ref{9l1}.} The first step of the proof relies on an application of the regularity lemma presented
in \S 3. Precisely, let $B=D_{r+1}\cap ...\cap D_k$ if $r<k$; otherwise, let $B=V$. Identifying the Carlson--Simpson tree $V$ with
$[k+1]^{<\dim(V)+1}$ via the canonical isomorphism $\mathrm{I}_V$ (see \S 2.5), we may apply Lemma \ref{3l2} to the family
$\mathcal{F}=\{D_r\cap B, D_r\cap B\cap A\}$ and we get a subset $L$ of $I$ of cardinality $N_{p_0}+1$ such that $\mathcal{F}$
is $(\gamma^2/2,L)$-regular. We set
\begin{equation} \label{9e12}
A^0=\cv_{L,V}^{-1}(A), \ B^0=\cv_{L,V}^{-1}(B) \text{ and } D^0=\cv_{L,V}^{-1}(D_r).
\end{equation}
The fact that $\mathcal{F}$ is $(\gamma^2/2,L)$-regular and conditions (a) and (b) in the statement of the lemma have some consequences
which are isolated in the following fact. Its proof is similar to the proof of Lemma \ref{7l2} and is left to the reader.
\begin{fact} \label{9f2}
For every $t\in [k+1]^{<|L|}$ let $A^0_t, B^0_t$ and $D^0_t$ be the sections at $t$ of $A^0, B^0$ and $D^0$ respectively. Then
for every $t,t'\in[k+1]^{<|L|}$ the following hold.
\begin{enumerate}
\item[(i)] If $t,t'$ are $(r,k+1)$-equivalent (see \S 2.6), then $D^0_t$ and $D^0_{t'}$ coincide.
\item[(ii)] We have $\dens_{X_L}(D^0_t\cap B^0_t\cap A^0_t)\meg(\delta+\gamma)\dens_{X_L}(D^0_t\cap B^0_t)$.
\item[(iii)] We have $\dens_{X_L}(D^0_t\cap B^0_t)\meg\gamma$.
\end{enumerate}
\end{fact}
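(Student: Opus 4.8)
The proof will follow the template of the proof of Lemma~\ref{7l2}, with one genuinely new ingredient: an appeal to Fact~\ref{5f4} for item (i). Throughout I identify $V$ with $[k+1]^{<\dim(V)+1}$ via $\mathrm{I}_V$, and for $t\in[k+1]^i$ with $i<|L|$ I write $\Omega_t=\{\cv_{L,V}(t,x):x\in X_L\}$; then $\Omega_t\subseteq V(l_i)$, where $l_i$ is the $i$-th element of $L$, and by \eqref{5e7} the $\mathrm{I}_V$-preimage of $\Omega_t$ is the cylinder $\{x\in[k+1]^{l_i}:x|_{L_i}=\mathrm{I}_{L_i}(t)\}$. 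The bridge between the two sides of each inequality is Lemma~\ref{5l5}(i): for any set $E$ which is an intersection of some of $D_r,B,A$ one has $\dens_{X_L}\big((\cv_{L,V}^{-1}(E))_t\big)=\dens_{\Omega_t}(E)$. Since $D^0_t\cap B^0_t=(\cv_{L,V}^{-1}(D_r\cap B))_t$ and $D^0_t\cap B^0_t\cap A^0_t=(\cv_{L,V}^{-1}(D_r\cap B\cap A))_t$, items (ii) and (iii) reduce to lower bounds for $\dens_{\Omega_t}(D_r\cap B)$ and $\dens_{\Omega_t}(D_r\cap B\cap A)$.

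To get those, note that $L_i=\{l\in L:l<l_i\}$, so the $(\gamma^2/2,L)$-regularity of $\mathcal{F}=\{D_r\cap B,\,D_r\cap B\cap A\}$, applied with $n=l_i$, $J=L_i$ and $y=\mathrm{I}_{L_i}(t)$, says precisely (unwinding \eqref{5e7} as in Lemma~\ref{7l2}) that for each member of $\mathcal{F}$ its density relative to $\Omega_t$ differs from its density relative to $V(l_i)$ by at most $\gamma^2/2$. Since $l_i\in L\subseteq I$, hypothesis (b) gives, with $u:=\dens_{V(l_i)}(D_r\cap...\cap D_k)=\dens_{V(l_i)}(D_r\cap B)$, that $u\meg 2\gamma$ and $\dens_{V(l_i)}(D_r\cap...\cap D_k\cap A)\meg(\delta+2\gamma)u$. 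Hence $\dens_{\Omega_t}(D_r\cap B)\meg u-\gamma^2/2\meg\gamma$, which is (iii), while $\dens_{\Omega_t}(D_r\cap B\cap A)\meg(\delta+2\gamma)u-\gamma^2/2\meg(\delta+\gamma)(u+\gamma^2/2)\meg(\delta+\gamma)\dens_{\Omega_t}(D_r\cap B)$, which is (ii); the middle inequality is the elementary estimate $\gamma u\meg(\gamma^2/2)(1+\delta+\gamma)$, valid because $u\meg 2\gamma$ and $\delta,\gamma\mik 1$.

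For (i), let $t,t'\in[k+1]^i$ with $i<|L|$ be $(r,k+1)$-equivalent. By Fact~\ref{5f4} (with $k+1$ in place of $k$) there is a bijection $g_{t,t'}:\Omega_t\to\Omega_{t'}$ with $Y^t_s=Y^{t'}_{g_{t,t'}(s)}$ for every $s\in\Omega_t$, and with $s$ and $g_{t,t'}(s)$ being $(r,k+1)$-equivalent. I will check that $D_r$ is closed under $(r,k+1)$-equivalence among elements of $V$, that is, if $a\in D_r\cap V$ and $b\in V$ is $(r,k+1)$-equivalent to $a$, then $b\in D_r$. Granting this, $g_{t,t'}$ restricts to a bijection of $\Omega_t\cap D_r$ onto $\Omega_{t'}\cap D_r$; combined with $D^0_t=\bigcup_{s\in\Omega_t\cap D_r}Y^t_s$, the analogous formula for $t'$, and $Y^t_s=Y^{t'}_{g_{t,t'}(s)}$, this yields $D^0_t=D^0_{t'}$. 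The closure property follows by unwinding Definition~\ref{2d3} once one observes that $\mathrm{I}_V$ and $\mathrm{I}_V^{-1}$ take $(r,k+1)$-equivalent nodes of $V$ to pairs of $(r,k+1)$-equivalent words and vice versa: each left variable word $w_n$ generating $V$ has a variable occurrence, so substituting $r$ rather than $k+1$ into it alters the resulting word only at positions carrying a letter of $\{r,k+1\}$ — hence coordinates differing by an $r\leftrightarrow k+1$ swap give $(r,k+1)$-equivalent nodes — and, reading off a single variable position of each $w_n$, $(r,k+1)$-equivalent nodes of $V$ must have $(r,k+1)$-equivalent coordinates.

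The only step requiring real care is this coordinate-versus-node invariance underlying (i); items (ii) and (iii) are routine density bookkeeping, and I foresee no genuine obstacle. (If it happens that $\mathcal{F}$ is a singleton — which can occur when $r=k$ and $D_k\cap V\subseteq A$ — one adjoins $\varnothing$ to $\mathcal{F}$ before invoking Lemma~\ref{3l2}.)
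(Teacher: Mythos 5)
Your proof is correct and is precisely the argument the paper intends: the authors state only that the proof ``is similar to the proof of Lemma \ref{7l2} and is left to the reader,'' and your treatment of (ii) and (iii) — regularity to compare $\dens_{\Omega_t}$ with $\dens_{V(l_i)}$, then Lemma \ref{5l5} to pass to sections, with the arithmetic absorbing the $\gamma^2/2$ error into the slack between $2\gamma$ and $\gamma$ — is exactly that template. For (i), which is the one point not covered by Lemma \ref{7l2}, your use of Fact \ref{5f4}(iv) together with the observation that $\mathrm{I}_V$ and $\mathrm{I}_V^{-1}$ preserve $(r,k+1)$-equivalence is the right (and needed) supplement, matching how the paper uses $g_{t,t'}$ elsewhere (e.g.\ in Claim \ref{9c14}).
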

We are ready to proceed to the main part of the proof. We will argue by contradiction. In particular, assuming that
the lemma is not satisfied, we shall determine an integer $d\in [p_0]$ and we shall construct
\begin{enumerate}
\item[(1)] a $(k+1)$-compatible pair $\big((L_n)_{n=0}^d,(V_n)_{n=0}^d\big)$ with $L_0=L$ and $V_0=V$, and
\item[(2)] for every $p\in [d]$, every $\ell\in [p]$ and every $s\in [k+1]^{<|L_p|}$ a family $\mathcal{Q}^{\ell,p}_s$ of subsets
of $X_{\bl_p}$, where $\bl_p=(L_n)_{n=0}^p$ and $\bv_p=(V_n)_{n=0}^p$.
\end{enumerate}
The construction is done recursively so that, setting
\begin{equation} \label{9e13}
A^p=\cv_{\bl_p,\bv_p}^{-1}(A), \ B^p=\cv_{\bl_p,\bv_p}^{-1}(B) \text{ and } D^p=\cv_{\bl_p,\bv_p}^{-1}(D_r),
\end{equation}
for every $p\in [d]$ the following conditions are satisfied.
\begin{enumerate}
\item[(C1)] The set $L_p$ has cardinality $N_{p_0-p}+1$.
\item[(C2)] For every $\ell\in[p]$ and every $s\in[k+1]^{<|L_p|}$ the family $\mathcal{Q}_s^{\ell,p}$ consists
of pairwise disjoint subsets of the section $D^p_s$ of $D^p$ at $s$.
\item[(C3)] For every $s\in[k+1]^{<|L_p|}$ the sets $\cup\mathcal{Q}^{1,p}_s,...,\cup\mathcal{Q}^{p,p}_s$ are pairwise disjoint.
\item[(C4)] For every $\ell\in [p]$, every pair $s,s'\in[k+1]^{<|L_p|}$ with the same length and every $Q\in\mathcal{Q}_s^{\ell,p}$
and $Q'\in\mathcal{Q}_{s'}^{\ell,p}$ we have $\dens_{X_{\bl_p}}(Q)=\dens_{X_{\bl_p}}(Q')$.
\item[(C5)] For every $\ell\in [p]$ and every $s\in[k+1]^{<|L_p|}$ we say that an element $Q$ of $\mathcal{Q}^{\ell,p}_s$ is \textit{good}
provided that $\dens_{Q}(D^p_s\cap B^p_s\cap A^p_s)\meg (\delta+\gamma/2) \dens_{Q}(D^p_s\cap B^p_s)$ and $\dens_{Q}(D^p_s\cap B^p_s)\meg
\gamma^3/256$. Then, setting
\begin{equation} \label{9e14}
\mathcal{G}_s^{\ell,p} = \{ Q\in\mathcal{Q}_s^{\ell,p} : Q \text{ is good}\}
\end{equation}
we have
\begin{equation}\label{9e15}
\frac{|\mathcal{G}_s^{\ell,p}|}{|\mathcal{Q}_s^{\ell,p}|}<\frac{\gamma^3}{256}.
\end{equation}
\item[(C6)] For every $\ell\in[p]$ and every $s\in [k+1]^{<|L_p|}$ we have $\dens_{X_{\bl_p}}(\cup\mathcal{Q}_s^{\ell,p})\meg\alpha$
where $\alpha$ is as in \eqref{9e6}.
\item[(C7)] For every $\ell\in [p]$ and every $s,s'\in [k+1]^{<|L_p|}$ we have $\mathcal{Q}_s^{\ell,p}= \mathcal{Q}_{s'}^{\ell,p}$
if $s$ and $s'$ are $(r,k+1)$-equivalent.
\item[(C8)] If $p=d$, then there exists $s_0\in[k+1]^{<|L_d|}$ such that
\begin{equation} \label{9e16}
\dens_{X_{\bl_d}}\Big( D^d_{s_0}\setminus \bigcup_{\ell=1}^d\cup\mathcal{Q}^{\ell,d}_{s_0} \Big)<\gamma^2/8.
\end{equation}
\end{enumerate}

Assuming that the above construction has been carried out, let us derive the contradiction. Let $s_0$ be as in (C8).
By Corollary \ref{6c8}, Corollary \ref{6c9} and Fact \ref{9f2}, we see that
\begin{equation}\label{9e17}
\dens_{X_{\bl_d}}(D^d_{s_0}\cap B^d_{s_0}\cap A^d_{s_0})\meg(\delta+\gamma) \dens_{X_{\bl_d}}(D^d_{s_0}\cap B^d_{s_0})
\end{equation}
and
\begin{equation}\label{9e18}
\dens_{X_{\bl_d}}(D^d_{s_0}\cap B^d_{s_0})\meg\gamma.
\end{equation}
For every $\ell\in[d]$ we set $C_\ell=\cup\mathcal{Q}^{\ell,d}_{s_0}$. By (\ref{9e16}), the family $\{C_\ell:\ell\in[d]\}$ is an ``almost cover"
of $D^d_{s_0}\cap B^d_{s_0}$. Hence, invoking (\ref{9e17}) and (\ref{9e18}) and applying Lemma \ref{2l6} for ``$\ee=\gamma/4$", we may find
$\ell_0\in[d]$ such that
\begin{equation}\label{9e19}
\dens_{C_{\ell_0}}(D^d_{s_0}\cap B^d_{s_0}\cap A^d_{s_0})\meg(\delta+3\gamma/4) \dens_{C_{\ell_0}}(D^d_{s_0}\cap B^d_{s_0})
\end{equation}
and
\begin{equation}\label{9e20}
\dens_{C_{\ell_0}}(D^d_{s_0}\cap B^d_{s_0})\meg\gamma^2/16.
\end{equation}
Next observe that, by conditions (C2) and (C4), the family $\mathcal{Q}^{\ell_0,d}_{s_0}$ is a partition of $C_{\ell_0}$ into sets of equal size.
Taking into account this observation and the estimates in (\ref{9e19}) and (\ref{9e20}), by a second application of Lemma \ref{2l6}
for ``$\ee=\gamma/4$",  we conclude that
\begin{equation}\label{9e21}
\frac{|\mathcal{G}_{s_0}^{\ell_0,d}|}{|\mathcal{Q}_{s_0}^{\ell_0,d}|} \meg \frac{\gamma^3}{256}.
\end{equation}
This contradicts (\ref{9e15}), as desired.

The rest of the proof is devoted to the description of the recursive construction. For ``$p=0$" we set $L_0=L$ and $V_0=V$.
Let $p\in \{0,...,p_0\}$ and assume that the construction has been carried out up to $p$ so that conditions (C1)-(C8) are satisfied.
We distinguish the following cases.
\medskip

\noindent \textsc{Case 1: }\textit{$p=p_0$.} Notice first that, by (C1), the set $L_p$ is a singleton. Therefore, the set
$[k+1]^{<|L_p|}$ is the singleton $\{\varnothing\}$. We set $s_0=\varnothing$ and $d=p_0$. With these choices, the recursive construction
will be completed once we show that the estimate in (\ref{9e16}) is satisfied. This is, however, an immediate consequence of conditions
(C3) and (C6) and the choice of $\alpha$ and $p_0$ in (\ref{9e6}).
\medskip

\noindent \textsc{Case 2: }\textit{we have that $p\meg 1$ and there exists $s_0\in [k+1]^{<|L_p|}$ such that }
\begin{equation} \label{9e22}
\dens_{X_{\bl_p}}\Big( D^p_{s_0}\setminus \bigcup_{\ell=1}^p\cup\mathcal{Q}^{\ell,p}_{s_0} \Big) <\gamma^2/8.
\end{equation}
In this case we set $d=p$ and we terminate the construction.
\medskip

\noindent \textsc{Case 3: }\textit{we have that $p<p_0$ and either $p=0$, or $p\meg 1$ and 
\begin{equation} \label{9e23}
\dens_{X_{\bl_p}}\Big( D^p_s\setminus \bigcup_{\ell=1}^p\cup\mathcal{Q}^{\ell,p}_s \Big)\meg\gamma^2/8
\end{equation}
for every $s\in [k+1]^{<|L_p|}$.} If $p\meg 1$, then for every $s\in [k+1]^{<|L_p|}$ we set
\begin{equation} \label{9e24}
\Gamma_s= D^p_s\setminus \bigcup_{\ell=1}^p\cup\mathcal{Q}^{\ell,p}_s.
\end{equation}
Otherwise, let $\Gamma_s=D^0_s$. The following fact follows by \eqref{9e23}, condition (a) in the statement of the lemma and condition (C7)
if $p\meg 1$, and by Fact \ref{9f2} if $p=0$.
\begin{fact} \label{9f3}
For every $s\in [k+1]^{<|L_p|}$ we have $\dens_{X_{\bl_p}}(\Gamma_s)\meg \gamma^2/8$. Moreover, if $s,s'\in [k+1]^{<|L_p|}$ are
$(r,k+1)$-equivalent, then $D^p_s=D^p_{s'}$ and $\Gamma_s=\Gamma_{s'}$.
\end{fact}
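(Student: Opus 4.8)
The plan is to split the verification into the two cases that appear in the definition of $\Gamma_s$. If $p\meg 1$, then $\Gamma_s=D^p_s\setminus\bigcup_{\ell=1}^p\cup\mathcal{Q}^{\ell,p}_s$ by \eqref{9e24}, and in this case the density bound $\dens_{X_{\bl_p}}(\Gamma_s)\meg\gamma^2/8$ needs no work at all: it is literally the standing hypothesis \eqref{9e23} of Case 3. For the equivariance assertion I would fix $(r,k+1)$-equivalent words $s,s'\in[k+1]^{<|L_p|}$; condition (C7) gives $\mathcal{Q}^{\ell,p}_s=\mathcal{Q}^{\ell,p}_{s'}$ for every $\ell\in[p]$, hence $\bigcup_{\ell=1}^p\cup\mathcal{Q}^{\ell,p}_s=\bigcup_{\ell=1}^p\cup\mathcal{Q}^{\ell,p}_{s'}$, so the matter reduces to establishing the identity $D^p_s=D^p_{s'}$; once that is in hand, subtracting this common set from the equal sets $D^p_s$ and $D^p_{s'}$ yields $\Gamma_s=\Gamma_{s'}$.

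To prove $D^p_s=D^p_{s'}$ for $(r,k+1)$-equivalent $s,s'$ I would argue by induction on $p$, the base case $p=0$ being exactly Fact \ref{9f2}(i). For the inductive step, Fact \ref{6f3} lets me write $D^p=\cv_{\bl_p,\bv_p}^{-1}(D_r)=\qv_{\bl_p,\bv_p}^{-1}(D^{p-1})$, so that, splitting $X_{\bl_p}=X_{\bl_{p-1}}\times X_{L_p}$, for every word $t\in[k+1]^{<|L_p|}$ the section of $D^p$ at $t$ is $\{(\bx,x)\in X_{\bl_{p-1}}\times X_{L_p}:\bx\in D^{p-1}_{\cv_{L_p,V_p}(t,x)}\}$; since Fact \ref{5f4}(i) and (iv), applied to the convolution $\cv_{L_p,V_p}$, give $\cv_{L_p,V_p}(t',x)=g_{t,t'}\big(\cv_{L_p,V_p}(t,x)\big)$, a word $(r,k+1)$-equivalent to $\cv_{L_p,V_p}(t,x)$, the inductive hypothesis supplies $D^{p-1}_{\cv_{L_p,V_p}(t,x)}=D^{p-1}_{\cv_{L_p,V_p}(t',x)}$ for every $x\in X_{L_p}$, whence $D^p_t=D^p_{t'}$. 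The remaining case $p=0$ of the fact itself is then immediate: $\bl_0=(L)$, so $X_{\bl_0}=X_L$; the density bound follows from Fact \ref{9f2}(iii) via $\dens_{X_L}(D^0_s)\meg\dens_{X_L}(D^0_s\cap B^0_s)\meg\gamma\meg\gamma^2/8$ (using $0<\gamma\mik 1$), and the equivariance $\Gamma_s=\Gamma_{s'}$ is exactly Fact \ref{9f2}(i).

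The only step that is not a direct citation is the identity $D^p_s=D^p_{s'}$ in the case $p\meg 1$, i.e. the bookkeeping verifying that the iterated convolution transports $(r,k+1)$-equivalence in the manner recorded in Fact \ref{5f4}; this is the point I would state carefully, but it is routine. Everything else reduces, by inspection, to the already-established conditions (C7) and \eqref{9e23} and to Fact \ref{9f2}.
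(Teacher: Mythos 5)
Your proof is correct and follows the same route as the paper, which simply asserts that the fact follows from \eqref{9e23} and condition (C7) when $p\meg 1$ and from Fact \ref{9f2} when $p=0$. The one step you rightly single out as needing an argument — that $D^p_s=D^p_{s'}$ for $(r,k+1)$-equivalent $s,s'$, which the paper leaves implicit — is handled correctly by your induction on $p$ via Fact \ref{6f3} and Fact \ref{5f4}.
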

By Fact \ref{9f3}, condition (C1), the choice of the sequence $(N_p)$ in \eqref{9e7} and the choice of $\alpha$ in \eqref{9e6}, we may apply
Corollary \ref{7c11} to get a Carlson--Simpson subtree $S$ of $[k+1]^{<|L_p|}$ with $\dim(S)=n^2_{p_0-p}$ such that for every $\bar{m}$-dimensional
Carlson--Simpson subtree $U$ of $S$, setting
\begin{equation}\label{9e25}
\Gamma_U=\bigcap_{s\in U}\Gamma_s,
\end{equation}
we have $\dens_{X_{\bl_p}}(\Gamma_U)\meg\alpha$.

For every $i\in\{0,...,\bar{m}\}$ and every Carlson--Simpson subtree $U$ of $S$ of dimension $\bar{m}$ let $G_{i,U}$ be the set of all
$\bx\in\Gamma_U$ satisfying
\begin{enumerate}
\item[(P1)] $\dens_{U(i)\times\{\bx\}}(D^p\cap B^p\cap A^p)\meg(\delta+\gamma/2)\dens_{U(i)\times\{\bx\}}(D^p\cap B^p)$ and
\item[(P2)] $\dens_{U(i)\times\{\bx\}}(D^p\cap B^p)\meg\gamma^3/256$.
\end{enumerate}
Our assumption that the lemma is not satisfied reduces to the following property of the sets $G_{i,U}$.
\begin{claim} \label{9c4}
For every $\bar{m}$-dimensional Carlson--Simpson subtree $U$ of $S$ there exists $i\in\{0,...,\bar{m}\}$ such that $\dens_{\Gamma_U}(G_{i,U})<\gamma^3/256$.
\end{claim}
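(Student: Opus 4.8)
The plan is to prove Claim \ref{9c4} by contradiction, combined with the standing hypothesis — maintained throughout this part of the argument — that the conclusion of Lemma \ref{9l1} fails. So assume there is a $\bar{m}$-dimensional Carlson--Simpson subtree $U$ of $S$ with $\dens_{\Gamma_U}(G_{i,U})\meg\gamma^3/256$ for every $i\in\{0,...,\bar{m}\}$; the goal is to produce a Carlson--Simpson tree $W$ of $V$ and a subset $I'$ of $\{0,...,\dim(W)\}$ with $|I'|=m$ witnessing the conclusion of Lemma \ref{9l1}. First I would equip the set $\Gamma_U$ — which is nonempty since $\dens_{X_{\bl_p}}(\Gamma_U)\meg\alpha$ — with the uniform probability measure and apply Lemma \ref{2l5} to the family $(G_{i,U})_{i=0}^{\bar{m}}$ with threshold $\gamma^3/256$. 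This produces a point $\bx_0\in\Gamma_U$ belonging to at least $(\gamma^3/512)(\bar{m}+1)$ of these events, and since $\bar{m}\meg 512\gamma^{-3}m$ by \eqref{9e4} this number is at least $m$. Fix such an $\bx_0$ and a subset $I'$ of $\{i\in\{0,...,\bar{m}\}:\bx_0\in G_{i,U}\}$ with $|I'|=m$.

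Next I would set $W=U_{\bx_0}=\{\cv_{\bl_p,\bv_p}(w,\bx_0):w\in U\}$. By Lemma \ref{6l4} this is a Carlson--Simpson subtree of $V_0=V$ of dimension $\bar{m}$, with $W(i)=\{\cv_{\bl_p,\bv_p}(w,\bx_0):w\in U(i)\}$ for every $i$, so that $I'\subseteq\{0,...,\dim(W)\}$. The key observation is that $\bx_0\in\Gamma_U=\bigcap_{s\in U}\Gamma_s$ and $\Gamma_s\subseteq D^p_s$ for every $s\in U$ (this holds both when $p=0$ and when $p\meg 1$, directly from the definition of $\Gamma_s$); hence $\cv_{\bl_p,\bv_p}(s,\bx_0)\in D_r$ for every $s\in U$, that is, $W\subseteq D_r$. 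In particular $W(i)\subseteq D_r$ for every $i$, and therefore $\dens_{W(i)}(D_r\cap X)=\dens_{W(i)}(X)$ for every $X\subseteq[k+1]^{<\nn}$.

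Finally I would fix $i\in I'$ and transfer the two inequalities defining $G_{i,U}$ to $V$. Since $D^p$, $B^p$ and $A^p$ are the $\cv_{\bl_p,\bv_p}$-preimages of $D_r$, $B$ and $A$ and preimages commute with intersections, Lemma \ref{6l5} gives $\dens_{U(i)\times\{\bx_0\}}(D^p\cap B^p\cap A^p)=\dens_{W(i)}(D_r\cap B\cap A)$ and $\dens_{U(i)\times\{\bx_0\}}(D^p\cap B^p)=\dens_{W(i)}(D_r\cap B)$. As $\bx_0\in G_{i,U}$, properties (P1) and (P2) combined with $W(i)\subseteq D_r$ then yield $\dens_{W(i)}(B\cap A)\meg(\delta+\gamma/2)\dens_{W(i)}(B)$ and $\dens_{W(i)}(B)\meg\gamma^3/256$. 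When $r<k$ we have $B=D_{r+1}\cap...\cap D_k$ and these are exactly \eqref{9e9} and \eqref{9e10}; when $r=k$ we have $B=V$, so $\dens_{W(i)}(B)=1$ and $W(i)\subseteq V$ give $\dens_{W(i)}(A)\meg\delta+\gamma/2$, which is \eqref{9e11}. Thus $W$ and $I'$ witness the conclusion of Lemma \ref{9l1}, a contradiction. The only step demanding attention is this last one — keeping straight which of $D_r\cap B\cap A$, $D_r\cap B$, $B$ and $V$ plays which role in Lemma \ref{6l5}, and separating the cases $r<k$ and $r=k$; everything else is a routine application of Lemmas \ref{2l5}, \ref{6l4} and \ref{6l5}, with no further combinatorial input.
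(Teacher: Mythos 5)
Your proof is correct and follows essentially the same route as the paper: contradiction, Lemma \ref{2l5} to find a point $\bx_0\in\Gamma_U$ lying in at least $m$ of the sets $G_{i,U}$, then $W=U_{\bx_0}$ via Lemma \ref{6l4} and the density transfer of Lemma \ref{6l5}. Your final paragraph is in fact slightly more careful than the paper's (which ends with ``it is then clear that...''), since you explicitly use $W\subseteq D_r$ to pass from $\dens_{W(i)}(D_r\cap B\cap A)$ and $\dens_{W(i)}(D_r\cap B)$ to the form required in the conclusion of Lemma \ref{9l1} and you separate the cases $r<k$ and $r=k$.
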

\begin{proof}
We will argue by contradiction. So, assume that there exists a Carlson--Simpson subtree $U$ of $S$ of dimension $\bar{m}$ such that
for every $i\in\{0,...,\bar{m}\}$ we have $\dens_{\Gamma_U}(G_{i,U})\meg\gamma^3/256$. For every $\bx\in \Gamma_U$ let
\begin{equation} \label{9e26}
I_\bx=\{i\in\{0,...,\bar{m}\}:\bx\in G_{i,U}\}.
\end{equation}
By Lemma \ref{2l5}, setting $G=\{\bx\in\Gamma_U:|I_{\bx}|\meg(\bar{m}+1)\gamma^3/512\}$, we have that $\dens_{\Gamma_U}(G)
\meg \gamma^3/512$. This implies, in particular, that the set $G$ is nonempty. We select $\bx\in G$. By the choice of
$\bar{m}$ in (\ref{9e4}) we have that $|I_\bx|\meg m$. We define $W=\{\cv_{\bl_p,\bv_p}(s,\bx):s\in U\}$. By Lemma \ref{6l4},
$W$ is a Carlson--Simpson subtree of $V$ of dimension $\bar{m}$. Applying Lemma \ref{6l5} twice for the sets
$D_r\cap B\cap A$ and $D_r\cap B$, for every $i\in \{0,...,\bar{m}\}$ we have
\begin{equation}\label{9e27}
\dens_{W(i)}(D_r\cap B\cap A)=\dens_{U(i)\times \{\bx\}}(D^p\cap B^p\cap A^p)
\end{equation}
and
\begin{equation}\label{9e28}
\dens_{W(i)}(D_r\cap B)=\dens_{U(i)\times \{\bx\}}(D^p\cap B^p).
\end{equation}
The above equalities and the fact that $\bx\in G_{i,U}$ for every $i\in I_{\bx}$ yield that
\begin{equation} \label{9e29}
\dens_{W(i)}(D_{r}\cap B\cap A) \meg (\delta+\gamma/2) \dens_{W(i)}(D_{r}\cap B)
\end{equation}
and
\begin{equation} \label{9e30}
\dens_{W(i)}(D_{r}\cap B)\meg\frac{\gamma^3}{256}
\end{equation}
for every $i\in I_{\bx}$. Finally, observe that $W$ is a subset of $D_r$ since $\bx\in \Gamma_U$. It is then clear that $W$ and $I_{\bx}$
satisfy the conclusion of the lemma in contradiction with our assumption.
\end{proof}
We are now in the position to start the process of selecting the new objects of the recursive construction.
\medskip

\noindent \textit{Step 1: selection of $V_{p+1}$ and $L_{p+1}$.} Firstly, we will use a coloring argument to control the integer $i$ obtained
by Claim \ref{9c4}. Specifically, by the choice of the sequence $(n_p^2)$ in (\ref{9e7}) and Claim \ref{9c4}, we may apply Theorem \ref{4t1}
to obtain $i_0\in\{0,...,\bar{m}\}$ and a Carlson--Simpson subtree $T$ of $S$ of dimension $n_{p_0-p}^1$ such that for every $\bar{m}$-dimensional
Carlson--Simpson subtree $U$ of $T$ we have $\dens_{\Gamma_{U}}(G_{i_0,U})<\gamma^3/256$. We define
\begin{equation} \label{9e31}
V_{p+1}=T \text{ and } L_{p+1}=\big\{ i_0+j(i_0+1): j\in\{0,..., N_{p_0-(p+1)}\}\big\}.
\end{equation}
Notice that the pair $\big((L_n)_{n=0}^{p+1},(V_n)_{n=0}^{p+1}\big)$ is $(k+1)$-compatible. This follows by our inductive assumptions and the
choice of the sequence $(n_p^1)$ in (\ref{9e7}). Moreover, the cardinality of the set $L_{p+1}$ is $N_{p_0-(p+1)}+1$. Hence, with these choices,
condition (C1) is satisfied. For notational simplicity, in what follows by $\qv_{p+1}$ we shall denote the quotient map associated to the pair
$(\bl_{p+1},\bv_{p+1})$ defined in (\ref{6e5}).
\medskip

\noindent \textit{Step 2: selection of the families $\mathcal{Q}_t^{p+1,p+1}$.} This is the most important part of the recursive selection.
The members of the families $\mathcal{Q}_t^{p+1,p+1}$ are, essentially, the sets $\Gamma_U$ where $U$ ranges over all $\bar{m}$-dimensional
Carlson--Simpson subtrees of $V_{p+1}$. However, in order to carry out the construction, we have to group them in a canonical way. We proceed
to the details.

Consider the canonical isomorphism $\mathrm{I}_{V_{p+1}}:[k+1]^{<\dim(V_{p+1})+1}\to V_{p+1}$ defined in \S 2.5;
for convenience it will be denoted by $\mathrm{I}$. For every $j\in \{0,...,|L_{p+1}|-1\}$ and every $t\in [k+1]^j$ we set
\begin{equation} \label{9e32}
\Omega_t=\big\{ \cv_{L_{p+1},V_{p+1}}(t,x): x\in X_{L_{p+1}}\big\}\subseteq V_{p+1}\big(i_0+j(i_0+1)\big).
\end{equation}
If $j\meg 1$, then let $t_*$ be the unique initial segment of $t$ of length $j-1$ and set
\begin{equation} \label{9e33}
K_t=\big\{\mathrm{I}^{-1}(w)^{\con}t(j-1):w\in\Omega_{t_*}\big\}\subseteq [k+1]^{j(i_0+1)};
\end{equation}
otherwise, let $K_\varnothing=\{\varnothing\}$. Notice that $K_t=\{ \cv_{L_{p+1}}(t_*,x)^{\con}t(j-1):x\in X_{L_{p+1}}\}$.
Finally for every $s\in K_t$ let $C_s=s^\con[k+1]^{<\bar{m}+1}$ and set
\begin{equation} \label{9e34}
\mathcal{P}_t=\big\{\mathrm{I}(C_s):s\in K_t\big\}.
\end{equation}
Before we analyze the above definitions, let us give a specific example. For concreteness take $k=3$ and assume, for notational simplicity,
that $V_{p+1}$ is of the form $[4]^{<n}$ where $n$ is large enough compared to $i_0$ (hence, the map $\mathrm{I}$ is the identity). Consider
the sequence $t=(1,2,1)$ and observe that $t_*=(1,2)$. Notice that $\Omega_t$ is the subset of $[4]^{4i_0+3}$ consisting of all finite sequences
$x$ such that $x(i_0)=t(0)=1$, $x(2i_0+1)=t(1)=2$ and $x(3i_0+2)=t(2)=1$. On the other hand, the set $K_t$ is the subset of $[4]^{3i_0+3}$
consisting of all sequences $x$ such that $x(i_0)=t(0)=1$, $x(2i_0+1)=t(1)=2$ and $x(3i_0+2)=t(2)=1$. It is then easy to see that in this
specific case the family $\{ U(i_0): U\in\mathcal{P}_t\}$ forms a partition of the set $\Omega_t$. This is, actually, a general property as is
shown in the following fact.
\begin{fact} \label{9f5}
Let $t\in[k+1]^{<|L_{p+1}|}$ be arbitrary. Then the family $\mathcal{P}_t$ consists of pairwise disjoint $\bar{m}$-dimensional Carlson--Simpson
subtrees of $V_{p+1}$. Moreover,
\begin{equation} \label{9e35}
\Omega_t=\bigcup_{U\in\mathcal{P}_t} U(i_0).
\end{equation}
\end{fact}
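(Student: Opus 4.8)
The plan is to unwind every definition down to the level of the ambient words $[k+1]^{<\nn}$ via the canonical isomorphism $\mathrm{I}:=\mathrm{I}_{V_{p+1}}$ and then to compare the two descriptions of $\Omega_t$ that emerge. Write $L_{p+1}=\{l_0<\dots<l_{|L_{p+1}|-1}\}$ with $l_r=i_0+r(i_0+1)$ and recall, from Fact~\ref{5f2} and equation \eqref{5e7}, that for $t\in[k+1]^j$ one has $\mathrm{I}^{-1}(\Omega_t)=\{x\in[k+1]^{l_j}:x(l_r)=t(r)\text{ for }r=0,\dots,j-1\}$ and likewise $\{\cv_{L_{p+1}}(t_*,x):x\in X_{L_{p+1}}\}=\{z\in[k+1]^{l_{j-1}}:z(l_r)=t(r)\text{ for }r=0,\dots,j-2\}$. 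The case $j=0$ is then immediate: here $K_\varnothing=\{\varnothing\}$, $C_\varnothing=[k+1]^{<\bar{m}+1}$, so $\mathcal{P}_\varnothing$ is the single Carlson--Simpson subtree $\mathrm{I}([k+1]^{<\bar{m}+1})$ of $V_{p+1}$, and since the singleton family $\{\Omega_\varnothing\}$ equipartitions $V_{p+1}(l_0)=V_{p+1}(i_0)$ by Fact~\ref{5f2} while $\mathrm{I}([k+1]^{i_0})=V_{p+1}(i_0)$ by \eqref{2e11}, equation \eqref{9e35} holds.

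Now fix $j\geq1$ and $t\in[k+1]^j$. First I would check the structural assertions. For $s\in K_t$ the set $C_s=s^{\con}[k+1]^{<\bar{m}+1}$ is the Carlson--Simpson tree generated by $(s,v,\dots,v)$ ($\bar{m}$ occurrences of the length-one left variable word $v$), hence an $\bar{m}$-dimensional Carlson--Simpson tree, and it is contained in $[k+1]^{<\dim(V_{p+1})+1}$: indeed $|s|=l_{j-1}+1=j(i_0+1)\mik N_{p_0-(p+1)}(i_0+1)\mik N_{p_0-(p+1)}(\bar{m}+1)$ because $i_0\mik\bar{m}$ and $j\mik|L_{p+1}|-1=N_{p_0-(p+1)}$, so $|s|+\bar{m}\mik(N_{p_0-(p+1)}+1)\bar{m}+N_{p_0-(p+1)}=n^1_{p_0-p}=\dim(V_{p+1})$ by \eqref{9e7}. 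By the structure-preservation property of canonical isomorphisms recalled in \S2.5 it follows that $\mathrm{I}(C_s)$ is an $\bar{m}$-dimensional Carlson--Simpson subtree of $V_{p+1}$ and that $\mathrm{I}(C_s)(i_0)=\mathrm{I}\big((C_s)(i_0)\big)=\mathrm{I}\big(s^{\con}[k+1]^{i_0}\big)$. Disjointness and injectivity of $s\mapsto\mathrm{I}(C_s)$ are then easy: if $s\neq s'$ in $K_t$ then, since $|s|=|s'|=j(i_0+1)$, no two words of equal length in $C_s$ and $C_{s'}$ can agree, so $C_s\cap C_{s'}=\varnothing$ and hence $\mathrm{I}(C_s)\cap\mathrm{I}(C_{s'})=\varnothing$ as $\mathrm{I}$ is a bijection.

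It remains to prove \eqref{9e35}. Using $K_t=\{\cv_{L_{p+1}}(t_*,x)^{\con}t(j-1):x\in X_{L_{p+1}}\}$ together with the description of the convolution fibers recalled above, one sees that $K_t=\{s\in[k+1]^{j(i_0+1)}:s(l_r)=t(r)\text{ for }r=0,\dots,j-1\}$: the coordinates $l_0,\dots,l_{j-2}$ are pinned by $\cv_{L_{p+1}}(t_*,x)$, while the appended letter $t(j-1)$ occupies exactly coordinate $l_{j-1}=|s|-1$. Since $l_j=j(i_0+1)+i_0$, concatenating an arbitrary $u\in[k+1]^{i_0}$ gives
\[
\bigcup_{s\in K_t}(C_s)(i_0)=\big\{s^{\con}u:s\in K_t,\ u\in[k+1]^{i_0}\big\}=\big\{x\in[k+1]^{l_j}:x(l_r)=t(r)\text{ for }r=0,\dots,j-1\big\}=\mathrm{I}^{-1}(\Omega_t),
\]
the last equality again by Fact~\ref{5f2}. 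Applying $\mathrm{I}$ and using $\mathrm{I}(C_s)(i_0)=\mathrm{I}\big((C_s)(i_0)\big)$ yields $\Omega_t=\bigcup_{U\in\mathcal{P}_t}U(i_0)$. The whole argument is bookkeeping with the arithmetic progression $l_r=i_0+r(i_0+1)$; the one point deserving care — and the crux of the fact — is that appending the single letter $t(j-1)$ to $\cv_{L_{p+1}}(t_*,x)$ pins down precisely coordinate $l_{j-1}$, so that $K_t$ constrains coordinates $l_0,\dots,l_{j-1}$ of the length-$j(i_0+1)$ words $s$, matching exactly the constraint Fact~\ref{5f2} places on $\mathrm{I}^{-1}(\Omega_t)$ at the $j$-th level.
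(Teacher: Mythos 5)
Your proof is correct and follows essentially the same route as the paper's: the paper's entire argument is the identity $\mathrm{I}^{-1}(\Omega_t)=\{s^{\con}y:s\in K_t,\ y\in[k+1]^{i_0}\}=\bigcup_{s\in K_t}C_s(i_0)$, which is exactly your central computation, the structural assertions being dismissed there as clear. You have merely filled in the routine verifications (the coordinate bookkeeping for $K_t$, the dimension bound from \eqref{9e7}, disjointness, and the $j=0$ case), all of which check out.
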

\begin{proof}
It is clear that the family $\mathcal{P}_t$ consists of pairwise disjoint Carlson--Simpson trees. Moreover, by the choice of $L_{p+1}$ in
(\ref{9e31}), we have
\begin{eqnarray} \label{9e36}
\mathrm{I}^{-1}(\Omega_t) & = & \big\{s^\con y:s\in K_t\text{ and } y\in[k+1]^{i_0}\big\} \\
& = & \bigcup_{s\in K_t} C_s(i_0)= \bigcup_{U\in\mathcal{P}_t}\mathrm{I}^{-1}(U)(i_0) \nonumber
\end{eqnarray}
and the proof is completed.
\end{proof}
We record, for future use, another property of the family $\mathcal{P}_t$.
\begin{fact} \label{9f6}
Let $t,t'\in[k+1]^{<|L_{p+1}|}$ with the same length. Then for every $U\in\mathcal{P}_t$ and every $U'\in\mathcal{P}_{t'}$  we have
$\dens_{\Omega_t}\big(U(i_0)\big) = \dens_{\Omega_{t'}}\big(U'(i_0)\big)$.
\end{fact}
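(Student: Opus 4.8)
The plan is to reduce the claimed equality of relative densities to an equality of cardinalities. By Fact \ref{9f5}, for every $t\in[k+1]^{<|L_{p+1}|}$ the family $\{U(i_0):U\in\mathcal{P}_t\}$ is a partition of $\Omega_t$; hence for every $U\in\mathcal{P}_t$ we have $\dens_{\Omega_t}\big(U(i_0)\big)=|U(i_0)|/|\Omega_t|$, and similarly $\dens_{\Omega_{t'}}\big(U'(i_0)\big)=|U'(i_0)|/|\Omega_{t'}|$ for every $U'\in\mathcal{P}_{t'}$. So it suffices to show that $|U(i_0)|$ is the same for all $U\in\mathcal{P}_t\cup\mathcal{P}_{t'}$ and that $|\Omega_t|=|\Omega_{t'}|$.

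First I would compute $|U(i_0)|$. Fix $s\in K_t$, so that $U=\mathrm{I}(C_s)$ where $\mathrm{I}=\mathrm{I}_{V_{p+1}}$ and $C_s=s^{\con}[k+1]^{<\bar{m}+1}$. The set $C_s$ is precisely the Carlson--Simpson subtree of $[k+1]^{<\dim(V_{p+1})+1}$ with generating sequence $\big(s,(v),\dots,(v)\big)$ ($\bar{m}$ copies of the left variable word $(v)$), so its $i_0$-level is $C_s(i_0)=\{s^{\con}u:u\in[k+1]^{i_0}\}$, which has exactly $(k+1)^{i_0}$ elements (recall $i_0\in\{0,\dots,\bar{m}\}$). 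Since the canonical isomorphism $\mathrm{I}$ carries Carlson--Simpson subtrees to Carlson--Simpson subtrees preserving their levels (see \S 2.5) and is injective, $U(i_0)=\mathrm{I}\big(C_s(i_0)\big)$ also has cardinality $(k+1)^{i_0}$; the very same computation applies to any $U'\in\mathcal{P}_{t'}$.

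Finally, to see $|\Omega_t|=|\Omega_{t'}|$, let $j$ be the common length of $t$ and $t'$ and apply Fact \ref{5f2} to the pair $(L_{p+1},V_{p+1})$: the family $\{\Omega_s:s\in[k+1]^j\}$ is an \emph{equi}partition of the level $V_{p+1}(l_j)$, where $l_j=i_0+j(i_0+1)$ is the $j$-th element of $L_{p+1}$. In particular $|\Omega_t|=|\Omega_{t'}|$, and combining the two steps gives
\[
\dens_{\Omega_t}\big(U(i_0)\big)=\frac{(k+1)^{i_0}}{|\Omega_t|}=\frac{(k+1)^{i_0}}{|\Omega_{t'}|}=\dens_{\Omega_{t'}}\big(U'(i_0)\big),
\]
as desired. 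I do not expect a genuine obstacle here: the only points requiring care are the bookkeeping with the canonical isomorphism (so that $C_s$ is legitimately a subtree of the correct ambient tree and its levels transfer correctly under $\mathrm{I}$) and the degenerate case $j=0$, where $t=t'=\varnothing$, $\mathcal{P}_\varnothing=\{\mathrm{I}([k+1]^{<\bar{m}+1})\}$, $\Omega_\varnothing=V_{p+1}(i_0)$, and both densities equal $1$.
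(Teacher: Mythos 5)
Your proof is correct and follows essentially the same route as the paper's: both arguments reduce the claim to the two facts that $|U(i_0)|=|U'(i_0)|=(k+1)^{i_0}$ and that $|\Omega_t|=|\Omega_{t'}|$ (the latter via Fact \ref{5f2}), with $U(i_0)\subseteq\Omega_t$ guaranteed by Fact \ref{9f5}. You merely spell out the cardinality computation and the transfer under the canonical isomorphism in more detail than the paper does.
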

\begin{proof}
By Fact \ref{9f5}, we have $U(i_0)\subseteq\Omega_{t}$ and $U'(i_0)\subseteq\Omega_{t'}$ while, by Fact \ref{5f2}, we have
$|\Omega_t|=|\Omega_{t'}|$. Noticing that $|U(i_0)|=|U'(i_0)|=(k+1)^{i_0}$ the result follows.
\end{proof}
We are ready to define the families $\mathcal{Q}_t^{p+1,p+1}$. Specifically, fix $t\in[k+1]^{<|L_{p+1}|}$. For every $U\in\mathcal{P}_t$
and every $\bx\in\Gamma_U$ let
\begin{equation} \label{9e37}
Q_t^{\bx,U}=\{\bx\}\times \{x\in X_{L_{p+1}}:\cv_{L_{p+1},V_{p+1}}(t,x)\in U(i_0)\}\subseteq X_{\bl_{p+1}}.
\end{equation}
By Fact \ref{9f5}, the set $U(i_0)$ is contained in $\Omega_t$. Hence,
\begin{equation} \label{9e38}
\{t\}\times Q_t^{\bx,U} = \qv_{p+1}^{-1}\big(U(i_0)\times\{\bx\}\big).
\end{equation}
We define
\begin{equation}\label{9e39}
\mathcal{Q}_t^{p+1,p+1}=\{Q_t^{\bx,U}:U\in\mathcal{P}_t\text{ and }\bx\in\Gamma_U\}.
\end{equation}
This completes the second step of the recursive selection.
\medskip

\noindent\textit{Step 3: selection of the families $\mathcal{Q}^{\ell,p+1}_t$ for every $\ell\in[p]$.} In this step we will not introduce
something new but rather ``copy'' in the space $X_{\bl_{p+1}}$ what we have constructed so far. In particular, this step is meaningful
only if $p\meg1$.

Specifically, let $p\meg1$ and fix $t\in[k+1]^{<|L_{p+1}|}$ and $\ell\in[p]$. For every $s\in\Omega_t$ and every $Q\in\mathcal{Q}^{\ell,p}_s$ let
\begin{equation}\label{9e40}
C_t^{s,\ell,Q}=Q\times\{x\in X_{L_{p+1}}:\cv_{L_{p+1},V_{p+1}}(t,x)=s\}\subseteq X_{\bl_{p+1}}.
\end{equation}
Notice that
\begin{equation} \label{9e41}
\{t\}\times C_t^{s,\ell,Q} = \qv_{p+1}^{-1}\big(\{s\}\times Q\big).
\end{equation}
We define
\begin{equation}\label{9e42}
\mathcal{Q}_t^{\ell,p+1}=\{C_t^{s,\ell,Q}:s\in\Omega_t\text{ and }Q\in\mathcal{Q}_s^{\ell,p}\}.
\end{equation}
This completes the third, and final, step of the recursive selection.
\medskip

\noindent\textit{Step 4: verification of the inductive assumptions.}  Recall that condition (C1) has already been verified in Step 1.
Also observe that condition (C8) is meaningless in this case. Conditions (C2) up to (C7) will be verified in the following claims.
\begin{claim}\label{9c7}
For every $\ell\in[p+1]$ and every $t\in[k+1]^{<|L_{p+1}|}$ the family $\mathcal{Q}_t^{\ell,p+1}$ consists of pairwise disjoint subsets of
$D^{p+1}_t$. That is, condition \emph{(C2)} is satisfied.
\end{claim}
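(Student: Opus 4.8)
The plan is to treat separately the case $\ell=p+1$, where the families $\mathcal{Q}_t^{p+1,p+1}$ are the genuinely new objects, and the case $\ell\in[p]$, which only arises when $p\meg 1$ and where one merely transports along $\qv_{p+1}$ the families built at stage $p$. In both cases the starting point is the factorization $\cv_{\bl_{p+1},\bv_{p+1}}=\cv_{\bl_p,\bv_p}\circ\qv_{p+1}$ supplied by Fact \ref{6f3}, which yields $D^{p+1}=\qv_{p+1}^{-1}(D^p)$; this is what lets us pull membership in $D^{p+1}$ back to membership in $D^p$ along the quotient map, together with the identities \eqref{9e38} and \eqref{9e41} that were recorded precisely for this purpose.

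First I would handle $\ell=p+1$. Fix $t\in[k+1]^{<|L_{p+1}|}$, $U\in\mathcal{P}_t$ and $\bx\in\Gamma_U$. By \eqref{9e38} we have $\{t\}\times Q_t^{\bx,U}=\qv_{p+1}^{-1}\big(U(i_0)\times\{\bx\}\big)$, so in view of $D^{p+1}=\qv_{p+1}^{-1}(D^p)$ it is enough to show $U(i_0)\times\{\bx\}\subseteq D^p$. Since $i_0\mik\bar{m}=\dim(U)$, the $i_0$-level $U(i_0)$ is a subset of $U$; hence $\bx\in\Gamma_U=\bigcap_{s\in U}\Gamma_s\subseteq\Gamma_w$ for every $w\in U(i_0)$, and since $\Gamma_w\subseteq D^p_w$ by the very definition of $\Gamma_w$ (namely \eqref{9e24} when $p\meg 1$ and $\Gamma_w=D^0_w$ when $p=0$), we get $(w,\bx)\in D^p$ for all $w\in U(i_0)$. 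This gives $Q_t^{\bx,U}\subseteq D^{p+1}_t$. For pairwise disjointness, note that $Q_t^{\bx,U}\subseteq\{\bx\}\times X_{L_{p+1}}$, so two members of $\mathcal{Q}_t^{p+1,p+1}$ with different first coordinate are trivially disjoint; if the first coordinate agrees but $U\neq U'$, then Fact \ref{9f5} says the members of $\mathcal{P}_t$ are pairwise disjoint, whence $U(i_0)\cap U'(i_0)=\varnothing$, and since $Q_t^{\bx,U}$ and $Q_t^{\bx,U'}$ are, respectively, copies of the preimages of $U(i_0)$ and $U'(i_0)$ under $x\mapsto\cv_{L_{p+1},V_{p+1}}(t,x)$, they are disjoint as well.

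Next I would handle $\ell\in[p]$ (so $p\meg 1$), where the argument is the same in spirit but driven by the inductive hypothesis (C2) at stage $p$. Fix $t$, $s\in\Omega_t$ and $Q\in\mathcal{Q}_s^{\ell,p}$. By \eqref{9e41} we have $\{t\}\times C_t^{s,\ell,Q}=\qv_{p+1}^{-1}\big(\{s\}\times Q\big)$; since $s\in\Omega_t\subseteq V_{p+1}\subseteq[k+1]^{<|L_p|}$ and, by (C2) at stage $p$, $Q\subseteq D^p_s$, we obtain $\{s\}\times Q\subseteq D^p$ and hence $C_t^{s,\ell,Q}\subseteq D^{p+1}_t$. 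Disjointness again splits into two subcases: members attached to distinct $s,s'$ are disjoint because no $x$ can satisfy $\cv_{L_{p+1},V_{p+1}}(t,x)=s$ and $\cv_{L_{p+1},V_{p+1}}(t,x)=s'$ at once; and for a fixed $s$, members attached to distinct $Q,Q'\in\mathcal{Q}_s^{\ell,p}$ are disjoint because $Q\cap Q'=\varnothing$ by (C2) at stage $p$. I do not anticipate any genuine obstacle here: everything reduces to the two identities \eqref{9e38} and \eqref{9e41} and to the inductive data, and the only point demanding a little care is the bookkeeping of the iterated convolution/quotient identifications together with the small but essential observation that $U(i_0)\subseteq U$, so that the single condition $\bx\in\Gamma_U$ simultaneously controls all sections of $D^p$ over the $i_0$-level of $U$.
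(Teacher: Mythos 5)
Your proposal is correct and follows essentially the same route as the paper: both cases reduce to the identities \eqref{9e38} and \eqref{9e41} together with $D^{p+1}=\qv_{p+1}^{-1}(D^p)$ from Fact \ref{6f3}, with disjointness coming from Fact \ref{9f5} (for $\ell=p+1$) and the inductive hypothesis (for $\ell\in[p]$). If anything, you make explicit a point the paper leaves implicit, namely that $\bx\in\Gamma_U$ together with $U(i_0)\subseteq U$ and $\Gamma_w\subseteq D^p_w$ is what gives $U(i_0)\times\{\bx\}\subseteq D^p$.
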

\begin{proof}
Fix $t\in[k+1]^{<|L_{p+1}|}$. First assume that $\ell\in[p]$. Invoking (\ref{9e40}), (\ref{9e42}) and our inductive assumptions,
we see that the family $\mathcal{Q}_t^{\ell,p+1}$ consists of pairwise disjoint sets. Fix $C_t^{s,\ell,Q}\in \mathcal{Q}_t^{\ell,p+1}$
for some $s\in\Omega_t$ and $Q\in\mathcal{Q}^{\ell,p}_s$. Using our inductive assumptions once again, we see that $Q\subseteq D^p_s$
or equivalently $\{s\}\times Q\subseteq D^p$. Hence, by \eqref{9e41} and Fact \ref{6f3},
\begin{equation} \label{9e43}
\{t\}\times C_t^{s,\ell,Q} \subseteq \qv_{p+1}^{-1}(D^p)=D^{p+1}.
\end{equation}

Now we treat the case ``$\ell=p+1$''. Let $U,U'\in \mathcal{P}_t$, $\bx\in\Gamma_U$ and $\bx'\in\Gamma_{U'}$. We will show that the sets $Q_t^{\bx,U}$
and $Q_t^{\bx',U'}$ are disjoint provided that $(U,\bx)\neq (U',\bx')$. Indeed, if $U=U'$, then necessarily $\bx\neq\bx'$ which implies that the
sets $Q_t^{\bx,U}$ and $Q_t^{\bx',U'}$ are disjoint. Otherwise, by Fact \ref{9f5}, the Carlson--Simpson trees $U$ and $U'$ are disjoint.
In particular, the sets $U(i_0)$ and $U'(i_0)$ are disjoint yielding that $Q_t^{\bx,U}\cap Q_t^{\bx',U'}=\varnothing$. What remains is to check
that $Q_t^{\bx,U}\subseteq D^{p+1}_t$ for every $U\in\mathcal{P}_t$ and every $\bx\in \Gamma_U$. So, fix $U\in\mathcal{P}_t$ and $\bx\in \Gamma_U$.
By \eqref{9e38} and Fact \ref{6f3}, we conclude that
\begin{equation} \label{9e44}
\{t\}\times Q_t^{\bx,U} \subseteq \qv_{p+1}^{-1}(D^p)=D^{p+1}
\end{equation}
and the proof is completed.
\end{proof}
\begin{claim}\label{9c8}
For every $t\in[k+1]^{<|L_{p+1}|}$ the sets $\cup\mathcal{Q}_t^{1,p+1}, ..., \cup\mathcal{Q}_t^{p+1,p+1}$ are pairwise disjoint.
That is, condition \emph{(C3)} is satisfied.
\end{claim}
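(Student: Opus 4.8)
The plan is to transport the whole disjointness question down to the space $[k+1]^{<|L_p|}\times X_{\bl_p}$ along the quotient map $\qv_{p+1}$ and then read off the conclusion from condition \emph{(C3)} at stage $p$ together with the way the sets $\Gamma_s$ were carved out. First I would record two trivialities: for subsets $P,P'$ of $X_{\bl_{p+1}}$ one has $P\cap P'=\varnothing$ if and only if $(\{t\}\times P)\cap(\{t\}\times P')=\varnothing$, and $\qv_{p+1}^{-1}$ sends disjoint sets to disjoint sets. Then, using \eqref{9e41} together with \eqref{9e42}, for each $\ell\in[p]$ I would write $\{t\}\times\cup\mathcal{Q}_t^{\ell,p+1}=\qv_{p+1}^{-1}(E_\ell)$ with $E_\ell=\bigcup_{s\in\Omega_t}\{s\}\times\cup\mathcal{Q}_s^{\ell,p}$, and using \eqref{9e38} together with \eqref{9e39} I would write $\{t\}\times\cup\mathcal{Q}_t^{p+1,p+1}=\qv_{p+1}^{-1}(E_{p+1})$ with $E_{p+1}=\bigcup_{U\in\mathcal{P}_t}\bigcup_{\bx\in\Gamma_U}U(i_0)\times\{\bx\}$. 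After this reduction it suffices to prove that $E_1,\dots,E_{p+1}$ are pairwise disjoint subsets of $[k+1]^{<|L_p|}\times X_{\bl_p}$.

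Next I would split into two cases. If $\ell,\ell'\in[p]$ are distinct, any point of $E_\ell\cap E_{\ell'}$ has the form $(s,\bx)$ with $s\in\Omega_t$ and $\bx\in\cup\mathcal{Q}_s^{\ell,p}\cap\cup\mathcal{Q}_s^{\ell',p}$; but $\Omega_t\subseteq V_{p+1}\subseteq[k+1]^{<|L_p|}$ by $(k+1)$-compatibility of $(\bl_{p+1},\bv_{p+1})$, so the inductive hypothesis \emph{(C3)} at stage $p$ applies at the point $s$ and forces $\cup\mathcal{Q}_s^{\ell,p}\cap\cup\mathcal{Q}_s^{\ell',p}=\varnothing$, hence $E_\ell\cap E_{\ell'}=\varnothing$.

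The remaining case, $\ell\in[p]$ and $\ell'=p+1$ (which in particular forces $p\meg 1$, so the sets $\Gamma_s$ are the ones of \eqref{9e24}), is the step I expect to carry the actual content of the claim, since it is here that one uses that the cells of $\mathcal{Q}_t^{p+1,p+1}$ sit over the ``leftover'' set that was deliberately excised from all the earlier families. Suppose $(s,\bx)\in E_\ell\cap E_{p+1}$. From $(s,\bx)\in E_{p+1}$ there is $U\in\mathcal{P}_t$ with $s\in U(i_0)$ and $\bx\in\Gamma_U$; since $U(i_0)\subseteq U$ and $\Gamma_U=\bigcap_{s'\in U}\Gamma_{s'}$ by \eqref{9e25}, this gives $\bx\in\Gamma_s$, and then the definition \eqref{9e24} of $\Gamma_s$ yields $\bx\notin\cup\mathcal{Q}_s^{\ell,p}$. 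But $(s,\bx)\in E_\ell$ says precisely $\bx\in\cup\mathcal{Q}_s^{\ell,p}$, a contradiction; hence $E_\ell\cap E_{p+1}=\varnothing$, and \emph{(C3)} holds at stage $p+1$. (When $p=0$ there is only the single family $\mathcal{Q}_t^{1,1}$ and the assertion is vacuous, so no case analysis is needed.) Everything outside this last case is pure bookkeeping with the identities already established in \eqref{9e38}, \eqref{9e41}, \eqref{9e42} and the induction hypothesis.
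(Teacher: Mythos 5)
Your proposal is correct and follows essentially the same route as the paper: the paper also handles the case $\ell,\ell'\in[p]$ by the inductive hypothesis via \eqref{9e40}/\eqref{9e41}, and the case $\ell'=p+1$ by observing that $\bx\in\Gamma_U$ forces $\bx\notin\cup\mathcal{Q}_s^{\ell,p}$ for all $s\in U$ by \eqref{9e24} and \eqref{9e25}, then transferring via \eqref{9e38} and \eqref{9e41}. You have merely made explicit the pullback along $\qv_{p+1}$ that the paper leaves implicit.
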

\begin{proof}
Fix $t\in[k+1]^{<|L_{p+1}|}$ and $\ell\in[p]$. Let $\ell'\in[p+1]$ with $\ell'\neq\ell$. We need to show that the sets
$\cup\mathcal{Q}^{\ell,p+1}_t$ and $\cup\mathcal{Q}^{\ell',p+1}_t$ are disjoint. If $\ell' \mik p$, then this follows immediately from
(\ref{9e40}) and our inductive assumptions. So, assume that $\ell'=p+1$ and let $U\in\mathcal{P}_t$ and $\bx\in \Gamma_U$. By (\ref{9e24})
and (\ref{9e25}), we have that $\bx\not\in \cup\mathcal{Q}^{\ell,p}_s$ for every $s\in U$. Using this observation, the result follows from
(\ref{9e38}) and (\ref{9e41}).
\end{proof}
\begin{claim}\label{9c9}
For every $\ell\in [p+1]$, every $t,t'\in[k+1]^{<|L_{p+1}|}$ with the same length and every $\Theta\in\mathcal{Q}_t^{\ell,p+1}$
and $\Theta'\in\mathcal{Q}_{t'}^{\ell,p+1}$ we have $\dens_{X_{\bl_{p+1}}}(\Theta)=\dens_{X_{\bl_{p+1}}}(\Theta')$.
That is, condition \emph{(C4)} is satisfied.
\end{claim}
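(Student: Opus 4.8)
The plan is to handle the two cases $\ell=p+1$ and $\ell\in[p]$ (the latter vacuous unless $p\meg 1$) separately, reducing in each the computation of $\dens_{X_{\bl_{p+1}}}(\Theta)$ to a single application of Lemma \ref{6l6}. First I would record the one general fact I need from that lemma: applied to the $(k+1)$-compatible pair $(\bl_{p+1},\bv_{p+1})$ (so $d=p+1$, $\bl'=\bl_p$, $\bv'=\bv_p$), it gives, for every $C\subseteq[k+1]^{<|L_p|}\times X_{\bl_p}$ and $B=\qv_{p+1}^{-1}(C)$,
\[ \dens_{X_{\bl_{p+1}}}(B_t)=\dens_{\{t\}\times X_{\bl_{p+1}}}(B)=\dens_{\Omega_t\times X_{\bl_p}}(C), \]
where $B_t$ is the section of $B$ at $t$. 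I would also note two elementary consequences of \eqref{9e32}: since $\Omega_t\subseteq V_{p+1}\big(i_0+|t|(i_0+1)\big)$, whenever $|t|=|t'|$ the sets $\Omega_t$ and $\Omega_{t'}$ sit in the same level of $V_{p+1}$, so $|\Omega_t|=|\Omega_{t'}|$ by Fact \ref{5f2}, and every $s\in\Omega_t$, $s'\in\Omega_{t'}$ have the same length (a level of a Carlson--Simpson tree lies inside a single $[k+1]^{\ell}$). These observations will do all the work.

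For $\ell=p+1$ I would write $\Theta=Q_t^{\bx,U}$ with $U\in\mathcal{P}_t$, $\bx\in\Gamma_U$. By Fact \ref{9f5}, $U(i_0)\subseteq\Omega_t\subseteq V_{p+1}$, and $\Gamma_U\subseteq X_{\bl_p}$, so $C:=U(i_0)\times\{\bx\}$ is a legitimate argument for the displayed identity; by \eqref{9e38}, $\qv_{p+1}^{-1}(C)=\{t\}\times\Theta$. Hence $\dens_{X_{\bl_{p+1}}}(\Theta)=\dens_{\Omega_t\times X_{\bl_p}}\big(U(i_0)\times\{\bx\}\big)=|X_{\bl_p}|^{-1}\dens_{\Omega_t}\big(U(i_0)\big)$, and the same with primes for $\Theta'=Q_{t'}^{\bx',U'}$. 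As $|t|=|t'|$, Fact \ref{9f6} gives $\dens_{\Omega_t}(U(i_0))=\dens_{\Omega_{t'}}(U'(i_0))$, so the two densities are equal.

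For $\ell\in[p]$ I would write $\Theta=C_t^{s,\ell,Q}$ with $s\in\Omega_t$, $Q\in\mathcal{Q}_s^{\ell,p}$; by condition \emph{(C2)} for $p$ we have $Q\subseteq D^p_s\subseteq X_{\bl_p}$, and by \eqref{9e41}, $\qv_{p+1}^{-1}(\{s\}\times Q)=\{t\}\times\Theta$. The displayed identity then gives $\dens_{X_{\bl_{p+1}}}(\Theta)=\dens_{\Omega_t\times X_{\bl_p}}(\{s\}\times Q)=|\Omega_t|^{-1}\dens_{X_{\bl_p}}(Q)$, and likewise $\dens_{X_{\bl_{p+1}}}(\Theta')=|\Omega_{t'}|^{-1}\dens_{X_{\bl_p}}(Q')$ for $\Theta'=C_{t'}^{s',\ell,Q'}$ with $Q'\in\mathcal{Q}_{s'}^{\ell,p}$. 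Since $|\Omega_t|=|\Omega_{t'}|$ and $s,s'$ have the same length, the stage-$p$ instance of condition \emph{(C4)} gives $\dens_{X_{\bl_p}}(Q)=\dens_{X_{\bl_p}}(Q')$, and again the two densities are equal.

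I do not foresee a real obstacle: this is bookkeeping on $X_{\bl_{p+1}}=X_{\bl_p}\times X_{L_{p+1}}$, driven entirely by Lemma \ref{6l6}, Facts \ref{5f2}, \ref{9f5}, \ref{9f6}, and the inductive conditions. The few points to be careful about are feeding the correct ambient set $[k+1]^{<|L_p|}\times X_{\bl_p}$ into Lemma \ref{6l6}, noting that $|t|=|t'|$ forces equality of the lengths of the relevant $s,s'$ (needed to apply \emph{(C4)}), and using the stage-$p$ version of \emph{(C4)} rather than trying to establish it afresh.
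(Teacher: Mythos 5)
Your proposal is correct and follows essentially the same route as the paper: both cases are reduced via Lemma \ref{6l6} (through the identities \eqref{9e38} and \eqref{9e41}) to the equalities $\dens_{\Omega_t\times X_{\bl_p}}(U(i_0)\times\{\bx\})=\dens_{\Omega_{t'}\times X_{\bl_p}}(U'(i_0)\times\{\bx'\})$ and $\dens_{\Omega_t\times X_{\bl_p}}(\{s\}\times Q)=\dens_{\Omega_{t'}\times X_{\bl_p}}(\{s'\}\times Q')$, which are then settled by Fact \ref{9f6} and by Fact \ref{5f2} together with the stage-$p$ instance of (C4), exactly as in the paper's argument.
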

\begin{proof}
If $\ell\in[p]$, then by (\ref{9e41}) there exist $s\in\Omega_t$ and $s'\in\Omega_{t'}$ as well as $Q\in\mathcal{Q}_s^{\ell,p}$ and
$Q'\in\mathcal{Q}_{s'}^{\ell,p}$ such that $\{t\}\times\Theta=\qv_{p+1}^{-1}\big(\{s\}\times Q\big)$ and
$\{t'\}\times\Theta'=\qv_{p+1}^{-1}\big(\{s'\}\times Q'\big)$. By Fact \ref{5f2}, we have that $s$ and $s'$ have the same length and 
$|\Omega_t|=|\Omega_{t'}|$. Therefore, by our inductive assumptions,
\begin{eqnarray} \label{9e45}
\dens_{\Omega_t\times X_{\bl_p}}\big(\{s\}\times Q\big)
& = & \frac{1}{|\Omega_t|}\dens_{\{s\}\times X_{\bl_p}}\big(\{s\}\times Q\big)\\
& = & \frac{1}{|\Omega_t|}\dens_{X_{\bl_p}}(Q)
 =  \frac{1}{|\Omega_{t'}|}\dens_{X_{\bl_p}}(Q') \nonumber \\
& = & \frac{1}{|\Omega_{t'}|}\dens_{\{s'\}\times X_{\bl_p}}\big(\{s'\}\times Q'\big) \nonumber \\
& = & \dens_{\Omega_{t'}\times X_{\bl_p}}\big(\{s'\}\times Q'\big). \nonumber
\end{eqnarray}
Applying Lemma \ref{6l6} we conclude that
\begin{eqnarray} \label{9e46}
\dens_{X_{\bl_{p+1}}}(\Theta)
& = & \dens_{\{t\}\times X_{\bl_{p+1}}}\big(\{t\}\times\Theta\big) \\
& = & \dens_{\Omega_t\times X_{\bl_p}}\big(\{s\}\times Q\big) \nonumber \\
& \stackrel{\eqref{9e45}}{=} &  \dens_{\Omega_{t'}\times X_{\bl_p}}\big(\{s'\}\times Q'\big) \nonumber \\
& = & \dens_{\{t'\}\times X_{\bl_{p+1}}}\big(\{t'\}\times\Theta'\big) = \dens_{X_{\bl_{p+1}}}(\Theta'). \nonumber
\end{eqnarray}

If $\ell=p+1$, then by (\ref{9e38}) there exist $U\in\mathcal{P}_t$ and $U'\in\mathcal{P}_{t'}$ as well as $\bx\in\Gamma_U$ and
$\bx'\in\Gamma_{U'}$ such that $\{t\}\times \Theta = \qv_{p+1}^{-1}\big(U(i_0)\times\{\bx\}\big)$ and
$\{t'\}\times \Theta = \qv_{p+1}^{-1}\big(U'(i_0)\times\{\bx'\}\big)$. By Fact \ref{9f6}, we have
\begin{eqnarray} \label{9e47}
\dens_{\Omega_t\times X_{\bl_p}}\big(U(i_0)\times \{\bx\}\big)
& = & \frac{1}{|X_{\bl_p}|}\dens_{\Omega_t}\big(U(i_0)\big) \\
& = & \frac{1}{|X_{\bl_p}|}\dens_{\Omega_{t'}}\big(U'(i_0)\big) \nonumber\\
& = & \dens_{\Omega_{t'}\times X_{\bl_p}}\big(U'(i_0)\times \{\bx'\}\big). \nonumber
\end{eqnarray}
Using (\ref{9e47}), Lemma \ref{6l6} and arguing precisely as in the previous case, we see that
$\dens_{X_{\bl_{p+1}}}(\Theta)=\dens_{X_{\bl_{p+1}}}(\Theta')$ and the proof is completed.
\end{proof}
\begin{claim} \label{9c10}
For every $\ell\in [p+1]$ and every $t\in[k+1]^{<|L_{p+1}|}$ we have
\begin{equation}\label{9e48}
\frac{|\mathcal{G}_t^{\ell,p+1}|}{|\mathcal{Q}_t^{\ell,p+1}|}<\frac{\gamma^3}{256}.
\end{equation}
That is, condition \emph{(C5)} is satisfied.
\end{claim}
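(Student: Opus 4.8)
The plan is to verify \eqref{9e48} separately in the two cases $\ell=p+1$ and $\ell\in[p]$. In each case the strategy is the same: transport the densities that enter the definition of ``good'' back to the space $X_{\bl_p}$ by means of Corollary \ref{6c10}, and then count. Throughout I will use that $\cv_{\bl_{p+1},\bv_{p+1}}=\cv_{\bl_p,\bv_p}\circ\qv_{p+1}$ by Fact \ref{6f3}, so that $A^{p+1}=\qv_{p+1}^{-1}(A^p)$, $B^{p+1}=\qv_{p+1}^{-1}(B^p)$ and $D^{p+1}=\qv_{p+1}^{-1}(D^p)$. I will also use that every member of every family $\mathcal{Q}^{\ell,p}_s$ is nonempty; this is immediate from conditions (C2), (C4) and (C6), since a nonempty finite union of pairwise disjoint sets of equal density has all of its parts nonempty.

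\textsc{Case 1: $\ell=p+1$.} Here $\mathcal{Q}_t^{p+1,p+1}=\{Q_t^{\bx,U}:U\in\mathcal{P}_t,\ \bx\in\Gamma_U\}$. Fix $U\in\mathcal{P}_t$ and $\bx\in\Gamma_U$. By Fact \ref{9f5} the set $U(i_0)$ is a nonempty subset of $\Omega_t$, so $U(i_0)\times\{\bx\}$ is a nonempty subset of $\Omega_t\times X_{\bl_p}$. Applying Corollary \ref{6c10} with ``$C_0=U(i_0)\times\{\bx\}$'' and ``$C_1$'' equal successively to $D^p\cap B^p\cap A^p$ and to $D^p\cap B^p$, and using \eqref{9e38}, I obtain
\[ \dens_{Q_t^{\bx,U}}(D^{p+1}_t\cap B^{p+1}_t\cap A^{p+1}_t)=\dens_{U(i_0)\times\{\bx\}}(D^p\cap B^p\cap A^p) \]
and $\dens_{Q_t^{\bx,U}}(D^{p+1}_t\cap B^{p+1}_t)=\dens_{U(i_0)\times\{\bx\}}(D^p\cap B^p)$. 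Comparing with properties (P1) and (P2), this says exactly that $Q_t^{\bx,U}$ is good if and only if $\bx\in G_{i_0,U}$. Since the sets $Q_t^{\bx,U}$ are nonempty and pairwise disjoint by Claim \ref{9c7}, the map $(U,\bx)\mapsto Q_t^{\bx,U}$ is a bijection of $\{(U,\bx):U\in\mathcal{P}_t,\ \bx\in\Gamma_U\}$ onto $\mathcal{Q}_t^{p+1,p+1}$ carrying $\{(U,\bx):\bx\in G_{i_0,U}\}$ onto $\mathcal{G}_t^{p+1,p+1}$, so
\[ \frac{|\mathcal{G}_t^{p+1,p+1}|}{|\mathcal{Q}_t^{p+1,p+1}|}=\frac{\sum_{U\in\mathcal{P}_t}|G_{i_0,U}|}{\sum_{U\in\mathcal{P}_t}|\Gamma_U|}. \]
By Fact \ref{9f5} each $U\in\mathcal{P}_t$ is an $\bar{m}$-dimensional Carlson--Simpson subtree of $V_{p+1}=T$, so the choice of $T$ made when selecting $V_{p+1}$ and $L_{p+1}$ guarantees $|G_{i_0,U}|<(\gamma^3/256)|\Gamma_U|$, while $\Gamma_U\neq\varnothing$ because $\dens_{X_{\bl_p}}(\Gamma_U)\meg\alpha>0$. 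Summing over $U\in\mathcal{P}_t$ yields \eqref{9e48}.

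\textsc{Case 2: $\ell\in[p]$} (vacuous when $p=0$). Here $\mathcal{Q}_t^{\ell,p+1}=\{C_t^{s,\ell,Q}:s\in\Omega_t,\ Q\in\mathcal{Q}_s^{\ell,p}\}$. Fix $s\in\Omega_t$ and $Q\in\mathcal{Q}_s^{\ell,p}$; then $\{s\}\times Q$ is a nonempty subset of $\Omega_t\times X_{\bl_p}$, and Corollary \ref{6c10} with ``$C_0=\{s\}\times Q$'', together with \eqref{9e41}, gives
\[ \dens_{C_t^{s,\ell,Q}}(D^{p+1}_t\cap B^{p+1}_t\cap A^{p+1}_t)=\dens_Q(D^p_s\cap B^p_s\cap A^p_s) \]
and $\dens_{C_t^{s,\ell,Q}}(D^{p+1}_t\cap B^{p+1}_t)=\dens_Q(D^p_s\cap B^p_s)$. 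Comparing with the definition of ``good'' in condition (C5) at level $p$, the set $C_t^{s,\ell,Q}$ is good if and only if $Q\in\mathcal{G}_s^{\ell,p}$. As the sets $C_t^{s,\ell,Q}$ are nonempty and pairwise disjoint by Claim \ref{9c7}, the same bijection argument gives
\[ \frac{|\mathcal{G}_t^{\ell,p+1}|}{|\mathcal{Q}_t^{\ell,p+1}|}=\frac{\sum_{s\in\Omega_t}|\mathcal{G}_s^{\ell,p}|}{\sum_{s\in\Omega_t}|\mathcal{Q}_s^{\ell,p}|}, \]
the denominator being positive since $\Omega_t\subseteq[k+1]^{<|L_p|}$ is nonempty and $\mathcal{Q}_s^{\ell,p}\neq\varnothing$ by (C6). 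The inductive hypothesis \eqref{9e15} applied to each $s\in\Omega_t$ then yields $|\mathcal{G}_s^{\ell,p}|<(\gamma^3/256)|\mathcal{Q}_s^{\ell,p}|$, and summing over $s\in\Omega_t$ gives \eqref{9e48}.

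The steps I expect to require the most care are not deep but purely organizational: the two applications of Corollary \ref{6c10} through the preimage identities \eqref{9e38} and \eqref{9e41} (one must keep track that $\qv_{p+1}^{-1}$ carries the ``downstairs'' sets precisely to $D^{p+1}$, $B^{p+1}$, $A^{p+1}$), and the bookkeeping with nonemptiness, since it is exactly the nonemptiness of the family members that upgrades the pairwise disjointness of Claim \ref{9c7} to the exact counting identities displayed above.
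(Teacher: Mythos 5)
Your proof is correct and follows essentially the same route as the paper: the key step in both is to use Corollary \ref{6c10} together with the preimage identities \eqref{9e38} and \eqref{9e41} to show that a member of $\mathcal{Q}_t^{\ell,p+1}$ is good exactly when the corresponding object at level $p$ (an element $Q\in\mathcal{G}_s^{\ell,p}$, resp.\ an $\bx\in G_{i_0,U}$) is, which is precisely the content of the paper's Subclaims \ref{9sc11} and \ref{9sc12}. The only difference is cosmetic: you compute the ratio as a quotient of sums, while the paper writes the same quantity as a convex combination of the ratios over the partition pieces.
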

\begin{proof}
Fix $t\in[k+1]^{<|L_{p+1}|}$. Assume first that $\ell\in[p]$. For every $s\in \Omega_t$ let
\begin{equation} \label{9e49}
\mathcal{Q}_s=\big\{C_t^{s,\ell,Q}:Q\in\mathcal{Q}_s^{\ell,p}\big\} \text{ and } \mathcal{G}_s= \mathcal{G}_t^{\ell,p+1}\cap \mathcal{Q}_s.
\end{equation}
By (\ref{9e42}), the family $\{\mathcal{Q}_s:s\in\Omega_t\}$ is a partition of $\mathcal{Q}_t^{\ell,p+1}$. The family
$\{\mathcal{G}_s:s\in\Omega_t\}$ is the induced partition of $\mathcal{G}_t^{\ell,p+1}$. Moreover, for every $s\in\Omega_t$ let
\begin{equation} \label{9e50}
\mathcal{B}_s=\{C_t^{s,\ell,Q}:Q\in\mathcal{G}_s^{\ell,p}\}.
\end{equation}
\begin{subclaim} \label{9sc11}
For every $s\in\Omega_t$ we have $\mathcal{G}_s=\mathcal{B}_s$.
\end{subclaim}
\begin{proof}[Proof of Subclaim \ref{9sc11}]
Fix $s\in\Omega_t$ and let $\Theta\in\mathcal{Q}_s$ be arbitrary. By (\ref{9e41}), the map $\mathcal{Q}_s^{\ell,p}\ni Q\mapsto
C_t^{s,\ell,Q}\in\mathcal{Q}_s$ is a bijection. Hence, there exists a unique $Q\in\mathcal{Q}_s^{\ell,p}$ such that
$\Theta=C_t^{s,\ell,Q}$. Using (\ref{9e41}) once again, we see that $\{t\}\times\Theta=\qv_{p+1}^{-1}\big(\{s\}\times Q\big)$.
Since $s\in\Omega_t$, by Corollary \ref{6c10}, we get
\begin{eqnarray} \label{9e51}
\dens_{\Theta}(D^{p+1}_t\cap B^{p+1}_t) & = & \dens_{\{t\}\times\Theta}(D^{p+1}\cap B^{p+1}) \\
& = & \dens_{\qv_{p+1}^{-1}(\{s\}\times Q)}\big( \qv_{p+1}^{-1}(D^p\cap B^p)\big) \nonumber \\
& = & \dens_{\{s\}\times Q}(D^p\cap B^p) \nonumber \\
& = & \dens_Q(D^p_s\cap B^p_s). \nonumber
\end{eqnarray}
Similarly,
\begin{equation} \label{9e52}
\dens_{\Theta}(D^{p+1}_t\cap B^{p+1}_t\cap A^{p+1}_t) = \dens_Q(D^p_s\cap B^p_s\cap A^p_s).
\end{equation}
Using (\ref{9e51}) and (\ref{9e52}) and invoking the definition of a good set described in condition (C5), we conclude that
$\Theta\in\mathcal{G}_s$ if and only  if $Q\in\mathcal{G}_s^{\ell,p}$. This is equivalent to say that $\mathcal{G}_s=\mathcal{B}_s$.
\end{proof}
We are ready to complete the proof for the case ``$\ell\in[p]$''. Indeed, we have already pointed out that the map
$\mathcal{Q}_s^{\ell,p}\ni Q\mapsto C_t^{s,\ell,Q}\in\mathcal{Q}_s$ is a bijection. Therefore, using our inductive assumptions, we see that
\begin{eqnarray} \label{9e53}
\frac{|\mathcal{G}_t^{\ell,p+1}|}{|\mathcal{Q}_t^{\ell,p+1}|} & = &
\sum_{s\in\Omega_t} \frac{|\mathcal{G}_s|}{|\mathcal{Q}_s|} \cdot \frac{|\mathcal{Q}_s|}{|\mathcal{Q}_t^{\ell,p+1}|} =
\sum_{s\in\Omega_t} \frac{|\mathcal{B}_s|}{|\mathcal{Q}_s|} \cdot \frac{|\mathcal{Q}_s|}{|\mathcal{Q}_t^{\ell,p+1}|} \\
& = & \sum_{s\in\Omega_t} \frac{|\mathcal{G}^{\ell,p}_s|}{|\mathcal{Q}^{\ell,p}_s|} \cdot
\frac{|\mathcal{Q}_s|}{|\mathcal{Q}_t^{\ell,p+1}|} < \frac{\gamma^3}{256} \Big( \sum_{s\in\Omega_t}
\frac{|\mathcal{Q}_s|}{|\mathcal{Q}_t^{\ell,p+1}|} \Big) = \frac{\gamma^3}{256}. \nonumber
\end{eqnarray}

Now we treat the case $\ell=p+1$. The argument is similar. Specifically, for every $U\in \mathcal{P}_t$ let
\begin{equation} \label{9e54}
\mathcal{Q}_U=\big\{ Q_t^{\bx,U}: \bx\in\Gamma_U\} \text{ and } \mathcal{G}_U= \mathcal{G}_t^{\ell,p+1}\cap \mathcal{Q}_U.
\end{equation}
By (\ref{9e39}), the family $\{\mathcal{Q}_U:U\in\mathcal{P}_t\}$ is a partition of $\mathcal{Q}_t^{\ell,p+1}$ and the family
$\{\mathcal{G}_U: U\in\mathcal{P}_t\}$ is the induced partition of $\mathcal{G}_t^{\ell,p+1}$. Also, for every $U\in\mathcal{P}_t$ let
\begin{equation} \label{9e55}
\mathcal{B}_U=\{ Q_t^{\bx,U}: \bx\in G_{i_0,U}\}.
\end{equation}
Recall that $G_{i_0,U}$ is the set of all $\bx\in\Gamma_U$ satisfying properties (P1) and (P2). Moreover, by the choice of
$i_0$ in Step 1, we have that $\dens_{\Gamma_U}(G_{i_0,U})<\gamma^3/256$. We have the following analogue of Subclaim \ref{9sc11}.
\begin{subclaim} \label{9sc12}
For every $U\in\mathcal{P}_t$ we have $\mathcal{G}_U=\mathcal{B}_U$.
\end{subclaim}
\begin{proof}[Proof of Subclaim \ref{9sc12}]
Fix $U\in\mathcal{P}_t$ and let $\Theta\in\mathcal{Q}_U$ be arbitrary. By (\ref{9e38}), the map $\Gamma_U\ni \bx \mapsto
Q_t^{\bx,U}\in\mathcal{Q}_U$ is a bijection. Hence, there exists a unique $\bx\in\Gamma_U$ such that $Q_t^{\bx,U}=\Theta$. Invoking (\ref{9e38})
once again, $\{t\}\times Q_t^{\bx,U}=\qv_{p+1}^{-1}(U(i_0)\times\{\bx\})$. Moreover, by Fact \ref{9f5}, we have $U(i_0)\subseteq \Omega_t$. By the
previous remarks and Corollary \ref{6c10}, and arguing precisely as in the proof of Subclaim \ref{9sc11}, we see that $\Theta\in\mathcal{G}_U$
if and only if $\bx\in G_{i_0,U}$.
\end{proof}

With Subclaim \ref{9sc12} at our disposal, we are ready complete the proof for the case ``$\ell=p+1$''. We have already pointed out that
the map $\Gamma_U\ni \bx \mapsto Q_t^{\bx,U}\in\mathcal{Q}_U$ is a bijection. Therefore, using our inductive assumptions, we conclude
\begin{eqnarray} \label{9e56}
\frac{|\mathcal{G}_t^{p+1,p+1}|}{|\mathcal{Q}_t^{p+1,p+1}|} & = &
\sum_{U\in\mathcal{P}_t} \frac{|\mathcal{G}_U|}{|\mathcal{Q}_U|} \cdot \frac{|\mathcal{Q}_U|}{|\mathcal{Q}_t^{p+1,p+1}|} =
\sum_{U\in\mathcal{P}_t} \frac{|\mathcal{B}_U|}{|\mathcal{Q}_U|} \cdot \frac{|\mathcal{Q}_U|}{|\mathcal{Q}_t^{p+1,p+1}|} \\
& = & \sum_{U\in\mathcal{P}_t} \frac{|G_{i_0,U}|}{|\Gamma_U|} \cdot
\frac{|\mathcal{Q}_U|}{|\mathcal{Q}_t^{p+1,p+1}|} \nonumber \\
& < & \frac{\gamma^3}{256} \Big( \sum_{U\in\mathcal{P}_t} \frac{|\mathcal{Q}_U|}{|\mathcal{Q}_t^{p+1,p+1}|} \Big) =
\frac{\gamma^3}{256}. \nonumber
\end{eqnarray}
The proof of Claim \ref{9c10} is completed.
\end{proof}
\begin{claim} \label{9c13}
For every $\ell\in[p+1]$ and every $t\in [k+1]^{<|L_{p+1}|}$ we have that $\dens_{X_{\bl_{p+1}}}(\cup\mathcal{Q}_t^{\ell,p+1})\meg\alpha$.
That is, condition \emph{(C6)} is satisfied.
\end{claim}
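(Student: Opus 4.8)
The plan is to treat the two cases $\ell\in[p]$ and $\ell=p+1$ separately, in each case pulling the set $\cup\mathcal{Q}_t^{\ell,p+1}$ back through the quotient map $\qv_{p+1}$ and then applying Lemma~\ref{6l6} to reduce the estimate to an \emph{unweighted} average, which is controlled either by the inductive hypothesis or by the choice of $V_{p+1}$.

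First I would handle the case $\ell\in[p]$ (which is vacuous if $p=0$). Combining \eqref{9e41} with the definition \eqref{9e42} of $\mathcal{Q}_t^{\ell,p+1}$ one sees that
\[
\{t\}\times\cup\mathcal{Q}_t^{\ell,p+1}=\qv_{p+1}^{-1}\Big(\bigcup_{s\in\Omega_t}\{s\}\times\cup\mathcal{Q}_s^{\ell,p}\Big),
\]
and the sets $\{s\}\times\cup\mathcal{Q}_s^{\ell,p}$ with $s\in\Omega_t$ are pairwise disjoint subsets of $\Omega_t\times X_{\bl_p}$. Hence, invoking Lemma~\ref{6l6} and a direct count,
\[
\dens_{X_{\bl_{p+1}}}\big(\cup\mathcal{Q}_t^{\ell,p+1}\big)=\dens_{\Omega_t\times X_{\bl_p}}\Big(\bigcup_{s\in\Omega_t}\{s\}\times\cup\mathcal{Q}_s^{\ell,p}\Big)=\ave_{s\in\Omega_t}\dens_{X_{\bl_p}}\big(\cup\mathcal{Q}_s^{\ell,p}\big).
\]
Since the pair $(\bl_{p+1},\bv_{p+1})$ is $(k+1)$-compatible we have $\Omega_t\subseteq V_{p+1}\subseteq[k+1]^{<|L_p|}$, so condition (C6) at stage $p$ applies to each $s\in\Omega_t$ and gives $\dens_{X_{\bl_p}}(\cup\mathcal{Q}_s^{\ell,p})\meg\alpha$; therefore the average is $\meg\alpha$, as wanted.

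For the case $\ell=p+1$ I would argue in the same way, using \eqref{9e38} and \eqref{9e39} in place of \eqref{9e41} and \eqref{9e42} to obtain
\[
\{t\}\times\cup\mathcal{Q}_t^{p+1,p+1}=\qv_{p+1}^{-1}\Big(\bigcup_{U\in\mathcal{P}_t}U(i_0)\times\Gamma_U\Big).
\]
By Fact~\ref{9f5} the sets $U(i_0)$ with $U\in\mathcal{P}_t$ are pairwise disjoint, partition $\Omega_t$, and all have cardinality $(k+1)^{i_0}$; combining this with Fact~\ref{5f2} (which controls $|\Omega_t|$) and Lemma~\ref{6l6}, a short count gives
\[
\dens_{X_{\bl_{p+1}}}\big(\cup\mathcal{Q}_t^{p+1,p+1}\big)=\dens_{\Omega_t\times X_{\bl_p}}\Big(\bigcup_{U\in\mathcal{P}_t}U(i_0)\times\Gamma_U\Big)=\ave_{U\in\mathcal{P}_t}\dens_{X_{\bl_p}}(\Gamma_U).
\]
Finally, every $U\in\mathcal{P}_t$ is a $\bar m$-dimensional Carlson--Simpson subtree of $V_{p+1}$, and $V_{p+1}=T$ is a Carlson--Simpson subtree of the tree $S$ produced via Corollary~\ref{7c11} (see \eqref{9e31}); by the defining property of $S$ we have $\dens_{X_{\bl_p}}(\Gamma_U)\meg\alpha$ for each such $U$, so again the average is $\meg\alpha$.

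The argument is essentially bookkeeping and I do not expect a genuine obstacle here. The one point requiring attention is to keep careful track of the ambient product space in which each density is measured when passing through $\qv_{p+1}$, and to verify that the equipartition statements Fact~\ref{5f2} and Fact~\ref{9f5} are precisely what is needed to turn the pulled-back densities into unweighted averages over $\Omega_t$ and over $\mathcal{P}_t$ (rather than weighted ones).
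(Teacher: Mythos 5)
Your proposal is correct and follows essentially the same route as the paper: pull $\cup\mathcal{Q}_t^{\ell,p+1}$ back through $\qv_{p+1}$ using \eqref{9e58} (resp.\ \eqref{9e61}), apply Lemma \ref{6l6} to transfer the density, and bound the resulting average over $\Omega_t$ (resp.\ over $\mathcal{P}_t$, via the equipartition in Fact \ref{9f5}) by the inductive hypothesis (C6) at stage $p$ in the first case and by the defining property of $S$ from Corollary \ref{7c11} in the second. The only difference is cosmetic: you make the unweighted averages explicit where the paper states the density bounds \eqref{9e57} and \eqref{9e60} directly.
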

\begin{proof}
Let $t\in [k+1]^{<|L_{p+1}|}$ be arbitrary. Assume, first, that $\ell\in[p]$. By the inductive assumptions, we have
\begin{equation}\label{9e57}
\dens_{\Omega_t\times X_{\bl_p}}\Big(\bigcup_{s\in\Omega_t}\{s\}\times\cup\mathcal{Q}_s^{\ell,p}\Big)\meg\alpha.
\end{equation}
On the other hand,
\begin{eqnarray} \label{9e58}
\qv^{-1}_{p+1}\Big(\bigcup_{s\in\Omega_t}\{s\}\times\cup\mathcal{Q}_s^{\ell,p}\Big)
& =& \qv^{-1}_{p+1}\Big(\bigcup_{s\in\Omega_t}\bigcup_{Q\in \mathcal{Q}_s^{\ell,p}} \{s\}\times Q\Big) \\
& =& \bigcup_{s\in\Omega_t}\bigcup_{Q\in \mathcal{Q}_s^{\ell,p}} \qv^{-1}_{p+1}\big(\{s\}\times Q\big) \nonumber \\
& \stackrel{(\ref{9e41})}{=}& \bigcup_{s\in\Omega_t}\bigcup_{Q\in \mathcal{Q}_s^{\ell,p}} \{t\}\times C_t^{s,\ell,Q} \nonumber \\
& \stackrel{(\ref{9e42})}{=} & \{t\}\times \cup\mathcal{Q}_t^{\ell,p+1}.\nonumber
\end{eqnarray}
By (\ref{9e57}), (\ref{9e58}) and Lemma \ref{6l6}, we conclude that
\begin{equation}\label{9e59}
\dens_{X_{\bl_{p+1}}}(\cup\mathcal{Q}_t^{\ell,p+1})= \dens_{\{t\}\times X_{\bl_{p+1}}}\big(\{t\}\times \cup\mathcal{Q}_t^{\ell,p+1}\big)\meg\alpha.
\end{equation}

Next assume that $\ell=p+1$. By Fact \ref{9f5}, the family $\{U(i_0):U\in\mathcal{P}_t\}$ forms a partition of the set $\Omega_t$. Moreover,
as we have already pointed out immediately after \eqref{9e25}, we have $\dens_{X_{\bl_{p}}}(\Gamma_U)\meg\alpha$. Hence,
\begin{equation} \label{9e60}
\dens_{\Omega_t\times X_{\bl_{p}}}\Big(\bigcup_{U\in\mathcal{P}_t}U(i_0)\times\Gamma_U\Big)\meg\alpha.
\end{equation}
Notice that
\begin{eqnarray} \label{9e61}
\qv^{-1}_{p+1} \Big(\bigcup_{U\in\mathcal{P}_t}U(i_0)\times\Gamma_U\Big)
& = & \qv^{-1}_{p+1} \Big(\bigcup_{U\in\mathcal{P}_t}\bigcup_{\bx\in\Gamma_U}U(i_0)\times\{\bx\}\Big) \\
& = & \bigcup_{U\in\mathcal{P}_t}\bigcup_{\bx\in\Gamma_U}\qv^{-1}_{p+1} \big(U(i_0)\times\{\bx\}\big) \nonumber \\
& \stackrel{(\ref{9e38})}{=} & \bigcup_{U\in\mathcal{P}_t}\bigcup_{\bx\in\Gamma_U} \{t\}\times Q^{\bx,U}_t \nonumber\\
& \stackrel{(\ref{9e39})}{=} & \{t\}\times\cup\mathcal{Q}_t^{p+1,p+1}. \nonumber
\end{eqnarray}
Combining \eqref{9e60}, \eqref{9e61} and applying Lemma \ref{6l6} we obtain
\begin{equation} \label{9e62}
\dens_{X_{\bl_{p+1}}}(\cup\mathcal{Q}_t^{p+1,p+1}) =
\dens_{\{t\}\times X_{\bl_{p+1}}}\big( \{t\}\times \cup\mathcal{Q}_t^{p+1,p+1}\big)\meg\alpha
\end{equation}
and the proof is completed.
\end{proof}
\begin{claim} \label{9c14}
For every $\ell\in [p+1]$ and every $t,t'\in [k+1]^{<|L_{p+1}|}$ if $t$ and $t'$ are $(r,k+1)$-equivalent, then
$\mathcal{Q}_t^{\ell,p+1}= \mathcal{Q}_{t'}^{\ell,p+1}$. That is, condition \emph{(C7)} is satisfied.
\end{claim}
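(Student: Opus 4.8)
The plan is to fix, once and for all, a bijection between $\Omega_t$ and $\Omega_{t'}$ supplied by the coherence properties of the convolution operation, and then check that it matches the families $\mathcal{Q}_t^{\ell,p+1}$ and $\mathcal{Q}_{t'}^{\ell,p+1}$ cell by cell. Since $t$ and $t'$ are $(r,k+1)$-equivalent they have a common length $j$; if $j=0$ then $t=t'=\varnothing$ and there is nothing to prove, so assume $j\meg1$. First I would apply Fact~\ref{5f4} to the convolution operation $\cv_{L_{p+1},V_{p+1}}$ to obtain a bijection $g=g_{t,t'}\colon\Omega_t\to\Omega_{t'}$ with $Y^t_s=Y^{t'}_{g(s)}$ for every $s\in\Omega_t$ and with $s$ and $g(s)$ being $(r,k+1)$-equivalent, where $\Omega_t,\Omega_{t'}$ are as in \eqref{5e6} (equivalently \eqref{9e32}) and $Y^t_s$ is as in \eqref{5e8}.

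For $\ell\in[p]$ the verification is then immediate: given $s\in\Omega_t$, the words $s$ and $g(s)$ lie in $V_{p+1}\subseteq[k+1]^{<|L_p|}$ and are $(r,k+1)$-equivalent, so the inductive assumption in (C7) gives $\mathcal{Q}_s^{\ell,p}=\mathcal{Q}_{g(s)}^{\ell,p}$; and then for any $Q$ in this common family formula \eqref{9e40} together with $Y^t_s=Y^{t'}_{g(s)}$ gives $C_t^{s,\ell,Q}=Q\times Y^t_s=Q\times Y^{t'}_{g(s)}=C_{t'}^{g(s),\ell,Q}$. Letting $s$ range over $\Omega_t$ and $Q$ over $\mathcal{Q}_s^{\ell,p}$, and using that $g$ is a bijection onto $\Omega_{t'}$, formula \eqref{9e42} yields $\mathcal{Q}_t^{\ell,p+1}=\mathcal{Q}_{t'}^{\ell,p+1}$.

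For $\ell=p+1$ I would first analyse how $g$ interacts with $\mathcal{P}_t$. Pulling back by $\mathrm{I}:=\mathrm{I}_{V_{p+1}}$, Fact~\ref{5f2} identifies $\mathrm{I}^{-1}(\Omega_t)$ with the set of words $w$ of the relevant length satisfying $w|_{L_j}=\mathrm{I}_{L_j}(t)$, where $L_j=\{l\in L_{p+1}:l<l_j\}$ and $l_j$ is the $j$-th element of $L_{p+1}$; and, by the definition of $g$, it acts on $\mathrm{I}^{-1}(\Omega_t)$ by replacing the coordinates in $L_j$ (which carry the values $\mathrm{I}_{L_j}(t)$) by $\mathrm{I}_{L_j}(t')$ and fixing all remaining coordinates. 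On the other hand, by \eqref{9e31}, \eqref{9e33}, \eqref{9e34} and Fact~\ref{9f5}, the sets $\{U(i_0):U\in\mathcal{P}_t\}$ partition $\Omega_t$ and, pulled back by $\mathrm{I}$, are exactly the classes obtained by fixing the first $j(i_0+1)$ coordinates of an element of $\mathrm{I}^{-1}(\Omega_t)$; since $L_j\subseteq\{0,\dots,j(i_0+1)-1\}$, the map $g$ sends each cell $U(i_0)$ (with $U=\mathrm{I}(C_s)$, $s\in K_t$) onto a cell $U'(i_0)$ (with $U'=\mathrm{I}(C_{s'})$, $s'\in K_{t'}$) in which $s'\in[k+1]^{j(i_0+1)}$ is obtained from $s$ by changing only the $L_j$-coordinates; in particular $s$ and $s'$ are $(r,k+1)$-equivalent. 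Since concatenation and the canonical isomorphisms of Carlson--Simpson trees preserve $(r,k+1)$-equivalence of words, $\mathrm{I}(s^{\con}u)$ and $\mathrm{I}(s'{}^{\con}u)$ are $(r,k+1)$-equivalent for every $u\in[k+1]^{<\bar{m}+1}$, so the natural bijection $U\to U'$ sends each node of $U$ to an $(r,k+1)$-equivalent node of $U'$; by Fact~\ref{9f3} these matched nodes have the same $\Gamma$, whence $\Gamma_U=\Gamma_{U'}$ by \eqref{9e25}. Moreover, since $g$ permutes the two cell decompositions, $U\mapsto U'$ is a bijection $\mathcal{P}_t\to\mathcal{P}_{t'}$. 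Finally, for $\bx\in\Gamma_U=\Gamma_{U'}$, formula \eqref{9e37} combined with $Y^t_\sigma=Y^{t'}_{g(\sigma)}$ and $g(U(i_0))=U'(i_0)$ gives
\[
Q_t^{\bx,U}=\{\bx\}\times\bigcup_{\sigma\in U(i_0)}Y^t_\sigma=\{\bx\}\times\bigcup_{\sigma'\in U'(i_0)}Y^{t'}_{\sigma'}=Q_{t'}^{\bx,U'},
\]
and running over all pairs $(U,\bx)$ with $U\in\mathcal{P}_t$ and $\bx\in\Gamma_U$ shows $\mathcal{Q}_t^{p+1,p+1}=\mathcal{Q}_{t'}^{p+1,p+1}$, completing the verification of (C7).

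The hard part will be the middle portion of the last paragraph: pinning down precisely that $g$ permutes the cell decompositions induced by $\mathcal{P}_t$ and $\mathcal{P}_{t'}$, and that corresponding trees $U$ and $U'$ are nodewise $(r,k+1)$-equivalent so that $\Gamma_U=\Gamma_{U'}$. This amounts to laying the coordinate description of $\Omega_t$ from \S5, the description of the cells $U(i_0)$ from \eqref{9e33}--\eqref{9e34}, and the action of $g$ side by side; once that is done it is routine bookkeeping, and everything else is just unwinding the definitions \eqref{9e37}, \eqref{9e40} and \eqref{9e42}.
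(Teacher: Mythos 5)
Your proposal is correct and follows essentially the same route as the paper: the bijection $g_{t,t'}$ from Fact \ref{5f4}, the inductive use of (C7) for $\ell\in[p]$ via \eqref{9e40}--\eqref{9e42}, and for $\ell=p+1$ a bijection $\mathcal{P}_t\to\mathcal{P}_{t'}$ matching nodewise $(r,k+1)$-equivalent trees with equal $\Gamma$'s together with the identity $g(U(i_0))=U'(i_0)$. The only (harmless) difference is that you obtain the cell correspondence directly from the coordinate description of $g$'s action, whereas the paper constructs the maps $h$ and $f$ explicitly and argues via lexicographical intervals.
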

\begin{proof}
We fix $t,t'\in [k+1]^{<|L_{p+1}|}$ which are $(r,k+1)$-equivalent. For every $s\in\Omega_t$ let
$Y^t_s=\{x\in X_{L_{p+1}}:\cv_{L_{p+1},V_{p+1}}(t,x)=s\}$. Respectively, for every $s'\in\Omega_{t'}$ let
$Y^{t'}_{s'}=\{x\in X_{L_{p+1}}:\cv_{L_{p+1},V_{p+1}}(t',x)=s'\}$. Let $g_{t,t'}:\Omega_t\to\Omega_{t'}$ be the bijection
obtained by Fact \ref{5f4} and recall that for every $s\in\Omega_t$ we have
\begin{equation} \label{9e63}
Y_s^t=Y^{t'}_{g_{t,t'}(s)}.
\end{equation}
Since $t$ and $t'$ are $(r,k+1)$-equivalent, by Fact \ref{5f4} again, we see that $s$ and $g_{t,t'}(s)$ are also $(r,k+1)$-equivalent
for every $s\in\Omega_t$.

After this preliminary discussion, we are ready for the main argument. First assume that $\ell\in[p]$. For every $s\in\Omega_t$
and every $s'\in\Omega_{t'}$ let
\begin{equation} \label{9e64}
\mathcal{Q}^t_s=\{C_t^{s,\ell,Q}:Q\in\mathcal{Q}_s^{\ell,p}\} \text{ and }
\mathcal{Q}^{t'}_{s'}=\{C_{t'}^{s',\ell,Q'}:Q'\in\mathcal{Q}_{s'}^{\ell,p}\}.
\end{equation}
By (\ref{9e42}), the families $\{\mathcal{Q}^t_s:s\in\Omega_t\}$ and $\{\mathcal{Q}^{t'}_{s'}:s'\in\Omega_{t'}\}$ form partitions
of $\mathcal{Q}_t^{\ell,p+1}$ and $\mathcal{Q}_{t'}^{\ell,p+1}$ respectively. By (\ref{9e40}), for every $s\in\Omega_{t}$
and every $Q\in\mathcal{Q}_s^{\ell,p}$ we have
\begin{equation}\label{9e65}
C_t^{s,\ell,Q} = Q\times Y^t_s.
\end{equation}
Of course, we have the same equality for $t'$, that is, for every $s'\in\Omega_{t'}$ and every $Q'\in\mathcal{Q}_{s'}^{\ell,p}$ it holds that
\begin{equation}\label{9e66}
C_{t'}^{s',\ell,Q'} = Q'\times Y^{t'}_{s'}.
\end{equation}
Moreover, invoking our inductive assumptions, for every $s\in\Omega_t$ we have
\begin{equation}\label{9e67}
\mathcal{Q}^{\ell,p}_s=\mathcal{Q}^{\ell,p}_{g_{t,t'}(s)}.
\end{equation}
Hence,
\begin{eqnarray} \label{9e68}
\mathcal{Q}_s^t & = &  \{C_t^{s,\ell,Q}:Q\in\mathcal{Q}_s^{\ell,p}\} \stackrel{(\ref{9e65})}{=} \{Q\times Y^t_s:Q\in\mathcal{Q}_s^{\ell,p}\} \\
& \stackrel{(\ref{9e63})}{=} & \{Q\times Y^{t'}_{g_{t,t'}(s)}:Q\in\mathcal{Q}_s^{\ell,p}\} \nonumber \\
& \stackrel{(\ref{9e67})}{=} & \{Q\times Y^{t'}_{g_{t,t'}(s)}:Q\in\mathcal{Q}_{g_{t,t'}(s)}^{\ell,p}\}
\stackrel{(\ref{9e66})}{=}\mathcal{Q}^{t'}_{g_{t,t'}(s)}. \nonumber
\end{eqnarray}
Since $g_{t,t'}$ is a bijection we conclude that $\mathcal{Q}^{\ell,p+1}_t=\mathcal{Q}^{\ell,p+1}_{t'}$.

Before we proceed we need, first, to introduce some terminology. Specifically, let $U$ and $U'$ be two Carlson--Simpson trees of $[k+1]^{<\nn}$
of the same dimension and consider the canonical isomorphism $\mathrm{I}_{U,U'}$ associated to the pair $U, U'$ described in \S 2.5. We say that
$U$ and $U'$ are \textit{$(r,k+1)$-equivalent} if for every $s\in U$ we have that $s$ and $\mathrm{I}_{U,U'}(s)$ are $(r,k+1)$-equivalent.

Now assume that $\ell=p+1$ and let $K_t$ and $K_{t'}$ be as in (\ref{9e33}). Our first goal is to define a map $h:K_t\to K_{t'}$ with the
following properties.
\begin{enumerate}
\item[(a)] The map $h$ is a bijection.
\item[(b)] The map $h$ preserves the lexicographical order.
\item[(c)] For every $s\in K_t$ we have that $s$ and $h(s)$ are $(r,k+1)$-equivalent.
\end{enumerate}
If $t=\varnothing$, then $h$ is the identity. Assume that $|t|=|t'|=j$ for some $j\meg1$ and recall that $t_*$ and $t'_*$ are the initial segments
of $t$ and $t'$ respectively of length $j-1$. Let $g_{t_*, t'_*}:\Omega_{t_*}\to\Omega_{t'_*}$ be the map obtained by Fact \ref{5f4}. We define
\begin{equation} \label{9e69}
h\big(\mathrm{I}^{-1}(w)^\con t(j-1)\big)=\mathrm{I}^{-1}(g_{t_*,t'_*}(w))^\con t'(j-1)
\end{equation}
for every $w\in\Omega_{t_*}$. With this choice the aforementioned properties of $h$ follow readily from the properties of $g_{t_*,t'_*}$.

The map $h$ induces a function $f:\mathcal{P}_t\to\mathcal{P}_{t'}$ defined by the rule
\begin{equation} \label{9e70}
f\big(\mathrm{I}(C_s)\big)=\mathrm{I}(C_{h(s)}).
\end{equation}
We isolate, for future use, the following properties of $f$. Their verification is straightforward.
\begin{enumerate}
\item[(d)] The function $f$ is a bijection.
\item[(e)] For every $U,U'\in\mathcal{P}_t$ if $U(0)\lex U'(0)$, then $f(U)(0)\lex f(U')(0)$.
\item[(f)] For every $U\in\mathcal{P}_t$ we have that $U$ and $f(U)$ are $(r,k+1)$-equivalent.
\end{enumerate}
Also observe that for every $U\in\mathcal{P}_t$ we have
\begin{equation}\label{9e71}
\Gamma_U=\Gamma_{f(U)}.
\end{equation}
This follows by Fact \ref{9f3} and property (f) above. More important, however, is the relation of the function $f$ with the map
$g_{t,t'}$. Specifically, for every $U\in\mathcal{P}_t$ we have
\begin{equation}\label{9e72}
\{g_{t,t'}(s): s\in U(i_0)\}=f(U)(i_0).
\end{equation}
To see this notice, first, that for every $s\in K_t$ the set $C_s(i_0)$ is an interval, in the lexicographical order, of $[k+1]^{l}$ for
some $l\in\{0,...,\dim(V_{p+1})\}$ depending only on the length of $t$ (precisely, $l=i_0+(i_0+1)|t|$). Hence, by (\ref{9e34}), for every
$U\in\mathcal{P}_t$ the set $U(i_0)$ is an interval of $V_{p+1}(l)$ for the same $l$. Therefore, equality (\ref{9e71}) follows by
Fact \ref{9f5} and property (e) isolated above.

For every $U\in\mathcal{P}_{t}$ and every $U'\in\mathcal{P}_{t'}$ we set
\begin{equation} \label{9e73}
\mathcal{Q}^t_U=\{Q_t^{\bx,U}:\bx\in \Gamma_U\} \text{ and } \mathcal{Q}^{t'}_{U'}=\{Q_{t'}^{\bx',U'}:\bx'\in \Gamma_{U'}\}.
\end{equation}
By (\ref{9e39}), the families $\{\mathcal{Q}^t_U:U\in\mathcal{P}_t\}$ and $\{\mathcal{Q}^{t'}_{U'}:U'\in\mathcal{P}_{t'}\}$
form partitions of $\mathcal{Q}_t^{p+1,p+1}$ and $\mathcal{Q}_{t'}^{p+1,p+1}$ respectively. By (\ref{9e37}), for every $U\in\mathcal{P}_t$
and $U'\in\mathcal{P}_{t'}$ and every $\bx\in\Gamma_U$ and $\bx'\in\Gamma_{U'}$ we have
\begin{equation} \label{9e74}
Q_t^{\bx,U}=\{\bx\}\times \bigcup_{s\in U(i_0)}Y^t_s \text{ and } Q_{t'}^{\bx',U'}=\{\bx'\}\times \bigcup_{s'\in U'(i_0)}Y^{t'}_{s'}.
\end{equation}
Thus, for every $U\in\mathcal{P}_t$,
\begin{eqnarray} \label{9e75}
\mathcal{Q}^{t}_{U} & \stackrel{(\ref{9e73})}{=} & \{Q_t^{\bx,U}:\bx\in \Gamma_U\}
\stackrel{(\ref{9e74})}{=} \Big\{\{\bx\}\times \bigcup_{s\in U(i_0)}Y^t_s:\bx\in \Gamma_U\Big\} \\
& \stackrel{(\ref{9e71})}{=} & \Big\{\{\bx'\}\times \bigcup_{s\in U(i_0)}Y^t_s:\bx'\in \Gamma_{f(U)}\Big\} \nonumber \\
& \stackrel{(\ref{9e63})}{=} & \Big\{\{\bx'\}\times \bigcup_{s\in U(i_0)}Y^{t'}_{g_{t,t'}(s)}:\bx'\in \Gamma_{f(U)}\Big\} \nonumber \\
& \stackrel{(\ref{9e72})}{=} & \Big\{\{\bx'\}\times \bigcup_{s'\in f(U)(i_0)}Y^{t'}_{s'}:\bx'\in \Gamma_{f(U)}\Big\}
=\mathcal{Q}^{t'}_{f(U)}. \nonumber
\end{eqnarray}
Since $f$ is a bijection we conclude that $\mathcal{Q}_t^{p+1,p+1}=\mathcal{Q}_{t'}^{p+1,p+1}$. The proof of Claim \ref{9c14} is thus completed.
\end{proof}
By Claims \ref{9c7} up to \ref{9c14}, the pair $(V_{p+1},L_{p+1})$ and the families $\mathcal{Q}_t^{\ell,p+1}$ constructed in Steps 1, 2 and 3,
satisfy all required conditions. This completes the recursive selection, and as we have already indicated, the proof of Lemma \ref{9l1}
is also completed.
\medskip

\noindent 9.4. \textbf{Consequences.} In this subsection we will isolate what we get by iterating Lemma \ref{9l1}. The resulting statement
together with Corollary \ref{8c6} form the basis of the proof of Theorem B. We proceed to the details.

Let $k\in\nn$ with $k\meg 2$ and assume that for every integer $l\meg 1$ and every $0<\beta\mik 1$ the number $\dcs(k,l,\beta)$ has been
defined. We define $H:\nn\times (0,1]\to \nn$ by $H(0,\gamma)=0$ and
\begin{equation} \label{9e76}
H(m,\gamma) = \reg(k+1,N_{p_0}+1,2,\gamma^2/2)
\end{equation}
if $m\meg 1$, where $p_0$ and $N_{p_0}$ are defined in \eqref{9e6} and \eqref{9e7} respectively for the parameters $m$ and $\gamma$.
Next for every $n\in\{0,...,k\}$ we define $H^{(n)}:\nn\times (0,1]\to \nn$ recursively by the rule $H^{(0)}(m,\gamma)=m$ and
\begin{equation} \label{9e77}
H^{(n+1)}(m,\gamma) = H\big( H^{(n)}(m,\gamma),\gamma\big).
\end{equation}
Finally, for every $0<\gamma\mik 1$ let
\begin{equation} \label{9e78}
\xi=\xi(\gamma)=\frac{\gamma^{3^k}}{\big(2^{1/2}\cdot 32\big)^{3^k-1}}.
\end{equation}
We have the following corollary. It is an immediate consequence of Lemma \ref{9l1}.
\begin{cor} \label{9c15}
Let $k\in\nn$ with $k\meg 2$ and assume that for every integer $l\meg 1$ and every $0<\beta\mik 1$ the number $\dcs(k,l,\beta)$ has been defined.

Let $0<\gamma,\delta\mik 1$. Also let $V$ be a Carlson--Simpson tree of $[k+1]^{<\nn}$ and $I$ be a nonempty subset of $\{0,...,\dim(V)\}$.
Assume that we are given subsets  $A,D_1,...,D_k$ of $[k+1]^{<\nn}$ with the following properties.
\begin{enumerate}
\item[(a)] For every $r\in [k]$ the set $D_r$ is $(r,k+1)$-insensitive in $V$.
\item[(b)] We have $\dens_{V(i)}(D_1\cap... \cap D_k\cap A)\meg(\delta+\gamma)\dens_{V(i)}(D_1\cap ...\cap D_k)$
and $\dens_{V(i)}(D_1\cap ...\cap D_k)\meg \gamma$ for every $i\in I$.
\end{enumerate}
Finally, let $m\in\nn$ with $m\meg 1$ and suppose that
\begin{equation} \label{9e79}
|I|\meg H^{(k)}(m,\xi)
\end{equation}
where $H^{(k)}$ and $\xi$ are defined in \eqref{9e77} and \eqref{9e78} respectively for the parameters $m$ and $\gamma$.
Then there exist a Carlson--Simpson subtree $W$ of $V$ and a subset $I'$ of $\{0,...,\dim(W)\}$ of cardinality $m$ such that
\begin{equation} \label{9e80}
\dens_{W(i)}(A) \meg \delta+\xi
\end{equation}
for every $i\in I'$.
\end{cor}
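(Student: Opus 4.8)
The plan is to derive Corollary \ref{9c15} from Lemma \ref{9l1} by peeling off the insensitive sets $D_1,\dots,D_k$ one at a time, through $k$ successive applications of that lemma. First I would fix the auxiliary parameters. Put $\gamma_1=\gamma/2$ and $\gamma_{r+1}=\gamma_r^3/512$ for $r\in\{1,\dots,k-1\}$; an immediate induction gives the closed form $\gamma_r=\gamma^{3^{r-1}}/2^{(11\cdot 3^{r-1}-9)/2}$, so that $(\gamma_r)_{r=1}^k$ is strictly decreasing, each $\gamma_r$ lies in $(0,1)$, and a comparison of exponents with the value of $\xi$ in \eqref{9e78} shows that $\xi\mik\gamma_k/2\mik\gamma_r$ for every $r\in[k]$. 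Next set $m_k=m$ and, for $r=k,k-1,\dots,1$, put $m_{r-1}=H(m_r,\gamma_r)$, where $H$ is as in \eqref{9e76}. Since $\reg$, $\Lambda$ and $\mathrm{CS}$ --- and hence $H$ --- are monotone, being non-decreasing in their dimension and cardinality arguments and non-increasing in their density and error arguments, the inequalities $\gamma_r\meg\xi$ give, by telescoping along the definition \eqref{9e77}, that $m_0=H(m_1,\gamma_1)\mik H^{(k)}(m,\xi)$. Thus \eqref{9e79} lets us fix a subset $I_0$ of $I$ with $|I_0|=m_0$.

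Now I would run the recursion. Put $V_0=V$. For $r=1,\dots,k$, assume inductively that we have a Carlson--Simpson subtree $V_{r-1}$ of $V$ and a subset $I_{r-1}$ of $\{0,\dots,\dim(V_{r-1})\}$ with $|I_{r-1}|\meg H(m_r,\gamma_r)$ such that, for every $i\in I_{r-1}$,
\[ \dens_{V_{r-1}(i)}(D_r\cap\cdots\cap D_k\cap A)\meg(\delta+2\gamma_r)\,\dens_{V_{r-1}(i)}(D_r\cap\cdots\cap D_k) \]
and $\dens_{V_{r-1}(i)}(D_r\cap\cdots\cap D_k)\meg 2\gamma_r$. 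For $r=1$ this is precisely hypothesis (b) of the corollary, since $2\gamma_1=\gamma$. Moreover $D_r$ is $(r,k+1)$-insensitive in $V_{r-1}$: it is $(r,k+1)$-insensitive in $V$ by hypothesis (a), and insensitivity in a Carlson--Simpson tree is inherited by its Carlson--Simpson subtrees, a routine consequence of the definitions in \S 2.5 and \S 2.6 since letter substitution commutes with passage to a subtree through the compatible canonical isomorphisms. Hence Lemma \ref{9l1} applies to $V_{r-1}$, $I_{r-1}$, the sets $A,D_r,\dots,D_k$ and the parameter $\gamma_r$, with target cardinality $m_r$, and produces a Carlson--Simpson subtree $V_r$ of $V_{r-1}$ (hence of $V$) together with $I_r\subseteq\{0,\dots,\dim(V_r)\}$ of cardinality $m_r$. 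If $r<k$, the lemma gives, for every $i\in I_r$,
\[ \dens_{V_r(i)}(D_{r+1}\cap\cdots\cap D_k\cap A)\meg(\delta+\gamma_r/2)\,\dens_{V_r(i)}(D_{r+1}\cap\cdots\cap D_k) \]
and $\dens_{V_r(i)}(D_{r+1}\cap\cdots\cap D_k)\meg\gamma_r^3/256$; since $2\gamma_{r+1}=\gamma_r^3/256\mik\gamma_r/2$ (using $\gamma_r\mik 1$) and $|I_r|=m_r=H(m_{r+1},\gamma_{r+1})$, the inductive hypothesis holds at stage $r+1$. If $r=k$, the lemma gives $\dens_{V_k(i)}(A)\meg\delta+\gamma_k/2$ for every $i\in I_k$.

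Setting $W=V_k$ and $I'=I_k$ then completes the proof: $W$ is a Carlson--Simpson subtree of $V$, $|I'|=m_k=m$, and $\dens_{W(i)}(A)\meg\delta+\gamma_k/2\meg\delta+\xi$ for every $i\in I'$. The argument is essentially bookkeeping; the part where a slip would matter is the numerology. Two points need care: that the choice $\gamma_{r+1}=\gamma_r^3/512$ simultaneously meets both thresholds appearing in the conclusion of Lemma \ref{9l1}, which reduces to $\gamma_r^3/256\mik\gamma_r/2$ and so holds because $\gamma_r\mik 1$; and that the single quantity $\xi$ of \eqref{9e78} is at once a common lower bound for $\gamma_1,\dots,\gamma_k$ --- so that the monotonicity of $H$ yields $m_0\mik H^{(k)}(m,\xi)$ --- and at most $\gamma_k/2$, so that the terminal density increment $\gamma_k/2$ attains the target $\delta+\xi$. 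Both are immediate from the closed form of $\gamma_r$ by comparing powers of $\gamma$ and of $2$, and the only structural input beyond Lemma \ref{9l1} is that insensitivity descends to Carlson--Simpson subtrees, which is straightforward from the definitions.
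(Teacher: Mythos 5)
Your proof is correct and is precisely the iteration of Lemma \ref{9l1} that the paper leaves implicit (it simply declares the corollary an immediate consequence of that lemma). Your bookkeeping --- in particular the recursion $\gamma_{r+1}=\gamma_r^3/512$ with closed form $\gamma_r=\gamma^{3^{r-1}}/2^{(11\cdot 3^{r-1}-9)/2}$, the verification that $\xi=\gamma_k^3/256\mik\gamma_k/2$ is simultaneously a common lower bound for all the $\gamma_r$ and at most the terminal increment, and the observation that $(r,k+1)$-insensitivity passes to Carlson--Simpson subtrees --- checks out.
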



\section{Proof of Theorem B}

\numberwithin{equation}{section}

In this section we will complete the proof of Theorem B following the inductive scheme outlined in \S 8.1. Notice first that the numbers
$\dcs(2,1,\delta)$ are estimated in Proposition \ref{7p1}. It is then easy to see that, by induction on $m$ and Corollary \ref{7c6},
we may also estimate the numbers $\dcs(2,m,\delta)$.

Now we argue for the general inductive step. So let $k\in\nn$ with $k\meg 2$ and assume that for every integer $l\meg 1$ and every
$0<\beta\mik 1$ the number $\dcs(k,l,\beta)$ has been defined. We fix $0<\delta\mik 1$. Let $\eta_1$ be as in \eqref{8e33}. Recall that
\begin{equation} \label{10e1}
\eta_1= \frac{\delta^2}{120k\cdot |\subtr_1\big([k]^{<\Lambda}\big)|}
\end{equation}
where $\Lambda=\lceil 8\delta^{-1}\dcs(k,1,\delta/8) \rceil$. We set
\begin{equation} \label{10e2}
\varrho=\xi(\eta_1^2/2)\stackrel{\eqref{9e78}}{=}\frac{(\eta_1^2/2)^{3^k}}{\big(2^{1/2}\cdot 32\big)^{3^k-1}}
\end{equation}
and we define $F_{\delta}:\nn\to\nn$ by the rule
\begin{equation} \label{10e3}
F_{\delta}(m)= G_1\Big( \big\lceil \eta_1^{-4}(k+1)k \cdot H^{(k)}(m,\varrho) \big\rceil, \eta_1^2/2 \Big)
\end{equation}
where $G_1$ and $H^{(k)}(m,\varrho)$ are as in \eqref{8e34} and \eqref{9e77} respectively. The following proposition
is the heart of the density increment strategy. It follows immediately by Corollary \ref{8c6} and Corollary \ref{9c15}.
\begin{prop} \label{10p1}
Let $k\in\nn$ with $k\meg 2$ and assume that for every integer $l\meg 1$ and every $0<\beta\mik 1$ the number $\dcs(k,l,\beta)$ has been defined.

Let $0<\delta\mik 1$ and $L$ be a nonempty finite subset of $\nn$. Also let $A\subseteq [k+1]^{<\nn}$ such that 
$|A\cap [k+1]^l|\meg \delta (k+1)^l$ for every $l\in L$ and assume that $A$ contains no Carlson--Simpson line of $[k+1]^{<\nn}$.
Finally, let $m\in\nn$ with $m\meg 1$ and suppose that $|L|\meg F_{\delta}(m)$ where $F_{\delta}$ is as in \eqref{10e3}. Then there exist
a Carlson--Simpson tree $W$ of $[k+1]^{<\nn}$ and a subset $I$ of $\{0,...,\dim(W)\}$ of cardinality $m$ such that
$\dens_{W(i)}(A) \meg \delta+\varrho$ for every $i\in I$ where $\varrho$ is as in \eqref{10e2}.
\end{prop}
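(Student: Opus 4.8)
The plan is to obtain Proposition \ref{10p1} by chaining together the two structural results that constitute the ``combinatorial core'' of the density increment, namely Corollary \ref{8c6} and Corollary \ref{9c15}. The former takes a dense set $A\subseteq [k+1]^{<\nn}$ containing no Carlson--Simpson line and produces a Carlson--Simpson tree $V$ together with a ``simple'' set $D$ — an intersection of $k$ insensitive sets — that over-correlates with $A$ on many levels of $V$; the latter converts precisely this kind of over-correlation into a genuine density increment of $A$ on a prescribed number of levels of a subtree of $V$. So no new idea is needed here; the task is to verify that the output format of the first corollary matches the input format of the second, and that the numerical parameters thread through correctly.

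Concretely, first I would fix $0<\delta\mik 1$, recall $\eta_1=\eta_1(k,\delta)$ from \eqref{8e33}, let $\varrho=\xi(\eta_1^2/2)$ be as in \eqref{10e2}, and set $n=H^{(k)}(m,\varrho)$. I would then observe that, by the definition of $F_\delta$ in \eqref{10e3}, the hypothesis $|L|\meg F_\delta(m)$ is exactly the size requirement $|N|\meg G_1\big(\lceil \eta_1^{-4}(k+1)kn\rceil,\eta_1^2/2\big)$ needed to invoke Corollary \ref{8c6} on the set $A$ and the finite set $L$. Applying Corollary \ref{8c6} yields a Carlson--Simpson subtree $V$ of $[k+1]^{<\nn}$, a set $D=\bigcap_{r=1}^k D_r$ with each $D_r$ being $(r,k+1)$-insensitive in $V$, and a subset $I$ of $\{0,\dots,\dim(V)\}$ with $|I|\meg n$, such that for every $i\in I$ one has $\dens_{V(i)}(A\cap D)\meg(\delta+\eta_1^2/2)\dens_{V(i)}(D)$ and $\dens_{V(i)}(D)\meg\eta_1^2/2$.

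Next I would feed this into Corollary \ref{9c15} with the parameter choice $\gamma=\eta_1^2/2$ (and the same integer $m$). Conditions (a) and (b) of Corollary \ref{9c15} are literally items (ii) and (iii) in the conclusion of Corollary \ref{8c6}, once one notes that $D_1\cap\cdots\cap D_k=D$. The cardinality requirement $|I|\meg H^{(k)}(m,\xi)$ in Corollary \ref{9c15} reads, for $\gamma=\eta_1^2/2$, as $|I|\meg H^{(k)}(m,\varrho)=n$ by \eqref{10e2}, which is guaranteed by the previous step. Corollary \ref{9c15} then produces a Carlson--Simpson subtree $W$ of $V$ — hence a Carlson--Simpson tree of $[k+1]^{<\nn}$ — and a subset $I'$ of $\{0,\dots,\dim(W)\}$ with $|I'|=m$ and $\dens_{W(i)}(A)\meg\delta+\xi=\delta+\varrho$ for every $i\in I'$, which is exactly the assertion of Proposition \ref{10p1}.

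The ``hard part'' has in fact already been done in §§8--9; what remains is purely bookkeeping. The two points requiring the most care are (i) checking that the composite function $F_\delta$ of \eqref{10e3} correctly interpolates between the ``$n$'' appearing in Corollary \ref{8c6} and the ``$H^{(k)}(m,\xi)$'' appearing in Corollary \ref{9c15}, and (ii) confirming that the data $(V,D_1,\dots,D_k,I)$ delivered by Corollary \ref{8c6} is in the precise shape demanded as input by Corollary \ref{9c15}, in particular with the correlation inequality holding for $\gamma=\eta_1^2/2$. Both are immediate from the definitions, so the proof is short; the only conceptual content is the observation — already flagged in the paper — that Proposition \ref{10p1} is nothing more than the concatenation of these two corollaries.
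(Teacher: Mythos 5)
Your proposal is correct and coincides with the paper's own argument: Proposition \ref{10p1} is obtained there exactly by applying Corollary \ref{8c6} with $n=H^{(k)}(m,\varrho)$ and then feeding its output into Corollary \ref{9c15} with $\gamma=\eta_1^2/2$, the definitions \eqref{10e2} and \eqref{10e3} being tailored precisely so that the parameters match. Nothing is missing.
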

Using Proposition \ref{10p1} the numbers $\dcs(k+1,1,\delta)$ can, of course, be estimated easily. In particular, we have the following corollary.
\begin{cor} \label{10c2}
Let $k\in\nn$ with $k\meg 2$ and assume that for every integer $l\meg 1$ and every $0<\beta\mik 1$ the number $\dcs(k,l,\beta)$ has been defined.
Then for every $0<\delta\mik 1$ we have
\begin{equation} \label{10e4}
\dcs(k+1,1,\delta)\mik F_\delta^{(\lceil \varrho^{-1}\rceil)}(1).
\end{equation}
\end{cor}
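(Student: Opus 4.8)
The plan is to obtain Corollary~\ref{10c2} from Proposition~\ref{10p1} by the usual density-increment iteration: each application of Proposition~\ref{10p1} raises the density of $A$ by at least $\varrho$ on sufficiently many levels of a fresh Carlson--Simpson tree, and since densities never exceed $1$ this can happen at most $\lceil\varrho^{-1}\rceil$ times. Fix $k\geq 2$ and assume $\dcs(k,l,\beta)$ is defined for all $l\geq 1$ and all $0<\beta\leq 1$; fix $0<\delta\leq 1$ and set $\varrho=\varrho(\delta)$ and $t=\lceil\varrho^{-1}\rceil$, with $F_\delta$ as in \eqref{10e3}. It suffices to prove that every $A\subseteq [k+1]^{<\nn}$ with $|A\cap [k+1]^l|\geq\delta(k+1)^l$ for all $l\in L$, where $L$ is finite with $|L|\geq F_\delta^{(t)}(1)$, contains a Carlson--Simpson line of $[k+1]^{<\nn}$.

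Call a pair $(A_j,I_j)$, consisting of a set $A_j\subseteq[k+1]^{<\nn}$ and a nonempty finite $I_j\subseteq\nn$, \emph{$j$-good} (for $j\in\{0,\dots,t\}$) if, writing $\delta_j=\delta+j\varrho$, one has (1)~$\dens_{[k+1]^i}(A_j)\geq\delta_j$ for every $i\in I_j$, (2)~$|I_j|\geq F_\delta^{(t-j)}(1)$, and (3)~$A_j$ contains no Carlson--Simpson line of $[k+1]^{<\nn}$. A $j$-good pair forces $\delta_j\leq 1$ (since $\dens_{[k+1]^i}(A_j)\le 1$ and $I_j\ne\varnothing$). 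I claim that if $(A_j,I_j)$ is $j$-good and $j<t$, then there is a $(j+1)$-good pair $(A_{j+1},I_{j+1})$. Granting the claim, suppose $A$ had no Carlson--Simpson line; then $(A,L)$ is $0$-good, so iterating the claim for $j=0,\dots,t-1$ yields a $t$-good pair $(A_t,I_t)$, whence $\dens_{[k+1]^i}(A_t)\geq\delta_t=\delta+t\varrho\geq\delta+\lceil\varrho^{-1}\rceil\varrho\geq\delta+1>1$ for $i\in I_t\neq\varnothing$ --- absurd. Hence $A$ contains a Carlson--Simpson line, i.e. $\dcs(k+1,1,\delta)\leq F_\delta^{(t)}(1)$.

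To prove the claim, set $m=F_\delta^{(t-j-1)}(1)$ and apply Proposition~\ref{10p1} to $A_j$ with parameter $\delta_j\in(0,1]$, level set $I_j$, and target dimension $m$; its hypothesis $|I_j|\geq F_{\delta_j}(m)$ holds because $|I_j|\geq F_\delta^{(t-j)}(1)=F_\delta(m)\geq F_{\delta_j}(m)$, using that $\delta\mapsto F_\delta(m)$ is non-increasing --- which reduces to $\dcs(k,\cdot,\beta)$ being non-increasing in $\beta$, hence $\eta_1(k,\cdot)$ non-decreasing, hence $\varrho(\cdot)$ non-decreasing and $F_\bullet$ non-increasing. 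We obtain a Carlson--Simpson tree $W$ of $[k+1]^{<\nn}$ and $I'\subseteq\{0,\dots,\dim(W)\}$ with $|I'|=m$ and $\dens_{W(i)}(A_j)\geq\delta_j+\varrho(\delta_j)\geq\delta_j+\varrho=\delta_{j+1}$ for $i\in I'$. Put $A_{j+1}=\mathrm{I}_W^{-1}(A_j\cap W)$ and $I_{j+1}=I'$, where $\mathrm{I}_W:[k+1]^{<\dim(W)+1}\to W$ is the canonical isomorphism; since $\mathrm{I}_W([k+1]^i)=W(i)$ and $W(i)\subseteq W$, we get $\dens_{[k+1]^i}(A_{j+1})=\dens_{W(i)}(A_j)\geq\delta_{j+1}$ for $i\in I_{j+1}$, so (1) and (2) hold. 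For (3): if $A_{j+1}$ contained a Carlson--Simpson line $S$, then $\mathrm{I}_W(S)$ would be a Carlson--Simpson line contained in $A_j\cap W\subseteq A_j$ (the canonical isomorphism carries $1$-dimensional Carlson--Simpson subtrees of $[k+1]^{<\dim(W)+1}$ to Carlson--Simpson lines of $W$), contradicting $j$-goodness. This proves the claim. The entire content lies in Proposition~\ref{10p1}; the only delicate points here are the monotonicity of the auxiliary numerical invariants in $\delta$ and the transfer of Carlson--Simpson lines along canonical isomorphisms, both routine, so I foresee no real obstacle beyond parameter bookkeeping.
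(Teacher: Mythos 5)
Your proof is correct and is exactly the standard density-increment iteration that the paper has in mind (the paper gives no details here, asserting only that the corollary follows ``easily'' from Proposition \ref{10p1}). The one point you rightly single out --- that $F_\delta(m)$ and $\varrho(\delta)$ must be monotone in $\delta$ so that a single $F_\delta$ and a single increment $\varrho$ can be used at every stage, and that Carlson--Simpson lines pull back along $\mathrm{I}_W^{-1}$ --- is indeed all the bookkeeping required; the monotonicity propagates from that of $\dcs$, $\reg$ and $\cs$ through the derived quantities $\eta_1$, $\varrho$, $H^{(k)}$ and $G_1$ in the expected directions.
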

Finally, just as in the case ``$k=2$'', the numbers $\dcs(k+1,m,\delta)$ can be estimated using Corollary \ref{10c2} and
Corollary \ref{7c6}. This completes the proof of the general inductive step, and so, the entire proof of Theorem B is completed.


\section{Consequences}

\numberwithin{equation}{section}

Our goal in this section is to prove several consequences of Theorem B. These include Theorem A and Theorem C stated in the introduction,
as well as, an appropriate finite version of Theorem C. To state this finite version we need, first, to introduce some terminology.

Recall that, given two sequences $(p_n)$ and $(w_n)$ of variable words over $k$, we say that $(w_n)$ is of \textit{pattern} $(p_n)$ if $p_n$
is an initial segment of $w_n$ for every $n\in\nn$. This notion can, of course, be extended to finite sequences of the same length.
Specifically, given two finite sequences $(p_n)_{n=0}^{m-1}$ and $(w_n)_{n=0}^{m-1}$ of variable words over $k$, we say that $(w_n)_{n=0}^{m-1}$
is of \textit{pattern} $(p_n)_{n=0}^{m-1}$ if $p_n$ is an initial segment of $w_n$ for every $n\in\{0,...,m-1\}$. In particular, if $p$ and $w$
are variable words over $k$, then $w$ is of pattern $p$ if $p$ is an initial segment of $w$. We have the following theorem.
\begin{thm} \label{11t1}
For every integer $k\meg 2$, every nonempty finite sequence $(\tau_n)_{n=0}^{m-1}$ of positive integers and every $0<\delta\mik 1$ there
exists an integer $N$ with the following property. If $(p_n)_{n=0}^{m-1}$ is a finite sequence of variable words over $k$ such that the length
of $p_n$ is $\tau_n$ for every $n\in\{0,...,m-1\}$, $L$ is a finite subset of $\nn$ of cardinality at least $N$ and $A$ is a subset of
$[k]^{<\nn}$ satisfying $\dens_{[k]^\ell}(A)\meg \delta$ for every $\ell\in L$, then there exist a word $c$ over $k$ and a finite sequence
$(w_n)_{n=0}^{m-1}$ of variable words over $k$ of pattern $(p_n)_{n=0}^{m-1}$ such that the set
\begin{equation} \label{11e1}
\{c\}\cup \big\{c^\con w_0(a_0)^\con...^\con w_n(a_n): n\in\{0,...,m-1\}\;\text{and}\;a_0,...,a_n\in[k]\big\}
\end{equation}
is contained in $A$. The least integer $N$ with the above property will be denoted by $\mathrm{DP}(k,(\tau_n)_{n=0}^{m-1},\delta)$.
\end{thm}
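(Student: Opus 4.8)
The plan is to reduce Theorem~\ref{11t1} to Theorem~B, in the spirit of the proof of Proposition~\ref{7p1}. Note first that if every $\tau_n$ equals $1$, then each $p_n$ is forced to be the word $(v)$ and the statement is literally Theorem~B; the only real content is therefore the absorption of the fixed letters that may occur in the $p_n$'s. The tool for this is a pattern-sensitive variant of the convolution operations of \S 5: given $p_0,\dots,p_{m-1}$ one builds a ``gluing'' map $\Phi$ which, from a pair $(t,x)$ of words over $k$, produces a word by placing, at prescribed pairwise disjoint windows whose left endpoints lie along a fixed set of coordinates, the word $p_r(t(r))$ for the $r$-th letter $t(r)$ of $t$, and by filling all remaining coordinates with the letters of $x$. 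The decisive feature of left variable words — that their leftmost letter is the variable $v$ — guarantees that under $\Phi$ the image of a left variable word is a variable word whose leftmost block is an entire copy of the corresponding $p_r$, so that $p_r$ is an \emph{initial segment} of that image; this is exactly what manufactures the prescribed pattern.

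Concretely, I would take $N$ to be a regularity bound of the shape appearing in \eqref{7e1}, with the Carlson--Simpson number there replaced by $\dcs(k,m,\delta)$ (enlarged by a factor accounting for the window sizes), so that after applying the regularity lemma (Lemma~\ref{3l2}) to the singleton family $\{A\}$ one is left with a set $L'\subseteq L$ of the required cardinality for which $\{A\}$ is $(\varepsilon,L')$-regular with $\varepsilon$ small compared to $\delta$; after thinning $L'$ one may also assume that consecutive elements of $L'$ differ by at least $\max_n\tau_n$, which makes room for the windows. Then, just as in Lemma~\ref{7l2}, the set $B=\Phi^{-1}(A)$ satisfies $\dens(B_t)\meg\delta-O(\varepsilon)\meg\delta/2$ for every $t$ in the index space of $\Phi$: the point is that the coordinates fixed by passing to $B_t$ are precisely those filling the windows — which carry the letters of $t$ together with the fixed letters of the $p_r$'s — and these all lie in $L'$, so that $(\varepsilon,L')$-regularity applies.

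Once all sections of $B$ are dense one runs the argument of \S\S 8--10 (equivalently, the proof of Theorem~B) on the index space of $\Phi$, with $\Phi$ playing the role of the convolution $\cv_L$: the partition result of Theorem~\ref{4t1} and the density--increment scheme transfer with no change, and they deliver a Carlson--Simpson tree $S$ of dimension $m$ in that space together with a point $x_0$ lying in $\bigcap_{t\in S}B_t$. Pushing $S$ forward by $t\mapsto\Phi(t,x_0)$ and invoking the analogue of Lemma~\ref{5l6} yields a set
\[ \{c\}\cup\big\{c^{\con}w_0(a_0)^{\con}\cdots^{\con}w_n(a_n):n\in\{0,\dots,m-1\}\text{ and }a_0,\dots,a_n\in[k]\big\} \]
contained in $A$ whose generating variable words $w_0,\dots,w_{m-1}$ are, by the remark in the first paragraph, of pattern $p_0,\dots,p_{m-1}$; the least such $N$ is $\mathrm{DP}(k,(\tau_n)_{n=0}^{m-1},\delta)$. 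The only step that requires genuine care — and the place where I expect the technical difficulty to lie — is setting up $\Phi$ correctly: one must make every window coordinate (in particular every coordinate carrying a fixed letter of some $p_r$) fall under the control of the regularity lemma, and one must verify that the push-forward of a left variable word genuinely has the prescribed $p_r$ as a \emph{prefix}, rather than somewhere in its interior. With these two points secured, the densities are transported unchanged and the reduction to Theorem~B is complete.
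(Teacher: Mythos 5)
There is a genuine gap, and it sits exactly where the paper is forced to argue by induction on $m$. Your gluing map $\Phi$ assigns the pattern $p_r$ to the window indexed by the \emph{absolute coordinate} $r$ of the letter $t(r)$ of $t$. For the machinery of Theorem B (or of \S 7) to produce a point $x_0\in\bigcap_{t\in S}B_t$, the index space of $\Phi$ must be $[k]^{<M}$ with $M$ large (of the order of $\dcs(k,m,\cdot)$, after the coloring and correlation arguments), and the $m$-dimensional Carlson--Simpson tree $S$ that comes out has its variables sitting at absolute coordinates $r_0<r_1<\dots<r_{m-1}$ over which you have no control. Pushing $S$ forward by $t\mapsto\Phi(t,x_0)$ therefore produces variable words whose prefixes are $p_{r_0},\dots,p_{r_{m-1}}$ rather than $p_0,\dots,p_{m-1}$; for a non-constant pattern sequence these are simply the wrong words, and for $r\meg m$ no pattern $p_r$ is even prescribed, so $\Phi$ is not defined on the index space you need. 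The ``all at once'' convolution only works when every window carries the \emph{same} variable word $p$ — this is precisely the $(p,L)$-restriction $R_{p,L}$ of Definition \ref{11d6} together with the map $\Phi_{p,L}$ of Definition \ref{11d8} — and that yields only one level of the desired structure. The paper accordingly extracts the pattern-$p_0$ level first (Proposition \ref{11p11}, the analogue of Proposition \ref{7p5}) and then handles $p_1,\dots,p_{m-1}$ by induction on $m$ applied to the continuation set $\Delta=\{s: t^{\con}s\in A \text{ for every } t\in V(1)\}$, whose density on a long set of levels is exactly what Proposition \ref{11p11} guarantees. Your proposal is missing this reduction, and it cannot be bypassed by the single global gluing map you describe.

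A second, more repairable, point: plain $(\ee,L)$-regularity (Definition \ref{3d1}) only controls sections indexed by $[k]^I$ with $I\subseteq L$; thinning $L$ to be $\tau$-sparse \emph{after} applying Lemma \ref{3l2} does not give control over sections indexed by the $\tau$-extensions $(I)_\tau$, which is what fixing an entire window (including the constant letters of the $p_n$'s) requires. This is why the paper proves the separate sparse regularity lemma (Lemma \ref{11l4}, via Corollary \ref{11c5}). You correctly flag this as the delicate point, but the fix is a new regularity statement, not a thinning of the output of the old one.
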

The proof of Theorem \ref{11t1} will be given in \S 11.4. All necessary tools (beside, of course, Theorem B) are developed in the previous
subsections. The corresponding infinite versions -- that is, Theorem A and Theorem C -- will be proved in \S 11.5. Finally, in \S 11.6 we
discuss how one can derive the density Hales--Jewett Theorem and the density Halpern--L\"{a}uchli Theorem from Theorem B and Theorem A respectively.
\medskip

\noindent 11.1. \textbf{Sparse sets and regularity.} We begin with the following definition.
\begin{defn} \label{11d2}
Let $\tau\in\nn$ with $\tau\meg1$ and $L$ be a nonempty subset of $\nn$. We say that $L$ is \emph{$\tau$-sparse} if for every $l,l'\in L$ with
$l\neq l'$ we have $|l'-l|\meg \tau$. If $L$ is a $\tau$-sparse subset of $\nn$, then we define the \emph{$\tau$-extension} of $L$ to be the set
\begin{equation} \label{11e2}
(L)_{\tau}=L+\{0,...,\tau-1\}=\{l+n:l\in L\;\text{and}\;0\mik n\mik \tau-1\}.
\end{equation}
\end{defn}
Every nonempty subset of $\nn$ is $1$-sparse and coincides with its $1$-extension. Also notice that every subset of $\nn$ of cardinality
at least $\tau (m-1)+1$ contains a $\tau$-sparse subset of  cardinality $m$. Finally observe that the notion of $\tau$-sparseness is hereditary,
that is, every nonempty subset of a $\tau$-sparse set is $\tau$-sparse.

Much of our interest in sparse sets is related to the following generalized version of Definition \ref{3d1}.
\begin{defn} \label{11d3}
Let $k\in\nn$ with $k\meg2$ and $\mathcal{F}$ be a family of subsets of $[k]^{<\nn}$. Also let $0<\ee\mik1$, $\tau\in\nn$ with $\tau\meg1$ and
$L$ be a $\tau$-sparse finite subset of $\nn$. The family $\mathcal{F}$ will be called \emph{$(\ee,\tau,L)$-regular} if for every $n\in L$, every
$I\subseteq\{l\in L:l<n\}$ and every $y\in [k]^{(I)_{\tau}}$ we have
\begin{equation}\label{11e3}
|\dens\big(\{z\in[k]^{\{m\in\nn:m<n\}\setminus (I)_{\tau}}:(y,z)\in A\}\big)-\dens(A\cap[k]^n)|\mik\ee.
\end{equation}
\end{defn}
We have the following analogue of Lemma \ref{3l2}. It is the main result of this subsection.
\begin{lem} \label{11l4}
Let $0<\ee\mik 1$ and $k,\ell,q,\tau\in\nn$ with $k\meg 2$ and $\ell,q,\tau\meg 1$. Then there exists an integer $n$ with the following property.
If $N$ is a finite $\tau$-sparse subset of $\nn$ with $|N|\meg n$ and $\mathcal{F}$ is a family of subsets of $[k]^{<\nn}$ with $|\mathcal{F}|=q$,
then there exists a subset $L$ of $N$ with $|L|=\ell$ such that $\mathcal{F}$ is $(\ee,\tau,L)$-regular. The least integer $n$ with this property
will be denoted by $\reg_\tau(k,\ell,q,\ee)$.
\end{lem}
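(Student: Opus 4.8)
The plan is to follow the proof of Lemma \ref{3l2} step by step, systematically replacing every set of coordinates one wishes to section over by its $\tau$-extension, and exploiting the $\tau$-sparseness of the ambient set to keep the combinatorics of the energy increment intact. Only one ingredient genuinely changes; everything else transfers verbatim.

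The new ingredient is a $\tau$-sparse analogue of Sublemma \ref{3sbl7}. Fix $k,m,q\in\nn$ with $k\meg2$ and $q\meg1$, fix $0<\ee<k^{-m\tau}$, let $N$ be a $\tau$-sparse finite subset of $\nn$ of cardinality at least $\big(q\lfloor 16\ee^{-4}\rfloor+1\big)m+1$, and let $\mathcal{F}$ be a family of $q$ subsets of $[k]^{\max(N)}$. Set $r_0=q\lfloor 16\ee^{-4}\rfloor+1$, write the first $r_0 m$ elements of $N'=N\setminus\{\max(N)\}$ in increasing order as $n_0<\dots<n_{r_0 m-1}$, and for $p\in\{0,\dots,r_0-1\}$ put $I_p=\big(\{n_{pm+j}:0\mik j\mik m-1\}\big)_\tau$ and $J_p=\{j\in\nn:j<n_{pm}\}$. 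Because $N$ is $\tau$-sparse one has $\max(I_p)=n_{(p+1)m-1}+\tau-1<n_{(p+1)m}$, so $I_p$ is a disjoint union of $m$ blocks of length $\tau$ (hence $|I_p|=m\tau$), $I_p\cap J_p=\varnothing$, and $I_p\cup J_p\subseteq J_{p+1}$. Fact \ref{3f4} then gives the telescoping chain $\mathrm{e}_{J_p}(A)\mik\mathrm{e}_{I_p\cup J_p}(A)\mik\mathrm{e}_{J_{p+1}}(A)\mik\dots\mik 1$, so for each $A\in\mathcal{F}$ there are at most $\lfloor 16\ee^{-4}\rfloor$ indices $p$ with $\mathrm{e}_{I_p\cup J_p}(A)-\mathrm{e}_{J_p}(A)>\ee^4/16$, and a counting argument produces $p_0$ good for every $A\in\mathcal{F}$. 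Since $\ee<k^{-m\tau}=k^{-|I_{p_0}|}$, Corollary \ref{3c6} yields $|\dens(A_y)-\dens(A)|\mik\ee$ for all $y\in[k]^{I_{p_0}}$ and all $A\in\mathcal{F}$. Finally set $M=\{n_{p_0 m+j}:0\mik j\mik m-1\}$, a $\tau$-sparse subinterval of $N'$ with $(M)_\tau=I_{p_0}$; for $I\subseteq M$ one has $(M)_\tau\setminus(I)_\tau=(M\setminus I)_\tau$ (the blocks are disjoint), so $\dens(A_y)=\ave_{z\in[k]^{(M\setminus I)_\tau}}\dens(A_{(y,z)})$ with $(y,z)$ ranging over $[k]^{(M)_\tau}=[k]^{I_{p_0}}$, and the triangle inequality gives $|\dens(A_y)-\dens(A)|\mik\ee$ for every $y\in[k]^{(I)_\tau}$.

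With this sparse sublemma available, I would run the recursion from the proof of Lemma \ref{3l2} unchanged, merely replacing the invariant $\rho=\min\{\ee,k^{-\ell}/2\}$ by $\rho_\tau=\min\{\ee,k^{-\ell\tau}/2\}$ (which is what makes the hypothesis $\ee<k^{-m\tau}$ of the sublemma available at each application) and setting $F^\tau_{k,\ell,q,\ee}(m)=(q\lfloor 16\rho_\tau^{-4}\rfloor+1)m+1$. Starting from a $\tau$-sparse $M_0\subseteq N$ with $|M_0|=(F^\tau)^{(\ell)}(0)$, repeated use of the sublemma produces nested $\tau$-sparse sets $M_0\supseteq M_1\supseteq\dots\supseteq M_{\ell-1}$ with $M_i$ a subinterval of $M_{i-1}\setminus\{\max(M_{i-1})\}$ and such that for every $A\in\mathcal{F}$, every $I\subseteq M_i$ and every $y\in[k]^{(I)_\tau}$ the section of $A\cap[k]^{\max(M_{i-1})}$ at $y$ has density within $\ee$ of $\dens(A\cap[k]^{\max(M_{i-1})})$; here $\tau$-sparseness of $M_{i-1}$ together with $I\subseteq M_i\subseteq M_{i-1}\setminus\{\max(M_{i-1})\}$ guarantees $(I)_\tau\subseteq\{m'\in\nn:m'<\max(M_{i-1})\}$, as required. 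Taking $L=\{\max(M_{\ell-1})<\dots<\max(M_0)\}$ and chasing the nesting exactly as in the proof of Lemma \ref{3l2} shows $\mathcal{F}$ is $(\ee,\tau,L)$-regular, with the explicit bound $\reg_\tau(k,\ell,q,\ee)\mik(F^\tau)^{(\ell)}(0)$.

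The only real obstacle is the bookkeeping in the sparse sublemma: one must verify carefully that the $\tau$-extended blocks $I_p$ are pairwise disjoint, disjoint from the initial segments $J_p$, and contained in $J_{p+1}$, and similarly that $(I)_\tau$ stays below the relevant level throughout the recursion. This is exactly where $\tau$-sparseness is used and is the single place the argument departs from the proof of Lemma \ref{3l2}; the rest — Fact \ref{3f4}, Corollary \ref{3c6}, the count of bad indices, the averaging passage from $I=M$ to arbitrary $I\subseteq M$, and the $\ell$-fold recursion — carries over without change.
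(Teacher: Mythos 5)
Your argument is correct and follows the same two-stage strategy as the paper: first a $\tau$-sparse analogue of Sublemma \ref{3sbl7}, then the $\ell$-fold recursion from the proof of Lemma \ref{3l2} run essentially verbatim, with $L$ taken to be the set of maxima of the nested sets $M_i$. The only genuine divergence is in how the sparse sublemma is obtained. The paper's Corollary \ref{11c5} gets it as a black-box consequence of Sublemma \ref{3sbl7}: one applies that sublemma to the $\tau$-extension $(N')_\tau\cup\{\max(N)\}$ with parameter $\tau(m+1)$, obtains a subinterval $M'$ of $(N')_\tau$ of cardinality $\tau(m+1)$, and then extracts a subinterval $M$ of $N'$ of cardinality $m$ with $(M)_\tau\subseteq M'$ --- the extra ``$+1$'' absorbs the partial $\tau$-blocks at the two ends of $M'$. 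You instead rerun the energy-increment argument from scratch with the blocks $I_p$ already $\tau$-extended; the disjointness and containment checks you list are exactly what $\tau$-sparseness provides, the argument is equally valid, and it even spares you the ``$+1$'' padding, so your iterate $F^\tau$ is marginally smaller than the paper's $\tilde F$. One caveat, which you inherit from the paper's own proof of Lemma \ref{3l2} rather than introduce: Corollary \ref{3c6} requires $\ee<k^{-|I|}$, so the intermediate claim that the section estimate holds for \emph{every} $I\subseteq M_i$ is literally justified only when $|(I)_\tau|\mik\ell\tau$ (which is all that the definition of $(\ee,\tau,L)$-regularity ever uses, and is exactly what your choice $\rho_\tau=\min\{\ee,k^{-\ell\tau}/2\}$ is calibrated for); for larger $I$ one should either restrict the quantifier or apply Corollary \ref{3c6} to the small set $(I)_\tau$ directly, folding the remaining coordinates of $I_{p_0}$ into $J$ via the monotonicity in Fact \ref{3f4}.
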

The proof of Lemma \ref{11l4} is similar to the proof of Lemma \ref{3l2} and is based on the following consequence of Sublemma \ref{3sbl7}.
\begin{cor} \label{11c5}
Let $k,m,\tau,q\in\nn$ with $k\meg2$ and $\tau,q\meg1$. Let $0<\ee< k^{-\tau(m+1)}$ and $N$ be a finite $\tau$-sparse subset of $\nn$ with
\begin{equation}\label{11e4}
|N|\meg (q\lfloor16\ee^{-4}\rfloor+1)(m+1)+1.
\end{equation}
Finally, let $\mathcal{F}$ be a family of subsets of $[k]^{\max(N)}$ with $|\mathcal{F}|=q$. Then there exists a subinterval $M$ of
$N\setminus\{\max(N)\}$ with $|M|=m$ such that for every $A\in\mathcal{F}$, every subset $I$ of $(M)_\tau$ and every $y\in[k]^I$ we have
$|\dens(A_y)-\dens(A)|\mik \ee$.
\end{cor}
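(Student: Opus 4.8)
The plan is to obtain Corollary \ref{11c5} as a genuine consequence of Sublemma \ref{3sbl7}, by applying that sublemma not to $N$ itself but to a ``thickened'' version of $N$. We may clearly assume that $m\meg 1$, since the case $m=0$ holds trivially with $M=\varnothing$. Put $N'=N\setminus\{\max(N)\}$ and enumerate $N'$ in increasing order as $\{n_0<\dots<n_{|N'|-1}\}$; by hypothesis $|N'|=|N|-1\meg (q\lfloor 16\ee^{-4}\rfloor+1)(m+1)$. Since $N$ is $\tau$-sparse, the $\tau$-blocks $B_i=\{n_i,\dots,n_i+\tau-1\}$ with $0\mik i<|N'|$ are pairwise disjoint, so $|(N')_\tau|=\tau\,|N'|$; moreover $\max\big((N')_\tau\big)=n_{|N'|-1}+\tau-1\mik \big(\max(N)-\tau\big)+\tau-1<\max(N)$, where the first inequality uses $\tau$-sparseness once more. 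Hence the finite set $\widetilde N=(N')_\tau\cup\{\max(N)\}$ has $\max(\widetilde N)=\max(N)$ and $|\widetilde N|=\tau\,|N'|+1\meg (q\lfloor 16\ee^{-4}\rfloor+1)\,\tau(m+1)+1$.

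I would then invoke Sublemma \ref{3sbl7} with the set $\widetilde N$, the same family $\mathcal F$ (which consists of $q$ subsets of $[k]^{\max(\widetilde N)}=[k]^{\max(N)}$), and with its parameter $m$ replaced by $\tau(m+1)$. Its hypotheses are satisfied: $0<\ee<k^{-\tau(m+1)}$ is assumed, and the lower bound $|\widetilde N|\meg (q\lfloor 16\ee^{-4}\rfloor+1)\,\tau(m+1)+1$ was just checked. We thus get a subinterval $M^{*}$ of $\widetilde N\setminus\{\max(\widetilde N)\}=(N')_\tau$ with $|M^{*}|=\tau(m+1)$ such that for every $A\in\mathcal F$, every $I\subseteq M^{*}$ and every $y\in[k]^I$ we have $|\dens(A_y)-\dens(A)|\mik\ee$.

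The remaining task is to extract from $M^{*}$ the $\tau$-extension of an $m$-element subinterval of $N'$; granting this, the corollary follows at once, since then every $I\subseteq(M)_\tau$ is a subset of $M^{*}$ and $M$ is automatically a subinterval of $N\setminus\{\max(N)\}$. In the increasing enumeration of $(N')_\tau$ the blocks $B_0,B_1,\dots$ occur as consecutive runs of length $\tau$, so the window $M^{*}$, having $\tau(m+1)$ consecutive elements, meets at least $m+1$ of these blocks and contains entirely every block it meets except (possibly) the first and the last; a short case analysis shows that in every configuration at least $m$ consecutive blocks, say $B_{i_0},\dots,B_{i_0+m-1}$, are completely contained in $M^{*}$. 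Setting $M=\{n_{i_0},\dots,n_{i_0+m-1}\}$, which is an $m$-element subinterval of $N'$, we get $(M)_\tau=\bigcup_{j=0}^{m-1}B_{i_0+j}\subseteq M^{*}$, and we are done.

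The only step that needs real (though elementary) care is this last count: one has to verify that a window of exactly $\tau(m+1)$ consecutive elements of a disjoint union of $\tau$-blocks always captures at least $m$ whole blocks — the point being that at most one block can be lost at each end of the window, so a window of length $\tau m$ would not suffice but $\tau(m+1)$ does. This is exactly where the ``$+1$'' in the hypothesis $|N|\meg (q\lfloor 16\ee^{-4}\rfloor+1)(m+1)+1$ is consumed. With Corollary \ref{11c5} in hand, the proof of Lemma \ref{11l4} is then a word-for-word repetition of the proof of Lemma \ref{3l2}.
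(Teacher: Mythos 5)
Your proof is correct and follows essentially the same route as the paper: apply Sublemma \ref{3sbl7} to the thickened set $(N')_\tau\cup\{\max(N)\}$ with the parameter $m$ replaced by $\tau(m+1)$, and then extract an $m$-element subinterval $M$ of $N'$ with $(M)_\tau$ contained in the resulting window. The only difference is that you spell out the final block-counting step, which the paper leaves to the reader, and your count (a window of $\tau(m+1)$ consecutive elements of a disjoint union of $\tau$-blocks captures at least $m$ whole consecutive blocks) is accurate.
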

\begin{proof}
We set $N'=N\setminus\{\max(N)\}$. Observe that
\begin{equation} \label{11e5}
|(N')_\tau\cup\{\max(N)\}|=\tau(N-1)+1\meg \tau(m+1)(q\lfloor16\ee^{-4}\rfloor+1)+1.
\end{equation}
By Sublemma \ref{3sbl7}, there exists a subinterval $M'$ of $(N')_\tau$ with $|M'|=\tau(m+1)$ such that for every
$A\in\mathcal{F}$, every subset $I$ of $M'$ and every $y\in[k]^I$ we have
 \begin{equation}\label{11e6}
|\dens(A_y)-\dens(A)|\mik \ee.
\end{equation}
Since $M'$ is subinterval of $(N')_\tau$ of cardinality $\tau(m+1)$, it is possible to select a subinterval $M$ of $N'$ of cardinality $m$
such that $(M)_\tau\subseteq M'$. Clearly $M$ is as desired.
\end{proof}
We are ready to proceed to the proof of Lemma \ref{11l4}.
\begin{proof}[Proof of Lemma \ref{11l4}]
We set $\varrho=\min\{\ee,k^{-\tau(\ell+1)}/2\}$ and we define $\tilde{F}:\nn\to \nn$ by
\begin{equation} \label{11e7}
\tilde{F}(m)=(q\lfloor16\varrho^{-4}\rfloor+1)(m+1)+1.
\end{equation}
Arguing precisely as in the proof of Lemma \ref{3l2} and using Corollary \ref{11c5} instead of Sublemma \ref{3sbl7}, we see that
\begin{equation} \label{1138}
\reg_\tau(k,\ell,q,\ee)\mik \tilde{F}^{(\ell)}(0)
\end{equation}
and the proof is completed.
\end{proof}
\noindent 11.2. \textbf{The $(p,L)$-restriction of $[k]^{<\nn}$.} We are about to introduce a family of subsets of $[k]^{<\nn}$
which are the analogues of Carlson--Simpson trees in the context of variable words of a fixed pattern $p$.
\begin{defn} \label{11d6}
Let $k,\tau\in\nn$ with $k\meg2$ and $\tau\meg 1$. Let $p$ be a variable word over $k$ of length $\tau$ and $L=\{l_0<...<l_{|L|-1}\}$
a $\tau$-sparse finite subset of $\nn$. Recursively, for every $i\in\{0,...,|L|-1\}$ we define a subset $R_{p,L}(i)$ of $[k]^{l_i}$
as follows. We set $R_{p,L}(0)=[k]^{l_0}$. Assume that $R_{p,L}(i)$ has been defined for some $i\in\{0,...,|L|-2\}$. Then we set
\begin{equation}\label{11e9}
R_{p,L}(i+1)=\big\{ x^{\con}p(a)^{\con} y: x\in R_{p,L}(i), a\in [k] \text{ and } y\in[k]^{l_{i+1}-l_i-\tau}\big\}.
\end{equation}
We define the \emph{$(p,L)$-restriction} of $[k]^{<\nn}$ to be the set
\begin{equation}\label{11e10}
R_{p,L}=\bigcup_{i=0}^{|L|-1} R_{p,L}(i).
\end{equation}
\end{defn}
Notice that the $(p,L)$-restriction $R_{p,L}$ is a rather ``thin'' subset of $[k]^{<\nn}$. So, if we are given a subset $A$ of $[k]^{<\nn}$
it is likely that the density of $A$ inside $R_{p,L}$ will be negligible. This phenomenon, however, does not occur as long as $A$ is sufficiently
regular. In particular, we have the following lemma. Its proof is a straightforward consequence of the relevant definitions.
\begin{lem} \label{11l7}
Let $k,\tau\in\nn$ with $k\meg2$ and $\tau\meg 1$, $p$ be a variable word over $k$ of length $\tau$ and $L=\{l_0<...<l_{|L|-1}\}$ a $\tau$-sparse
finite subset of $\nn$. Let  $0<\ee\mik 1$ and $\mathcal{F}$ be a family of subsets of $[k]^{<\nn}$ which is $(\ee,\tau,L)$-regular. Then
\begin{equation} \label{11e11}
| \dens_{R_{p,L}(i)}(A)-\dens_{[k]^{l_i}}(A)|\mik \ee
\end{equation}
for every $i\in\{0,...,|L|-1\}$ and every $A\in\mathcal{F}$.
\end{lem}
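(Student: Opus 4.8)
The plan is to unfold Definition \ref{11d6} into an explicit description of $R_{p,L}(i)$ as a disjoint union of sections of exactly the kind controlled by $(\ee,\tau,L)$-regularity, and then to observe that $\dens_{R_{p,L}(i)}(A)$ is merely an average of the corresponding section densities. The case $i=0$ is immediate: here $R_{p,L}(0)=[k]^{l_0}$, so the left-hand side of \eqref{11e11} is zero and there is nothing to prove.

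So fix $i\in\{1,\dots,|L|-1\}$ and put $I=\{l_0,\dots,l_{i-1}\}$, a subset of $\{l\in L:l<l_i\}$. Since $L$ is $\tau$-sparse, the blocks $[l_j,l_j+\tau)$ with $j<i$ are pairwise disjoint and contained in $\{m\in\nn:m<l_i\}$, so the $\tau$-extension $(I)_\tau$ of Definition \ref{11d2} is precisely their disjoint union. A straightforward induction on $i$ based on \eqref{11e9} then shows that a word $w\in[k]^{l_i}$ lies in $R_{p,L}(i)$ exactly when, for each $j<i$, the block $w|_{[l_j,l_j+\tau)}$ equals $p(a_j)$ for some $a_j\in[k]$, while the coordinates of $w$ outside $(I)_\tau$ are arbitrary and independent of one another. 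For $\bar a=(a_0,\dots,a_{i-1})\in[k]^i$ let $y_{\bar a}\in[k]^{(I)_\tau}$ be the located word whose block at $[l_j,l_j+\tau)$ equals $p(a_j)$ (canonically identified as a located word via \S 2.2); then
\[
R_{p,L}(i)=\bigcup_{\bar a\in[k]^i}\{y_{\bar a}\}\times[k]^{\{m\in\nn:m<l_i\}\setminus(I)_\tau},
\]
and this union is disjoint because $p(a)\neq p(a')$ whenever $a\neq a'$ (the two words disagree at the positions occupied by the variable of $p$, which form a nonempty set since $p$ is a variable word). In particular all the pieces above have the same cardinality.

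It remains to combine this decomposition with regularity. Since the pieces are equinumerous, a routine count gives
\[
\dens_{R_{p,L}(i)}(A)=\ave_{\bar a\in[k]^i}\dens\big(\{z\in[k]^{\{m\in\nn:m<l_i\}\setminus(I)_\tau}:(y_{\bar a},z)\in A\}\big).
\]
Applying Definition \ref{11d3} with $n=l_i$, the set $I$ above, and $y=y_{\bar a}$, each section density on the right differs from $\dens_{[k]^{l_i}}(A)=\dens(A\cap[k]^{l_i})$ by at most $\ee$; averaging over $\bar a$ and using the triangle inequality yields \eqref{11e11}. I do not expect a genuine obstacle here; the only point requiring care is the inductive identification of $R_{p,L}(i)$ with the displayed union of sections — in particular, checking that the wildcard letters inside distinct $p$-blocks may be chosen independently, so that the decomposition is exact and its pieces are equinumerous — after which the estimate is an immediate consequence of regularity, exactly as in the proof of Lemma \ref{7l2}.
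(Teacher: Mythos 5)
Your proof is correct and is precisely the ``straightforward consequence of the relevant definitions'' that the paper alludes to: decompose $R_{p,L}(i)$ into the equinumerous, pairwise disjoint sections indexed by $\bar a\in[k]^i$ over the coordinate set $(I)_\tau$ with $I=\{l_0,\dots,l_{i-1}\}$, apply Definition \ref{11d3} to each section, and average. The only points needing care — injectivity of $a\mapsto p(a)$ for disjointness, and $\tau$-sparseness to keep the blocks $[l_j,l_j+\tau)$ disjoint and below $l_i$ — are both handled.
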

We will need to parameterize the $(p,L)$-restriction $R_{p,L}$ of $[k]^{<\nn}$ in a ``canonical'' way. This is, essentially, the content of the following definition.
\begin{defn} \label{11d8}
Let $k,\tau\in\nn$ with $k\meg2$ and $\tau\meg 1$. Let $p$ be a variable word over $k$ of length $\tau$ and $L=\{l_0<...<l_{|L|-1}\}$
a $\tau$-sparse finite subset of $\nn$. For every $i\in\{0,...,|L|-1\}$ we set $l_i'=l_i-i\tau+i$ and we define
\begin{equation} \label{11e12}
L^{(\tau)}=\big\{l'_i: i=0,...,|L|-1\big\}.
\end{equation}
Recursively we define a bijection
\begin{equation} \label{11e13}
\Phi_{p,L}:\bigcup_{l\in L^{(\tau)}}[k]^l\to R_{p,L}
\end{equation}
as follows. For every $x\in [k]^{l_0'}=[k]^{l_0}$ we set $\Phi_{p,L}(x)=x$. Let $i\in \{0,...,|L|-2\}$ and assume that
$\Phi_{p,L}(y)$ has been defined for every $y\in [k]^{l_i'}$. Then for every $x\in [k]^{l'_{i+1}}$ we define
\begin{equation} \label{11e14}
\Phi_{p,L}(x)=\Phi_{p,L}(x_1)^\con p(a_x)^\con x_2
\end{equation}
where $x_1=\big(x(0),..., x(l'_i-1)\big)$, $a_x=x(l_{i}')$ and $x_2=\big(x(l'_i+1),...,x(l'_{i+1}-1)\big)$.
\end{defn}
We isolate, for future use, some elementary properties of the map $\Phi_{p,L}$.
\begin{lem} \label{11l9}
Let $k,\tau\in\nn$ with $k\meg2$ and $\tau\meg 1$. Let $p$ be a variable word over $k$ of length $\tau$ and $L=\{l_0<...<l_{|L|-1}\}$
a $\tau$-sparse finite subset of $\nn$. Let $L^{(r)}=\{l'_i: i=0,...,|L|-1\}$ be as in \eqref{11e12}. Then the following are satisfied.
\begin{enumerate}
\item[(i)] For every $i\in\{0,...,|L|-1\}$ we have $\Phi_{p,L}\big([k]^{l_i'}\big)=R_{p,L}(i)$.
\item[(ii)] For every Carlson--Simpson line $W$ of $[k]^{<\nn}$ with $L(W)\subseteq L^{(\tau)}$ its image $\Phi_{p,L}(W)$ under the map
$\Phi_{p,L}$ is of the form $\{c\}\cup\{c^\con w(a):a\in[k]\}$ where $c$ is a word over $k$ and $w$ is a variable word over $k$ of pattern $p$.
\item[(iii)] If $\mathcal{F}$ is a family of subsets of $[k]^{<\nn}$ which is $\mathcal{F}$ is $(\ee,\tau,L)$-regular for some $0<\ee\mik 1$,
then for every $i\in\{0,...,|L|-1\}$ and every $A\in\mathcal{F}$ we have
\begin{equation} \label{11e15}
|\dens_{[k]^{l_i'}}\big(\Phi_{p,L}^{-1}(A)\big)-\dens_{[k]^{l_i}}(A)|\mik \ee.
\end{equation}
\end{enumerate}
\end{lem}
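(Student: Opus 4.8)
The plan is to establish the three items in order, each by unravelling Definitions \ref{11d6} and \ref{11d8}; only item (ii) requires any real bookkeeping, and everything rests on the arithmetic identity built into $l_m'=l_m-m\tau+m$.

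For (i) I would induct on $i\in\{0,\dots,|L|-1\}$. The base case is trivial: $l_0'=l_0$ and $\Phi_{p,L}$ is the identity on $[k]^{l_0}=R_{p,L}(0)$. For the inductive step, the key observation is $l_{i+1}'-l_i'-1=l_{i+1}-l_i-\tau$, which is immediate from $l_m'=l_m-m\tau+m$; hence writing $x\in[k]^{l_{i+1}'}$ uniquely as $x=x_1{}^\con(a_x){}^\con x_2$ with $x_1\in[k]^{l_i'}$ and $a_x\in[k]$ forces $x_2\in[k]^{l_{i+1}-l_i-\tau}$. Now \eqref{11e14} gives $\Phi_{p,L}(x)=\Phi_{p,L}(x_1){}^\con p(a_x){}^\con x_2$, and combining the inductive hypothesis $\Phi_{p,L}([k]^{l_i'})=R_{p,L}(i)$ with \eqref{11e9} shows that these elements range exactly over $R_{p,L}(i+1)$ as $x$ ranges over $[k]^{l_{i+1}'}$. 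The same computation shows $\Phi_{p,L}$ restricts to a bijection from $[k]^{l_i'}$ onto $R_{p,L}(i)$ (one recovers $\Phi_{p,L}(x_1)$, $a_x$ and $x_2$ from $\Phi_{p,L}(x)$ using that $p$ contains the variable), and in particular $|[k]^{l_i'}|=|R_{p,L}(i)|$, a fact needed in (iii).

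For (ii), write the generating sequence of $W$ as $(c',w')$, so $c'$ is a word, $w'$ a \emph{left} variable word, $W=\{c'\}\cup\{c'{}^\con w'(a):a\in[k]\}$ and $L(W)=\{|c'|,\,|c'|+|w'|\}$. The hypothesis $L(W)\subseteq L^{(\tau)}$ and the strict monotonicity of $m\mapsto l_m'$ force $|c'|=l_i'$ and $|c'|+|w'|=l_j'$ for some $i<j$; in particular $|w'|=l_j'-l_i'$. Set $c=\Phi_{p,L}(c')\in R_{p,L}(i)$. Iterating \eqref{11e14} from level $l_i'$ up to level $l_j'$ on $x=c'{}^\con w'(a)$, at stage $m$ (for $i\mik m<j$) one peels off the coordinate of $x$ at position $l_m'$ — which is the coordinate of $w'(a)$ at index $l_m'-l_i'$, hence $a$ if $w'$ has the variable there and a fixed letter otherwise — inserting the block $p\big(x(l_m')\big)$, and then copies the coordinates of $x$ strictly between $l_m'$ and $l_{m+1}'$ verbatim. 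Since $w'$ is \emph{left} variable, the first such coordinate (index $0$ of $w'$) is the variable, so the first inserted block is $p(a)$. This data assembles, independently of $a$, into a sequence $w$ over $[k]\cup\{v\}$: at the block for position $l_m'$ put a copy of $p$ regarded as a variable word, if $w'$ has the variable at index $l_m'-l_i'$, and the fixed word $p(s)$ if $w'$ has the letter $s$ there; between consecutive blocks copy $w'$. A short computation with $l_m'=l_m-m\tau+m$ gives $|w|=l_j-l_i$. Then $w$ is a variable word (its first block is a genuine copy of $p$, which contains the variable), its first $\tau$ coordinates form $p$, so $w$ is of pattern $p$, and $w(a)$ reproduces exactly the blocks and copied segments above, i.e. $\Phi_{p,L}(c'{}^\con w'(a))=c{}^\con w(a)$ for every $a\in[k]$. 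Hence $\Phi_{p,L}(W)=\{c\}\cup\{c{}^\con w(a):a\in[k]\}$, as required.

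For (iii) I would combine (i) with Lemma \ref{11l7}. Since $\Phi_{p,L}$ maps $[k]^{l_i'}$ bijectively onto $R_{p,L}(i)$, for any $A$ one has $\Phi_{p,L}^{-1}(A)\cap[k]^{l_i'}=\Phi_{p,L}^{-1}(A\cap R_{p,L}(i))$ and $|[k]^{l_i'}|=|R_{p,L}(i)|$, whence $\dens_{[k]^{l_i'}}(\Phi_{p,L}^{-1}(A))=\dens_{R_{p,L}(i)}(A)$; as $\mathcal{F}$ is $(\ee,\tau,L)$-regular, Lemma \ref{11l7} bounds $|\dens_{R_{p,L}(i)}(A)-\dens_{[k]^{l_i}}(A)|$ by $\ee$, and \eqref{11e15} follows. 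The only genuinely fiddly point of the whole argument is the coordinate bookkeeping in (ii) — keeping track of which coordinates of $c'{}^\con w'(a)$ feed copies of $p$ and which get copied, and confirming the length of $w$ and that $p$ is its initial segment; everything else is a mechanical unwinding of the definitions.
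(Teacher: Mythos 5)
Your proposal is correct and fills in the details in exactly the way the paper intends: the paper simply declares parts (i) and (ii) to be immediate consequences of Definition \ref{11d8} and part (iii) to follow from Lemma \ref{11l7}, and your induction via the identity $l_{i+1}'-l_i'-1=l_{i+1}-l_i-\tau$, the block-by-block assembly of $w$ (using that the generating word of a Carlson--Simpson line is left variable, so the first inserted block is a genuine copy of $p$), and the density transfer through the bijection $[k]^{l_i'}\to R_{p,L}(i)$ are precisely the omitted verifications.
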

Parts (i) and (ii) of Lemma \ref{11l9} are immediate consequences of Definition \ref{11d8}. Part (iii) follows easily by Lemma \ref{11l7}.
We leave the details to the reader.
\medskip

\noindent 11.3. \textbf{Preliminary lemmas.} As in \eqref{7e15} and \eqref{7e16} for every $k\in\nn$ with $k\meg 2$ and every
$0<\delta\mik 1$ we set
\begin{equation} \label{11e16}
\Lambda(k,1,\delta) = \lceil\delta^{-1}\mathrm{DCS}(k,1,\delta)\rceil
\end{equation}
and
\begin{equation} \label{11e17}
\Theta(k,1,\delta) = \frac{2\delta}{|\subtr_1\big( [k]^{<\Lambda(k,1,\delta)}\big)|}.
\end{equation}
We have the following analogue of Lemma \ref{7l9}.
\begin{lem} \label{11l10}
Let $k\in\nn$ with $k\meg 2$ and $0<\rho,\gamma\mik 1$. Also let $\tau\in\nn$ with $\tau\meg1$ and $M$ be a finite subset of $\nn$ with
\begin{equation}\label{11e18}
|M| \meg \tau\cdot\reg_\tau\big(k,\Lambda(k,1,\rho\gamma/8),1,\rho/2\big)
\end{equation}
where $\Lambda(k,1,\rho\gamma/8)$ is as in \eqref{11e16}. Finally let $B\subseteq [k]^{<\nn}$ with $\dens_{[k]^n}(B)\meg \rho$ for every
$n\in M$. If $\{A_t: t\in B\}$ is a family of measurable events in a probability space $(\Omega,\Sigma,\mu)$ satisfying $\mu(A_t)\meg\gamma$
for every $t\in B$, then for every variable word $p$ over $k$ of length $\tau$ there exist a word $c$ over $k$ and a variable word $w$ over $k$
of pattern $p$ such that, setting $V=\{c\}\cup\{c^\con w(a):a\in[k]\}$, we have $V\subseteq B$ and
\begin{equation} \label{11e19}
\mu \Big( \bigcap_{t\in V} A_t\Big) \meg \Theta(k,1,\rho\gamma/8)
\end{equation}
where $\Theta(k,1,\rho\gamma/8)$ is as in \eqref{11e17}.
\end{lem}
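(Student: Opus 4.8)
The plan is to transfer the problem to $[k]^{<\nn}$ by means of the parameterizing bijection $\Phi_{p,L}$ of a $(p,L)$-restriction, and then to invoke Lemma \ref{7l9} with $m=1$. Since this lemma lies in Section \S11, after the proof of Theorem B, the numbers $\dcs(k,1,\beta)$ are available for all $0<\beta\mik 1$, so Lemma \ref{7l9} may be used freely.

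First I would pass to a sparse set. Because $\tau\meg 1$ the hypothesis gives
$|M|\meg \tau\cdot\reg_\tau\big(k,\Lambda(k,1,\rho\gamma/8),1,\rho/2\big)\meg \tau\big(\reg_\tau(k,\Lambda(k,1,\rho\gamma/8),1,\rho/2)-1\big)+1$,
so by the remarks following Definition \ref{11d2} the set $M$ contains a $\tau$-sparse subset $N$ of cardinality $\reg_\tau(k,\Lambda(k,1,\rho\gamma/8),1,\rho/2)$. Applying Lemma \ref{11l4} to the singleton family $\{B\}$ (so $q=1$) with $\ell=\Lambda(k,1,\rho\gamma/8)$ and $\ee=\rho/2$ produces $L\subseteq N$ with $|L|=\Lambda(k,1,\rho\gamma/8)$ such that $\{B\}$ is $(\rho/2,\tau,L)$-regular; being a subset of $N$, the set $L$ is $\tau$-sparse, and $\dens_{[k]^n}(B)\meg\rho$ for every $n\in L$.

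Next, fixing the variable word $p$ of length $\tau$, I would introduce the $(p,L)$-restriction $R_{p,L}$, the set $L^{(\tau)}$ of \eqref{11e12} — which has $|L^{(\tau)}|=|L|=\Lambda(k,1,\rho\gamma/8)=\Lambda\big(k,1,(\rho/2)\gamma/4\big)$ — and the bijection $\Phi_{p,L}\colon\bigcup_{l\in L^{(\tau)}}[k]^l\to R_{p,L}$ of Definition \ref{11d8}. Writing $L=\{l_0<\ldots<l_{|L|-1}\}$, I set $B'=\Phi_{p,L}^{-1}(B)$ and, for $t'\in B'$, $A'_{t'}=A_{\Phi_{p,L}(t')}$, so that $\mu(A'_{t'})\meg\gamma$ for every $t'\in B'$ since $\Phi_{p,L}(t')\in B$. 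By Lemma \ref{11l9}(iii) with $\ee=\rho/2$, for every $i\in\{0,\ldots,|L|-1\}$ we get $\dens_{[k]^{l_i'}}(B')\meg\dens_{[k]^{l_i}}(B)-\rho/2\meg\rho/2$; averaging over $L^{(\tau)}$ yields $\mathrm{d}_{L^{(\tau)}}(B')\meg\rho/2$.

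The main step is then an application of Lemma \ref{7l9} with $m=1$, taking $\rho/2$, $\gamma$, $L^{(\tau)}$, $B'$ and the family $\{A'_{t'}:t'\in B'\}$ in place of $\rho$, $\gamma$, $L$, $B$ and $\{A_t\}$: the size requirement $|L^{(\tau)}|\meg\Lambda(k,1,(\rho/2)\gamma/4)$ and the density requirement $\mathrm{d}_{L^{(\tau)}}(B')\meg\rho/2$ have just been checked, so there is a Carlson--Simpson line $\tilde V$ of $[k]^{<\nn}$ with $\tilde V\subseteq B'$ and $\mu\big(\bigcap_{t'\in\tilde V}A'_{t'}\big)\meg\Theta\big(k,1,(\rho/2)\gamma/4\big)=\Theta(k,1,\rho\gamma/8)$. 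Since $\tilde V\subseteq B'$ is contained in the domain $\bigcup_{l\in L^{(\tau)}}[k]^l$ of $\Phi_{p,L}$, every node of $\tilde V$ has length in $L^{(\tau)}$, hence $L(\tilde V)\subseteq L^{(\tau)}$; by Lemma \ref{11l9}(ii) the image $V:=\Phi_{p,L}(\tilde V)$ then has the form $\{c\}\cup\{c^{\con}w(a):a\in[k]\}$ for a word $c$ over $k$ and a variable word $w$ over $k$ of pattern $p$. Finally, as $\Phi_{p,L}$ is a bijection onto $R_{p,L}$ we have $V=\Phi_{p,L}(\tilde V)\subseteq\Phi_{p,L}\big(\Phi_{p,L}^{-1}(B)\big)=B\cap R_{p,L}\subseteq B$, and $\bigcap_{t\in V}A_t=\bigcap_{t'\in\tilde V}A_{\Phi_{p,L}(t')}=\bigcap_{t'\in\tilde V}A'_{t'}$, so $\mu\big(\bigcap_{t\in V}A_t\big)\meg\Theta(k,1,\rho\gamma/8)$, which is what is claimed. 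I do not expect a serious obstacle here: the combinatorial depth is entirely contained in Lemma \ref{7l9} (hence in Theorem B), and what remains is transport along $\Phi_{p,L}$; the only point needing care is the numerology — verifying that the factor $1/2$ lost to regularity via Lemma \ref{11l4} and Lemma \ref{11l9}(iii) feeds exactly $\Lambda\big(k,1,(\rho/2)\gamma/4\big)=\Lambda(k,1,\rho\gamma/8)$ good levels into Lemma \ref{7l9}, so that the output correlation $\Theta\big(k,1,(\rho/2)\gamma/4\big)$ is precisely the asserted $\Theta(k,1,\rho\gamma/8)$.
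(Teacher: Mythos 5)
Your proposal is correct and follows essentially the same route as the paper: extract a $\tau$-sparse $N\subseteq M$, apply Lemma \ref{11l4} to get an $(\rho/2,\tau,L)$-regular singleton with $|L|=\Lambda(k,1,\rho\gamma/8)$, transfer $B$ and the events along $\Phi_{p,L}$ using Lemma \ref{11l9}(iii), invoke Lemma \ref{7l9} with $m=1$ and parameters $\rho/2,\gamma$, and pull the resulting Carlson--Simpson line back via Lemma \ref{11l9}(ii). The numerology $\Lambda(k,1,(\rho/2)\gamma/4)=\Lambda(k,1,\rho\gamma/8)$ and $\Theta(k,1,(\rho/2)\gamma/4)=\Theta(k,1,\rho\gamma/8)$ checks out exactly as you say.
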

\begin{proof}
By \eqref{11e18}, we may select a $\tau$-sparse subset $N$ of $M$ with
\begin{equation}\label{11e20}
|N| \meg \reg_\tau\big(k,\Lambda(k,1,\rho\gamma/8),1,\rho/2\big).
\end{equation}
By Lemma \ref{11l4}, there exists a subset $L$ of $N$ with $|L|=\Lambda(k,1,\rho\gamma/8)$ such that the family $\mathcal{F}:=\{B\}$
is $(\rho/2,\tau,L)$-regular.

Fix a variable word $p$ over $k$ of length $\tau$. Let $L^{(\tau)}$ and $\Phi_{p,L}$ be as in Definition \ref{11d8}. We set
$B'=\Phi_{p,L}^{-1}(B)$. Since the singleton $\{B\}$ is $(\rho/2,\tau,L)$-regular and $\dens_{[k]^{l}}(B)\meg \rho$ for every $l\in L$,
by part (iii) of Lemma \ref{11l9}, we get that $\dens_{[k]^{l'}}(B')\meg \rho/2$ for every $l'\in L^{(\tau)}$. Next for every $t'\in B'$
let $C_{t'}=A_{\Phi_{p,L}(t')}$. By our assumptions we have $\mu(A_t)\meg \gamma$ for every $t\in B$, and so, $\mu(C_{t'})\meg\gamma$
for every $t'\in B'$. Finally notice that $|L^{(\tau)}|=|L|=\Lambda(k,1,\rho\gamma/8)$. By the previous discussion, we may apply
Lemma \ref{7l9} and we obtain a Carlson--Simpson line $V'\subseteq B'$ with $L(V')\subseteq L^{(\tau)}$ and such that
\begin{equation} \label{11e21}
\mu\Big( \bigcap_{t'\in V'} C_{t'}\Big) \meg \Theta(k,1,\rho\gamma/8).
\end{equation}
We set $V=\Phi_{p,L}(V')$. By part (ii) of Lemma \ref{11l9} and \eqref{11e21}, we see that $V$ is as desired. The proof is completed.
\end{proof}
To state the next result we need to introduce some numerical invariants. Specifically, for every $k,\tau\in\nn$ with $k\meg 2$
and $\tau\meg 1$ and every $0<\delta\mik 1$ we set
\begin{equation} \label{11e22}
\Lambda_P=\Lambda_P(k,\delta)=\Lambda(k,1,\delta^2/32) \text{ and } \Theta_P=\Theta_P(k,\delta)=\Theta(k,1,\delta^2/32)
\end{equation}
and we define $h_{\tau,\delta}:\nn\to\nn$ by the rule
\begin{equation} \label{11e23}
h_{\tau,\delta}(n)=\tau\cdot\reg_\tau(k,\Lambda_P,1,\delta/4)+\lceil 2\Theta_P^{-1}\cdot n\rceil.
\end{equation}
The following proposition corresponds to Proposition \ref{7p5}.
\begin{prop} \label{11p11}
Let $k\in\nn$ with $k\meg 2$ and $0<\delta\mik 1$ and define $\Lambda_P$ and $\Theta_P$ as in \eqref{11e22}. Also let $\tau\in\nn$ with
$\tau\meg 1$, $L$ be a nonempty finite subset of $\nn$ and $A$ be a subset of $[k]^{<\nn}$ such that $\dens_{[k]^l}(A)\meg \delta$ for
every $l\in L$. Finally let $n\in\nn$ with $n\meg 1$ and assume that $|L|\meg h_{\tau,\delta}(n)$ where $h_{\tau,\delta}$ is as in
\eqref{11e23}. Then, setting $L_0$ to be the set of the first $\Lambda_P$ elements of $L$, we have that either
\begin{enumerate}
\item[(i)] there exist a subset $L'$ of $L\setminus L_0$ with $|L'|\meg n$ and a word $t_0\in [k]^{\ell_0}$ for some $\ell_0\in L_0$ such that
\begin{equation} \label{11e24}
\dens_{[k]^{\ell-\ell_0}} \big( \{s\in[k]^{<\nn}: t_0^{\con}s\in A\}\big) \meg \delta+\delta^2/8
\end{equation}
for every $\ell\in L'$, or
\item[(ii)] for every variable word $p$ over $k$ of length $\tau$ there exist a word $c$ over $k$, a variable word $w$ over $k$ of pattern $p$
and a subset $L''$ of $L\setminus L_0$ with $|L''|\meg n$ such that the following are satisfied.
\begin{enumerate}
\item[(a)] The set $V=\{c\}\cup\{c^\con w(a):a\in[k]\}$ is contained in $\bigcup_{\ell\in L_0}A\cap [k]^{\ell}$.
\item[(b)] Setting $V(1)=\{c^\con w(a):a\in [k]\}$ and $\ell_1$ the unique integer with $V(1)\subseteq [k]^{\ell_1}$,
for every $\ell\in L''$ we have
\begin{equation} \label{11e25}
\dens_{[k]^{\ell-\ell_1}}\big( \{s\in[k]^{<\nn}: t^{\con}s\in A \text{ for every } t\in V(1)\}\big) \meg \Theta_P/2.
\end{equation}
\end{enumerate}
\end{enumerate}
\end{prop}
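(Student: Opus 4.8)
The plan is to repeat, almost verbatim, the argument used for Proposition \ref{7p5}, with Lemma \ref{11l10} taking the place of Lemma \ref{7l9} (and the $\tau$-sparse regularity lemma, Lemma \ref{11l4}, which is used inside Lemma \ref{11l10}, taking the place of Lemma \ref{3l2}). Write $L_0$ for the set of the first $\tau\cdot\reg_\tau(k,\Lambda_P,1,\delta/4)$ elements of $L$ — this is the length that the single application of Lemma \ref{11l10} below will force — and put $M=L\setminus L_0$; for each $\ell\in L_0$ set $M_\ell=\{m-\ell:m\in M\}$ and, for $t\in[k]^\ell$, $A_t=\{s\in[k]^{<\nn}:t^\con s\in A\}$. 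Since $\Theta_P\mik\delta^2/8$, the hypothesis $|L|\meg h_{\tau,\delta}(n)$ yields $|M_\ell|=|M|\meg\lceil2\Theta_P^{-1}n\rceil\meg8\delta^{-2}n$, while $\dens_{[k]^\ell}(A)\meg\delta$ for every $\ell\in L$ gives $\mathrm{d}_{L_0}(A)\meg\delta$, $\mathrm{d}_M(A)\meg\delta$ and $\ave_{t\in[k]^\ell}\mathrm{d}_{M_\ell}(A_t)=\mathrm{d}_M(A)$ for every $\ell\in L_0$.

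I would then split into the same two cases as in Proposition \ref{7p5}. If there exist $\ell_0\in L_0$ and $t_0\in[k]^{\ell_0}$ with $\mathrm{d}_{M_{\ell_0}}(A_{t_0})\meg\delta+\delta^2/4$, then a Markov argument inside $M_{\ell_0}$ shows that $L'=\{m\in M:\dens_{[k]^{m-\ell_0}}(A_{t_0})\meg\delta+\delta^2/8\}$ has size at least $(\delta^2/8)|M|\meg n$, which is alternative (i). Otherwise $\mathrm{d}_{M_\ell}(A_t)<\delta+\delta^2/4$ for every $\ell\in L_0$ and every $t\in[k]^\ell$, whence $\dens_{[k]^\ell}(\{t:\mathrm{d}_{M_\ell}(A_t)\meg\delta/2\})\meg1-\delta/2$, and the set $B=\bigcup_{\ell\in L_0}\{t\in A\cap[k]^\ell:\mathrm{d}_{M_\ell}(A_t)\meg\delta/2\}$ satisfies $\dens_{[k]^\ell}(B)\meg\delta/2$ for \emph{every} $\ell\in L_0$, not merely on average.

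In this second case I would pass, exactly as in the proof of Proposition \ref{7p5}, to the product probability space $(\Omega,\mu)=\prod_{\ell\in L_0}([k]^{<\nn},\mathrm{d}_{M_\ell})$ and, for $t\in B$, to the event $\widetilde{A}_t=\prod_{\ell\in L_0}X_t^\ell$ with $X_t^{|t|}=A_t$ and $X_t^\ell=[k]^{<\nn}$ otherwise, so that $\mu(\widetilde{A}_t)=\mathrm{d}_{M_{|t|}}(A_t)\meg\delta/2$ for $t\in B$. Now fix a variable word $p$ over $k$ of length $\tau$. The choice of $L_0$ is precisely what is needed to apply Lemma \ref{11l10} with $\rho=\gamma=\delta/2$ (note $\rho\gamma/8=\delta^2/32$, $\Lambda(k,1,\rho\gamma/8)=\Lambda_P$ and $\Theta(k,1,\rho\gamma/8)=\Theta_P$), over the index set $B\subseteq\bigcup_{\ell\in L_0}[k]^\ell$ and the events $\widetilde{A}_t$: it produces a word $c$ and a variable word $w$ over $k$ of pattern $p$ with $V=\{c\}\cup\{c^\con w(a):a\in[k]\}\subseteq B\subseteq\bigcup_{\ell\in L_0}A\cap[k]^\ell$ and $\mu\big(\bigcap_{t\in V}\widetilde{A}_t\big)\meg\Theta_P$. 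This gives (ii)(a); writing $\ell_1\in L_0$ for the level of $V(1)$, the product structure of the $\widetilde{A}_t$ yields $\mathrm{d}_{M_{\ell_1}}\big(\bigcap_{t\in V(1)}A_t\big)=\mu\big(\bigcap_{t\in V(1)}\widetilde{A}_t\big)\meg\Theta_P$, and one last Markov argument over $M_{\ell_1}$ exhibits at least $(\Theta_P/2)|M_{\ell_1}|\meg n$ values $m\in M_{\ell_1}$ with $\dens_{[k]^m}\big(\bigcap_{t\in V(1)}A_t\big)\meg\Theta_P/2$; pulling these back via $L''=\{\ell_1+m:m\text{ as above}\}\subseteq L\setminus L_0$ gives (ii)(b).

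The only substantive novelty over Proposition \ref{7p5} is the transition from Carlson--Simpson lines to variable words of a prescribed pattern $p$, and this is entirely absorbed by Lemma \ref{11l10}, whose proof in turn rests on the $\tau$-sparse regularity lemma (Lemma \ref{11l4}) and the canonical parameterization $\Phi_{p,L}$ of the $(p,L)$-restriction. The main point to watch is the bookkeeping: Lemma \ref{11l10} requires its index set to have size $\tau\cdot\reg_\tau(\cdots)$ rather than merely $\Lambda(\cdots)$ — one must extract a $\tau$-sparse subset before regularizing — which is exactly why the first summand of $h_{\tau,\delta}$ carries the factor $\tau$ and why $L_0$ must be taken of that length; everything else is identical to the case of Proposition \ref{7p5}, including the verification, level by level along $L_0$, of the hypothesis of Lemma \ref{11l10} that $B$ be dense at every relevant level.
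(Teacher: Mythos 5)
Your proof is correct and is exactly the argument the paper intends: the paper itself declares the proof of Proposition \ref{11p11} to be identical to that of Proposition \ref{7p5} with Lemma \ref{11l10} replacing Lemma \ref{7l9}, and you have carried out that substitution faithfully, including the two points that genuinely require care (that in the second case $B$ is dense at \emph{every} level of $L_0$, as the hypothesis of Lemma \ref{11l10} demands, and that the index set must now have cardinality $\tau\cdot\reg_\tau(k,\Lambda_P,1,\delta/4)$ rather than $\Lambda_P$). In doing so you have tacitly corrected what appears to be a slip in the statement itself: for the argument (and for the later application in Corollary \ref{11c12}, as well as for consistency with the first summand of $h_{\tau,\delta}$) $L_0$ must be the set of the first $\tau\cdot\reg_\tau(k,\Lambda_P,1,\delta/4)$ elements of $L$, not the first $\Lambda_P$ elements as written.
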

The proof of Proposition \ref{11p11} is identical to the proof of Proposition \ref{7p5} using Lemma \ref{11l10} instead of Lemma \ref{7l9}.
The details are left to the reader.

We close this subsection with the following consequence of Proposition \ref{11p11}. It is the main tool for the proof of Theorem C.
\begin{cor} \label{11c12}
Let $k\in\nn$ with $k\meg 2$ and $0<\delta\mik 1$. Also let $L$ be an infinite subset of $\nn$ and $A$ be a subset of $[k]^{<\nn}$ such that
$\dens_{[k]^l}(A)\meg \delta$ for every $l\in L$. Then for every variable word $p$ over $k$ there exist a word $c$ over $k$, a variable word
$w$ over $k$ of pattern $p$ and an infinite subset $L'$ of $L$ with the following properties.
\begin{enumerate}
\item [(i)]  The set $V=\{c\}\cup\{c^\con w(a):a\in[k]\}$ is contained in $\bigcup_{\ell\in L_0} A\cap [k]^{\ell}$ where
$L_0=\{\ell\in L: \ell<\min (L')\}$.
\item [(ii)] Setting $V(1)=\{c^\con w(a):a\in [k]\}$ and $\ell_1$ the unique integer such that $V(1)\subseteq [k]^{\ell_1}$,
for every $\ell\in L'$ we have
\begin{equation} \label{11e26}
\dens_{[k]^{\ell-\ell_1}}\big( \{s\in[k]^{<\nn}: t^{\con}s\in A \text{ for every } t\in V(1)\}\big) \meg 2^{-1} \Theta_P(k,\delta/2)
\end{equation}
where $\Theta_P(k,\delta/2)$ is as in \eqref{11e22}.
\end{enumerate}
\end{cor}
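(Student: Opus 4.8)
The plan is to derive Corollary \ref{11c12} from the finite dichotomy of Proposition \ref{11p11} by a bounded iteration, each step of which is followed by a pigeonhole argument that upgrades ``arbitrarily large finite'' to ``infinite''. Fix $\tau=|p|$. First I would run an iteration producing, for $j=0,1,2,\dots$, a word $u_j$ over $k$ of length $m_j$ (with $u_0=\varnothing$, $m_0=0$), a real $\delta_j\in(0,1]$ with $\delta_0=\delta$, and an infinite set $L^{(j)}\subseteq L$ with $m_j<\min(L^{(j)})$, maintaining the invariant that, writing $A_j=\{s:u_j^{\con}s\in A\}$ and $\tilde L^{(j)}=\{\ell-m_j:\ell\in L^{(j)}\}$, one has $\dens_{[k]^\ell}(A_j)\meg\delta_j$ for every $\ell\in\tilde L^{(j)}$. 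The step $j\to j+1$ is: for every sufficiently large $n$ apply Proposition \ref{11p11} to $A_j$, to a finite initial segment of $\tilde L^{(j)}$ of cardinality $h_{\tau,\delta_j}(n)$, and to the parameter $\delta_j$. Note that the first $\Lambda_P$ elements of that initial segment form a fixed finite set $\tilde L_0$ (the first $\Lambda_P$ elements of $\tilde L^{(j)}$), independent of $n$. Each $n$ lands in alternative (i) or alternative (ii) of Proposition \ref{11p11}.

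Suppose alternative (i) occurs for infinitely many $n$. The pair $(\ell_0,t_0)$ it produces ranges over the finite set $\{(\ell_0,t_0):\ell_0\in\tilde L_0,\ t_0\in[k]^{\ell_0}\}$, so by pigeonhole a single pair $(\ell_0,t_0)$ occurs along an increasing sequence $n_1<n_2<\cdots$; the associated sets $L'_{n_i}$ satisfy $|L'_{n_i}|\meg n_i\to\infty$ and, by \eqref{11e24}, each is contained in $\tilde L^{(j+1)}:=\{m\in\tilde L^{(j)}: m>\max(\tilde L_0)\ \text{and}\ \dens_{[k]^{m-\ell_0}}(\{s:t_0^{\con}s\in A_j\})\meg\delta_j+\delta_j^2/8\}$, which is therefore infinite. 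Putting $u_{j+1}=u_j^{\con}t_0$, $m_{j+1}=m_j+\ell_0$, $\delta_{j+1}=\delta_j+\delta_j^2/8\ (\meg\delta_j+\delta^2/8)$ and $L^{(j+1)}=\{m+m_j:m\in\tilde L^{(j+1)}\}$ preserves the invariant. Since $\tilde L^{(j)}$ is nonempty, $\delta_j$ is a lower bound for a density, hence $\delta_j\mik 1$; therefore this case can recur only finitely often, and after boundedly many steps we reach an index $j$ at which alternative (i) holds for only finitely many $n$, i.e. alternative (ii) holds for all large $n$.

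At this index, for each large $n$ alternative (ii) applied with our $p$ yields a word $c^{(n)}$, a variable word $w^{(n)}$ of pattern $p$, and $L''_n\subseteq\tilde L^{(j)}\setminus\tilde L_0$ with $|L''_n|\meg n$, such that $\{c^{(n)}\}\cup\{c^{(n)\con}w^{(n)}(a):a\in[k]\}\subseteq\bigcup_{\ell\in\tilde L_0}A_j\cap[k]^\ell$ and the bound \eqref{11e25} (with $A_j$) holds along $L''_n$. Since $|c^{(n)}|$ and $|c^{(n)}|+|w^{(n)}|$ both lie in the fixed finite set $\tilde L_0$ and $w^{(n)}$ is a variable word of the fixed pattern $p$, the pair $(c^{(n)},w^{(n)})$ ranges over a finite set, so pigeonhole supplies a fixed pair $(c,w)$ occurring along an increasing sequence $n_1<n_2<\cdots$. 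With $\ell_1=|c|+|w|$, $V(1)=\{c^{\con}w(a):a\in[k]\}$ and $\Theta_P=\Theta_P(k,\delta_j)$, the set $\tilde L'=\{m\in\tilde L^{(j)}:m>\max(\tilde L_0)\ \text{and}\ \dens_{[k]^{m-\ell_1}}(\{s:t^{\con}s\in A_j\ \forall t\in V(1)\})\meg\Theta_P/2\}$ contains every $L''_{n_i}$ and is thus infinite. Finally I would prepend $u_j$: set $c^*=u_j^{\con}c$, $w^*=w$, $V^*=\{c^*\}\cup\{c^{*\con}w^*(a):a\in[k]\}$ and $L'=\{\ell\in L^{(j)}:\ell-m_j\in\tilde L'\}$, an infinite subset of $L$. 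Unwinding the definitions gives $V^*\subseteq\bigcup_{\ell\in L_0}A\cap[k]^\ell$ with $L_0=\{\ell\in L:\ell<\min(L')\}$, and also $\{s:t^{\con}s\in A\ \forall t\in V^*(1)\}=\{s:t^{\con}s\in A_j\ \forall t\in V(1)\}$ together with the length shift $\ell-\ell_1^*=(\ell-m_j)-\ell_1$, so the required density bound holds with $\Theta_P(k,\delta_j)/2$; since $\delta_j\meg\delta$ and $\Theta_P(k,\cdot)$ is non-decreasing (an immediate consequence of the monotonicity of $\dcs(k,1,\cdot)$), this is $\meg 2^{-1}\Theta_P(k,\delta/2)$, which is exactly (ii).

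I expect the main difficulty to be bookkeeping rather than a new idea: one must keep the shifts $m_j$ coherent so the final objects translate back to the original set $A$, and—more essentially—one must ensure the data pigeonholed over $n$ (the pair $(\ell_0,t_0)$ in the increment case, the pair $(c,w)$ in the success case) genuinely ranges over a \emph{finite} set. This is precisely what forces working with the fixed finite set $\tilde L_0$ of the first $\Lambda_P$ levels and with words of bounded length, and it is why Proposition \ref{11p11} was set up so that $c$, $w$ and $t_0$ are anchored to those first $\Lambda_P$ levels.
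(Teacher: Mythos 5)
Your proof is correct, and it organizes the argument somewhat differently from the paper. The paper rules out the density-increment alternative of Proposition \ref{11p11} in one shot: it sets $\delta_t=\limsup_{\ell\in L}\dens_{[k]^{\ell-|t|}}(A_t)$, takes $\delta^*=\sup_t\delta_t$, and chooses $t_0$ and $\delta_0$ with $\delta_0<\delta^*<\delta_0+\delta_0^2/8$, so that beyond some $q$ no extension $t_0^{\con}s$ anchored in the first $\Lambda_P$ levels can realize alternative (i) -- whence alternative (ii) must fire for each of infinitely many disjoint tails, and the proof concludes with exactly the pigeonhole over the finitely many pairs $(c,w)$ anchored in those first levels that you also use. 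You replace this $\sup$--$\limsup$ selection by an explicit bounded iteration of the dichotomy, pigeonholing over the finitely many $(\ell_0,t_0)$ at each increment step and using $\delta_{j+1}\meg\delta_j+\delta^2/8\mik 1$ to terminate after at most $\lceil 8\delta^{-2}\rceil$ stages. Both devices are sound; the paper's is shorter and avoids the shift bookkeeping $m_j$, while yours is more finitary in spirit (it exhibits an explicit bound on the number of increments, in the style of the finite Proposition \ref{7p5}--Corollary \ref{7c6} iteration) at the cost of carrying the translation data $u_j$, $m_j$ through to the end. The remaining ingredients -- the finiteness of the anchored data that makes both pigeonhole steps legitimate, the translation back to $A$, and the monotonicity of $\Theta_P(k,\cdot)$ needed to pass from $\Theta_P(k,\delta_j)$ to $\Theta_P(k,\delta/2)$ -- are handled correctly and coincide with what the paper does (the paper uses the same monotonicity implicitly at \eqref{11e34}).
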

\begin{proof}
For every $t\in [k]^{<\nn}$ let $A_t=\{s\in [k]^{<\nn}:t^\con s\in A\}$ and define
\begin{equation} \label{11e27}
\delta_t=\limsup_{\ell\in L}\dens_{[k]^{\ell-|t|}}(A_t).
\end{equation}
We set $\delta^*=\sup_{t\in [k]^{<\nn}}\delta_t$ and we notice that $\delta\mik \delta^*\mik 1$. Hence, we may select $0<\delta_0\mik 1$,
$t_0\in[k]^{<\nn}$ and an infinite subset $M$ of $L$ with $\min(M)>|t_0|$ such that
\begin{equation} \label{11e28}
\delta/2<\delta_0<\delta^*<\delta_0+\delta_0^2/8
\end{equation}
and
\begin{equation}\label{11e29}
\delta_0<\dens_{[k]^{\ell-|t_0|}}(A_{t_0})
\end{equation}
for every $\ell\in M$.

Fix a variable word $p$ over $k$ and denote by $\tau$ its length. Let $M_0$ be the initial segment of $M$ of cardinality
\begin{equation} \label{11e30}
|M_0|=\tau\cdot\reg_\tau(k,\Lambda_P(k,\delta_0),1,\delta/4).
\end{equation}
By the definition of $\delta^*$ and \eqref{11e28}, there exists $q\in M\setminus M_0$ such that for every $s\in \bigcup_{\ell\in M_0}
[k]^{\ell-|t_0|}$ and every $\ell\in M$ with $\ell\meg q$ we have
\begin{equation}\label{11e31}
\dens_{[k]^{\ell-|t_0^{\con}s|}}(A_{t_0^{\con}s}) < \delta_0+\delta_0^2/8.
\end{equation}
We set $d=\lceil 2\Theta_P(k,\delta_0)^{-1}\rceil$ and we select a sequence $(E_n)$ of pairwise disjoint subsets of $\{m\in M:m\meg q\}$
such that $|E_n|=d$ for every $n\in\nn$.

Let $n\in\nn$ be arbitrary. We set $F_n=M_0\cup E_n$ and we observe that
\begin{equation}\label{11e32}
|F_n|=|M_0|+|E_n| \stackrel{\eqref{11e30}}{=}\tau\cdot\reg_\tau(k,\Lambda_P(k,\delta_0),1,\delta/4) + d
\stackrel{\eqref{11e23}}{=} h_{\tau,\delta_0}(1).
\end{equation}
By \eqref{11e32}, we may apply Proposition \ref{11p11}. Notice that the first alternative of Proposition \ref{11p11} contradicts
\eqref{11e31}. Hence, there exist a word $c_n$ over $k$, a variable word $w_n$ over $k$ of pattern $p$ and $m_n\in E_n$ such that
\begin{equation} \label{11e33}
V_n=\{ c_n\}\cup\{c_n^\con w_n(a):a\in[k]\}\subseteq \bigcup_{\ell\in M_0} A_{t_0}\cap [k]^{\ell-|t_0|}
\end{equation}
and, setting $Q_n=\{s\in[k]^{<\nn}: t^{\con}s\in A_{t_0} \text{ for every } t\in V_n(1)\}$,
\begin{equation} \label{11e34}
\dens_{[k]^{m_n-\ell_n-|t_0|}}(Q_n) \meg 2^{-1}\Theta_P(k,\delta_0)  \stackrel{\eqref{11e28}}{\meg} 2^{-1} \Theta_P(k,\delta/2)
\end{equation}
where $V_n(1)=\{c_n^\con w_n(a):a\in [k]\}$ and $\ell_n$ is the unique integer with $V_n(1)\subseteq [k]^{\ell_n}$.

By the classical pigeonhole principle, there exists an infinite subset $N$ of $\nn$, a word $c'$ over $k$ and a variable word $w$ over $k$
of pattern $p$ such that $c_n=c'$ and $w_n=w$ for every $n\in N$. We set $L'= \{m_n: n\in N\}$ and $c=t_0^\con c'$. Using \eqref{11e33} and
\eqref{11e34} it is easy to see that $L', c$ and $w$ are as desired.
\end{proof}
\noindent 11.4. \textbf{Proof of Theorem \ref{11t1}.} The proof proceeds by induction on $m$. For ``$m=1$'' we notice that
\begin{equation} \label{11e35}
\mathrm{DP}(k,\tau,\delta)\mik \tau\cdot\reg_\tau(\tau,\Lambda(k,1,\delta/8),1,\delta/2)
\end{equation}
for every $0<\delta\mik1$ and every $k,\tau\in\nn$ with $k\meg2$ and $\tau\meg 1$. Indeed, let $M$ be a finite subset of $\nn$ with
$|M|\meg \tau \cdot \reg_\tau(k,\Lambda(k,1,\delta/8),1,\delta/2)$ and $A$ be a subset of $[k]^{<\nn}$ such that $\dens_{[k]^n}(A)\meg\delta$
for every $n\in M$. Let $(\Omega,\Sigma,\mu)$ be an arbitrary probability space and set $A_t=\Omega$ for every $t\in A$. By Lemma \ref{11l10}
applied for ``$\rho=\delta$'', ``$\gamma=1$'' and ``$B=A$'', we see that \eqref{11e35} is satisfied.

Let $m\in\nn$ with $m\meg 1$ and assume that for every integer $k\meg 2$, every $0<\beta\mik 1$ and every finite sequence $(\sigma_n)_{n=0}^{m-1}$
of positive integers the number $\mathrm{DP}(k,(\sigma_n)_{n=0}^{m-1},\beta)$ has been defined. Let $k\meg 2$, $0<\delta\mik1$ and
$\tau_0,...,\tau_m\in\nn$ with $\tau_0,...,\tau_m\meg1$ be arbitrary. We set $\tau_n'=\tau_{n+1}$ for every $n\in\{0,...,m-1\}$ and
\begin{equation} \label{11e36}
N_0=\mathrm{DP}(k,(\tau'_n)_{n=0}^{m-1},\Theta_P/2).
\end{equation}
We claim that
\begin{equation} \label{11e37}
\mathrm{DP}(k,(\tau_n)_{n=0}^m,\delta)\mik h_{\tau_0,\delta}^{(\lceil8\delta^{-2}\rceil)}(N_0).
\end{equation}
Clearly this will finish the proof. To see that \eqref{11e37} is satisfied let $L$ be a finite subset of $\nn$ with
$|L|\meg h_{\tau_0,\delta}^{(\lceil8\delta^{-2}\rceil)}(N_0)$ and $A$ be a subset of $[k]^{<\nn}$ with $\dens_{[k]^l}(A)\meg\delta$
for every $l\in L$. Also fix a finite sequence $(p_n)_{n=0}^m$ of variable words over $k$ with $|p_n|=\tau_n$ for every $n\in\{0,...,m\}$.
By our assumptions on the cardinality of the set $L$ and repeated applications of Proposition \ref{11p11} for ``$\tau=\tau_0$'' and ``$p=p_0$'',
we see that there exist a word $c$ over $k$, a variable word $w$ over $k$ of pattern $p_0$ and a subset $L''$ of $L$ with
\begin{equation} \label{11e38}
|L''| \meg N_0 \stackrel{\eqref{11e36}}{=}\mathrm{DP}(k,(\tau'_n)_{n=0}^{m-1},\Theta_P/2)
\end{equation}
such that the following properties are satisfied.
\begin{enumerate}
\item[(a)] The set $V=\{c\}\cup\{c^\con w(a):a\in[k]\}$ is contained in $A$. Moreover, setting $V(1)=\{c^\con w(a):a\in [k]\}$ and
$\ell_1$ the unique integer with $V(1)\subseteq [k]^{\ell_1}$, we have $\ell_1<\min (L'')$.
\item[(b)] For every $\ell\in L''$ we have
\begin{equation} \label{11e39}
\dens_{[k]^{\ell-\ell_1}}(\Delta) \meg \Theta_P/2
\end{equation}
where $\Delta=\{s\in[k]^{<\nn}: t^{\con}s\in A \text{ for every } t\in V(1)\}$.
\end{enumerate}
For every $n\in\{0,...,m-1\}$ we set $p'_n=p_{n+1}$ and we notice that the length of $p'_n$ is $\tau'_n$. Therefore, by \eqref{11e38} and
\eqref{11e39} and our inductive assumptions, there exist a word  $c'$ over $k$ and a finite sequence $(w'_n)_{n=0}^{m-1}$ of variable words
over $k$ of pattern $(p'_n)_{n=0}^{m-1}$ such that the set
\begin{equation} \label{11e40}
\{c'\} \cup \big\{c^\con w'_0(a_0)^\con...^\con w'_n(a_{n}): n\in\{0,...,m-1\} \text{ and } a_0,...,a_n\in[k]\big\}
\end{equation}
is contained in $\Delta$. We set $w_0=w^\con c'$ and $ w_n=w'_{n-1}$ for every $n\in [m]$. It is easily verified that the set
\begin{equation} \label{11e41}
\{c\} \cup \big\{c^\con w_0(a_0)^\con...^\con w_n(a_{n}):n\in\{0,...,m\} \text{ and }a_0,...,a_n\in[k]\big\}
\end{equation}
is contained in $A$. This shows that \eqref{11e37} is satisfied and so the proof of Theorem \ref{11t1} is completed.
\medskip

\noindent 11.5. \textbf{Proofs of Theorem A and Theorem C.} As we indicated in the introduction, Theorem A is a special case of Theorem C.
Indeed, let $k\in\nn$ with $k\meg 2$ and set $q_n=(v)$ for every $n\in\nn$. Notice that a sequence $(w_n)$ of variable words over $k$ consists
of left variable words if and only if it is of pattern $(q_n)$. Thus, Theorem A follows from Theorem C applied to the sequence $(q_n)$.

So, we only need to prove Theorem C. To this end, fix an integer $k\meg 2$ and a sequence $(p_n)$ of variable words over $k$.
Let $0<\delta\mik 1$ and $A\subseteq [k]^{<\nn}$ such that
\begin{equation} \label{11e42}
\limsup_{n\to\infty} \frac{|A\cap [k]^n|}{k^n} >\delta.
\end{equation}
We fix an infinite subset $L$ of $\nn$ such that $\dens_{[k]^\ell}(A)\meg \delta$ for every $\ell\in\ L$. Recursively, we define a sequence
$(\delta_n)$ in $(0,1]$ by the rule
\begin{equation} \label{11e43}
\delta_0=\delta \text{ and } \delta_{n+1}=2^{-1}\Theta_P(k,\delta_{n}/2).
\end{equation}
Using Corollary \ref{11c12} we may select
\begin{enumerate}
\item[(i)] a sequence $(c_n)$ of words over $k$,
\item[(ii)] a sequence $(v_n)$ of variable words over $k$ of pattern $(p_n)$,
\item[(iii)] a sequence $(A_n)$ of subsets of $[k]^{<\nn}$ with $A_0=A$ and
\item[(iv)] two sequences $(L_n)$ and $(L'_n)$ of infinite subsets of $\nn$
\end{enumerate}
such that for every $n\in\nn$ the following conditions are satisfied.
\begin{enumerate}
\item[(C1)] The set $L'_n$ is contained in $L_n$; moreover, $L_0=L$.
\item[(C2)] The set $V_n=\{c_n\}\cup\{c_n^\con v_n(a): a\in[k]\}$ is contained in $\bigcup_{\ell\in L^0_n} A_n\cap [k]^{\ell}$
where $L^0_n=\{\ell\in L_n: \ell<\min(L'_n)\}$.
\item[(C3)] Let $V_n(1)=\{c_n^\con v_n(a):a\in [k]\}$ and $\ell_n$ be the unique integer such that $V_n(1)\subseteq [k]^{\ell_n}$.
Then
\begin{equation} \label{11e44}
A_{n+1}=\{s\in[k]^{<\nn}: t^{\con}s\in A_n \text{ for every } t\in V_n(1)\}
\end{equation}
and
\begin{equation} \label{11e45}
L_{n+1}=L'_n-\ell_n=\{ \ell-\ell_n: \ell\in L'_n\}.
\end{equation}
\item[(C4)] For every $\ell\in L_n$ we have $\dens_{[k]^\ell}\big(A_n\big) \meg \delta_n$.
\end{enumerate}
The recursive selection is fairly standard and the details are left to the reader.

We set $c=c_0$ and $w_n=v_n^\con c_{n+1}$ for every $n\in\nn$. By (ii) above, the sequence $(w_n)$ is of pattern $(p_n)$.
Moreover, using conditions (C2) and (C3), it is easily verified that the set
\begin{equation} \label{11e46}
\{c\}\cup \big\{ c^\con w_0(a_0)^\con...^\con w_n(a_n): n\in\nn \text{ and } a_0,...,a_n\in [k]\big\}
\end{equation}
is contained in $A$. The proof of Theorem C is completed.
\medskip

\noindent 11.6. \textbf{Further implications.} In this subsection we will discuss the relation of Theorem B and Theorem A with the
density Hales--Jewett Theorem and the density Halpern--L\"{a}uchli Theorem respectively. Notice, first, that the density Hales--Jewett
Theorem follows from Theorem A via a standard compactness argument. In fact, we have the following finer quantitative information.
\begin{prop} \label{11p13}
For every integer $k\meg 2$ and every $0<\delta\mik 1$ we have
\begin{equation} \label{11e47}
\mathrm{DHJ}(k,\delta)\mik \dcs(k,1,\delta).
\end{equation}
\end{prop}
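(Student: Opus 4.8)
\noindent The plan is to derive the estimate directly from Theorem~B applied in dimension one, after lifting the given dense subset of a single cube to a suitable dense subset of $[k]^{<\nn}$. Fix $k\meg 2$ and $0<\delta\mik 1$, set $N=\dcs(k,1,\delta)$, and let $n\meg N$ and $A\subseteq [k]^n$ with $|A|\meg \delta k^n$; the goal is to produce a combinatorial line of $[k]^n$ contained in $A$. First I would construct an auxiliary set $\hat{A}\subseteq [k]^{<\nn}$ supported on the levels $0,1,\dots,n-1$. For each $j\in\{0,\dots,n-1\}$ the sets $\{y\in[k]^n:(y(j),\dots,y(n-1))=u\}$, with $u$ ranging over $[k]^{n-j}$, partition $[k]^n$ into $k^{n-j}$ pieces of equal size, so by averaging there is a word $u_j\in [k]^{n-j}$ with $|\{x\in[k]^j: x^{\con}u_j\in A\}|\meg \delta k^j$. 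Put $\hat{A}\cap [k]^j=\{x\in [k]^j: x^{\con}u_j\in A\}$ for $0\mik j\mik n-1$ and $\hat{A}\cap [k]^j=\varnothing$ for $j\meg n$. Then $\dens_{[k]^j}(\hat A)\meg \delta$ for every $j$ in $L:=\{0,1,\dots,n-1\}$, and $|L|=n\meg N$.

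\noindent Next I would apply Theorem~B with its dimension parameter $m$ equal to $1$: since $|L|\meg \dcs(k,1,\delta)$ and $\dens_{[k]^j}(\hat A)\meg\delta$ for every $j\in L$, there exist a word $c$ over $k$ and a left variable word $w_0$ over $k$ such that $\{c\}\cup\{c^{\con}w_0(a):a\in[k]\}\subseteq \hat A$. Set $q=|c|+|w_0|$, so that $q\meg 1$ and the $k$ words $c^{\con}w_0(a)$ all belong to $\hat A\cap [k]^q$; since this set is nonempty and $\hat A$ is empty on every level $\meg n$, we get $q\mik n-1$. Finally, consider the word $W=(c^{\con}w_0)^{\con}u_q$. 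Because $w_0$ is a left variable word it contains at least one occurrence of the variable $v$, while $c$ and $u_q$ contain none, so $W$ is a variable word over $k$; moreover $|W|=q+(n-q)=n$. For each $a\in[k]$ we have $W(a)=(c^{\con}w_0(a))^{\con}u_q$, and since $c^{\con}w_0(a)\in\hat A\cap[k]^q$ the definition of $\hat A$ yields $(c^{\con}w_0(a))^{\con}u_q\in A$. Hence $\{W(a):a\in[k]\}$ is a combinatorial line of $[k]^n$ contained in $A$. As $n\meg N$ was arbitrary, this shows that $N$ witnesses the conclusion of the density Hales--Jewett Theorem, that is, $\dhj(k,\delta)\mik N=\dcs(k,1,\delta)$.

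\noindent The only step that is not pure bookkeeping is the choice of the lift $\hat A$: it must simultaneously have density at least $\delta$ on a whole block of consecutive levels, so that Theorem~B applies, and be thin enough that a Carlson--Simpson line of $\hat A$ reads off a combinatorial line of $A$. A fixed padding rule -- appending a prescribed tail, or repeating the last letter of a word -- preserves the line structure but in general destroys the density on the lower levels; the averaging argument that produces the tails $u_j$ is precisely what reconciles the two demands, while keeping $\hat A$ empty above level $n-1$ is what forces the Carlson--Simpson line delivered by Theorem~B to sit at a level from which a length-$n$ combinatorial line can be recovered.
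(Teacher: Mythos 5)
Your proposal is correct and follows essentially the same route as the paper: average over one side to pick, for each level, a padding word that keeps the density at least $\delta$, assemble these sections into a set dense on $n$ consecutive levels, apply the defining property of $\dcs(k,1,\delta)$ to get a Carlson--Simpson line, and reattach the padding to read off a combinatorial line of $[k]^n$. The only (immaterial) difference is that you pad on the right and keep prefixes, whereas the paper pads on the left and keeps suffixes.
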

\begin{proof}
Let $n\meg \dcs(k,1,\delta)$ and fix a subset $A$ of $[k]^n$ with $|A|\meg \delta k^n$. For every $\ell\in [n]$ and every
$y\in [k]^{n-\ell}$ let $A_y=\{x\in [k]^\ell: y^{\con}x\in A\}$ and observe that
\begin{equation} \label{11e48}
\ave_{y\in [k]^{n-\ell}} \dens(A_y)=\dens(A)\meg \delta.
\end{equation}
Hence, for every $\ell\in [n]$ we may select $y_{\ell}\in [k]^{n-\ell}$ such that $\dens(A_{y_{\ell}})\meg \delta$. We set
\begin{equation} \label{11e49}
B=\bigcup_{\ell\in [n]} A_{y_{\ell}}
\end{equation}
and we notice that $\dens_{[k]^{\ell}}(B)\meg \delta$ for every $\ell\in [n]$. Since $n\meg \dcs(k,1,\delta)$, there exists a Carlson--Simpson
line $R$ of $[k]^{<\nn}$ which is contained in $B$. Let $(c,w)$ be the generating sequence of $R$. Also let $\ell_R\in [n]$ be the unique
integer such that the $1$-level $R(1)$ of $R$ is contained in $[k]^{\ell_R}$. Then, setting
\begin{equation} \label{11e50}
V=\{y_{\ell_R}^{\con}c^{\con}w(a):a\in [k]\},
\end{equation}
we see that $V$ is a combinatorial line of $[k]^n$ and $V\subseteq A$. This shows that \eqref{11e47} is satisfied, as desired.
\end{proof}
We proceed to discuss how one can deduce the density Halpern--L\"{a}uchli Theorem from Theorem A. The argument is well-known
(see, e.g., \cite{CS,PV}) but we will comment on it for the benefit of the reader.

Recall that a \textit{tree} is a partially ordered set $(T,<)$ such that the set $\{s\in T: s<t\}$ is finite and linearly ordered under $<$
for every $t\in T$. A tree $T$ is said to be \textit{homogeneous} if it is uniquely rooted and there exists an integer $b\meg 2$, called the
\textit{branching number} of $T$, such that every $t\in T$ has exactly $b$ immediate successors. A typical example of a homogeneous tree with
branching number $b\meg 2$ is the set consisting of all finite sequence having values in a set $\mathbb{A}$ of cardinality $b$ and equipped
with the partial order of end-extension; it is denoted by $\mathbb{A}^{<\nn}$ and can, of course, be identified with $[b]^{<\nn}$. Part of the
interest in homogeneous trees of this form is based on the fact that they can be used to ``code'' the level product
\begin{equation} \label{11e51}
\otimes (T_1,...,T_d):=\bigcup_{n\in\nn} T_1(n)\times ... \times T_d(n)
\end{equation}
of a finite sequence $(T_1,...,T_d)$ of homogeneous trees. Specifically, we have the following lemma.
\begin{lem} \label{11l14}
Let $d\in\nn$ with $d\meg 1$. Also let $\mathbf{b}=(b_1,...,b_d)\in\nn^d$ with $b_i\meg 2$ for every $i\in [d]$ and set
$\mathbb{A}_{\mathbf{b}}=[b_1]\times ... \times [b_d]$. Finally let $(T_1,...,T_d)$ be a finite sequence of homogeneous trees such that
the branching number of $T_i$ is $b_i$ for every $i\in [d]$. Then there exists a bijection
\begin{equation} \label{11e52}
\Phi_{\mathbf{b}}: \mathbb{A}_{\mathbf{b}}^{<\nn} \to \otimes (T_1,...,T_d)
\end{equation}
with the following properties.
\begin{enumerate}
\item[(i)] For every $n\in\nn$ we have $\Phi_{\mathbf{b}}(\mathbb{A}_{\mathbf{b}}^n)=T_1(n)\times...\times T_d(n)$.
\item[(ii)] For every $c\in \mathbb{A}_{\mathbf{b}}^{<\nn}$ and every sequence $(w_n)$ of left variable words over $\mathbb{A}_{\mathbf{b}}$
there exist strong subtrees $(S_1,...,S_d)$ of $(T_1,...,T_d)$ having common level set such that, setting
\begin{equation} \label{11e53}
\mathcal{S}=\{c\}\cup \big\{ c^\con w_0(a_0)^\con...^\con w_n(a_n): n\in\nn \text{ and } a_0,...,a_n\in \mathbb{A}_{\mathbf{b}}\big\},
\end{equation}
we have $\Phi_{\mathbf{b}}(\mathcal{S})=\otimes(S_1,...,S_d)$.
\end{enumerate}
\end{lem}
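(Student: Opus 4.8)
The plan is to reduce to the archetypical situation in which each $T_i$ equals $[b_i]^{<\nn}$, and then to view $\mathbb{A}_{\mathbf{b}}^{<\nn}$ as a ``diagonal coding'' of the level product of the trees $[b_1]^{<\nn},...,[b_d]^{<\nn}$. First I would record the standard fact that a homogeneous tree is determined up to isomorphism by its branching number: for every $i\in[d]$ there is a tree isomorphism $\phi_i\colon[b_i]^{<\nn}\to T_i$ with $\phi_i\big([b_i]^n\big)=T_i(n)$ for every $n\in\nn$. This is proved by a routine recursion on the levels, matching at the inductive step the $b_i$ immediate successors of a node of $T_i$ with the $b_i$ immediate successors of the corresponding node of $[b_i]^{<\nn}$ via an arbitrary bijection. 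Next I would set up the diagonal bijection: identifying a letter $a\in\mathbb{A}_{\mathbf{b}}$ with the tuple $(a^1,...,a^d)\in[b_1]\times...\times[b_d]$, a word $x=(a_0,...,a_{n-1})\in\mathbb{A}_{\mathbf{b}}^n$ is sent to $\iota(x)=(x^1,...,x^d)$, where $x^i=(a_0^i,...,a_{n-1}^i)\in[b_i]^n$. The map $\iota$ is a bijection from $\mathbb{A}_{\mathbf{b}}^n$ onto $[b_1]^n\times...\times[b_d]^n$ for every $n$, hence a level-preserving bijection from $\mathbb{A}_{\mathbf{b}}^{<\nn}$ onto $\otimes([b_1]^{<\nn},...,[b_d]^{<\nn})$. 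I then define $\Phi_{\mathbf{b}}=(\phi_1\times...\times\phi_d)\circ\iota$; property (i) is immediate from the construction.

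For property (ii) the crucial point is that this coding is compatible with substitution in variable words. Given a left variable word $w$ over $\mathbb{A}_{\mathbf{b}}$, decompose it coordinatewise into words $w^1,...,w^d$ by setting $w^i(j)=v$ if $w(j)=v$ and $w^i(j)=w(j)^i$ otherwise. Since the leftmost letter of $w$ is $v$, each $w^i$ is a left variable word over $[b_i]$ of the same length as $w$, and $w(a)^i=w^i(a^i)$ for every $a\in\mathbb{A}_{\mathbf{b}}$. Applying this to $c$ and to each $w_n$, and using that $(a_0,...,a_n)\mapsto\big((a_0^i,...,a_n^i)\big)_{i=1}^d$ is a bijection from $\mathbb{A}_{\mathbf{b}}^{n+1}$ onto $\prod_{i=1}^d[b_i]^{n+1}$, an unfolding of the definitions gives $\iota(\mathcal{S})=\otimes(\mathcal{S}_1,...,\mathcal{S}_d)$, where
\[ \mathcal{S}_i=\{c^i\}\cup\big\{ c^i{}^{\con}w_0^i(b_0)^{\con}...^{\con}w_n^i(b_n):\, n\in\nn \text{ and } b_0,...,b_n\in[b_i]\big\}. \]
Here the lengths occurring in $\mathcal{S}_i$ are exactly $l_0<l_1<...$ with $l_0=|c|$ and $l_{n+1}=l_n+|w_n|$, and these do not depend on $i$ because each $w_n$, being a word over $\mathbb{A}_{\mathbf{b}}$, induces a word of the same length in every coordinate; in particular the strictness of the inequalities uses that a variable word has positive length.

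It then remains to check that each $\mathcal{S}_i$ is a strong subtree of $[b_i]^{<\nn}$ with level set $\{l_0<l_1<...\}$; transporting by $\phi_i$ will yield strong subtrees $S_i=\phi_i(\mathcal{S}_i)$ of $T_i$ with the common level set $\{l_0<l_1<...\}$ and with $\Phi_{\mathbf{b}}(\mathcal{S})=\otimes(S_1,...,S_d)$, as required. Conditions (a) and (b) in the definition of a strong subtree are clear: $c^i$ is the unique shortest element of $\mathcal{S}_i$ and every element of $\mathcal{S}_i$ extends it, and the $n$-th level of $\mathcal{S}_i$ is contained in $[b_i]^{l_n}$. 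The only point that uses the left-variable hypothesis is condition (c): if $s\in\mathcal{S}_i$ lies at the $n$-th level and $t=s^{\con}(a)$ is an immediate successor of $s$ in $[b_i]^{<\nn}$, then the immediate successors of $s$ in $\mathcal{S}_i$ are precisely the words $s^{\con}w_n^i(b)$ with $b\in[b_i]$, and, $w_n^i$ being a left variable word, the letter of $s^{\con}w_n^i(b)$ in position $|s|$ is $b$; hence $t\mik s^{\con}w_n^i(a)$ while $t\not\mik s^{\con}w_n^i(b)$ for $b\neq a$, so $s^{\con}w_n^i(a)$ is the unique immediate successor of $s$ in $\mathcal{S}_i$ lying above $t$.

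None of the steps is genuinely difficult; the one requiring care is the identification $\iota(\mathcal{S})=\otimes(\mathcal{S}_1,...,\mathcal{S}_d)$ together with the verification that the lengths arising in the several coordinates coincide — which is exactly the place where one uses that a variable word over $\mathbb{A}_{\mathbf{b}}$ has a well-defined length, the same in each coordinate.
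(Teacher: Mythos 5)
Your proposal is correct and follows exactly the route of the paper's proof: reduce to $T_i=[b_i]^{<\nn}$, define $\Phi_{\mathbf{b}}$ coordinatewise via the projections $\mathbb{A}_{\mathbf{b}}\to[b_i]$, and verify that this diagonal coding respects levels and substitution in left variable words. The paper states the construction and leaves the verification to the reader; you have simply carried out that verification in full, correctly identifying that the left-variable hypothesis is what makes condition (c) of the strong-subtree definition work.
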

\begin{proof}
Let $i\in [d]$ be arbitrary and $\pi_i:\mathbb{A}_{\mathbf{b}}\to [b_i]$ be the natural projection. Clearly we may assume that the tree $T_i$
coincides with $[b_i]^{<\nn}$ and so we may consider the ``extension'' $\bar{\pi}_i:\mathbb{A}_{\mathbf{b}}^{<\nn}\to T_i$ of $\pi_i$
defined by $\bar{\pi}_i(\varnothing)=\varnothing$ and
\begin{equation} \label{11e54}
\bar{\pi}_i\big( (a_0,...,a_{n-1})\big)= \big(\pi_i(a_0),...,\pi_i(a_{n-1})\big)
\end{equation}
for every integer $n\meg 1$ and every $(a_0,...,a_{n-1})\in \mathbb{A}_{\mathbf{b}}^n$. The map $\Phi_{\mathbf{b}}$ is then
defined by the rule
\begin{equation} \label{11e55}
\Phi_{\mathbf{b}}(s)=\big( \bar{\pi}_1(s),..., \bar{\pi}_d(s) \big).
\end{equation}
It is easily verified that $\Phi_{\mathbf{b}}$ is a bijection and satisfies all desired properties.
\end{proof}
With Lemma \ref{11l14} at our disposal, let us see how Theorem A yields the density Halpern--L\"{a}uchli Theorem. To this end,
fix  a finite sequence $(T_1,...,T_d)$ of homogeneous trees and a subset $A$ of the level product of $(T_1,...,T_d)$ such that
\begin{equation} \label{11e56}
\limsup_{n\to\infty} \frac{|A\cap \big( T_1(n)\times ... \times T_d(n)\big)|}{|T_1(n)\times ... \times T_d(n)|}>0.
\end{equation}
Let $\mathbf{b}=(b_1,...,b_d)$ where $b_i$ is the branching number of the tree $T_i$ for every $i\in [d]$ and consider the bijection
$\Phi_{\mathbf{b}}$ obtained by Lemma \ref{11l14}. We set $B=\Phi_{\mathbf{b}}^{-1}(A)$. By part (i) of Lemma \ref{11l14} and
\eqref{11e56}, we see that
\begin{equation} \label{11e57}
\limsup_{n\to\infty} \frac{|B \cap \mathbb{A}_{\mathbf{b}}^n|}{|\mathbb{A}_{\mathbf{b}}|^n}>0.
\end{equation}
Hence, by Theorem A, there exist $c\in \mathbb{A}_{\mathbf{b}}^{<\nn}$ and a sequence $(w_n)$ of left variable words over
$\mathbb{A}_{\mathbf{b}}$ such that the set
\begin{equation} \label{11e58}
\{c\}\cup \big\{ c^\con w_0(a_0)^\con...^\con w_n(a_n): n\in\nn \text{ and } a_0,...,a_n\in \mathbb{A}_{\mathbf{b}}\big\}
\end{equation}
is contained in $B$. Invoking the definition of the set $B$ and part (ii) of Lemma \ref{11l14}, we conclude that there exist strong subtrees
$(S_1,...,S_d)$ of $(T_1,...,T_d)$ having common level set such that the level product of $(S_1,...,S_d)$ is contained in $A$.


\end{document}